\newtheorem{theorem}{Theorem}[section]
\newtheorem{lemma}[theorem]{Lemma}
\newtheorem{proposition}[theorem]{Proposition}
\newtheorem{corollary}[theorem]{Corollary}
\theoremstyle{definition}
\newtheorem{definition}[theorem]{Definition}
\newtheorem{example}[theorem]{Example}
\newtheorem{remark}[theorem]{Remark}
\def\car#1,#2,#3,#4{
$$
   \CD
   #1           @>{}>>          #2        \\
   @V{{}}VV                  @VV{{}}V  \\
   #3        @>{{}}>>   #4
   \endCD
   $$}
\def\Car#1,#2,#3,#4,#5,#6,#7,#8{
$$
   \CD
   #1           @>#2>>          #3        \\
   @V{#4}VV                  @VV{#5}V  \\
   #6        @>{#7}>>   #8
   \endCD
   $$}
\begin{document}

\title[Quasi-Pr\"ufer extensions]{Quasi-Pr\"ufer   extensions of rings}

\author[G. Picavet and M. Picavet]{Gabriel Picavet and Martine Picavet-L'Hermitte}
\address{Universit\'e Blaise Pascal \\
Laboratoire de Math\'ematiques\\ UMR6620 CNRS  \\ 24, avenue des Landais\\
BP 80026 \\ 63177 Aubi\`ere CEDEX \\ France}

\email{Gabriel.Picavet@math.univ-bpclermont.fr}
\email{picavet.gm(at)wanadoo.fr}

\begin{abstract}  We introduce quasi-Pr\"ufer ring extensions, in order to relativize quasi-Pr\"ufer domains and to take also into account some contexts in recent papers, where such extensions appear in a hidden form. An extension is quasi-Pr\"ufer if and only if it is an INC pair.  The class of these extensions has  nice stability properties. We also define almost-Pr\"ufer extensions that are quasi-Pr\"ufer, the converse being not true. Quasi-Pr\"ufer extensions are closely linked to finiteness properties of fibers. Applications are given for FMC extensions, because they are quasi-Pr\"ufer.

\end{abstract} 
\thanks{}

\subjclass[2010]{Primary:13B22;~Secondary:}

\keywords  {flat epimorphism, FIP, FCP extension, minimal extension, integral extension, Morita, Pr\"ufer hull, support of a module}

\maketitle

\section{Introduction and Notation}

 We consider the category of commutative and unital rings. An epimorphism  is an epimorphism of  this  category. Let $R\subseteq S$ be a (ring) extension. The set of all $R$-subalgebras of $S$ is denoted by $[R,S]$. The extension $R\subseteq S$ is said to have FIP (for the ``finitely many intermediate algebras property") if $[R,S]$ is finite.   A {\it chain} of $R$-subalgebras of $S$ is a set of elements of $[R, S] $ that are pairwise comparable with respect to inclusion. We say that the extension $R\subseteq S$ has FCP (for the ``finite chain property") if each chain in $[R,S]$ is finite.  Dobbs and the authors characterized FCP and FIP extensions  \cite{DPP2}. Clearly, an  extension that satisfies FIP must also satisfy FCP. An extension $R\subseteq S$ is called FMC if there is a finite maximal chain of extensions from $R$ to $S$.
 
We begin  by explaining our motivations and aims. The reader who is not familiar with the notions used will find some Scholia in the sequel, as well as necessary definitions that exist in the literature. Knebusch and Zang introduced Pr\"ufer extensions in their book \cite{KZ}. Actually, these extensions are nothing but  normal pairs, that are intensively studied in the literature. We do not intend to give an extensive list of recent papers, written by Ayache, Ben Nasr, Dobbs, Jaballah, Jarboui and some others. We are indebted to these authors because their papers  are a rich source of suggestions. We observed that some of them are dealing with FCP (FIP, FMC) extensions, followed by a Pr\"ufer extension, perhaps under a hidden form. These extensions reminded us quasi-Pr\"ufer domains (see \cite{FHP}).  Therefore, we   introduced in \cite{P1} {\em quasi-Pr\"ufer} extensions $R\subseteq S$ as extensions that can be factored $R\subseteq R' \subseteq S$, where the first extension is integral and the second is Pr\"ufer.  Note that FMC extensions are quasi-Pr\"ufer. 

We give a systematic study of quasi-Pr\"ufer extensions in Section 2 and Section 3. The class of quasi-Pr\"ufer extensions has a nice behavior  with respect to  the classical operations of commutative algebra. An important result is that quasi-Pr\"ufer extensions coincide with INC-pairs.  Another one is that this class is stable under forming subextensions and  composition. A striking result is the stability of the class  of quasi-Pr\"ufer extensions by absolutely flat  base change, like localizations and Henselizations. Any ring extension $R\subseteq S$ admits a quasi-Pr\"ufer closure, contained in $S$. Examples are provided by Laskerian pairs, open pairs and the pseudo-Pr\"ufer pairs of Dobbs-Shapiro \cite{DS}.

 Section 4 deals with {\em almost-Pr\"ufer} extensions, a special kind of quasi-Pr\"ufer extensions. They are of the form $R\subseteq T \subseteq S$, where the first extension is Pr\"ufer and the second is integral. Any ring extension admits an almost-Pr\"ufer closure, contained in $S$. The class of almost-Pr\"ufer extensions seems to have less properties than  the class of quasi-Pr\"ufer extensions but has the advantage of the commutation of Pr\"ufer closures with localizations at prime ideals.
 We examine the transfer of the quasi (almost)-Pr\"ufer properties to subextensions.
 
 Section 5  study the transfer of the quasi (almost)-Pr\"ufer properties to Nagata extensions.
 
 In section 6, we complete and generalize the results of Ayache-Dobbs in \cite{AD}, with respect to the finiteness of fibers. These authors have evidently considered particular cases of quasi-Pr\"ufer extensions. A main result is that if $R\subseteq S$ is quasi-Pr\"ufer with finite fibers, then so is $R\subseteq T$ for $T\in [R,S]$. In particular, we recover a result of \cite{AD} about FMC extensions.
 
 Now Section 7 gives  calculations of $|[R,S]|$ with respect to its Pr\"ufer closure, quasi-Pr\"ufer (almost-Pr\"ufer) closure  in case $R\subseteq S$ has FCP.
 
 \subsection{ Recalls about some results and definitions}
 
 The reader is warned that we will mostly use the definition of Pr\"ufer extensions by flat epimorphic subextensions investigated in \cite{KZ}. The results needed may be found in Scholium A for flat epimorphic extensions and some results of \cite{KZ}  are summarized  in Scholium B. Their powers give quick  proofs of results that are generalizations of results of the literature.

As long as FCP or FMC extensions are concerned,  we use  minimal (ring) extensions, a concept  introduced by Ferrand-Olivier \cite{FO}. An extension $R\subset S$ is called {\it minimal} if $[R,S]=\{R,S\}$. It is known that a minimal extension is either module-finite or a flat epimorphism \cite{FO} and these conditions are mutually exclusive. There are three types of integral minimal (module-finite) extensions: ramified, decomposed or inert \cite[Theorem 3.3]{Pic}. A minimal extension  $R\subset S$ admits a crucial ideal $\mathcal{ C}(R,S)= : M$ which is maximal in $R$ and such that $R_P=S_P$ for each $P\neq M, P\in \mathrm{Spec}(R)$. Moreover, $\mathcal{ C}(R,S)=(R:S)$ when $R\subset S$ is an integral minimal extension. 
 The key connection between the above ideas is that if $R\subseteq S$ has FCP or FMC, then any maximal (necessarily finite) chain of $R$-subalgebras of $S$, $R=R_0\subset R_1\subset\cdots\subset R_{n- 1}\subset R_n=S$, with {\it length} $n<\infty$, results from juxtaposing $n$ minimal extensions $R_i\subset R_{i+1},\ 0\leq i\leq n-1$. 
 
 Following \cite{J}, we define the {\it length} $\ell[R,S]$ of $[R,S]$ as  the supremum of the lengths of chains in $[R,S]$. In particular, if $\ell[R,S]=r$, for some integer $r$, there exists a maximal chain in $[R,S]$  with length $r$. 
 
As usual, Spec$(R)$,  Max$(R)$, Min$(R)$, U$(R)$, Tot$(R)$ are respectively the set of prime ideals,  maximal ideals, minimal prime ideals, units, total ring of fractions of a ring $R$ and $\kappa (P) = R_P/PR_P$ is the residual field of $R$ at $P\in \mathrm{Spec}(R)$. 

If $R\subseteq S$ is an extension, then $(R:S)$ is its conductor and if $P\in\mathrm{Spec}(R)$, then $S_P$ is the localization $S_{R\setminus P}$. We denote the integral closure of $R$ in $S$ by $\overline R^S$ (or $\overline R$). 
 
 A local ring is here what is called elsewhere a quasi-local ring. The {\it  support} of an $R$-module $E$ is $\mathrm{Supp}_R(E):= \{P\in\mathrm {Spec}(R)\mid  E_P\neq 0\}$ and $\mathrm{MSupp}_R(E):=\mathrm{Supp}_R(E)\cap\mathrm{Max}(R)$.  
Finally, $\subset$ denotes proper inclusion and $|X|$  the cardinality of a set $X$.

{\bf Scholium A}
We give some recalls about flat epimorphisms (see \cite[Chapitre IV]{L}, except (2) which is \cite[Proposition 2]{OEPI}).
\begin{enumerate}
\item    $R\to S$ is a flat epimorphism $\Leftrightarrow$  for all $P\in \mathrm{Spec}(R)$, either $R_P\to S_P$ is an isomorphism or $S=PS$ $\Leftrightarrow$  $R_P\subseteq S_P$ is a flat epimorphism for all $P\in \mathrm{Spec}(R)$.

\item (S) A flat epimorphism, with a zero-dimensional domain, is surjective.

\item If $f:A \to B$ and $g: B\to C$ are ring morphisms such that $g\circ f$ is injective and $f$ is a flat epimorphism, then $g$ is injective.

\item Let  $R\subseteq T \subseteq S$ be a tower of extensions, such that $R\subseteq  S$ is a flat epimorphism. Then $T\subseteq S$ is a flat epimorphism but $R\subseteq T$ does not need. A Pr\"ufer extension remedies to this defect.

\item (L) A faithfully flat epimorphism is an isomorphism. Hence, $R= S$ if  $R\subseteq S$  is an integral flat epimorphism.

\item  If $f: R\to S$ is a flat epimorphism and $J$ an ideal of $S$, then $J = f^{-1}(J)S$.

\item If $f: R \to S$ is an epimorphism, then $f$ is spectrally injective and its residual extensions are isomorphisms.

\item Flat epimorphisms remain flat epimorphisms under base change (in particular, after a localization with respect  to a multiplicatively closed subset).

\item Flat epimorphisms are descended  by faithfully flat morphisms.
\end{enumerate}

\subsection{Recalls and results on Pr\"ufer extensions}

We recall some definitions  and properties of ring extensions $R\subseteq S$ and rings $R$. There are a lot of characterizations of Pr\"ufer extensions. We keep only
those that are useful in this paper. We give  the two definitions that are dual and emphasize some characterizations in the local case.

{\bf  Scholium B}

\begin{enumerate}
\item \cite{KZ} $R\subseteq S$ is called Pr\"ufer if $R\subseteq T$ is a flat epimorphism  for each $T\in [R,S]$.

\item   $R\subseteq S$ is called a {\it normal} pair if $T\subseteq S$ is integrally closed for each $T\in [R,S]$.

\item $R\subseteq S$ is Pr\"ufer if and only if it is a normal pair 
\cite[Theorem 5.2(4)]{KZ}.

\item $R$  is called Pr\"ufer if  its finitely generated regular ideals are invertible, or equivalently, $R\subseteq \mathrm{Tot}(R)$  is Pr\"ufer \cite[Theorem 13((5)(9))]{GR}.
\end{enumerate}

Hence  Pr\"ufer extensions are a relativization of Pr\"ufer rings.  Clearly, a minimal extension is a flat epimorphism if and only if it is Pr\"ufer. We will then use for such extensions the terminology: {\it Pr\"ufer minimal} extensions. The reader may find  some properties of Pr\"ufer minimal extensions in \cite[Proposition 3.2, Lemma 3.4 and Proposition 3.5]{Pic}, asserted by L. Dechene in her dissertation, but where in addition  $R$ must be supposed local. The reason why is that this word has surprisingly disappeared during the printing process of \cite{Pic}.

We will need the  two next  results. Some of them  do not  explicitely appear in \cite{KZ} but deserve to be emphasized. We refer to \cite[Definition 1, p.22]{KZ} for a definition of Manis extensions.

\begin{proposition}\label{0.2} Let $R\subseteq S$ be a ring extension.
 \begin{enumerate}
  
  \item $R\subseteq S$ is Pr\"ufer if and only if $R_P\subseteq S_P$ is Pr\"ufer for each $P\in \mathrm{Spec}(R)$ (respectively, $P\in \mathrm{Supp}(S/R)$).
 
  \item  $R \subseteq S$ is Pr\"ufer if and only if $R_M\subseteq S_M$ is Manis for each $M\in \mathrm{Max}(R)$.
  \end{enumerate}
  \end{proposition}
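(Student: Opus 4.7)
The plan is to use Scholium A to pass between the global and local settings, with the forward direction of (1) relying on base change preservation (item (8)) and the backward direction on the local characterization of flat epimorphisms (item (1)).

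For the forward direction of (1), given any $T' \in [R_P, S_P]$, I would set $T := \pi^{-1}(T')$ where $\pi\colon S \to S_P$ is the canonical localization map. Then $T \in [R, S]$, since $R$ maps into $R_P \subseteq T'$. The crux is a direct verification that $T_P = T'$ as subrings of $S_P$: the inclusion $T_P \subseteq T'$ is immediate from $T \subseteq \pi^{-1}(T')$ combined with the fact that every element of $R_P$ is already invertible in $T'$; conversely, any $y = s/r \in T'$ with $r \in R \setminus P$ satisfies $s/1 = (r/1)\,y \in T'$, hence $s \in T$ and $y \in T_P$. Since $R \subseteq S$ is Pr\"ufer, $R \subseteq T$ is a flat epimorphism, and Scholium A(8) then yields that its localization $R_P \subseteq T_P = T'$ is a flat epimorphism as well.

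For the backward direction of (1), fix $T \in [R, S]$. For every $P \in \mathrm{Spec}(R)$, the localization $T_P$ sits in $[R_P, S_P]$, so $R_P \subseteq T_P$ is a flat epimorphism by hypothesis; Scholium A(1) then lifts this to $R \subseteq T$ being a flat epimorphism, and hence $R \subseteq S$ is Pr\"ufer. The refinement to $P \in \mathrm{Supp}(S/R)$ is automatic: off the support one has $R_P = S_P$, which is trivially Pr\"ufer.

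For part (2), since flatness and the epimorphism property can both be tested locally at maximal ideals, the argument of (1) restricts to the assertion that $R \subseteq S$ is Pr\"ufer if and only if $R_M \subseteq S_M$ is Pr\"ufer for every $M \in \mathrm{Max}(R)$. The remaining ingredient is the equivalence between Pr\"ufer and Manis extensions over a local base ring, which is a standard result in Knebusch--Zang that I would simply cite. The main obstacle throughout is the lifting $T' = T_P$ used in the forward direction of (1); without it, one cannot invoke the Pr\"ufer hypothesis on $R \subseteq S$ directly at an arbitrary intermediate ring of $R_P \subseteq S_P$. Once this lifting is in place, the rest reduces to formal applications of the recalled properties of flat epimorphisms.
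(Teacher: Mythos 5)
Your proof is correct, and in substance it follows the same route as the paper: the backward direction of (1) via Scholium A(1), the support refinement being trivial off $\mathrm{Supp}(S/R)$, and part (2) by reducing to the local case and citing the Knebusch--Zang equivalence of Pr\"ufer and Manis over a local ring (which is Proposition~\ref{0.3}(2) in the paper). The one place you genuinely diverge is the forward direction of (1): the paper simply cites \cite[Proposition 5.1(ii)]{KZ} for stability of Pr\"ufer extensions under localization, whereas you unpack it. Your key step --- that every $T'\in[R_P,S_P]$ is of the form $T_P$ for $T:=\pi^{-1}(T')\in[R,S]$, verified by the two inclusions using that $R\setminus P$ maps to units of $T'$ --- is exactly the right lemma, and combined with Scholium A(8) it gives a clean, self-contained proof. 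This buys a proof that does not lean on the black box of \cite{KZ} for that direction, at the cost of a few lines; the mathematics is otherwise identical.
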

  \begin{proof} (1) The class of Pr\"ufer extensions is stable under localization \cite[Proposition 5.1(ii), p.46-47]{KZ}. To get the converse, use Scholium A(1).  (2) follows from \cite[Proposition 2.10, p.28, Definition 1, p.46]{KZ}.
 \end{proof}

  \begin{proposition}\label{0.3} Let $R\subseteq S$ be a ring extension, where $R$ is local. 
  \begin{enumerate}
  \item  $R\subseteq S$ is Manis if and only if $S\setminus R \subseteq \mathrm{U}(S)$ and  $x \in S\setminus R \Rightarrow x^{-1} \in R$. In that case, $R\subseteq S$ is integrally closed.
  \item  $R\subseteq S$ is Manis if and  only if $R\subseteq S$ is Pr\"ufer.
  
  \item   $R\subseteq S$ is Pr\"ufer if and only if there exists $P\in \mathrm{Spec}(R)$ such that $S=R_P$, $P=SP$ and $R/P$ is a valuation domain. Under these conditions, $S/P$ is the quotient field of $R/P$.
    \end{enumerate}
  \end{proposition}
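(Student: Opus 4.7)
\medskip

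\noindent\textbf{Proof plan.} Part (2) is immediate from Proposition~\ref{0.2}(2): when $R$ is local with unique maximal ideal $M$, the condition that $R_M \subseteq S_M$ be Manis for every maximal ideal of $R$ collapses to $R \subseteq S$ itself being Manis. So this part requires no work beyond citing the previous proposition.

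\medskip

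For (1), I would unwrap the valuation-theoretic definition of Manis. Fix a Manis valuation $v : S \to \Gamma \cup \{\infty\}$ with $R = v^{-1}(\Gamma_{\geq 0} \cup \{\infty\})$. If $x \in S \setminus R$ then $v(x) < 0$, and the Manis defining condition furnishes $y \in S$ with $v(xy) = 0$ and $v(y) = -v(x) > 0$; one then checks that $y$ is an honest two-sided inverse of $x$ in $S$ (using that $xy \in R$ has value $0$, hence differs from $1$ by an element of the valuation's maximal ideal, which is contained in $R$ since $R$ is local). Thus $x \in \mathrm{U}(S)$ and $x^{-1} = y \in R$. Integral-closedness is the standard valuation argument: an integral relation for $x$ over $R$ combined with $v(x) < 0$ gives $v(x^n) < v(a_i x^i)$ for $i < n$, contradicting the ultrametric inequality. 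For the converse, I would build a valuation directly from the hypotheses: the relation $x \sim y \iff x/y \in \mathrm{U}(R)$ on $\mathrm{U}(S)$ is an equivalence whose quotient $G$ is totally ordered by $[x] \leq [y] \iff y/x \in R$ (this is where both hypotheses are crucial), and extending the quotient map by sending zero-divisors in $S$ to $\infty$ yields a Manis valuation with ring $R$.

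\medskip

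For (3), the forward direction uses (2) to obtain a Manis valuation $v$ as above and sets $P := v^{-1}(\infty)$. Then $P$ is a prime ideal of $S$ contained in $R$, so $PS = P$ is automatic and $P \in \mathrm{Spec}(R)$. The induced $\bar v$ on $S/P$ is non-degenerate, so $S/P$ is a domain. Applying (1) to the quotient: every nonzero $\bar s \in S/P$ satisfies $\bar s \in R/P$ or $\bar s^{-1} \in R/P$, which simultaneously shows $S/P$ is a field (hence the fraction field of $R/P$) and that $R/P$ is a valuation domain therein. To identify $S$ with $R_P$, I note that each $x \in R \setminus P$ has $v(x) \in \Gamma_{\geq 0}$, so by the Manis property $x$ becomes a unit of $S$, giving $R_P \subseteq S$; conversely, any $s \in S \setminus R$ has $s^{-1} \in R$ by (1) and $s^{-1} \notin P$ (else $v(s) = -\infty$), so $s = (s^{-1})^{-1} \in R_P$. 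For the converse of (3), given the structural data, $R/P \subseteq S/P = \mathrm{Frac}(R/P)$ is Pr\"ufer because valuation-domain inclusions into the fraction field are normal pairs; using $PS = P$ to lift, any $T \in [R,S]$ corresponds to $T/P \in [R/P, S/P]$, which is integrally closed in $S/P$, and integrality lifts through $P$, so $T$ is integrally closed in $S$, making $R \subseteq S$ a normal pair, i.e.\ Pr\"ufer by Scholium~B(3).

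\medskip

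The main obstacle will be the forward direction of (3), specifically the identification $S = R_P$: one must convert the valuation-theoretic ``Manis surjectivity'' into the algebraic statement that every element of $R \setminus P$ is invertible in $S$ and every element of $S$ arises this way, and both directions require care in tracking when $\infty$-valued elements interfere. The backward direction of (1) also requires attention to ensure the constructed totally ordered group really is well-defined without the hypotheses producing contradictions.
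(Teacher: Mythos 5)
The paper does not prove this proposition from first principles: it simply cites \cite[Theorem 2.5, p.24]{KZ} for (1), \cite[Scholium 10.4, p.147]{KZ} for (2), and \cite[Theorem 6.8]{DPP2} for (3). Your plan is therefore a genuinely different (self-contained, valuation-theoretic) route. One part of it is clean and correct: deriving (2) from Proposition~\ref{0.2}(2) by noting that for local $(R,M)$ one has $R_M=R$ and $S_M=S$, so ``Manis at every maximal ideal'' collapses to ``Manis''. The integral-closedness claim in (1) and the backward direction of (3) (lifting integral closedness through the common ideal $P=PS$) are also sound.

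However, three steps would fail as written. First, in the forward direction of (1), ``$v(xy)=0$, hence $xy$ differs from $1$ by an element of the valuation's maximal ideal'' is false: value~$0$ does not place $xy$ near $1$. The correct mechanism is that the center $P_v$ of the Manis pair is a maximal ideal of $R=A_v$ (when $R\neq S$), hence equals $M$ by localness, so $xy\in A_v\setminus P_v=R\setminus M=\mathrm{U}(R)$ and $x^{-1}=y(xy)^{-1}\in R$. Second, your construction of a valuation for the converse of (1) is both overcomplicated and gappy: the non-units of $S$ need not be zero-divisors, and sending them to $\infty$ requires $v^{-1}(\infty)$ to be a prime ideal of $S$, which you have not established. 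The efficient argument is to verify the Manis-pair condition for $(R,M)$ directly: for $x\in S\setminus R$, $x^{-1}\in R$ cannot be a unit of $R$ (else $x\in R$), so $x^{-1}\in M$ and $x\cdot x^{-1}=1\in R\setminus M$. Third, in the forward direction of (3) the crucial inclusion $R\setminus P\subseteq\mathrm{U}(S)$ (needed for $S=R_P$) is asserted ``by the Manis property'', but surjectivity of $v$ only yields $y$ with $v(xy)=0$, not an inverse of $x$; you correctly flag this as the main obstacle, but it is precisely the substantive content of \cite[Theorem 6.8]{DPP2} and remains unproved in your sketch. Relatedly, (1) alone does not show $S/P$ is a field, since elements of $R/P$ need not be invertible in $S/P$; that follows only after $S=R_P$ is established.
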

  \begin{proof}  (1) is \cite[Theorem 2.5, p.24]{KZ}. 
  (2) is \cite[Scholium 10.4, p.147]{KZ}.
  Then (3) is \cite[Theorem 6.8]{DPP2}. 
   \end{proof}
   
     Next result shows that Pr\"ufer FCP extensions  can be described  in a special manner.
   
   \begin{proposition}\label{0.5} Let $R\subset S$ be a ring extension. 
\begin{enumerate}
\item If $R\subset S$ has FCP, then $R\subset S$ is integrally closed $\Leftrightarrow$  $R\subset S$ is  Pr\"ufer $\Leftrightarrow$ $R\subset S$ is a composite of Pr\"ufer minimal extensions.

\item If $R\subset S$ is integrally closed, then $R\subset S$ has FCP $\Leftrightarrow$ $R\subset S$ is  Pr\"ufer and   $\mathrm{Supp}(S/R)$ is finite. 
\end{enumerate}
\end{proposition}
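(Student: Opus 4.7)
The plan for part (1) combines Scholium B with the decomposition of FCP extensions from \cite{DPP2}, which writes any FCP extension as $R\subseteq\overline R^S\subseteq S$ with the first factor integral FCP and the second Pr\"ufer FCP. The implication ``Pr\"ufer $\Rightarrow$ integrally closed'' is immediate: by Scholium B(2)--(3), Pr\"ufer means normal pair, and choosing $T=R$ in Scholium B(2) forces $R$ integrally closed in $S$. Conversely, if $R\subset S$ is integrally closed and has FCP, then $\overline R^S=R$ collapses the decomposition to $R=\overline R^S\subseteq S$, which is Pr\"ufer.

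Assuming now that $R\subset S$ is Pr\"ufer FCP, I would obtain a composite of Pr\"ufer minimal extensions by choosing a maximal chain $R=R_0\subset R_1\subset\cdots\subset R_n=S$ of minimal subextensions, available by FCP. For each step, integral-closedness of $R_i$ in $R_{i+1}$ is inherited from the normal pair $R\subseteq S$: any $\alpha\in R_{i+1}\subseteq S$ integral over $R_i$ must already lie in $R_i$. Hence $R_i\subset R_{i+1}$ is minimal and integrally closed, which rules out the three integral types (ramified, decomposed, inert) from the Ferrand--Olivier trichotomy, so it must be Pr\"ufer minimal. Conversely, a composite of Pr\"ufer minimal extensions is a composite of Pr\"ufer extensions, and the class of Pr\"ufer extensions is stable under composition (a standard feature of the normal-pair description in \cite{KZ}).

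For part (2), the forward direction combines part (1), which provides Pr\"ufer, with the fact that FCP forces $\mathrm{Supp}(S/R)$ to be finite, read off from the finitely many crucial maximal ideals of any maximal chain of minimal subextensions. For the converse, assuming $R\subseteq S$ is Pr\"ufer with $\mathrm{Supp}(S/R)$ finite, I would use Proposition~\ref{0.2}(1) to argue locally: off the finite set $\mathrm{MSupp}(S/R)$ one has $R_M=S_M$, while for each $M\in\mathrm{MSupp}(S/R)$ Proposition~\ref{0.3}(3) gives $S_M=(R_M)_P$ with $P=PS_M$ and $R_M/P$ a valuation domain whose quotient field is $S_M/P$. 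Every $T_M\in[R_M,S_M]$ contains $P$ and corresponds bijectively to an overring of $R_M/P$ inside its quotient field, hence to a prime of $R_M/P$; the local finiteness of $\mathrm{Supp}(S/R)$ above $P$ makes $[R_M,S_M]$ finite.

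The main obstacle will be the gluing step in (2)$\Leftarrow$: combining finite local data $|[R_M,S_M]|$ at the finitely many $M\in\mathrm{MSupp}(S/R)$ into FCP for $R\subseteq S$. Here one needs that a subextension $T$ of a Pr\"ufer extension is recovered from its localizations as $T=\bigcap_M T_M$, a local--global principle supplied by the flat epimorphism properties in Scholium A. This ensures that any strictly increasing chain in $[R,S]$ forces at least one local component $T_M$ to strictly increase at each step, bounding the global length by the sum of the finite local lengths and thereby yielding FCP.
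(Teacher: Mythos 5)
Your proposal is correct, but it takes a genuinely different route from the paper: the paper's proof of both parts is essentially a citation, invoking \cite[Lemma 3.10]{DPP2} for the decomposition into Pr\"ufer minimal extensions and \cite[Theorem 6.3]{DPP2} for ``integrally closed FCP $\Leftrightarrow$ Pr\"ufer with finite support,'' whereas you reconstruct most of the arguments. Your derivation of the chain decomposition (normal-pair property forces each minimal step $R_i\subset R_{i+1}$ to be integrally closed, and the Ferrand--Olivier dichotomy then rules out the integral types) is a clean substitute for the citation of \cite[Lemma 3.10]{DPP2}, and your proof of (2)$\Leftarrow$ via Proposition~\ref{0.2}, the local structure theorem of Proposition~\ref{0.3}(3) (overrings of a valuation domain are its localizations, so $[R_M,S_M]$ is a finite chain indexed by the finitely many primes of $\mathrm{Supp}$ below $M$), and the local--global bound $T=\bigcap_M T_M$ is a genuine proof of what the paper only quotes. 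What you do \emph{not} make independent is the crux ``integrally closed $+$ FCP $\Rightarrow$ Pr\"ufer'': you import it through the \cite{DPP2} decomposition whose second factor is asserted to be Pr\"ufer, which is exactly \cite[Theorem 6.3]{DPP2} again, i.e.\ the same external input the paper uses; your own implications only close the loop starting from the Pr\"ufer (normal pair) hypothesis, not from integral closedness alone. One small caveat in (2)$\Rightarrow$: the crucial ideal of a non-integral step $R_i\subset R_{i+1}$ is maximal in $R_i$ but need not contract to a maximal ideal of $R$, so finiteness of $\mathrm{Supp}_R(S/R)$ (as opposed to $\mathrm{MSupp}$) is not literally ``read off'' from the finitely many crucial ideals; however, the local valuation-theoretic analysis you set up for the converse supplies exactly the missing finiteness of $\mathrm{Supp}_{R_M}(S_M/R_M)$, so the gap is only one of presentation.
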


\begin{proof} (1) Assume that $R\subset S$ has FCP. If $R\subset S$ is integrally closed, then, $R\subset S$ is composed of Pr\"ufer minimal extensions by \cite[Lemma 3.10]{DPP2}. Conversely, if $R\subset S$ is composed of Pr\"ufer minimal extensions,  $R\subset S$ is integrally closed, since so is each Pr\"ufer minimal extension. 
A Pr\"ufer extension is obviously integrally closed, and an FCP integrally closed extension is Pr\"ufer by \cite[Theorem 6.3]{DPP2}.

(2) The logical  equivalence is \cite[Theorem 6.3]{DPP2}. 
\end{proof}

\begin{definition}\label{0.6} \cite{KZ} A ring extension $R\subseteq S$ has:
\begin{enumerate}
\item  a greatest flat epimorphic subextension  $R \subseteq \widehat R^S$, called the {\bf Morita hull} of $R$ in  $S$. 
\item  a  greatest Pr\"ufer subextension $R \subseteq \widetilde R^S$, called the {\bf Pr\"ufer hull}  of $R$ in $S$. 

\end{enumerate}
 We set  $\widehat{R} := \widehat R^S$ and $\widetilde{R} := \widetilde R^S$, if no confusion can occur.
$R\subseteq S$ is called Pr\"ufer-closed if $R= \widetilde R$.
 \end{definition}

 Note that $\widetilde R^S$ is denoted by $\mathrm P(R,S)$ in \cite{KZ} and $\widehat R^S$ is the weakly surjective hull $\mathrm{M}(R,S)$ of \cite{KZ}. Our terminology is justified because Morita's work is earlier \cite[Corollary 3.4]{M}.  The Morita hull can be computed by using a (transfinite) induction \cite{M}. Let $S'$ be the set of all  $s\in S$  such that there is some ideal $I$ of $R$, such that $IS= S$ and $Is \subseteq R$. 
Then $R\subseteq S'$ is a subextension of $R\subseteq S$. We set $S_1:= S'$ and  $S_{i+1}:= (S_i)' \subseteq S_i$. By \cite[p.36]{M}, if $R \subset S$ is an FCP extension, then $\widehat R = S_n$ for some integer $n$.

 At this stage it is interesting to point out a result;
  showing again that integral closedness and Pr\"ufer extensions  are closely related. 
 
\begin{proposition}\label{0.7}  Olivier \cite[Corollary, p.56]{O} An extension $R\subseteq S$ is integrally closed if and only if there is a pullback square:
\car R,S,V,K  

\noindent  where $V$ is a semi-hereditary ring and $K$ its total quotient ring. 
\end{proposition}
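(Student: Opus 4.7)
The plan has two directions, with the easy direction being a formal pullback argument and the hard direction being Olivier's construction.

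For the backward direction, I would first reduce integral closedness of $R\subseteq S$ to that of $V\subseteq K$. A semi-hereditary ring $V$ has the property that $V\subseteq K=\mathrm{Tot}(V)$ is a Pr\"ufer extension: every finitely generated regular ideal of $V$ is projective, hence invertible, which is the criterion in Scholium B(4). Therefore $V\subseteq K$ is integrally closed by Scholium B (Pr\"ufer $\Rightarrow$ normal pair $\Rightarrow$ integrally closed). Then I would verify the standard fact that integral closedness descends through pullbacks: given $s\in S$ satisfying $s^n+r_{n-1}s^{n-1}+\cdots+r_0=0$ with $r_i\in R$, applying the map $S\to K$ yields the corresponding integral equation for $\psi(s)$ over the image of $V$ in $K$, so $\psi(s)\in V$; the universal property of the pullback then supplies an element of $R=V\times_K S$ mapping to $s$, showing $s\in R$.

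For the forward direction (Olivier's construction), I would build, given an integrally closed $R\subseteq S$, a von Neumann regular ring $K$ equipped with an injection $\psi:S\hookrightarrow K$, and set $V$ to be the integral closure of $\psi(R)$ in $K$. A natural choice for $K$ is an absolutely flat epimorphic envelope of $S$ (which exists because the absolutely flat hull functor is available for any reduced ring, and $S$ is reduced because $R\subseteq S$ integrally closed forces $S$ to have the same nilradical structure as required). Since $K$ is von Neumann regular, one checks that $K=\mathrm{Tot}(V)$; since $V\subseteq K$ is an integrally closed extension with $K$ absolutely flat, $V$ is semi-hereditary (by the standard characterization via Endo's theorem). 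The pullback identity $R=V\times_K S$ then follows from the hypothesis: for $(v,s)\in V\times_K S$, the integral equation of $v$ over $\psi(R)$ in $K$ pulls back through the injective map $\psi$ to an integral dependence relation for $s$ over $R$ in $S$; integral closedness gives $s\in R$, and injectivity of $V\hookrightarrow K$ identifies $v$ with $\psi(s)$.

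The main obstacle is the forward direction: constructing $K$ with the right universal property and verifying that the integral closure $V$ obtained inside it is indeed semi-hereditary with total quotient ring exactly $K$. This requires the machinery of absolutely flat epimorphic envelopes and the interplay between integral closure and Pr\"ufer extensions in the absolutely flat setting. In practice, since the statement is attributed to Olivier, one would invoke \cite[Corollary, p.56]{O} directly rather than rebuild the argument, and the write-up here is really a conceptual reconciliation with the language of Scholium B.
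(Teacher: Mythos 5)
The paper does not prove this statement at all: it is quoted verbatim from Olivier \cite[Corollary, p.56]{O}, and the only argument the paper adds is the remark following the proposition (that $V\subseteq K$ is then Pr\"ufer). So the honest comparison is between your reconstruction and Olivier's paper, not the present one. Your backward direction is correct and complete in essence: $V$ semi-hereditary gives $V\subseteq\mathrm{Tot}(V)=K$ Pr\"ufer, hence integrally closed, and an integral equation for $s\in S$ over $R$ pushes forward to one for the image of $s$ over $V$ in $K$, after which the fiber-product property pulls $s$ back into $R$. That half matches what anyone, including Olivier, would write.

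The forward direction, however, contains a genuine error and several unjustified steps. First, the claim that ``$S$ is reduced because $R\subseteq S$ integrally closed forces $S$ to have the same nilradical structure'' is false: integral closedness only forces $\mathrm{Nil}(S)\subseteq R$ (nilpotents satisfy monic equations), and $R=S=k[X]/(X^2)$ is a trivially integrally closed extension with $S$ non-reduced. Consequently the map $S\to K$ into a universal absolutely flat ring need not be injective, and your pullback verification, which explicitly ``pulls back through the injective map $\psi$,'' breaks down; the repair (an element of $V\times_K S$ satisfies $p(s)\in\ker(S\to K)=\mathrm{Nil}(S)$, hence $p(s)^m=0$ and $s$ is integral over $R$ via $p^m$) is different from what you wrote. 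Second, the two structural claims that carry all the weight --- that $K$ is exactly $\mathrm{Tot}(V)$ for $V$ the integral closure of the image of $R$, and that an integrally closed subring of an absolutely flat ring is semi-hereditary (equivalently, Pr\"ufer with absolutely flat total quotient ring) --- are asserted via a vague appeal to ``Endo's theorem'' but are precisely the content of Olivier's paper and are not standard one-liners. Since you end by conceding that one would simply invoke \cite[Corollary, p.56]{O}, your proposal is acceptable as a citation (which is all the paper does), but the sketch you offer in between would not survive as a proof.
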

In that case  $V\subseteq K$ is a Pr\"ufer extension, since $V$ is a Pr\"ufer ring, whose localizations at prime ideals are valuation domains and $K$ is an absolutely flat ring. As there exist integrally closed extensions that are not Pr\"ufer, we see in passing that the pullback construction may not descend Pr\"ufer extensions. The above result has a companion  for minimal extensions that are Pr\"ufer \cite[Proposition 3.2]{GRA}.

\begin{proposition}\label{0.8} Let $R\subseteq S$ be an extension and $T \in [R,S]$, then $\widetilde R ^T = \widetilde R \cap T$. Therefore, for  $T, U\in [R,S]$ with $T\subseteq U$, then $\widetilde R^T \subseteq \widetilde R^U$.
\end{proposition}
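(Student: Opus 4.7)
The plan is to prove the two inclusions $\widetilde R^T \subseteq \widetilde R \cap T$ and $\widetilde R \cap T \subseteq \widetilde R^T$ separately, each reduced to the maximality property defining the Pr\"ufer hull (Definition \ref{0.6}(2)) together with the flat-epimorphism characterization of Pr\"ufer extensions (Scholium B(1)).

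First I would establish the preliminary observation that \emph{a subextension of a Pr\"ufer extension is Pr\"ufer}. Indeed, if $R \subseteq V$ is Pr\"ufer and $R \subseteq W \subseteq V$, then every $W' \in [R,W]$ lies in $[R,V]$, so $R \subseteq W'$ is a flat epimorphism; by Scholium B(1), $R \subseteq W$ is Pr\"ufer. This is the only non-bookkeeping step, and it is immediate from the flat-epi definition.

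For the inclusion $\widetilde R \cap T \subseteq \widetilde R^T$: since $R \subseteq \widetilde R$ is Pr\"ufer and $R \subseteq \widetilde R \cap T \subseteq \widetilde R$, the preliminary observation gives that $R \subseteq \widetilde R \cap T$ is Pr\"ufer. This is a Pr\"ufer subextension of $R \subseteq T$, hence contained in $\widetilde R^T$ by the defining maximality of the Pr\"ufer hull in $T$. For the reverse inclusion $\widetilde R^T \subseteq \widetilde R \cap T$: by construction $\widetilde R^T \subseteq T$; and since $\widetilde R^T$ is a Pr\"ufer subextension of $R$ sitting inside $S$ (via $T \subseteq S$), the maximality of $\widetilde R = \widetilde R^S$ yields $\widetilde R^T \subseteq \widetilde R$. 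Combining gives $\widetilde R^T \subseteq \widetilde R \cap T$, and hence equality.

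The monotonicity statement is then a one-line consequence: if $T \subseteq U$ in $[R,S]$, then
\[
\widetilde R^T \;=\; \widetilde R \cap T \;\subseteq\; \widetilde R \cap U \;=\; \widetilde R^U.
\]
I do not anticipate any genuine obstacle here; the only substantive input is the stability of the Pr\"ufer property under passage to subextensions, which the flat-epimorphism formulation of Pr\"ufer (as opposed to the normal-pair formulation) makes transparent.
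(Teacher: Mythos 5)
Your proof is correct and follows exactly the route the paper intends: the paper's own proof consists of the single remark that $\widetilde R^T$ is the greatest Pr\"ufer subextension of $R$ contained in $T$, and your two inclusions (plus the observation that a subextension of a Pr\"ufer extension is Pr\"ufer, immediate from Scholium B(1)) are precisely the details being left to the reader there.
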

\begin{proof} Obvious, since the Pr\"ufer hull  $\widetilde R^ T$ is the greatest Pr\"ufer extension $R \subseteq V$ contained in $T$.
\end{proof}

We will show later that in some cases $\widetilde T\subseteq \widetilde U$ if $R\subseteq S$ has FCP.

\section{Quasi-Pr\"ufer extensions}

We introduced the following definition  in \cite[p.10]{P1}. 

\begin{definition}\label{2.1} An extension of rings $R\subseteq S$ is called quasi-Pr\"ufer if one of the following equivalent statements holds:

\begin{enumerate}

\item   $\overline R \subseteq S$ is a Pr\"ufer extension;

\item $R\subseteq S$ can be factored $R\subseteq T \subseteq S$, where $R\subseteq T$ is integral and $T\subseteq S$ is Pr\"ufer. In that case $\overline R = T$

\end{enumerate}
\end{definition}
 To see that (2) $\Rightarrow$ (1) observe that if (2) holds, then $T\subseteq \overline R$ is integral and a flat injective epimorphism, so that $\overline R= T$ by  (L) (Scholium A(5)).

 We observe that quasi-Pr\"ufer extensions are akin to quasi-finite extensions if we refer to Zariski Main  Theorem. This will be explored in Section 6, see for example Theorem~\ref{6.0}.

Hence integral or Pr\"ufer extensions are quasi-Pr\"ufer.   An extension is  clearly Pr\"ufer if and only if it  is quasi-Pr\"ufer and integrally closed. Quasi-Pr\"ufer extensions allow us to avoid FCP hypotheses.

We give some other definitions involved  in  ring extensions $R\subseteq S$.
The{ \it fiber} at $P\in \mathrm{Spec}(R)$ of $R\subseteq S$ is $\mathrm{Fib}_{R,S}(P):= \{ Q\in \mathrm{Spec}(S) \mid Q\cap R = P\}$.  The subspace $\mathrm{Fib}_{R,S}(P)$ of $\mathrm{Spec}(S)$ is homeomorphic  to the spectrum of the fiber ring  $\mathrm{F}_{R,S}(P):=\kappa(P)\otimes_R S$  at $P$. The homeomorphism is given by the spectral map of  $S \to\kappa(P)\otimes_R S$ and $\kappa(P) \to\kappa(P) \otimes_RS$ is the {\it fiber morphism} at $P$. 
\begin{definition}\label{2.2} A ring extension $R\subseteq S$ is called:

\begin{enumerate}
\item  {\it incomparable} if for each pair $Q\subseteq Q'$ of prime ideals of $S$, then $Q\cap R =Q'\cap R\Rightarrow Q= Q'$, or equivalently, $\kappa(P)\otimes_RT$ is a zero-dimensional ring for each $T\in [R,S]$ and $P\in \mathrm{Spec}(R)$, such that $\kappa(P)\otimes_RT \neq 0$. 

\item  an {\it INC-pair} if $R\subseteq T$ is incomparable for each $T\in [R,S]$.   

 \item  {\it residually algebraic} if $R/(Q\cap R) \subseteq S/Q$ is algebraic for each $Q\in \mathrm{Spec}(S)$.
 
 \item a  {\it residually algebraic pair} if  the extension $R\subseteq T$ is residually algebraic for each $T\in [R,S]$.

\end{enumerate}
\end{definition}

The following characterization  was  announced in \cite{P1}. We were unaware that this result is also proved in \cite[Corollary 1]{BJ}, when we present it in ArXiv. However, our proof is largely shorter because we use the powerful results of \cite{KZ}.

\begin{theorem}\label{2.3} An extension $R\subseteq S$ is quasi-Pr\"ufer if and only if $R\subseteq S$ is an INC-pair and, if and only if, $R\subseteq S$ is a residually algebraic pair.
\end{theorem}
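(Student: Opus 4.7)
The plan is to prove the cycle of implications: quasi-Pr\"ufer $\Rightarrow$ INC-pair $\Rightarrow$ residually algebraic pair $\Rightarrow$ quasi-Pr\"ufer. For the first implication, I would exploit the factorization $R \subseteq \overline R \subseteq S$, with $R \subseteq \overline R$ integral and $\overline R \subseteq S$ Pr\"ufer. Integral extensions satisfy INC classically, and every subextension $\overline R \subseteq T'$ of the Pr\"ufer $\overline R \subseteq S$ is a flat epimorphism by Scholium~B(1), hence spectrally injective by Scholium~A(7), hence incomparable. Since composition preserves incomparability, $R \subseteq S$ is INC. For a general $T \in [R,S]$, the compositum $T' := T\overline R \in [\overline R, S]$ gives an integral extension $T \subseteq T'$, and $R \subseteq T'$ is again integral-then-Pr\"ufer, hence INC by the same argument. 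A routine going-up lifting of chains from $T$ into $T'$ then transfers INC down to $R \subseteq T$.

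For INC-pair $\Rightarrow$ residually algebraic pair I argue the contrapositive. Suppose $R \subseteq T$ is not residually algebraic for some $T \in [R,S]$; pick $Q \in \mathrm{Spec}(T)$ and $\bar x \in T/Q$ transcendental over $R/(Q \cap R)$, lift to $x \in T$, and set $U := R[x] \in [R,S]$, $Q' := Q \cap U$. The surjection $(R/(Q' \cap R))[X] \twoheadrightarrow U/Q'$ sending $X$ to the image of $x$ is also injective, because that image maps further to the transcendental $\bar x \in T/Q$. Hence $U/Q' \cong (R/(Q' \cap R))[X]$, a polynomial ring over a domain. The chain of primes $(0) \subsetneq (X)$ in this polynomial ring pulls back to a strict chain of primes of $U$ with the same contraction to $R$, contradicting incomparability of $R \subseteq U$.

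Finally, for residually algebraic pair $\Rightarrow$ quasi-Pr\"ufer, I first observe that the property transfers from $R \subseteq S$ to $\overline R \subseteq S$, so one may assume $R = \overline R^S$ and prove that $R \subseteq S$ is itself Pr\"ufer. Proposition~\ref{0.2}(1) makes Pr\"uferness local on $R$; since both residual algebraicity and the pair condition localize (using that every element of $[R_P, S_P]$ has the form $(T \cap S)_P$ for some $T \in [R,S]$), one may take $R$ local with maximal ideal $M$. Proposition~\ref{0.3}(1)--(2) then reduces Pr\"uferness to the Manis criterion: for every $s \in S \setminus R$, one must have $s \in \mathrm{U}(S)$ and $s^{-1} \in R$. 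Given such $s$ (necessarily non-integral over $R$), residual algebraicity applied to a prime of $R[s]$ lying over $M$ (with the degenerate case $M R[s] = R[s]$ handled separately) yields a polynomial relation satisfied by $s$ whose coefficient in some non-leading position is a unit of $R$; isolating this unit term, while using integral closedness to rule out a monic relation, should produce the desired inverse. The main obstacle is exactly this final Manis extraction, a ring-theoretic analogue of Davis's classical result for residually algebraic integrally closed overrings of local domains, which I expect to carry out via the characterizations collected in Scholia~A and~B.
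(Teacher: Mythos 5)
Your first implication (quasi-Pr\"ufer $\Rightarrow$ INC-pair) is exactly the paper's argument: form $T\overline R$, use that $\overline R\subseteq T\overline R$ is a flat epimorphism hence spectrally injective, compose with the integral (hence INC) extension $R\subseteq\overline R$, and descend to $T$ by going-up. Your second implication (INC-pair $\Rightarrow$ residually algebraic pair) is a correct, self-contained transcendence argument --- the identification $U/Q'\cong (R/(Q'\cap R))[X]$ and the chain $(0)\subsetneq(X)$ do contradict incomparability of $R\subseteq R[x]$ --- whereas the paper simply cites \cite[Proposition 2.1]{D} or \cite[Theorem 6.5.6]{FHP} for this equivalence; that part is a genuine (and slightly more elementary) alternative.

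The problem is the closing implication, residually algebraic pair $\Rightarrow$ quasi-Pr\"ufer, which is where the entire mathematical content of the theorem sits, and which you do not actually prove. Your reductions (pass to $\overline R\subseteq S$, localize, invoke Proposition~\ref{0.3}) are legitimate, but they only bring you to the statement: \emph{if $(R,M)$ is local, integrally closed in $S$, and $(R,S)$ is a residually algebraic (equivalently INC) pair, then for every $s\in S\setminus R$ one has $s\in\mathrm{U}(S)$ and $s^{-1}\in R$.} That is precisely the hard step, and ``residual algebraicity \dots\ yields a polynomial relation \dots\ isolating this unit term \dots\ should produce the desired inverse'' is a declaration of intent, not an argument. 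Two concrete difficulties are left unresolved. First, the Gilmer--Hoffmann-style extraction of $s^{-1}\in R$ from a relation $\sum a_is^i=0$ with some $a_j\in\mathrm{U}(R)$ genuinely uses integral closedness in a nontrivial way and takes real work even for domains. Second, and specific to this paper's generality, you must also show $s\in\mathrm{U}(S)$, i.e.\ that $s$ is invertible \emph{in $S$}; for an overring of a domain inside its quotient field this is free, but for arbitrary ring extensions with zero-divisors it is not, and a relation coming from a single prime over $M$ does not obviously give it. The paper sidesteps all of this by quoting the Knebusch--Zhang characterization \cite[Theorem 5.2(9'), p.48]{KZ}: an integrally closed INC-pair is Pr\"ufer. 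Either invoke that result (or \cite[Theorem 2.5]{AJ} suitably generalized) explicitly, or supply the Manis extraction in full; as written, the cycle of implications does not close.
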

\begin{proof} Suppose that $R\subseteq S$ is quasi-Pr\"ufer and let $T\in [R,S]$. We set $U:= \overline RT$. Then $\overline R\subseteq U$ is a flat epimorphism by definition of a Pr\"ufer extension and hence is incomparable as is $R\subseteq \overline  R$ . It follows that $R\subseteq U$ is incomparable. Since $T\subseteq U$ is integral, it has going-up. It follows that $R\subseteq T$ is incomparable. Conversely, if $R\subseteq S$ is an INC-pair, then so is $\overline R \subseteq S$. Since $\overline R \subseteq S$ is integrally closed, $\overline R \subseteq S$ is  Pr\"ufer \cite[Theorem 5.2,(9'), p.48]{KZ}. The second equivalence is \cite[Proposition 2.1]{D} or \cite[Theorem 6.5.6]{FHP}.
\end{proof}

 \begin{corollary} An extension $R\subseteq S$ is quasi-Pr\"ufer if and only if $\overline R \subseteq \overline T$ is Pr\"ufer for each $T \in [R,S]$.
 
 \end{corollary}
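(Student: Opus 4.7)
The plan is to leverage the normal pair characterization of Pr\"ufer extensions (Scholium B(3)): once one knows $\overline{R}\subseteq S$ is Pr\"ufer, the desired statement for every $T\in[R,S]$ reduces to the elementary observation that any subextension of a Pr\"ufer extension is again Pr\"ufer. The backward implication is then almost immediate by specializing to $T=S$.

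For the forward direction, assume $R\subseteq S$ is quasi-Pr\"ufer, so that $\overline{R}\subseteq S$ is Pr\"ufer by Definition~\ref{2.1}(1). Fix $T\in[R,S]$ and let $\overline{T}$ denote its integral closure in $S$; evidently $\overline{R}\subseteq\overline{T}\subseteq S$. Now any $C\in[\overline{R},\overline{T}]$ also lies in $[\overline{R},S]$, so $C$ is integrally closed in $S$ by the normal pair property, and \emph{a fortiori} integrally closed in $\overline{T}$. Hence $\overline{R}\subseteq\overline{T}$ is itself a normal pair, that is, Pr\"ufer.

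For the converse direction, apply the hypothesis to $T=S$: then $\overline{T}=S$, so $\overline{R}\subseteq S$ is Pr\"ufer, which is exactly Definition~\ref{2.1}(1). Therefore $R\subseteq S$ is quasi-Pr\"ufer.

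No serious obstacle is expected. The only mildly delicate point is the subextension stability invoked in the forward direction; I would be careful to use the normal pair formulation of Pr\"ufer rather than the flat epimorphism definition, since by Scholium A(4) flat epimorphisms do not in general descend to subextensions, whereas the normal pair property trivially does.
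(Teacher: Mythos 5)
Your proof is correct; the paper states this corollary without proof, and your argument is exactly the intended one: the forward direction is the standard fact that a Pr\"ufer (= normal pair) extension passes to subextensions, applied to $\overline R\subseteq\overline T\subseteq S$ inside the Pr\"ufer extension $\overline R\subseteq S$, and the converse is the specialization $T=S$ combined with Definition~\ref{2.1}(1). Your caution about using the normal-pair formulation rather than the flat-epimorphism one for subextension stability is well placed and consistent with the warning in Scholium A(4).
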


It follows that most of the properties described in \cite{AJ} for integrally closed INC-pairs of domains are valid for arbitrary ring extensions. Moreover, a result of Dobbs is easily gotten: an INC-pair $R\subseteq S$ is an integral extension if and only if $\overline R \subseteq S$ is spectrally surjective \cite[Theorem 2.2]{D}. This follows from Scholium A, Property (L).

\begin{example}\label{2.5}

Quasi-Pr\"ufer domains $R$ with quotient fields $K$ can be characterized by $R\subseteq K$ is quasi-Pr\"ufer.  The reader may  consult \cite[Theorem 1.1]{CF} or \cite{FHP}. In view of \cite[Theorem 2.7]{ADF}, $R$ is a quasi-Pr\"ufer domain if and only if $\mathrm{Spec}(R(X)) \to \mathrm{Spec}(R)$ is  
bijective. 

 We give here another  example of quasi-Pr\"ufer extension. An  extension $R\subset S$  is called a {\it  going-down pair} if  each  of its subextensions has the going-down property.  For such a pair,  $R\subseteq T$ has incomparability for  each $T\in [R,S]$, at each  non-maximal prime ideal of $R$ \cite[Lemma 5.8]{ABEJ}(ii).  Now let $M$ be a maximal ideal of $R$, whose fiber is not void in $T$. Then $R\subseteq T$ is a going-down pair, and so is $R/M \subseteq T/MT$ because $MT\cap R= M$. By \cite[Corollary 5.6]{ABEJ}, the dimension of $T/MT$ is $\leq 1$. Therefore, if $R\subset S$  is a going-down pair, then $R\subset S$ is quasi-Pr\"ufer if and only if  $\mathrm{dim}(T/MT)\neq 1$ for each $T\in [R,S]$ and $M\in \mathrm{Max}(R)$.
 
 Also {\it open-ring pairs} $R\subset S$  are quasi-Pr\"ufer  by \cite[Proposition 2.13]{C}.
 
An  $i$-{\it pair} is an extension $R\subseteq S$ such that $\mathrm{Spec}(T) \to \mathrm{Spec}(R)$ is injective for each $T\in [R,S]$, or equivalently if and only if $R \subseteq S$ is quasi-Pr\"ufer and $R \subseteq \overline R$ is spectrally injective \cite[Proposition 5.8]{P1}. These extensions appear frequently in the integral domains context.  Another examples are  given by some  extensions $R\subseteq S$, such that   $\mathrm{Spec}(S)=\mathrm{Spec}(R)$ as sets, as we will see later.
\end{example}

\section{Properties of   quasi-Pr\"ufer extensions}

We now develop the machinery of quasi-Pr\"ufer extensions.

\begin{proposition}\label{3.1} An extension $R\subset S$  is  (quasi-)Pr\"ufer if and only if $R_P\subseteq S_P$ is  (quasi-)Pr\"ufer  for any  $P\in \mathrm{Spec}(R)$ ($P\in\mathrm{MSupp}(S/R)$).
\end{proposition}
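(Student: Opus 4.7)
The plan is to deduce Proposition \ref{3.1} from Proposition \ref{0.2}(1), which already handles the Pr\"ufer equivalence over $P\in\mathrm{Spec}(R)$. Two upgrades remain: weakening $\mathrm{Spec}(R)$ to $\mathrm{MSupp}(S/R)$ in the Pr\"ufer case, and bootstrapping from Pr\"ufer to quasi-Pr\"ufer.

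For the Pr\"ufer $\mathrm{MSupp}$ version, the forward direction is stability under localization. For the converse, assuming $R_M\subseteq S_M$ is Pr\"ufer for every $M\in\mathrm{MSupp}(S/R)$, I would verify $R_Q\subseteq S_Q$ Pr\"ufer for every $Q\in\mathrm{Spec}(R)$ and then invoke Proposition \ref{0.2}(1). If $Q\notin\mathrm{Supp}(S/R)$, then $R_Q=S_Q$. Otherwise pick any $M\in\mathrm{Max}(R)$ containing $Q$: since $(S/R)_Q$ is a further localization of $(S/R)_M$, one has $(S/R)_M\neq 0$, so $M\in\mathrm{MSupp}(S/R)$ and $R_M\subseteq S_M$ is Pr\"ufer by hypothesis; a further localization at $QR_M$ then gives $R_Q\subseteq S_Q$ Pr\"ufer.

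For the quasi-Pr\"ufer case, I combine Definition \ref{2.1}(1) with the commutation of integral closure and localization, $(\overline R^S)_P=\overline{R_P}^{\,S_P}$, to obtain the chain
\[
R\subseteq S\ \text{quasi-Pr\"ufer}\Leftrightarrow\overline R^S\subseteq S\ \text{Pr\"ufer}\Leftrightarrow(\overline R^S)_P\subseteq S_P\ \text{Pr\"ufer}\Leftrightarrow R_P\subseteq S_P\ \text{quasi-Pr\"ufer},
\]
where the middle equivalence uses the Pr\"ufer case just settled. For the $\mathrm{MSupp}(S/R)$ clause in the converse direction, one shows that any $N\in\mathrm{MSupp}_{\overline R^S}(S/\overline R^S)$ contracts to some $M\in\mathrm{MSupp}(S/R)$: lying-over in the integral extension $R\subseteq\overline R^S$ gives $M:=N\cap R\in\mathrm{Max}(R)$, and the surjection $(S/R)_M\twoheadrightarrow(S/\overline R^S)_M$ combined with $(S/\overline R^S)_M\neq 0$ forces $M\in\mathrm{MSupp}(S/R)$; the hypothesis then yields $(\overline R^S)_M\subseteq S_M$ Pr\"ufer, and a further localization produces the required Pr\"ufer property at $N$.

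The main obstacle is pure bookkeeping: distinguishing localizations of $\overline R^S$ at primes of $R$ from those at primes of $\overline R^S$ in the $\mathrm{MSupp}$ portion of the quasi-Pr\"ufer direction. Beyond this, only Proposition \ref{0.2}(1), Definition \ref{2.1}, and the commutation of integral closure with localization are required.
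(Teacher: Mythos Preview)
Your proof is correct, but it follows a different route from the paper. The paper disposes of the quasi-Pr\"ufer case in one line by invoking Theorem~\ref{2.3}: an extension is quasi-Pr\"ufer if and only if it is an INC-pair, and the INC-pair property is transparently local on $\mathrm{Spec}(R)$ since the fiber of $R\subseteq T$ at $P$ coincides with the fiber of $R_P\subseteq T_P$ at $PR_P$, while every element of $[R_P,S_P]$ is a localization of some element of $[R,S]$.

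You instead stay with Definition~\ref{2.1} and reduce to the Pr\"ufer case via the commutation $(\overline R^S)_P=\overline{R_P}^{\,S_P}$. This works, and you correctly flag the one genuine bookkeeping point: Proposition~\ref{0.2}(1) applied to $\overline R^S\subseteq S$ localizes at primes of $\overline R^S$, not of $R$, so the converse direction needs the extra step of contracting $N\in\mathrm{Spec}(\overline R^S)$ to $P=N\cap R$ and then further localizing. Your handling of the $\mathrm{MSupp}$ clause is also sound: the maximality of $M=N\cap R$ comes from integrality of $R\subseteq\overline R^S$, and the non-vanishing of $(S/R)_M$ follows from the surjection onto $(S/\overline R^S)_M$, which is non-zero because it further localizes to the non-zero $(S/\overline R^S)_N$.

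The trade-off: the paper's approach is shorter and conceptually cleaner (incomparability is a fiber condition, so locality is immediate), but it spends the capital of Theorem~\ref{2.3}. Your approach is more self-contained, relying only on Proposition~\ref{0.2} and the definition, at the cost of the two-tier localization argument.
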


\begin{proof} The proof is easy if we use the INC-pair property definition of
 quasi-Pr\"ufer extension (see also \cite[Proposition 2.4]{AJ}).
\end{proof}

\begin{proposition}\label{3.2} Let $R\subseteq S$ be a quasi-Pr\"ufer  extension and $\varphi : S \to S'$ an integral ring morphism. Then $\varphi(R) \subseteq S'$ is  quasi-Pr\"ufer  and $S' = \varphi(S)\overline{\varphi(\overline R)}$, where $\overline{\varphi(\overline R)}$ is the integral closure of $\varphi(\overline R)$ in $S'$. 
\end{proposition}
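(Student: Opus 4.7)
The plan is to split the proposition into its two assertions---the quasi-Pr\"ufer property of $\varphi(R)\subseteq S'$ and the multiplicative formula---and treat each with a different tool: Theorem~\ref{2.3} (residually algebraic pair characterization) for the first, and the normal-pair characterization of Pr\"ufer extensions from Scholium~B(3) for the second.

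For the quasi-Pr\"ufer assertion, I would verify directly that $\varphi(R)\subseteq S'$ is a residually algebraic pair. Take $U'\in[\varphi(R),S']$, $Q'\in\mathrm{Spec}(U')$, and $u\in U'$; the task is to show $u+Q'$ is algebraic over $\varphi(R)/(Q'\cap\varphi(R))$. Integrality of $\varphi$ supplies $s_0,\dots,s_{n-1}\in S$ with $u$ integral over $\varphi(A)$ for $A:=R[s_0,\dots,s_{n-1}]\in[R,S]$. Theorem~\ref{2.3} applied to $R\subseteq S$ gives $R\subseteq A$ residually algebraic, and the prime correspondence for the quotient $A\twoheadrightarrow\varphi(A)=A/(A\cap\ker\varphi)$ transfers this to $\varphi(R)\subseteq\varphi(A)$. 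Composing with the finite integral extension $\varphi(A)\subseteq\varphi(A)[u]$ yields residual algebraicity of $\varphi(R)\subseteq\varphi(A)[u]$. Reducing the integral relation for $u$ modulo a prime of $\varphi(A)[u]$ compatible with $Q'$ then expresses $u+Q'$ as algebraic over an algebraic extension of $\varphi(R)/(Q'\cap\varphi(R))$, hence algebraic over $\varphi(R)/(Q'\cap\varphi(R))$ itself. A second application of Theorem~\ref{2.3} delivers the quasi-Pr\"ufer conclusion.

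For the multiplicative formula, set $T:=\overline{\varphi(\overline R)}^{S'}$. Since $\varphi(R)\subseteq\varphi(\overline R)$ is integral (image of an integral extension), $T$ coincides with the integral closure of $\varphi(R)$ in $S'$, and Definition~\ref{2.1} together with the first part forces $T\subseteq S'$ to be Pr\"ufer. The subring $\varphi(S)\cdot T$ of $S'$ lies in $[T,S']$, hence is integrally closed in $S'$ by the normal-pair property (Scholium~B(3)). It also contains $\varphi(S)$, and $\varphi(S)\subseteq S'$ is integral, so $\varphi(S)\cdot T\subseteq S'$ is integral. An extension that is simultaneously integral and integrally closed must be an equality, yielding $S'=\varphi(S)\cdot\overline{\varphi(\overline R)}^{S'}$.

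The main obstacle is the residually algebraic verification. Because $U'$ need not lie inside $\varphi(S)$, one cannot simply pull it back to a subring of $S$ to transfer properties via the quotient, and the coefficients $\varphi(s_i)$ in the integrality relation for $u$ generally do not belong to $U'$. The device of passing to the auxiliary ring $\varphi(A)[u]$, which contains $u$ together with the relevant coefficient data, combined with transitivity of algebraicity at the level of residue fields, is what circumvents this technicality. Once the quasi-Pr\"ufer half is in place, the multiplicative formula is a clean integral-plus-integrally-closed consequence of the normal-pair property.
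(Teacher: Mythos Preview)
Your argument for the multiplicative formula $S'=\varphi(S)\cdot\overline{\varphi(\overline R)}$ is correct and self-contained once the quasi-Pr\"ufer part is granted: identifying $T=\overline{\varphi(\overline R)}$ with the integral closure of $\varphi(R)$ in $S'$, invoking Definition~\ref{2.1} to make $T\subseteq S'$ Pr\"ufer, and then using the normal-pair property to force the integral extension $\varphi(S)T\subseteq S'$ to be an equality is clean. The paper obtains both halves at once by applying \cite[Theorem~5.9]{KZ} to the Pr\"ufer extension $\overline R\subseteq S$: that theorem already packages the statement that when a Pr\"ufer extension is pushed along an integral morphism, the integral closure of the image is Pr\"ufer in the target and the target is the indicated product. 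So your Part~2 essentially redoes a piece of that theorem.

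The gap is in Part~1, at the phrase ``a prime of $\varphi(A)[u]$ compatible with $Q'$''. The prime $Q'$ lives in $U'$, which is in general incomparable with $B:=\varphi(A)[u]$; the only common ground is $C:=\varphi(R)[u]\subseteq U'\cap B$. For your reduction to say anything about $u+Q'\in U'/Q'$, you need a prime $N$ of $B$ with $N\cap C=Q'\cap C$, i.e.\ lying-over for $C\subseteq B$. But $B=C[\varphi(s_0),\dots,\varphi(s_{n-1})]$ and nothing forces the $\varphi(s_i)$ to be integral over $C$, so lying-over is not available. Knowing that $\varphi(R)\subseteq B$ is residually algebraic (which you do establish) does not by itself imply that the \emph{subextension} $\varphi(R)\subseteq C$ is residually algebraic; that implication is exactly the ``pair'' property, and getting it for the tower $\varphi(R)\subseteq\varphi(A)\subseteq B$ amounts to the composition stability of quasi-Pr\"ufer extensions (Corollary~\ref{3.3}), whose proof in the paper relies on the very Proposition~\ref{3.2} you are proving. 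So as written the argument is either incomplete or circular. The paper avoids this by working on the Pr\"ufer side with \cite[Theorem~5.9]{KZ}, which produces a factorisation $\varphi(R)\subseteq\overline{\varphi(\overline R)}\subseteq S'$ with the first map integral and the second Pr\"ufer, so Definition~\ref{2.1} applies immediately.
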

\begin{proof} It is enough to apply \cite[Theorem 5.9]{KZ} to the Pr\"ufer extension
$\overline R \subseteq S$ and to use Definition~\ref{2.1}.
\end{proof}

This result applies  with $S':= S\otimes_R R'$, where $R\to R'$ is an integral morphism. Therefore integrality ascends the quasi-Pr\"ufer property.

We know that a composite of Pr\"ufer extensions is  Pr\"ufer  \cite[Theorem 5.6, p.51]{KZ}. The following Corollary~\ref{3.3} contains \cite[Theorem 3]{BJ}.

\begin{corollary}\label{3.3} Let $R\subseteq T \subseteq S$ be a tower of extensions. Then $R\subseteq S$ is quasi-Pr\"ufer if and only if $R\subseteq T$ and $T\subseteq S$ are quasi-Pr\"ufer. It follows that $R\subseteq T$ is quasi-Pr\"ufer if and only if $R\subseteq \overline RT$ is quasi-Pr\"ufer.
\end{corollary}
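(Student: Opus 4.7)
The plan is to prove the biconditional using the INC-pair characterization of Theorem~\ref{2.3} for the forward direction and the integral--Pr\"ufer factorization together with Proposition~\ref{3.2} for the reverse, then derive the final assertion formally.

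For the forward direction, assume $R\subseteq S$ is an INC-pair. The subextension $R\subseteq T$ is immediately an INC-pair, since $[R,T]\subseteq[R,S]$. For $T\subseteq S$, given $U\in[T,S]\subseteq[R,S]$ and primes $Q\subseteq Q'$ of $U$ contracting to the same prime of $T$, they also contract to the same prime of $R$, and the INC property of $R\subseteq U$ forces $Q=Q'$. Invoking Theorem~\ref{2.3} twice packages both halves as quasi-Pr\"ufer.

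For the reverse, set $A:=\overline{R}^T$ and $B:=\overline{T}^S$, so by Definition~\ref{2.1} the extensions $A\subseteq T$ and $B\subseteq S$ are Pr\"ufer. Proposition~\ref{3.2} applied to the Pr\"ufer (hence quasi-Pr\"ufer) extension $A\subseteq T$ together with the integral inclusion $T\hookrightarrow B$ yields that $A\subseteq B$ is quasi-Pr\"ufer, so the integral closure of $A$ in $B$ is Pr\"ufer inside $B$. By transitivity of integrality this closure equals $\overline{R}^B$, and since $B$ is integrally closed in $S$ (being Pr\"ufer in $S$), every element of $S$ integral over $R$ already lies in $B$, giving $\overline{R}^B=\overline{R}^S$. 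Composing the Pr\"ufer extensions $\overline{R}^S\subseteq B$ and $B\subseteq S$ via \cite[Theorem 5.6, p.51]{KZ} produces $\overline{R}^S\subseteq S$ Pr\"ufer, i.e., $R\subseteq S$ is quasi-Pr\"ufer. The step I expect to be the main obstacle is precisely the identification of the integral closure of $A$ in $B$ with $\overline{R}^S$, as this is what allows the Pr\"ufer-composition theorem to close the argument; without the integral closedness of $B$ in $S$ one would only land inside $B$.

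The final claim follows cleanly: the extension $T\subseteq\overline{R}T$ is integral, since $\overline{R}$ is integral over $R\subseteq T$, hence quasi-Pr\"ufer. Applying the main biconditional to the tower $R\subseteq T\subseteq\overline{R}T$, if $R\subseteq T$ is quasi-Pr\"ufer then so is $R\subseteq\overline{R}T$; conversely, the forward direction (subextension part) applied to the same tower recovers $R\subseteq T$ quasi-Pr\"ufer from $R\subseteq\overline{R}T$ quasi-Pr\"ufer.
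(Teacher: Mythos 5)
Your proof is correct and follows essentially the same route as the paper: the subextension direction via the INC-pair characterization of Theorem~\ref{2.3}, and the composition direction via Proposition~\ref{3.2} applied to $\overline{R}^T\subseteq T\hookrightarrow\overline{T}^S$ followed by composing Pr\"ufer extensions. The only cosmetic difference is that you derive the final assertion about $R\subseteq\overline{R}T$ directly from the biconditional, whereas the paper cites an external reference; your careful identification $\overline{A}^B=\overline{R}^S$ is exactly the point the paper leaves implicit.
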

\begin{proof}   
Consider a tower ($\mathcal T$) of extensions $R \subseteq \overline R \subseteq S:= R' \subseteq \overline{R'} \subseteq S'$ (a composite of two quasi-Pr\"ufer extensions). By using Proposition~\ref{3.2} we see that $\overline R \subseteq S= R' \subseteq \overline{R'}$ is quasi-Pr\"ufer. Then ($\mathcal T$) is obtained by writing on the left an integral extension and on the right a Pr\"ufer extension. Therefore, ($\mathcal T$) is  quasi-Pr\"ufer. We prove the converse.

 If $R\subseteq T \subseteq  S$ is  a tower of extensions, then $R\subseteq T$ and $T\subseteq S$ are INC-pairs whenever $R\subseteq S$ is an INC-pair.  The converse is then a consequence of Theorem~\ref{2.3}.
 
 The last statement is \cite[Corollary 4]{BJ}.
\end{proof}

Using the above corollary, we can exhibit  new examples of quasi-Pr\"ufer extensions. We recall that a ring $R$ is called {\it Laskerian}  if each of its ideals is a finite intersection of primary ideals and a ring extension $R\subset S$  a {\it Laskerian pair} if each $T\in [R,S]$ is a Laskerian ring. Then \cite[Proposition 2.1]{V} shows that if $R$ is an integral domain  with quotient field  $F\neq R$ and $F\subset K$ is a field extension, then $R\subset K$ is a Laskerian pair if and only if $K$ is algebraic over $R$ and $\overline R$ (in $K$)  is a Laskerian Pr\"ufer domain. It follows easily that $R\subset K$ is quasi-Pr\"ufer.

Next result generalizes \cite[Proposition 1]{JM}.

\begin{corollary}~\label{3.4} An FMC extension $R\subset S$  is quasi-Pr\"ufer.
\end{corollary}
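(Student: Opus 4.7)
The plan is to exploit the structural decomposition guaranteed by the FMC hypothesis together with the composition stability established in Corollary~\ref{3.3}. By definition, an FMC extension $R \subset S$ admits a finite maximal chain
\[
R = R_0 \subset R_1 \subset \cdots \subset R_{n-1} \subset R_n = S,
\]
and since the chain is maximal, each step $R_i \subset R_{i+1}$ is a minimal extension. Thus I would reduce the problem to showing that every minimal extension is quasi-Pr\"ufer, and then iterate.

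For the reduction to the minimal case, I would invoke the dichotomy recalled in the introduction (from Ferrand--Olivier): a minimal extension is either module-finite (integral, of ramified, decomposed, or inert type) or a flat epimorphism. In the first case, $R_i \subset R_{i+1}$ is integral, hence trivially quasi-Pr\"ufer (take $T = R_{i+1}$ in Definition~\ref{2.1}(2)). In the second case, $R_i \subset R_{i+1}$ is a Pr\"ufer minimal extension, hence Pr\"ufer, hence quasi-Pr\"ufer (take $T = R_i$). Either way, each one-step piece is quasi-Pr\"ufer.

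Finally, I would assemble these pieces by a straightforward induction on $n$, using Corollary~\ref{3.3}: if $R \subseteq R_{n-1}$ is quasi-Pr\"ufer (inductive hypothesis) and $R_{n-1} \subseteq R_n = S$ is quasi-Pr\"ufer (by the previous paragraph), then the composite $R \subseteq S$ is quasi-Pr\"ufer. There is essentially no obstacle here; the only thing to be careful about is citing the correct characterization of minimal extensions so that the dichotomy ``integral or Pr\"ufer minimal'' is unambiguous, but this is already stated in the paper's preliminaries. The whole argument is two or three lines once Corollary~\ref{3.3} is in hand.
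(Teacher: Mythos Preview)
Your proof is correct and essentially identical to the paper's own argument: the paper also observes that an FMC extension is a composite of finitely many minimal extensions, each of which is either integral or Pr\"ufer (hence quasi-Pr\"ufer), and then invokes Corollary~\ref{3.3} to conclude. Your version merely makes the induction explicit where the paper leaves it implicit.
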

\begin{proof} Because $R\subset S$ is a composite of finitely many minimal extensions, by Corollary~\ref{3.3}, it is enough to observe that a minimal extension   is either  Pr\"ufer or integral.
\end{proof}

\begin{corollary}\label{3.5} Let $R\subseteq S$ be a quasi-Pr\"ufer extension and a tower $R\subseteq T \subseteq S$, where $R\subseteq T$ is integrally closed. Then $R\subseteq T$ is Pr\"ufer.
\end{corollary}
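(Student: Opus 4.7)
The plan is to reduce this to two facts already established in the excerpt: the stability of the quasi-Pr\"ufer class under forming subextensions (Corollary~\ref{3.3}), and the observation immediately following Definition~\ref{2.1} that an extension is Pr\"ufer if and only if it is quasi-Pr\"ufer and integrally closed.

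First I would apply Corollary~\ref{3.3} to the tower $R \subseteq T \subseteq S$. Since $R \subseteq S$ is quasi-Pr\"ufer, that corollary gives that both $R\subseteq T$ and $T\subseteq S$ are quasi-Pr\"ufer; only the first half is needed here.

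Next, by Definition~\ref{2.1}(1), $R\subseteq T$ quasi-Pr\"ufer means that $\overline{R}^T \subseteq T$ is Pr\"ufer. But by hypothesis $R\subseteq T$ is integrally closed, i.e.\ $\overline{R}^T = R$, so $R\subseteq T$ is itself Pr\"ufer, as required.

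There is no real obstacle: the result is essentially a bookkeeping consequence of Corollary~\ref{3.3} combined with the trivial characterization ``Pr\"ufer = quasi-Pr\"ufer + integrally closed.'' One could alternatively argue via the INC-pair characterization of Theorem~\ref{2.3} (the INC property passes to $R\subseteq T$, and combined with integral closedness gives the Pr\"ufer property by \cite[Theorem 5.2(9'), p.~48]{KZ}), but the route through Corollary~\ref{3.3} is cleaner.
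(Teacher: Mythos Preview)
Your proof is correct and matches the paper's own argument essentially verbatim: the paper's proof simply says ``Observe that $R\subseteq T$ is quasi-Pr\"ufer and then that $R= \overline R^T$,'' which is exactly your application of Corollary~\ref{3.3} followed by the observation that integral closedness gives $\overline{R}^T=R$.
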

\begin{proof} Observe that $R\subseteq T$ is quasi-Pr\"ufer and then that $R= \overline R^T$.
\end{proof}

Next result deals with the Dobbs-Shapiro {\it  pseudo-Pr\"ufer} extensions of integral domains  \cite{DS}, that they  called pseudo-normal pairs.  Suppose that $R$ is local, we call here pseudo-Pr\"ufer an extension $R\subseteq  S$ such that there exists $T\in[R,S]$ with $\mathrm{Spec}(R) =\mathrm{Spec}(T)$ and $T \subseteq S$ is Pr\"ufer \cite[Corollary 2.5]{DS}. If $R$ is arbitrary, the extension $R\subseteq S$ is called pseudo-Pr\"ufer if $R_M \subseteq S_M$ is pseudo-Pr\"ufer for each $M\in \mathrm{Max}(R)$. In view of the Corollary~\ref{3.3}, it is enough to characterize quasi-Pr\"ufer extensions of the type $R\subseteq T$ with  $\mathrm{Spec}(R) =\mathrm{Spec}(T)$.

\begin{corollary}\label{3.6} Let $R\subseteq T$ be an extension with  $\mathrm{Spec}(R) =\mathrm{Spec}(T)$ and $(R,M)$  local. Then $R\subseteq T$ is quasi-Pr\"ufer if and only if  $\mathrm{Spec}(R) =\mathrm{Spec}(U)$ for each $U\in [R,T]$ and, if and only if $R/M \subseteq T/M$ is an algebraic field extension. In such a case, $R\subseteq T$ is Pr\"ufer-closed.
\end{corollary}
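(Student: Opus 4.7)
My plan is to run a cyclic argument (i) $\Rightarrow$ (ii) $\Rightarrow$ (iii) $\Rightarrow$ (i), leaning on one preliminary remark that powers the entire proof. The remark is that $\mathrm{Spec}(R) = \mathrm{Spec}(T)$ as sets forces every prime of $T$ to be already contained in $R$; in particular $M$ is the unique maximal ideal of $T$, $M$ is an ideal of every intermediate $U$, and $M$ itself is the kernel of $T \to T/M$ (so $MT = M$). Since the containment relation on $\mathrm{Spec}(T)$ is just the one on $\mathrm{Spec}(R)$, whenever $\mathrm{Spec}(U) = \mathrm{Spec}(R)$ for some $U \in [R,T]$, $M$ is also maximal in $U$ and $U/M$ is a field.

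For (i) $\Rightarrow$ (ii), I would use Theorem~\ref{2.3} to replace ``quasi-Pr\"ufer'' by ``INC-pair''. Given $U \in [R,T]$ and $Q \in \mathrm{Spec}(U)$, set $P := Q \cap R \in \mathrm{Spec}(T)$. Then $P \subseteq R \subseteq U$ and $P$ is prime in $U$ (as $U/P$ embeds in $T/P$). Now $P \subseteq Q$ in $U$ both contract to $P$ in $R$, and incomparability gives $P = Q$; hence $\mathrm{Spec}(U) \subseteq \mathrm{Spec}(R)$, the reverse containment being automatic.

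For (ii) $\Rightarrow$ (iii), I would test (ii) on $U := R[t]$ for arbitrary $t \in T$. The preliminary remark yields $M$ maximal in $U$, so $U/M$ is a field. But $U/M$ is the image of $R[t]$ in $T/M$, namely $(R/M)[\bar t]$, which is a field only if $\bar t$ is algebraic over $R/M$. Since $t$ was arbitrary, $T/M$ is algebraic over $R/M$.

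The meatiest implication is (iii) $\Rightarrow$ (i), and I expect to get the stronger statement that $R \subseteq T$ is actually integral. Given $t \in T$, I lift a monic annihilator $\bar p \in (R/M)[X]$ of $\bar t$ to a monic $p \in R[X]$; then $p(t) \in M \subseteq R$ (this is where $MT = M$ is essential), so $p(X) - p(t) \in R[X]$ is monic with root $t$, making $t$ integral over $R$. Integrality certainly implies quasi-Pr\"ufer. For the Pr\"ufer-closed addendum, the Pr\"ufer hull $\widetilde R^T$ is both integral over $R$ and a flat epimorphism of $R$, hence trivial by (L) of Scholium~A, so $R = \widetilde R^T$. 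The main obstacle is spotting precisely this ``lift and absorb $p(t)$ into the constant term'' trick in (iii) $\Rightarrow$ (i), which converts residual algebraicity into outright integrality; once that is in hand, everything else is spectral bookkeeping.
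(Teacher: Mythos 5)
Your proof is correct, and it is more self-contained than the one in the paper. The paper disposes of the two substantive implications by citation: it invokes \cite{ADo} for the fact that $M\in\mathrm{Max}(T)$ and for \cite[Corollary 3.26]{ADo} (which is essentially your equivalence with the residual algebraicity condition), and it only argues directly that $\mathrm{Spec}(R)=\mathrm{Spec}(U)$ forces $R\subseteq U$ to be an INC extension, which together with Theorem~\ref{2.3} gives the direction from the second condition back to quasi-Pr\"ufer. Your cyclic scheme replaces those citations with proofs: the preliminary remark recovers $M\in\mathrm{Max}(T)$, your first implication is the INC-pair argument run in the forward direction, and your third implication --- lifting a monic annihilator of $\bar t$ and absorbing $p(t)\in M\subseteq R$ into the constant term --- is precisely the Anderson--Dobbs mechanism, reproved from scratch. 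This buys you the explicitly stronger conclusion that the equivalent conditions force $R\subseteq T$ to be \emph{integral}, which the paper never states. For the Pr\"ufer-closed addendum the two arguments coincide in substance: the paper notes $R\subseteq\widetilde R$ is a spectrally surjective flat epimorphism, you note it is an integral flat epimorphism; either way Scholium A(5) kills it. The only step you wave at slightly is the ``reverse containment being automatic'' in the first implication --- it does require observing that each $P\in\mathrm{Spec}(R)=\mathrm{Spec}(T)$ is an ideal of $U$ with $U/P$ embedding in the domain $T/P$ --- but that is already contained in your preliminary remark and is harmless.
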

\begin{proof} It follows from \cite{ADo} that $M\in \mathrm{Max}(T)$. Part of the proof is gotten by observing that $R\subseteq U$ is an INC extension  if   $\mathrm{Spec}(R) =\mathrm{Spec}(U)$. Another one is proved in \cite[Corollary 3.26]{ADo}. Now  $R\subseteq \widetilde R$ is a  spectrally surjective flat epimorphism  and then, by Scholium A, $R= \widetilde R$.
\end{proof}

Let $R\subseteq S$ be an extension and  $I$  an ideal shared with $R$ and $S$. It is easy to show that $R\subseteq S$ is quasi-Pr\"ufer if and only if $R/I\subseteq S/I$ is quasi-Pr\"ufer by using \cite[Proposition 5.8]{KZ} in the Pr\"ufer case. We are able to give a more general statement.

\begin{lemma}\label{3.7} Let $ R\subseteq S$ be a (quasi-)Pr\"ufer extension and $J$ an ideal of $S$ with $I= J\cap R$. Then $R/I \subseteq S/J$ is a (quasi-)Pr\"ufer extension. If $R\subseteq S$ is Pr\"ufer and  $N$  is  a maximal ideal of $S$, then $R/(N\cap R)$ is a valuation domain with quotient field $S/N$.
\end{lemma}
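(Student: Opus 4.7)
The plan is to derive the Prüfer case by converting $J$ into a shared ideal, to bootstrap to the quasi-Prüfer case via a factorization through the integral closure, and finally to specialize $J = N$ for the valuation-domain statement.

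For the Prüfer case I set $T := R + J$; then $J \subseteq T \subseteq S$ exhibits $J$ as a common ideal of $T$ and $S$, and the second isomorphism theorem gives $T/J = (R+J)/J \cong R/(R \cap J) = R/I$. Since $R \subseteq S$ is a normal pair (Scholium B(3)), so is the subextension $T \subseteq S$, hence $T \subseteq S$ is again Prüfer. The shared-ideal preservation of the Prüfer property, quoted from \cite[Proposition 5.8]{KZ} in the paragraph just preceding the lemma, then gives $T/J \subseteq S/J$ Prüfer, i.e., $R/I \subseteq S/J$ is Prüfer.

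For the quasi-Prüfer case, factor $R \subseteq \overline R \subseteq S$ with $R \subseteq \overline R$ integral and $\overline R \subseteq S$ Prüfer (Definition~\ref{2.1}). Setting $I' := J \cap \overline R$, the induced inclusion $R/I \hookrightarrow \overline R / I'$ remains integral (integrality descends to quotients and $I = I' \cap R$), while the Prüfer case applied to $\overline R \subseteq S$ gives $\overline R / I' \subseteq S/J$ Prüfer. Composing yields the desired integral-then-Prüfer factorization of $R/I \subseteq S/J$.

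For the final assertion, take $J = N$ to obtain $R/(N \cap R) \subseteq S/N$ Prüfer with $S/N$ a field; in particular $R/(N \cap R)$ is a domain. Its quotient field $K$ lies in $S/N$, and the subextension $K \subseteq S/N$ is again Prüfer by the normal-pair argument just used; applying Scholium A(1) at the unique prime $(0)$ of $K$ forces $K = S/N$, so the quotient field of $R/(N\cap R)$ is indeed $S/N$. The valuation-domain conclusion then follows from Proposition~\ref{0.3}(3) applied to the Prüfer extension of the (local) domain $R/(N\cap R)$ by its own quotient field.

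The main obstacle I anticipate is the first step: selecting $T := R + J$ as the correct intermediate ring so that $J$ becomes shared without losing the Prüfer property, and justifying that $T \subseteq S$ inherits Prüferness through the normal-pair characterisation. Once this reduction is in place, the shared-ideal case of \cite[Proposition 5.8]{KZ} closes the Prüfer statement, and the remaining parts are bookkeeping through the integral closure and a direct appeal to Proposition~\ref{0.3}(3).
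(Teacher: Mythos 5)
Your treatment of the two displayed assertions is correct, but your Pr\"ufer case takes a genuinely different route from the paper's. The paper does not pass through $T:=R+J$: it uses Scholium A(6) to write $J=IS$, observes that every $D\in[R/I,S/J]$ equals $C/IS$ for some $C\in[R,S]$, identifies $C/IS$ with $C\otimes_R R/I$ (via $C\cap IS=IC$, again Scholium A(6) applied to the flat epimorphism $R\subseteq C$), and concludes that $R/I\subseteq D$ is a flat epimorphism for every intermediate $D$. Your reduction to the shared-ideal case of \cite[Proposition 5.8]{KZ} via the subring $R+J$ is shorter and legitimate, since that is exactly the citation the paper itself invokes in the paragraph preceding the lemma; what the paper's computation buys is a self-contained verification of the flat-epimorphism property for each intermediate ring, essentially reproving the shared-ideal case rather than quoting it. Your quasi-Pr\"ufer step (factor through $\overline R/(J\cap\overline R)$) coincides with what the paper dismisses as ``an easy consequence.''

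The final assertion is where there is a genuine gap --- in your write-up and, it must be said, in the statement itself; note that the paper's own proof is silent on this sentence. Your argument correctly shows that $A:=R/(N\cap R)$ is a domain whose quotient field is $S/N$, hence that $A$ is a Pr\"ufer domain. But the parenthetical ``(local)'' before ``domain'' carries all the remaining weight and is unjustified: $N\cap R$ is only prime, not maximal, so $A$ need not be local, whereas Proposition~\ref{0.3}(3) requires a local base ring. In fact the assertion as printed fails for $\mathbb{Z}\subseteq\mathbb{Q}$ with $N=(0)$: there $R/(N\cap R)=\mathbb{Z}$ is a Pr\"ufer domain with quotient field $\mathbb{Q}=S/N$ but is not a valuation domain. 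The most your (otherwise sound) argument yields is ``Pr\"ufer domain with quotient field $S/N$,'' which upgrades to ``valuation domain'' exactly when $R$, hence $A$, is local --- the situation in which the paper actually applies this fact after localizing, as in Proposition~\ref{6.3}. You should either add the local hypothesis or weaken the conclusion; it cannot be obtained as written.
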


\begin{proof} Assume first that $ R\subseteq S$ is Pr\"ufer. We have $J = IS$ by Scholium A(6), because $R \subseteq S$ is a flat epimorpism. Therefore, any  $D\in [R/I,S/J]$ is of the form $C/J$ where $C \in [R,S]$. We can write $C/IS = (C+I)/IS \cong  C/C\cap IS$. As $R \subseteq C$ is a flat epimorphism,  $C \cap IS =IC$. It then follows that $D= C\otimes R/I$ and we get easily that $R/I\subseteq S/J$ is  Pr\"ufer, since $R/I \subseteq D$ is a flat epimorphism. The quasi-Pr\"ufer case is an easy consequence. 
\end{proof}

With this lemma we generalize and complete \cite[Proposition 1.1]{H}.

\begin{proposition}\label{3.8} Let $R\subseteq S$ be an extension of rings. The following statements are equivalent:
\begin{enumerate}
\item $R\subseteq S$ is quasi-Pr\"ufer;
\item $R/(Q\cap R) \subseteq S/Q$ is quasi-Pr\"ufer for each  $Q\in \mathrm{Spec}(S)$ ;
\item $(X-s)S[X]\cap R[X] \not\subseteq M[X]$ for each $s\in S$ and $M\in \mathrm{Max}(R)$;
\item For each $T\in [R,S]$, the fiber morphisms of $R\subseteq T$ are integral.
\end{enumerate}
\end{proposition}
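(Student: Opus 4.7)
The plan is to split the four-way equivalence into $(1)\Leftrightarrow(2)$, handled via Lemma~\ref{3.7} plus a localization trick, and $(1)\Leftrightarrow(3)\Leftrightarrow(4)$, handled uniformly through the fiber analysis of the monogenic subextensions $R\subseteq R[s]$.

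For $(1)\Rightarrow(2)$, I apply Lemma~\ref{3.7} to the quasi-Pr\"ufer extension $R\subseteq S$ with $J=Q\in\mathrm{Spec}(S)$ and $I=Q\cap R$. For $(2)\Rightarrow(1)$, by Theorem~\ref{2.3} it suffices to establish that $R\subseteq S$ is a residually algebraic pair. Fix $T\in[R,S]$ and $Q_0\in\mathrm{Spec}(T)$, and set $P_0:=Q_0\cap R$. The multiplicative subset $U:=T\setminus Q_0$ of $S$ avoids $0$ (since $0\in Q_0$), so $U^{-1}S\neq 0$ and there exists $Q\in\mathrm{Spec}(S)$ with $Q\cap T\subseteq Q_0$. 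Writing $P:=Q\cap R$, hypothesis (2) yields $R/P\subseteq S/Q$ quasi-Pr\"ufer, hence a residually algebraic pair by Theorem~\ref{2.3}. Its intermediate ring $T/(Q\cap T)$ has the prime $Q_0/(Q\cap T)$ contracting to $P_0/P$ in $R/P$, so the third isomorphism theorem identifies the corresponding residue extension as $R/P_0\subseteq T/Q_0$, which is therefore algebraic.

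For the block $(1)\Leftrightarrow(3)\Leftrightarrow(4)$, note that for each $s\in S$, the ideal $(X-s)S[X]\cap R[X]$ is the kernel $J_s$ of the evaluation $R[X]\to R[s]$, $X\mapsto s$, so the fiber $\kappa(P)\otimes_R R[s]$ is $\kappa(P)[X]/\overline{J_s}$. Zero-dimensionality of this fiber is equivalent to $J_s\not\subseteq P[X]$, in which case the fiber $\kappa(P)[X]/(\bar f)$ is actually integral over $\kappa(P)$. Then $(1)\Rightarrow(3)$ follows from Corollary~\ref{3.3} (which passes quasi-Pr\"ufer to $R\subseteq R[s]$, whence INC by Theorem~\ref{2.3}) applied at maximal $M$. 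Next $(3)\Rightarrow(4)$: (3) at all $M\in\mathrm{Max}(R)$ forces $J_s\not\subseteq P[X]$ at every $P\in\mathrm{Spec}(R)$, so each $1\otimes s$ is integral over $\kappa(P)$ in $\kappa(P)\otimes_R R[s]$, hence in $\kappa(P)\otimes_R T$ for any $T\ni s$; integrality of the $\kappa(P)$-algebra generators gives integrality of the whole fiber. Finally $(4)\Rightarrow(1)$ is immediate, since an integral algebra over a field is zero-dimensional, so every $R\subseteq T$ is INC, and Theorem~\ref{2.3} supplies quasi-Pr\"ufer.

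The main difficulty is $(2)\Rightarrow(1)$: condition (2) only quantifies $Q$ over $\mathrm{Spec}(S)$, whereas the residually algebraic pair conclusion asks about primes of an arbitrary intermediate $T\in[R,S]$, which need not arise by contraction from any prime of $S$. The key move is the localization argument at the multiplicative set $T\setminus Q_0$, producing a $Q\in\mathrm{Spec}(S)$ whose contraction to $T$ lies below $Q_0$; this reduces primes of $T$ to the primes of $S$ that hypothesis (2) controls, and everything else is pure bookkeeping with Theorem~\ref{2.3} and the third isomorphism theorem.
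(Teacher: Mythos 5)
Your argument is correct, but it closes the cycle of implications differently from the paper and replaces its two external ingredients with self-contained ones. The paper proves $(1)\Rightarrow(2)\Rightarrow(3)\Rightarrow(1)$, quoting Houston's result on uppers to zero for $(2)\Rightarrow(3)$ and the $\mathcal P$-extension/INC criterion of Dobbs for $(3)\Rightarrow(1)$, and then treats $(1)\Leftrightarrow(4)$ separately, getting $(1)\Rightarrow(4)$ by localizing at $P$ and observing that $\overline R_P/P\overline R_P\to S_P/PS_P$ is a flat epimorphism onto a zero-dimensional ring, hence surjective by Scholium A (S). You instead prove $(2)\Rightarrow(1)$ directly: the genuine obstacle is that (2) only controls primes of $S$ while the residually-algebraic-pair criterion of Theorem~\ref{2.3} quantifies over primes of an arbitrary $T\in[R,S]$, and your localization of $S$ at $T\setminus Q_0$ manufactures the needed $Q\in\mathrm{Spec}(S)$ with $Q\cap T\subseteq Q_0$, after which Lemma~\ref{3.7} and the isomorphism $R/P_0\subseteq T/Q_0$ finish the job. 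For $(1)\Leftrightarrow(3)\Leftrightarrow(4)$ you identify $(X-s)S[X]\cap R[X]$ with the kernel $J_s$ of $R[X]\to R[s]$ and compute the fiber as $\kappa(P)[X]/J_s\kappa(P)[X]$, so that (3) becomes nonvanishing of that extended ideal at every prime (maximal suffices, since $J_s\subseteq P[X]\subseteq M[X]$), each generator $1\otimes s$ of a fiber of $R\subseteq T$ is then algebraic over $\kappa(P)$, and integrality of the whole fiber follows; combined with Corollary~\ref{3.3} for $(1)\Rightarrow(3)$ and the triviality of $(4)\Rightarrow(1)$, this makes the whole block elementary and citation-free. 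What you lose relative to the paper is only the structural remark, useful elsewhere, that for a quasi-Pr\"ufer extension the fiber at $P$ is the surjective image of the zero-dimensional ring $\overline R_P/P\overline R_P$.
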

\begin{proof} (1) $\Rightarrow$ (2) is entailed by Lemma~\ref{3.7}. Assume that (2) holds and let $M\in \mathrm{Max}(R)$ that contains a minimal prime ideal $P$, lain over by a minimal prime ideal $Q$ of $S$.  Then (2) $\Rightarrow$ (3) follows from \cite[Proposition 1.1(1)]{H}, applied to $R/(Q\cap R)\subseteq S/Q$. If (3) holds,  argue as in the paragraph before \cite[Proposition 1.1]{H} to get that $R\subseteq S$ is a $\mathcal P$-extension, whence an INC-extension by \cite[Proposition 2.1]{D}. Because  integral extensions have incomparability, we see that (4) $\Rightarrow$ (1). Corollary~\ref{3.3} shows that the reverse implication holds, if any quasi-Pr\"ufer extension  $R\subseteq S$ has integral fiber morphisms. 
For $P\in \mathrm{Spec}(R)$, the extension $R_P/PR_P \subseteq S_P/PS_P$ is quasi-Pr\"ufer by Lemma~\ref{3.7}. The ring $\overline R_P/P\overline R_P$ is zero-dimensional and  $\overline R_P/P\overline R_P \to S_P/PS_P$, being a flat epimorphism, is therefore surjective by Scholium A (S). It follows that the fiber morphism at $P$ is integral.
\end{proof}

\begin{remark}\label{3.9} The logical equivalence (1) $\Leftrightarrow$ (2) is still valid if we replace quasi-Pr\"ufer with integral in the above proposition. It is enough to show that  an extension $R\subseteq S$ is  integral when $R/P \subseteq S/Q$ is integral for each $Q\in \mathrm{Spec}(S)$ and $P:= Q\cap R$. We can suppose that $ S= R[s] \cong R[X]/I$, where $X$ is an indeterminate, $I$ an ideal of $R[X]$ and  $Q$ varies in Min$(S)$, because for an extension $A\subseteq B$, any element  of Min($A$) is lain over by some element of Min($B$). If $\Sigma$ is the set of unitary polynomials of $R[X]$, the assumptions show that any element of $\mathrm{Spec}(R[X])$, containing $I$, meets $\Sigma$. As $\Sigma$ is a multiplicatively closed subset, $I\cap \Sigma \neq \emptyset$, whence $s$ is integral over $R$.

But a similar result does not hold if we replace quasi-Pr\"ufer with Pr\"ufer, except if we suppose that $R\subseteq S$ is integrally closed. To see this, apply the above proposition to get a quasi-Pr\"ufer extension $R\subseteq S$ if each $R/P\subseteq S/Q$ is Pr\"ufer. Actually, this situation already occurs for Pr\"ufer rings and their factor domains, as Lucas's paper \cite{LU} shows. More precisely, \cite[Proposition 2.7]{LU}  and the third paragraph of \cite[p. 336]{LU} shows that  if $R$ is a ring with $\mathrm{Tot}(R)$ absolutely flat, then $R$ is a quasi-Pr\"ufer ring if  $R/P$ is a Pr\"ufer domain for each $P\in \mathrm{Spec}(R)$. Now example \cite[Example 2.4]{LU} shows that $R$ is not necessarily Pr\"ufer.

\end{remark}

We observe that if $R\subseteq S$ is  quasi-Pr\"ufer, then $R/M$ is a quasi-Pr\"ufer domain for each $N\in \mathrm{Max}(S)$ and $M: =N\cap R$   (in case $R\subseteq S$ is integral, $R/M$ is a field). To prove this, observe that $R/M\subseteq S/N$ can be factored $R/M\subseteq \kappa(M) \subseteq  S/N$. As we will see, $R/M\subseteq \kappa(M)$ is quasi-Pr\"ufer because $R/M\subseteq S/N$ is quasi-Pr\"ufer.

The class of Pr\"ufer extensions is not stable by (flat) base change. For example, let $V$ be a valuation domain with quotient field $K$. Then $V[X] \subseteq K[X]$ is not Pr\"ufer  \cite[Example 5.12, p.53]{KZ}. Thus if we consider an ideal $I$ of $R$ and $J:= IS$, $R \subseteq S$ Pr\"ufer may not imply $R/I \subseteq S/IS$ Pr\"ufer except if $IS \cap R = I$. This happens for instance for a prime ideal $I$  of $R$ that is lain over by a prime ideal of $S$.

\begin{proposition}\label{3.10} Let $R\subseteq S$ be  a (quasi)-Pr\"ufer extension and  $R\to T$ a flat epimorphism, then $T\subseteq S\otimes_RT$ is (quasi)-Pr\"ufer. If in addition   $S$ and $T$ are both subrings of some ring and $R\subseteq T$ is an extension,  then $T\subseteq TS$ is (quasi)-Pr\"ufer.
\end{proposition}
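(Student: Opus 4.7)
The plan is to first handle the Pr\"ufer case of the first statement via a localization argument, then bootstrap to the quasi-Pr\"ufer case through the integral closure factorization, and finally deduce the second statement from Lemma~\ref{3.7} applied to the kernel of the multiplication map $S\otimes_R T \to TS$.

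For the Pr\"ufer case of the first claim, I would verify by Proposition~\ref{0.2}(1) that $T_Q \subseteq (S\otimes_R T)_Q$ is Pr\"ufer for every $Q\in\mathrm{Spec}(T)$. Setting $P:=Q\cap R$ and using that $R\to T$ is a flat epimorphism, Scholium~A(1) forces $R_P=T_P$ (the alternative $T=PT$ is ruled out by $Q\supseteq PT$ being a proper ideal), while Scholium~A(7) makes $Q$ the unique prime of $T$ over $P$, so the further localization at $T\setminus Q$ changes nothing and $T_Q=T_P=R_P$. Consequently $(S\otimes_R T)_Q \cong S\otimes_R T_Q = S\otimes_R R_P = S_P$, and the inclusion in question identifies with $R_P\subseteq S_P$, which is Pr\"ufer by Proposition~\ref{0.2}(1) applied to $R\subseteq S$. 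Note that $T\subseteq S\otimes_R T$ is genuinely an extension by flatness of $R\to T$.

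For the quasi-Pr\"ufer bootstrap, factor $R\subseteq\overline R\subseteq S$ with the first extension integral and the second Pr\"ufer. Integrality is preserved by base change, so $T\subseteq\overline R\otimes_R T$ is integral; meanwhile $\overline R\to\overline R\otimes_R T$ is a flat epimorphism (base change of $R\to T$, Scholium~A(8)), so the Pr\"ufer case already established applied to $\overline R\subseteq S$ yields $\overline R\otimes_R T\subseteq S\otimes_R T$ Pr\"ufer. Composing these two gives an integral-then-Pr\"ufer factorization of $T\subseteq S\otimes_R T$, i.e.\ the quasi-Pr\"ufer property.

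For the second statement, the multiplication map $\mu:S\otimes_R T \to TS$ defined by $s\otimes t \mapsto st$ is a surjective ring morphism, so $TS\cong (S\otimes_R T)/K$ with $K:=\ker\mu$. Because $T\to TS$ is injective by hypothesis (both rings sit inside a common ambient ring), the composite $T\to S\otimes_R T\to TS$ is injective, forcing $K\cap T=0$. Lemma~\ref{3.7}, applied to the (quasi-)Pr\"ufer extension $T\subseteq S\otimes_R T$ and the ideal $K$, then delivers the (quasi-)Pr\"ufer extension $T=T/(K\cap T)\subseteq (S\otimes_R T)/K = TS$. The main technical obstacle is the identification $T_Q=R_P$ in the second paragraph, which requires combining the flat epimorphism dichotomy of Scholium~A(1) with spectral injectivity to pin down the localizations precisely; everything else is a clean application of base change and previously established results.
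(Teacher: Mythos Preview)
Your argument is correct and, for the first statement, essentially matches the paper's approach: both localize at $Q\in\mathrm{Spec}(T)$, use the flat-epimorphism isomorphism $R_P\cong T_Q$ from Scholium~A, and identify the localized extension with $R_P\subseteq S_P$. Your explicit bootstrap to the quasi-Pr\"ufer case via the factorization $R\subseteq\overline R\subseteq S$ spells out what the paper compresses into the single remark ``it is enough to consider the Pr\"ufer case''.

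For the second statement your route diverges from the paper's. The paper observes directly that the multiplication map $S\otimes_R T\to TS$ is an \emph{isomorphism}: since $S\to S\otimes_R T$ is a flat epimorphism (base change of $R\to T$) and the composite $S\to TS$ is injective, Scholium~A(3) forces $S\otimes_R T\to TS$ to be injective, hence bijective; the first statement then applies verbatim. Your approach---quotienting by the kernel $K$ and invoking Lemma~\ref{3.7} with $K\cap T=0$---is perfectly valid but yields a weaker intermediate conclusion: you never learn that $K=0$. The paper's route thus gives the bonus fact $TS\cong S\otimes_R T$ (and similarly $\overline R T\cong\overline R\otimes_R T$), which is of independent interest and slightly shortens the argument by avoiding the detour through Lemma~\ref{3.7}.
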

\begin{proof} For the first part, it is enough to consider the Pr\"ufer case.  It is well known that the following diagram is a pushout if $Q\in \mathrm{Spec}(T)$ is lying over $P$ in $R$:

 \car {R_P},{S_P},{T_Q},{(T\otimes_RS)_Q}
 
 As $R_P \to T_Q$ is an isomorphism since $R\to T$ is a flat epimorphism by Scholium A, it follows that $R_P \subseteq S_P$ identifies to $T_Q \to (T\otimes_R S)_Q$. The result follows  because Pr\"ufer extensions localize and globalize. 
 
 In case $R\to T$ is a flat epimorphic extension, the surjective maps $T\otimes_RS \to TS$ and $\overline R\otimes_RT \to \overline RT$ are  isomorphisms because $\overline R \to T\overline R$ (resp. $S \to ST$) is injective and $\overline R\to T\otimes_T\overline R$ (resp. $S \to S\otimes_R T$) is a flat epimorphism. Then it is enough to use  Scholium A.
\end{proof} 

The reader may find in \cite[Corollary 5.11, p.53]{KZ} that if $R\subseteq A \subseteq S$ and $R\subseteq B \subseteq S$ are extensions  and $R\subseteq A$ and  $R\subseteq B$ are both Pr\"ufer, then $R \subseteq AB$ is Pr\"ufer.

\begin{proposition}\label{3.11} Let $R\subseteq A$ and $R\subseteq B$ be two extensions, where $A$ and $B$ are subrings of a ring $S$.  If they are both quasi-Pr\"ufer, then $R \subseteq AB$ is quasi-Pr\"ufer.
\end{proposition}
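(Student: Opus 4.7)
The plan is to reduce, via the tower decomposition of Corollary~\ref{3.3}, to the following key sublemma: \emph{if $R\subseteq B$ is quasi-Pr\"ufer and $A'\supseteq R$ lies in a common ambient ring with $B$, then $A'\subseteq A'[b]$ is quasi-Pr\"ufer for every $b\in B$.} Iterating the sublemma along the tower $A\subseteq A[b_1]\subseteq A[b_1,b_2]\subseteq\dots\subseteq A[b_1,\dots,b_n]$, combined with Corollary~\ref{3.3} at each stage, shows $A\subseteq A[b_1,\dots,b_n]$ is quasi-Pr\"ufer for every finite $\{b_1,\dots,b_n\}\subseteq B$. To pass to $A\subseteq AB$ I use the INC-pair characterization of Theorem~\ref{2.3}: for any $T\in[A,AB]$ and any primes $Q_1\subsetneq Q_2$ of $T$ with common contraction to $A$, an element $t\in Q_2\setminus Q_1$ lies in some finitely generated subalgebra $A[b_1,\dots,b_n]$, and then the contractions $Q_i\cap(T\cap A[b_1,\dots,b_n])$ form a strict chain over the same contraction to $A$, contradicting the INC-pair property of $A\subseteq A[b_1,\dots,b_n]$. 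Thus $A\subseteq AB$ is quasi-Pr\"ufer, and a final application of Corollary~\ref{3.3} to $R\subseteq A\subseteq AB$ concludes.

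For the sublemma I verify the polynomial criterion of Proposition~\ref{3.8}(3). Given $s\in A'[b]$ and $N\in\mathrm{Max}(A')$, write $s=g(b)$ for some $g\in A'[Y]$ and set $M:=N\cap R$. Since $R\subseteq R[b]$ is quasi-Pr\"ufer, Theorem~\ref{2.3} makes the fiber $\kappa(M)\otimes_R R[b]$ zero-dimensional, yielding $f(Y)\in R[Y]$ with $f(b)=0$ and $f\notin M[Y]$; since $f\in R[Y]$ and $M=N\cap R$, also $f\notin N[Y]$. I form the resultant
\[
  h(X):=\mathrm{Res}_Y\bigl(f(Y),\,X-g(Y)\bigr)\in A'[X].
\]
The B\'ezout identity $h(X)=u(Y,X)f(Y)+v(Y,X)(X-g(Y))$ in $A'[Y,X]$ gives $h(X)=v(b,X)(X-s)$ upon substituting $Y=b$, so $h\in(X-s)A'[b][X]\cap A'[X]$. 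Reducing modulo $N$, $\bar h=\mathrm{Res}_Y(\bar f,X-\bar g)\in\kappa(N)[X]$ factors as $\bar c_d^{\deg\bar g}\prod_\alpha\bigl(X-\bar g(\alpha)\bigr)$ over the roots $\alpha$ of $\bar f$ in an algebraic closure of $\kappa(N)$ (where $\bar c_d\neq0$ is the leading coefficient of $\bar f$); this is nonzero because $\bar f$ has positive degree---a nonzero constant cannot annihilate $\bar b$. Hence $h\notin N[X]$, which is exactly what Proposition~\ref{3.8}(3) demands.

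The principal technical point is this nonvanishing of $\bar h\in\kappa(N)[X]$; once one knows $\bar f$ has positive degree, the classical product formula for the resultant settles it. All remaining steps---the tower decompositions via Corollary~\ref{3.3}, the induction on the number of generators, and the reduction from arbitrary $T\in[A,AB]$ to finitely generated subalgebras---are essentially formal.
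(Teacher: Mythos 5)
Your reduction to finitely generated subalgebras and the passage to the directed union at the end are sound (the latter is essentially Proposition~\ref{3.13}), but the key sublemma is false as stated, and its proof breaks at an identifiable point. Counterexample to the sublemma: take $R=\mathbb{Z}_{(p)}$, $B=\mathbb{Q}$, $b=1/p$ (so $R\subseteq B$ is Pr\"ufer, hence quasi-Pr\"ufer) and $A':=\mathbb{Z}_{(p)}[x]$ a polynomial ring, all inside $S=\mathbb{Q}(x)$. Then $A'[b]=A'[1/p]$ contains $\mathbb{Z}_{(p)}[x/p]$, and the primes $(p)\subset (p,\,x/p)$ of $\mathbb{Z}_{(p)}[x/p]$ both contract to $(p,x)$ in $A'$; so $A'\subseteq A'[b]$ is not an INC-pair, hence not quasi-Pr\"ufer. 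The flaw in your argument is the assertion that $\bar f$ has positive degree, i.e.\ that the \emph{leading} coefficient of $f$ survives modulo $N$. The fiber condition only gives $f\notin M[Y]$; when the leading coefficient does lie in $M$, the reduction of $\mathrm{Res}_Y(f,X-g)$ modulo $N$ is \emph{not} the resultant of $\bar f$ and $X-\bar g$ computed with their actual degrees, and the product formula does not apply. Concretely, in the example take $s=x/p$, $N=(p,x)$, $f=pY-1$ (so $\bar f=-1$ is a nonzero constant) and $g=xY$: then $h=\mathrm{Res}_Y(pY-1,\,X-xY)=pX-x\in N[X]$, so Proposition~\ref{3.8}(3) is not witnessed. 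Your justification that ``a nonzero constant cannot annihilate $\bar b$'' tacitly assumes the fiber ring is nonzero, which fails here ($\kappa(M)\otimes_R R[1/p]=0$).

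In your actual application $A'$ is always of the form $A[b_1,\dots,b_i]$ with $R\subseteq A$ quasi-Pr\"ufer, and under that extra hypothesis the sublemma is true --- but it is then essentially equivalent to the proposition being proved, so the argument is circular unless the sublemma's proof genuinely uses that hypothesis, which yours does not. The paper's proof avoids element-wise polynomial manipulations entirely: letting $U,V$ be the integral closures of $R$ in $A,B$, the extension $A\subseteq AV$ is integral, so $R\subseteq AV$ is quasi-Pr\"ufer by Corollary~\ref{3.3}, whence $V\subseteq AV$ is quasi-Pr\"ufer; since $V\subseteq B$ is Pr\"ufer, hence a flat epimorphism, Proposition~\ref{3.10} yields that $B\subseteq B(AV)=AB$ is quasi-Pr\"ufer, and Corollary~\ref{3.3} applied to $R\subseteq B\subseteq AB$ concludes. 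I would recommend replacing your sublemma by this base-change argument.
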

\begin{proof} Let $U$ and $V$ be the integral closures of $R$ in $A$ and $B$. Then $R \subseteq A \subseteq AV$ is quasi-Pr\"ufer because $A\subseteq AV$ is integral and Corollary~\ref{3.3} applies. Using  again Corollary~\ref{3.3} with $R\subseteq V \subseteq AV$, we find that $V\subseteq AV$ is quasi-Pr\"ufer. Now Proposition~\ref{3.10} entails that $B\subseteq AB$ is quasi-Pr\"ufer because $V \subseteq B$ is a flat epimorphism. Finally $R \subseteq AB$ is quasi-Pr\"ufer, since a composite of quasi-Pr\"ufer extensions.
\end{proof}

It is known that an arbitrary  product of extensions is  Pr\"ufer if and only if each of its components is ¬Pr\"ufer \cite[Proposition 5.20, p.56]{KZ}. The following result is an easy consequence.

\begin{proposition}\label{3.12} Let $\{ R_i \subseteq S_i | i=1,\ldots, n\}$ be a  finite family of quasi-Pr\"ufer extensions, then $R_1\times \cdots \times R_n \subseteq S_1\times \cdots \times S_n$ is quasi-Pr\"ufer. In particular, if $\{ R \subseteq S_i | i=1,\ldots, n\}$ is a finite family of quasi-Pr\"ufer extensions, then $R \subseteq S_1\times \cdots \times S_n$ is quasi-Pr\"ufer.
\end{proposition}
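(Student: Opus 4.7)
The plan is to transport the factorization that defines a quasi-Pr\"ufer extension through a finite direct product, reducing to the already-known analogue for Pr\"ufer extensions.

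First, for each $i$, invoke Definition~\ref{2.1}(2) to factor $R_i\subseteq S_i$ as $R_i\subseteq T_i\subseteq S_i$ with $T_i=\overline{R_i}^{S_i}$ integral over $R_i$ and $T_i\subseteq S_i$ Pr\"ufer. Set $R:=R_1\times\cdots\times R_n$, $T:=T_1\times\cdots\times T_n$ and $S:=S_1\times\cdots\times S_n$; by Corollary~\ref{3.3} it suffices to show that $R\subseteq T$ is integral and $T\subseteq S$ is Pr\"ufer.

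For the second piece, the result \cite[Proposition 5.20, p.56]{KZ} cited in the paragraph immediately before the statement says that a finite product of extensions is Pr\"ufer iff each component is, so $T\subseteq S$ is Pr\"ufer. For the first piece, integrality of a finite product of integral extensions is elementary: given $t=(t_1,\ldots,t_n)\in T$, choose a monic polynomial $p_i(X)\in R_i[X]$ with $p_i(t_i)=0$ of degree $d_i$, let $d:=\max_i d_i$, and form the monic polynomial $p(X)\in R[X]$ whose coefficient tuple in degree $k$ has $i$-th component equal to the $X^k$-coefficient of $X^{d-d_i}p_i(X)$; then $p(t)=0$ componentwise. So $R\subseteq T$ is integral and the composition is quasi-Pr\"ufer.

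For the ``in particular'' assertion, apply the first part to the family $\{R\subseteq S_i\}$ to conclude that $R^n:=R\times\cdots\times R\subseteq S_1\times\cdots\times S_n$ is quasi-Pr\"ufer. The diagonal embedding $R\hookrightarrow R^n$, $r\mapsto(r,\ldots,r)$, is an integral (in fact module-finite) extension, since every $(r_1,\ldots,r_n)\in R^n$ satisfies the monic $\prod_i(X-(r_i,\ldots,r_i))\in R[X]$ once each diagonal scalar $(r_i,\ldots,r_i)$ is seen to lie in the image. Hence $R\subseteq R^n$ is quasi-Pr\"ufer, and Corollary~\ref{3.3} gives that $R\subseteq S_1\times\cdots\times S_n$ is quasi-Pr\"ufer. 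The only mildly delicate point is the bookkeeping to produce the componentwise monic polynomial witnessing integrality of $R\subseteq T$; everything else is a direct appeal to earlier results.
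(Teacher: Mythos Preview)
Your argument is correct and follows exactly the route the paper sketches: factor each component as integral followed by Pr\"ufer, use \cite[Proposition 5.20]{KZ} for the product of the Pr\"ufer pieces, and the elementary fact that a finite product of integral extensions is integral; the ``in particular'' then follows from the integrality of the diagonal $R\hookrightarrow R^n$ together with Corollary~\ref{3.3}. Two cosmetic remarks: the appeal to Corollary~\ref{3.3} in the first paragraph is unnecessary since Definition~\ref{2.1}(2) applies directly, and for the diagonal it is cleaner to note that $R^n$ is a free $R$-module of rank $n$ (hence module-finite) rather than writing out the polynomial $\prod_i(X-r_i)$.
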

In the same way we have  the following result deduced from \cite[Remark 5.14, p.54]{KZ}.

\begin{proposition}\label{3.13} Let $R\subseteq S$ be an extension of rings and an upward directed  family $\{R_\alpha | \alpha \in I\}$ of elements of $[R,S]$ such that $R\subseteq R_\alpha$ is quasi-Pr\"ufer for each $\alpha \in I$. Then $R\subseteq \cup[R_\alpha | \alpha \in I]$ is quasi-Pr\"ufer.
\end{proposition}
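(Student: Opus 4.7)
I would use the INC\textendash pair characterization from Theorem~\ref{2.3} as the bridge, since the INC property behaves well under directed unions (it is essentially a first\textendash order condition on pairs of primes of subrings). Concretely, set $T := \bigcup_{\alpha\in I} R_\alpha$ and aim to show that $R \subseteq U$ is incomparable for every $U \in [R,T]$. Let $U_\alpha := U \cap R_\alpha$. Since $\{R_\alpha\}$ is upward directed, so is $\{U_\alpha\}$, and because every $u \in U \subseteq T$ lies in some $R_\alpha$, we get $U = \bigcup_\alpha U_\alpha$.

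For each $\alpha$ the extension $R \subseteq R_\alpha$ is quasi\textendash Pr\"ufer by hypothesis, so by Corollary~\ref{3.3} (applied to the tower $R \subseteq U_\alpha \subseteq R_\alpha$, noting that subextensions of quasi\textendash Pr\"ufer extensions are quasi\textendash Pr\"ufer) the extension $R\subseteq U_\alpha$ is quasi\textendash Pr\"ufer, hence INC by Theorem~\ref{2.3}. Now take a comparable pair $Q \subseteq Q'$ in $\mathrm{Spec}(U)$ with $Q\cap R = Q'\cap R$ and set $Q_\alpha := Q \cap U_\alpha$, $Q'_\alpha := Q'\cap U_\alpha$. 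These are comparable primes of $U_\alpha$ with the same trace on $R$, so by INC on $R\subseteq U_\alpha$ we have $Q_\alpha = Q'_\alpha$ for every $\alpha$. Since $U = \bigcup_\alpha U_\alpha$, it follows that $Q = \bigcup_\alpha Q_\alpha = \bigcup_\alpha Q'_\alpha = Q'$. Therefore $R\subseteq U$ is incomparable, and as $U\in[R,T]$ was arbitrary, $R\subseteq T$ is an INC\textendash pair, i.e.\ quasi\textendash Pr\"ufer.

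\textbf{Main obstacle.} Nothing is genuinely hard here; the only point requiring care is the interplay between the intersection $U \cap R_\alpha$ and prime contraction. One must verify that the family $\{U_\alpha\}$ really is directed and really exhausts $U$ (both are immediate from directedness of $\{R_\alpha\}$), and must make sure to invoke Corollary~\ref{3.3} in its ``subextension'' form rather than its ``tower'' form. Alternatively, one could try to mimic the Pr\"ufer argument of \cite[Remark~5.14]{KZ} by passing to the integral closures $\overline{R}^{R_\alpha}$, whose union is $\overline{R}^{T}$; but that route must contend with varying bases in each Pr\"ufer extension $\overline{R}^{R_\alpha}\subseteq R_\alpha$, so the INC\textendash pair route via Theorem~\ref{2.3} is cleaner and avoids any appeal to base\textendash change results.
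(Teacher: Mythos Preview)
Your proof is correct. The argument via the INC\textendash pair characterization (Theorem~\ref{2.3}) is clean: writing any $U\in[R,T]$ as the directed union of the $U_\alpha:=U\cap R_\alpha$, each of which sits inside the quasi\textendash Pr\"ufer extension $R\subseteq R_\alpha$ and hence satisfies INC over $R$, and then pulling primes back along the union, is entirely sound.

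However, your route is \emph{not} the one the paper takes. Amusingly, the paper follows precisely the alternative you sketch and set aside in your ``Main obstacle'' paragraph: it passes to the integral closures $A_\alpha:=\overline{R}^{R_\alpha}$, so that each $A_\alpha\subseteq R_\alpha$ is Pr\"ufer, and then invokes \cite[Proposition~5.13, p.~54]{KZ}, which handles directed unions of Pr\"ufer extensions with varying bases and yields that $\overline{R}^{T}=\bigcup_\alpha A_\alpha\subseteq T$ is Pr\"ufer. The trade\textendash off is clear: the paper's proof is a one\textendash line citation but depends on a nontrivial external result from \cite{KZ}, whereas your proof is self\textendash contained within the paper (using only Theorem~\ref{2.3} and Corollary~\ref{3.3}) and makes the passage to the limit completely explicit. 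Your concern that the varying\textendash base issue is an obstacle is thus unfounded\textemdash \cite{KZ} disposes of it directly\textemdash but your route is arguably more transparent for a reader who does not have \cite{KZ} at hand.
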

\begin{proof} It is enough to use \cite[Proposition 5.13, p.54]{KZ} where $A_\alpha$  is the integral closure of $R$ in $R_\alpha$. 
\end{proof}

A ring morphism $R \to T$ preserves the integral closure of ring morphisms $R \to S$ if $\overline T^{T\otimes_RS} \cong  T \otimes_R \overline R $ for every ring morphism $R\to S$. An absolutely flat morphism $R\to T$ ($R\to T$ and $T\otimes_R T \to T$ are both flat) preserves integral closure \cite[Theorem 5.1]{O}. Flat epimorphisms, Henselizations and \'etale morphisms are absolutely flat.  Another examples are morphisms $R\to T$ that are essentially of finite type and (absolutely) reduced \cite[Proposition 5.19]{PR}(2). Such  morphisms are flat if $R$ is reduced \cite[Proposition 3.2]{LA}.

We will  prove an ascent result for absolutely flat ring morphisms. This will be proved by using base changes. For this we need to introduce some  concepts. A ring $A$ is called an AIC ring if each monic polynomial of $A[X]$ has a zero in $A$. We recalled  in \cite[p.4662]{PAIC} that any ring $A$ has a faithfully flat integral  extension $A\to A^*$, where $A^*$ is an AIC ring. Moreover, if $A$ is an AIC ring, each localization $A_P$ at a prime ideal $P$ of $A$ is a strict Henselian ring \cite[Lemma II.2]{PAIC}.

\begin{theorem}\label{3.14} Let $R\subseteq  S$ be a (quasi-) Pr\"ufer extension and $R\to T$ an absolutely flat ring morphism. Then $T \to T\otimes_R S$ is a (quasi-) Pr\"ufer extension.
\end{theorem}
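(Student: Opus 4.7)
The plan is to split into the quasi-Pr\"ufer reduction and the Pr\"ufer case proper. If $R \subseteq S$ is quasi-Pr\"ufer, I factor it as $R \subseteq \overline R^S \subseteq S$ with the first step integral and the second Pr\"ufer; base-changing along the flat morphism $R \to T$ gives injections $T \subseteq T \otimes_R \overline R^S \subseteq T \otimes_R S$, where the first step is integral (integrality is stable under base change) and the second step is the base change of the Pr\"ufer extension $\overline R^S \subseteq S$ along the map $\overline R^S \to T \otimes_R \overline R^S$, which is absolutely flat since absolute flatness is stable under base change. Granting the Pr\"ufer case at this step, Corollary~\ref{3.3} yields the quasi-Pr\"ufer conclusion.

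For the Pr\"ufer case, I first show that $T \subseteq T \otimes_R S$ is integrally closed: since Pr\"ufer extensions are integrally closed, $\overline R^S = R$, and Olivier's theorem \cite[Theorem 5.1]{O} that absolutely flat morphisms preserve integral closure gives $\overline T^{T \otimes_R S} = T \otimes_R \overline R^S = T$. It remains to upgrade integrally-closed to Pr\"ufer, i.e., to verify the normal-pair condition (Scholium B(3)). By Proposition~\ref{0.2}(1), one may check the Pr\"ufer property on $T$ locally: for each $Q \in \mathrm{Spec}(T)$ with $P := Q \cap R$, flat base change gives $(T \otimes_R S)_Q \cong T_Q \otimes_{R_P} S_P$, the extension $R_P \subseteq S_P$ remains Pr\"ufer (Proposition~\ref{0.2}(1)), and $R_P \to T_Q$ remains absolutely flat because absolute flatness localizes. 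Proposition~\ref{0.3}(3) then realizes $S_P$ explicitly as a localization of the local ring $R_P$ at a prime $P'$ with $P' S_P = P'$ and $R_P/P'$ a valuation domain, so that $T_Q \otimes_{R_P} S_P$ is a corresponding localization of $T_Q$; combining this description with the strict-Henselian local structure of absolutely flat morphisms recalled just before the theorem via the AIC machinery allows one to verify the normal-pair property in the local setting and then globalize.

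The main obstacle is precisely this final normal-pair step: whereas the single extension $T \subseteq T \otimes_R S$ being integrally closed drops out immediately from Olivier's preservation of integral closure, showing that every intermediate $V \in [T, T \otimes_R S]$ is integrally closed in $T \otimes_R S$ requires handling rings which need not be of the form $T \otimes_R W$ for some $W \in [R, S]$, so Olivier's theorem is not directly applicable to them. Reducing to the local case via Propositions~\ref{0.2} and~\ref{0.3} and then exploiting the AIC/strict-Henselian ascent-descent apparatus introduced in the preceding paragraphs (passing to a faithfully flat integral AIC extension where the local structure of absolute flatness becomes tractable) is where the bulk of the work lies.
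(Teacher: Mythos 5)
Your reduction of the quasi-Pr\"ufer case to the Pr\"ufer case (factor through $\overline R$, base change, invoke Corollary~\ref{3.3}) is sound, but it makes the Pr\"ufer case load-bearing, and that is precisely the case you do not complete. Olivier's preservation of integral closure gives only that the single extension $T \subseteq T\otimes_R S$ is integrally closed; as you yourself observe, the normal-pair condition requires every intermediate ring to be integrally closed, and these need not be of the form $T\otimes_R W$. Your sketch for closing this gap --- localize via Propositions~\ref{0.2} and~\ref{0.3} and ``exploit the AIC/strict-Henselian apparatus'' --- is not an argument. Worse, that apparatus cannot be run inside the Pr\"ufer case at all: the reduction $R \to R^*$ is \emph{integral}, so $S \subseteq S^* := S\otimes_R R^*$ is integral and $R^* \subseteq S^*$ is only quasi-Pr\"ufer, never Pr\"ufer (unless trivial). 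The strict-Henselian local structure only becomes available after integral closedness has already been sacrificed, so it cannot be used to verify a normal-pair condition.

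The paper's proof runs in the opposite order, and this inversion is the missing idea. One proves the quasi-Pr\"ufer case first and directly: reduce to $R$ an AIC ring by the faithfully flat integral base change $R\to R^*$ (quasi-Pr\"uferness survives because $S\subseteq S^*$ is integral, and descends back by Proposition~\ref{3.17}); then for $N\in\mathrm{Spec}(T)$ lying over $M\in\mathrm{Spec}(R)$, the local morphism $R_M\to T_N$ is absolutely flat with $R_M$ strict Henselian, hence is an isomorphism by Olivier's theorem, so $T_N\subseteq (T\otimes_RS)_N\cong T_N\otimes_{R_M}S_M$ identifies with the quasi-Pr\"ufer extension $R_M\subseteq S_M$, and one globalizes with Proposition~\ref{3.1}. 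With the quasi-Pr\"ufer case in hand, the Pr\"ufer case is a one-line corollary: $T\subseteq T\otimes_RS$ is quasi-Pr\"ufer and integrally closed (by preservation of integral closure), hence Pr\"ufer. No normal-pair verification for arbitrary intermediate rings is ever needed.
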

\begin{proof}  

We can suppose that $R$ is an AIC ring. To see this, it is enough to use the base change $R\to R^*$. We set $T^*:= T\otimes_RR^*$, $S^*:= S\otimes_RR^*$. We first observe that 
$R^*\subseteq S^*$ is quasi-Pr\"ufer for the following reason: the composite extension $R\subseteq S \subseteq S^*$ is quasi-Pr\"ufer because the last extension is integral. Moreover, $R^* \to T^*$ is absolutely flat.  In case $T^*\subseteq T^*\otimes_{R^*} S^*$ is quasi-Pr\"ufer, so is $T\subseteq T\otimes_R S$,  because $T\to T^*= T\otimes_RR^*$ is faithfully flat and $T^*\subseteq T^*\otimes_{R^*} S^*$ is deduced from $T\subseteq _RS$ by the faithfully flat base change $T \to T\otimes_R S$. It is then enough to apply Proposition~\ref{3.17}.

We thus assume from now on that $R$ is an AIC ring.

 Let $N \in \mathrm{Spec}(T)$ be lying over $M$ in $R$. Then $R_M \to T_N$ is absolutely flat \cite[Proposition f]{O1} and $R_M \subseteq  S_M$ is quasi-Pr\"ufer. Now observe that $(T\otimes_R S)_N \cong T_N\otimes_{R_M} S_M$. Therefore, we can suppose that $R$ and $T$ are local and $R\to T$ is local and injective.  We deduce from \cite[Theorem 5.2]{O}, that $R_M \to T_N$ is an isomorphism. Therefore the proof is complete in the quasi-Pr\"ufer case. For the Pr\"ufer case, we need only to observe that absolutely flat morphisms preserve integral closure and a quasi-Pr\"ufer extension is Pr\"ufer if it is integrally closed. 
\end{proof}

\begin{proposition}\label{3.15}  Let $R \subseteq  S$ be an extension of rings and $R \to T$ a  base change which  preserves  integral closure. If $T \subseteq T\otimes_R S$ has FCP  and  $R \subseteq S$ is Pr\"ufer, then $T \subseteq  T\otimes_RS$ is Pr\"ufer.
\end{proposition}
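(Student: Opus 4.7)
The plan is to combine the ``preserves integral closure'' hypothesis with the FCP/integrally-closed characterization of Pr\"ufer extensions in Proposition~\ref{0.5}(1). The whole proof should be short; there is no serious obstacle.

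First, since $R\subseteq S$ is Pr\"ufer, it is integrally closed (a Pr\"ufer extension is a normal pair by Scholium B(3), and applying normality to $T=R$ itself gives $\overline R^S=R$). Next, I would unwind the definition of ``preserves integral closure'' with this particular choice of $R\subseteq S$: by hypothesis,
\[
\overline T^{\,T\otimes_R S}\;\cong\;T\otimes_R \overline R^S\;=\;T\otimes_R R\;=\;T,
\]
and this isomorphism is compatible with the inclusion into $T\otimes_R S$ (that is the content of the preservation statement, as the discussion preceding the proposition indicates for absolutely flat morphisms). Hence $T\subseteq T\otimes_R S$ is integrally closed.

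Finally, the FCP hypothesis kicks in: the integrally closed extension $T\subseteq T\otimes_R S$ has FCP, so Proposition~\ref{0.5}(1) applies and yields that $T\subseteq T\otimes_R S$ is Pr\"ufer, as desired.

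The only place where one might have to be mildly careful is checking that $T\to T\otimes_R S$ is really an extension (injective), but this is tacit in the statement, and in any case it follows because $R\subseteq S$ being Pr\"ufer is in particular a flat epimorphism, so its base change $T\to T\otimes_R S$ is a flat epimorphism (Scholium A(8)); together with the integral-closedness just derived (which ensures $T$ embeds as $\overline T^{T\otimes_R S}$) this makes $T\subseteq T\otimes_R S$ genuinely an extension. Thus the entire argument reduces to the two-line chain: preserves integral closure plus $\overline R^S=R$ give integral closedness; FCP plus integral closedness give Pr\"ufer.
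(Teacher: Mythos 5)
Your argument is exactly the paper's: the authors' one-line proof is precisely that an FCP extension is Pr\"ufer if and only if it is integrally closed, with the integral-closedness of $T\subseteq T\otimes_RS$ coming from the preservation hypothesis applied to the integrally closed (because Pr\"ufer) extension $R\subseteq S$. Your write-up just spells out the details the paper leaves implicit, so it is correct and follows the same route.
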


\begin{proof} The result holds because   an FCP extension is Pr\"ufer if and only if it is integrally closed. 
\end{proof}

We observe that $T\otimes_R \widetilde {R} \subseteq  \widetilde{T}$ needs not to be an isomorphism, since this property may fail even for a localization $R \to R_P$, where $P$ is a prime ideal of $R$.

\begin{proposition}\label{3.16} Let $R\subseteq S$ be an extension of rings, $R\to R'$ a faithfully flat ring morphism and set $S' := R'\otimes_RS$.  If $R'\subseteq S'$ is (quasi-) Pr\"ufer (respectively, FCP), then so is $R\subseteq S$.
\end{proposition}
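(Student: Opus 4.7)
The plan is to handle the three assertions (Pr\"ufer, quasi-Pr\"ufer, FCP) separately, each time verifying the defining property for $R\subseteq S$ by pulling the relevant data back from $R'\subseteq S'$ via the base change $R\to R'$. A preliminary observation used throughout: since $R\to R'$ is flat, for any $T\in[R,S]$ the map $T':= R'\otimes_R T\to R'\otimes_R S=S'$ is injective, so $T'\in[R',S']$; and if $T_1\subsetneq T_2$ in $[R,S]$, then $T_2'/T_1'\cong R'\otimes_R(T_2/T_1)$ is nonzero by faithful flatness, so $T_1'\subsetneq T_2'$ in $[R',S']$.

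For the Pr\"ufer case, fix $T\in[R,S]$ and form $T'$ as above. Since $R'\subseteq S'$ is Pr\"ufer, $R'\subseteq T'$ is a flat epimorphism. But $R'\to T'$ is the base change of $R\to T$ along the faithfully flat morphism $R\to R'$, so by descent of flat epimorphisms along faithfully flat morphisms (Scholium A(9)), $R\to T$ is itself a flat epimorphism. Hence $R\subseteq S$ is Pr\"ufer.

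For the quasi-Pr\"ufer case, by Theorem~\ref{2.3} it suffices to show that $R\subseteq S$ is an INC-pair, equivalently that for every $T\in[R,S]$ and every $P\in\mathrm{Spec}(R)$ the fiber ring $\kappa(P)\otimes_R T$ is zero-dimensional. Fix such $P$ and $T$, and by surjectivity of $\mathrm{Spec}(R')\to\mathrm{Spec}(R)$ (faithful flatness), choose $P'\in\mathrm{Spec}(R')$ with $P'\cap R=P$. With $T'=R'\otimes_R T$ we have
$$\kappa(P')\otimes_{R'}T'\ \cong\ \kappa(P')\otimes_R T\ \cong\ \kappa(P')\otimes_{\kappa(P)}\bigl(\kappa(P)\otimes_R T\bigr),$$
and the left-hand side is zero-dimensional because $R'\subseteq T'$ is INC. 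The morphism $\kappa(P)\to\kappa(P')$ (a field extension into a nonzero ring) is faithfully flat, so $\kappa(P)\otimes_R T\to\kappa(P')\otimes_{\kappa(P)}(\kappa(P)\otimes_R T)$ is faithfully flat; spectral surjectivity together with going-down then forbids any proper chain in $\mathrm{Spec}(\kappa(P)\otimes_R T)$, and we conclude that $\kappa(P)\otimes_R T$ is zero-dimensional.

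For the FCP case, any infinite chain $\{T_\alpha\}$ in $[R,S]$ produces by base change an infinite chain $\{T_\alpha'\}$ in $[R',S']$ with strict inclusions preserved by the preliminary observation, contradicting FCP for $R'\subseteq S'$. I expect the quasi-Pr\"ufer step to be the principal obstacle, since the Pr\"ufer case is immediate from Scholium A(9) and the FCP case is immediate from faithful flatness, while the INC descent requires passing through two successive tensor products and using that a faithfully flat ring morphism cannot strictly lower Krull dimension.
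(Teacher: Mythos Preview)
Your proof is correct and follows essentially the same strategy as the paper: the Pr\"ufer case via Scholium~A(9), the quasi-Pr\"ufer case via the INC-pair characterization and faithful flatness of the induced map on fiber rings (the paper cites \cite[Corollaire 3.4.9]{EGA} for this, which is exactly your computation $\kappa(P')\otimes_{R'}T'\cong\kappa(P')\otimes_{\kappa(P)}(\kappa(P)\otimes_R T)$), and the FCP case by a descent argument. The only minor divergence is that the paper delegates the FCP case to \cite[Theorem 2.2]{DPP3}, whereas you give the direct chain-lifting argument; your version is self-contained and equally valid.
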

\begin{proof} The Pr\"ufer case is clear, because faithfully  flat morphisms descend flat epimorphisms (Scholium A (9)). For the quasi-Pr\"ufer case, we use the INC-pair characterization  and the fact that $\mathrm{F}_{R,S}(P)\to\mathrm{F}_{R',S'}(P')$ is faithfully flat  for $P'\in \mathrm{Spec}(R')$ lying over $P$ in $R$ \cite[Corollaire 3.4.9]{EGA}. The FCP case is proved in \cite[Theorem 2.2]{DPP3}. 
\end{proof}
\begin{proposition}\label{3.17} Let $R\subseteq S$ be a ring extension and $R\to R'$
a  spectrally surjective ring morphism  (for example, either faithfully flat or injective and integral). Then $R\subseteq S$ is quasi-Pr\"ufer if $R' \to R'\otimes_R S$ is injective (for example, if $R\to R'$ is  faithfully flat) and quasi-Pr\"ufer.
\end{proposition}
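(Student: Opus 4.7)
The plan is to verify the INC-pair characterization from Theorem~\ref{2.3}: I will show that for every $T\in[R,S]$, the extension $R\subseteq T$ is incomparable. Having fixed such a $T$, consider the base-changed algebra $T':=R'\otimes_R T$ together with the canonical ring morphism $\varphi\colon T'\to R'\otimes_R S=S'$ induced by the inclusion $T\subseteq S$, and let $U:=\varphi(T')$ be its image. The hypothesis that $R'\to S'$ is injective yields $R'\subseteq U\subseteq S'$, and Corollary~\ref{3.3} then gives that the subextension $R'\subseteq U$ is itself quasi-Pr\"ufer; in particular $R'\subseteq U$ is an INC-pair.

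Suppose, toward a contradiction, that there are primes $Q_1\subsetneq Q_2$ of $T$ with common contraction $P:=Q_1\cap R=Q_2\cap R$. Spectral surjectivity of $R\to R'$ produces $P'\in\mathrm{Spec}(R')$ with $P'\cap R=P$. Because the canonical spectral map $\mathrm{Spec}(T')\to\mathrm{Spec}(R')\times_{\mathrm{Spec}(R)}\mathrm{Spec}(T)$ is surjective, the pairs $(P',Q_1)$ and $(P',Q_2)$ lift to primes $Q_1^{\ast},Q_2^{\ast}\in \mathrm{Spec}(T')$, each lying over $P'$ in $R'$ and over $Q_i$ in $T$. I then want to upgrade this to a chain $Q_1^{\ast}\subseteq Q_2^{\ast}$ with both primes containing $\ker\varphi$, so that their images become distinct comparable primes of $U$ over $P'$, contradicting the INC-pair property of $R'\subseteq U$.

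The main obstacle will be simultaneously achieving the chain condition $Q_1^{\ast}\subseteq Q_2^{\ast}$ and passage to $U$ (which requires $Q_i^{\ast}\supseteq\ker\varphi$). In the two emphasized special cases the situation is much cleaner: when $R\to R'$ is faithfully flat, $\varphi$ is itself injective by flat base change, so $\ker\varphi=0$ and every prime over $P'$ in $T'$ automatically descends to $U$, recovering Proposition~\ref{3.16}; when $R\to R'$ is injective and integral, the morphism $T\to T'$ is an integral base change, so going-up supplies the inclusion $Q_1^{\ast}\subseteq Q_2^{\ast}$ once $Q_1^{\ast}$ has been chosen, and incomparability for the integral extension $R\subseteq R'$ forces the contraction $Q_2^{\ast}\cap R'$ back to $P'$. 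For the full generality stated I expect the argument to combine these mechanisms, leveraging the injectivity of $R'\to S'$ to keep $U$ large enough inside $S'$ to detect the chain, and using spectral surjectivity both to ensure lifts of primes and, together with the incomparability inherited from the quasi-Pr\"ufer structure of $R'\subseteq S'$, to collapse both contractions onto the single prime~$P'$.
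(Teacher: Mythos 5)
Your framework---reducing to the INC-pair criterion of Theorem~\ref{2.3}, base-changing $T$ to $T':=R'\otimes_RT$, using spectral surjectivity to lift $P$ to $P'$, and using the injectivity of $R'\to S'$ to place the image $U=\varphi(T')$ in $[R',S']$---is exactly the paper's, and your treatment of the two parenthetical special cases is correct. But the proposal does not prove the proposition in the stated generality: after correctly isolating the obstacle (producing a \emph{chain} $Q_1^{\ast}\subseteq Q_2^{\ast}$ in $\mathrm{Spec}(T')$ lying over $P'$), you only write that you ``expect the argument to combine these mechanisms.'' Since the faithfully flat case is already Proposition~\ref{3.16} and the injective integral case is elementary, that unproved step is precisely the content of Proposition~\ref{3.17}; as written, the proof is incomplete.

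The paper closes this step with no case analysis by arguing at the level of fiber rings rather than individual primes. By \cite[Corollaire 3.4.9]{EGA} one has $\mathrm{F}_{R',T'}(P')\cong\mathrm{F}_{R,T}(P)\otimes_{\kappa(P)}\kappa(P')$, which is a \emph{faithfully flat} extension of $\mathrm{F}_{R,T}(P)$, being the base change along the (nonzero) field extension $\kappa(P)\subseteq\kappa(P')$. Faithfully flat morphisms are spectrally surjective and satisfy going-down, so any chain $\mathfrak q_1\subsetneq\mathfrak q_2$ in $\mathrm{Spec}\bigl(\mathrm{F}_{R,T}(P)\bigr)$---that is, a failure of incomparability for $R\subseteq T$ at $P$---lifts to a chain of distinct comparable primes of $T'$ over $P'$: choose a prime over $\mathfrak q_2$ by lying-over and then descend to one over $\mathfrak q_1$ by going-down. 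This is exactly the ``upgrade'' you could not supply, and it is obtained uniformly from flatness of the fiber morphism, with no appeal to going-up (hence no integrality hypothesis) and no separate treatment of $\ker\varphi$ in the flat case. In short, the missing idea is that incomparability at $P$ is the zero-dimensionality of the fiber ring $\mathrm{F}_{R,T}(P)$ (Definition~\ref{2.2}(1)) and that zero-dimensionality descends along faithfully flat ring maps; combined with Theorem~\ref{2.3} applied to $R'\subseteq U\subseteq S'$, this finishes the argument. If you rewrite your proof around the single morphism $\mathrm{F}_{R,T}(P)\to\mathrm{F}_{R',T'}(P')$ instead of around ad hoc lifts $Q_i^{\ast}$, the general case goes through in a few lines.
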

\begin{proof}
 Let $T\in [R,S]$ and  $P\in \mathrm{Spec}(R)$ and set $T':= T\otimes_RR'$. There is some $P'\in \mathrm{Spec}(R')$ lying over $P$, because $R\to R'$ is spectrally surjective. There is a faithfully flat  morphism $\mathrm{F}_{R,T}(P) \to   \mathrm{F}_{R',T'}(P') \cong  \mathrm{F}_{R,T}(P)\otimes_{\mathbf{k}(P)}\kappa(P')$ \cite[Corollaire 3.4.9]{EGA}.  By Theorem~\ref{2.3},  the result follows from  the faithful flatness of  $\mathrm{F}_{R,T}(P) \to   \mathrm{F}_{R',T\otimes_RR'}(P')$.
\end{proof}

\begin{theorem}\label{3.18}  Let $R\subseteq S$ be a ring extension. 
  
   \begin{enumerate} 
  \item   $R\subseteq S$ has a greatest quasi-Pr\"ufer subextension $R \subseteq \overset{\Longrightarrow} R =\widetilde{\overline R}$.
    
   \item $R \subseteq  \overline R \widetilde R= : \vec R$ is quasi-Pr\"ufer and then $\vec R \subseteq \overset{\Longrightarrow}R$.
   
   \item $\overline R^{\overset{\Longrightarrow}R} = \overline R$ and $\widetilde R^{\overset{\Longrightarrow}R} = \widetilde R$.
   \end{enumerate}
  \end{theorem}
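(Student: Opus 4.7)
The plan is to settle (1) first, since (2) and (3) are then essentially formal consequences. For part~(1), I would first observe that $R\subseteq \widetilde{\overline R}$ is itself quasi-Pr\"ufer because the tower $R\subseteq \overline R\subseteq \widetilde{\overline R}$ exhibits an integral extension followed by a Pr\"ufer extension (the latter by definition of the Pr\"ufer hull), so Definition~\ref{2.1}(2) applies directly.

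The substantive step is maximality: if $R\subseteq T$ is any quasi-Pr\"ufer subextension with $T\in[R,S]$, I must show $T\subseteq \widetilde{\overline R}$. My idea is to apply Proposition~\ref{3.11} to the two quasi-Pr\"ufer extensions $R\subseteq T$ and $R\subseteq \overline R$ (the second one being integral, hence trivially quasi-Pr\"ufer), yielding that $R\subseteq T\overline R$ is quasi-Pr\"ufer. I then identify the integral closure of $R$ inside $T\overline R$: since any $R$-integral element of $T\overline R$ lies in $\overline R^S\cap T\overline R$ and conversely $\overline R\subseteq T\overline R$, the integral closure is exactly $\overline R$. By Corollary~\ref{3.5} (or directly by Definition~\ref{2.1}(2)), this forces $\overline R\subseteq T\overline R$ to be Pr\"ufer. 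By the universal property of the Pr\"ufer hull, $T\overline R\subseteq \widetilde{\overline R}$, and in particular $T\subseteq \widetilde{\overline R}=\overset{\Longrightarrow}R$.

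Part (2) is then immediate: $R\subseteq \overline R$ is integral and $R\subseteq \widetilde R$ is Pr\"ufer, so both are quasi-Pr\"ufer, and Proposition~\ref{3.11} gives that $R\subseteq \overline R\widetilde R=\vec R$ is quasi-Pr\"ufer. The containment $\vec R\subseteq \overset{\Longrightarrow}R$ follows from the maximality established in~(1).

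For part (3), I would apply Proposition~\ref{0.8} twice. First, $\widetilde R^{\overset{\Longrightarrow}R}=\widetilde R\cap \overset{\Longrightarrow}R$; since $\widetilde R\subseteq \vec R\subseteq \overset{\Longrightarrow}R$ by~(2), this intersection equals $\widetilde R$. Second, $\overline R^{\overset{\Longrightarrow}R}=\overline R\cap \overset{\Longrightarrow}R=\overline R$ because $\overline R\subseteq \widetilde{\overline R}=\overset{\Longrightarrow}R$ by construction. The main obstacle is the computation inside (1) of the integral closure of $R$ in $T\overline R$ and the subsequent appeal to Proposition~\ref{3.11}; once this is arranged, everything else is routine chasing of containments.
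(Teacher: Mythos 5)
Your proof is correct, and it takes a genuinely different route for the key step. The paper proves part~(1) in two stages: it first establishes \emph{abstractly} that a greatest quasi-Pr\"ufer subextension $\overset{\Longrightarrow}R$ exists, using the fact that composita of quasi-Pr\"ufer subextensions are quasi-Pr\"ufer (so the family is upward directed) together with Proposition~\ref{3.13} on directed unions; only afterwards does it identify $\overset{\Longrightarrow}R$ with $\widetilde{\overline R}$ by factoring $R\subseteq\overset{\Longrightarrow}R$ as integral followed by Pr\"ufer and observing the integral part must be $\overline R$. You instead verify directly that $\widetilde{\overline R}$ dominates every quasi-Pr\"ufer $T\in[R,S]$: form $T\overline R$, which is quasi-Pr\"ufer by Proposition~\ref{3.11}, compute that the integral closure of $R$ in $T\overline R$ is $\overline R^S\cap T\overline R=\overline R$, conclude from Definition~\ref{2.1} that $\overline R\subseteq T\overline R$ is Pr\"ufer, and invoke the universal property of the Pr\"ufer hull of $\overline R$ in $S$. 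This bypasses the directed-union argument entirely and is arguably more constructive; what it costs is nothing, since Proposition~\ref{3.11} is available either way. Two cosmetic remarks: Corollary~\ref{3.5} as stated concerns an integrally closed subextension sitting at the \emph{bottom} of a quasi-Pr\"ufer tower, so the cleaner citation for ``$\overline R\subseteq T\overline R$ is Pr\"ufer'' is Definition~\ref{2.1}(1) (equivalently, quasi-Pr\"ufer plus integrally closed equals Pr\"ufer); and Proposition~\ref{0.8} governs only the Pr\"ufer hull, the identity $\overline R^{\,U}=\overline R^{\,S}\cap U$ for integral closures in part~(3) being the standard fact the paper itself uses with a ``clearly''. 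Parts~(2) and~(3) otherwise match the paper's argument.
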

  \begin{proof}
   To see (1), use Proposition~\ref{3.13} which tells us that the set of all quasi-Pr\"ufer subextensions is upward directed and then use Proposition~\ref{3.12} to prove the existence of $\overset{\Longrightarrow} R$. Then let $R\subseteq T \subseteq \overset{\Longrightarrow} R$ be a tower with $R\subseteq T$ integral and $T\subseteq \overset{\Longrightarrow} R$ Pr\"ufer. From $T\subseteq \overline R \subseteq \widetilde{\overline R} \subseteq \overset{\Longrightarrow} R$, we deduce that  $T= \overline R$ and then $ \overset{\Longrightarrow}  R = \widetilde{\overline R}$. 
   
  (2) Now $R \subseteq \overline R\widetilde R$ can be factored $R\subseteq \widetilde R \subseteq \overline R\widetilde R$ and is a tower of quasi-Pr\"ufer extensions, because $\widetilde R \to \widetilde R \overline R$ is integral.
  
  (3) Clearly, the integral closure and the Pr\"ufer closure of $R$ in $\overset{\Longrightarrow} R$ are the respective intersections of $\overline R$ and $\widetilde R$ with $\overset{\Longrightarrow}R$, and $\overline R,\widetilde R \subseteq \overset{\Longrightarrow}R$. 
 \end{proof}
 This last result means that, as long integral closures and Pr\"ufer closures of subsets of $\overset{\Longrightarrow}R$ are concerned, we can suppose that $R \subseteq S$ is quasi-Pr\"ufer.

\section{Almost-Pr\"ufer extensions}

We next give a definition ``dual" of the definition of a quasi-Pr\"ufer extension.
\subsection{Arbitrary extensions}

\begin{definition}\label{4.1} A ring extension $R\subseteq S$ is called an 
{\it almost-Pr\"ufer} extension if it can be factored $R\subseteq T \subseteq S$, where $R\subseteq T$ is Pr\"ufer and $T\subseteq S$ is integral. 
\end{definition}

\begin{proposition}\label{4.2} An extension $R\subseteq S$ is almost-Pr\"ufer if and only if $\widetilde R \subseteq S$ is integral. It follows that the subring $T$ of the above definition is $\widetilde R= \widehat R$ when $R\subseteq S$ is almost-Pr\"ufer.
\end{proposition}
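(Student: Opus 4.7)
The plan is to handle the two implications separately and then establish the identifications $T=\widetilde R=\widehat R$ as a consequence of the forward direction.

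For the easy implication ($\Leftarrow$), I would simply observe that if $\widetilde R\subseteq S$ is integral, then $R\subseteq \widetilde R\subseteq S$ is a decomposition of the required type, since $R\subseteq \widetilde R$ is Pr\"ufer by definition of the Pr\"ufer hull. So it remains to prove the harder direction and, along the way, pin down $T$.

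For ($\Rightarrow$), assume $R\subseteq T\subseteq S$ with $R\subseteq T$ Pr\"ufer and $T\subseteq S$ integral. First, the defining maximality of the Pr\"ufer hull gives $T\subseteq \widetilde R$, so we have a tower $T\subseteq \widetilde R\subseteq S$. Since $T\subseteq S$ is integral, both subextensions $T\subseteq \widetilde R$ and $\widetilde R\subseteq S$ inherit integrality. This already yields the main equivalence: $\widetilde R\subseteq S$ is integral. To identify $T$ with $\widetilde R$, I would exploit the normal pair characterization of Pr\"ufer extensions (Scholium B(3)): since $R\subseteq \widetilde R$ is Pr\"ufer (hence a normal pair), $T\subseteq \widetilde R$ is integrally closed; combined with the integrality noted above, this forces $T=\widetilde R$.

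For the remaining identification $\widetilde R=\widehat R$, I would argue as follows. The inclusion $\widetilde R\subseteq \widehat R$ holds in general, since any Pr\"ufer subextension is a flat epimorphic subextension and $\widehat R$ is the greatest such. For the reverse inclusion, note that $\widehat R\subseteq S$ is integral (subextension of $\widetilde R\subseteq S$), so $\widetilde R\subseteq \widehat R$ is integral. On the other hand, $R\subseteq \widehat R$ is a flat epimorphism by definition, so by Scholium A(4) the induced map $\widetilde R\subseteq \widehat R$ is also a flat epimorphism. An extension that is simultaneously integral and a flat epimorphism is trivial by (L) (Scholium A(5)), giving $\widetilde R=\widehat R$.

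The only subtle point is remembering that in the Pr\"ufer setting one must invoke the normal pair characterization to turn the formal inclusion $T\subseteq \widetilde R$ into equality; everything else is a clean application of the Scholia. There is no serious obstacle, as the machinery already assembled in the excerpt does all the heavy lifting.
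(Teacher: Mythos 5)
Your proof is correct and follows essentially the same route as the paper's: both collapse the tower $T\subseteq\widetilde R\subseteq\widehat R\subseteq S$ using the fact that an injective integral flat epimorphism is trivial (Scholium A(4) and (5)(L)). The only cosmetic difference is that you obtain $T=\widetilde R$ via the normal-pair characterization of the Pr\"ufer extension $R\subseteq\widetilde R$ rather than by applying (L) directly to $T\subseteq\widehat R$, which is an equally valid shortcut.
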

\begin{proof} If $R\subseteq S$ is almost-Pr\"ufer, there is a factorization $R\subseteq T \subseteq \widetilde R\subseteq \widehat R \subseteq S$, where $T\subseteq \widehat R$ is both integral and a flat epimorphism by Scholium A (4). Therefore,   $T =\widetilde R= \widehat R$ by Scholium A (5) (L).
\end{proof}

\begin{corollary}\label{4.2bis} Let $R\subseteq S$ be a quasi-Pr\"ufer extension, and let $T\in[R,S]$. Then, $T\cap \overline R\subseteq T\overline R$ is almost-Pr\"ufer. Moreover, $T=\widetilde{\overline R\cap T}^{T\overline R}$.
\end{corollary}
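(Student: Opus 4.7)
Set $A := \overline R\cap T$ and $B := T\overline R$; the goal is to exhibit $A\subseteq B$ as almost-Pr\"ufer and to identify $T$ as its Pr\"ufer hull inside $B$. The first observation is that $A$ is precisely the integral closure of $R$ inside $T$: indeed, an element of $T$ is integral over $R$ if and only if it lies in $\overline R^S$, so $\overline R^T = \overline R\cap T = A$. This identification is the bridge that lets us apply Definition~\ref{2.1} in the subextension $R\subseteq T$.

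Since $R\subseteq S$ is quasi-Pr\"ufer, Corollary~\ref{3.3} yields that $R\subseteq T$ is quasi-Pr\"ufer, and Definition~\ref{2.1} then gives that $\overline R^T\subseteq T$ is Pr\"ufer; equivalently, $A\subseteq T$ is Pr\"ufer. On the other hand, $T\subseteq T\overline R = B$ is integral, because $B$ is generated over $T$ by elements of $\overline R$, each of which is integral over $R$ and a fortiori over $T$. Therefore the factorization $A\subseteq T\subseteq B$ shows at once that $A\subseteq B$ is almost-Pr\"ufer, by Definition~\ref{4.1}.

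For the equality $T = \widetilde{A}^{B}$, apply Proposition~\ref{4.2} to the almost-Pr\"ufer extension $A\subseteq B$: the intermediate ring in \emph{any} Pr\"ufer-then-integral factorization of an almost-Pr\"ufer extension is the Pr\"ufer hull. Since our factorization is $A\subseteq T\subseteq B$ with $A\subseteq T$ Pr\"ufer and $T\subseteq B$ integral, this forces $T = \widetilde{A}^{B} = \widetilde{\overline R\cap T}^{\,T\overline R}$, as required.

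There is no real obstacle here; the only point requiring vigilance is keeping straight that $A$ is computed as an integral closure relative to $T$ (not to $S$), so that the quasi-Pr\"ufer hypothesis on $R\subseteq T$ (inherited via Corollary~\ref{3.3}) delivers exactly the Pr\"ufer statement $A\subseteq T$ needed for the first factor.
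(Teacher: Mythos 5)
Your proof is correct and follows essentially the same route as the paper: both rest on Corollary~\ref{3.3} to get quasi-Pr\"uferness of the relevant subextension, identify $\overline R\cap T$ as the integral closure of $R$ in $T$ so that $\overline R\cap T\subseteq T$ is Pr\"ufer, note that $T\subseteq T\overline R$ is integral, and invoke Proposition~\ref{4.2} for the hull identification. The only cosmetic difference is that the paper deduces Pr\"uferness of $\overline R\cap T\subseteq T$ via Corollary~\ref{3.5} (quasi-Pr\"ufer and integrally closed implies Pr\"ufer) rather than via Definition~\ref{2.1} applied to $R\subseteq T$; the two are interchangeable here.
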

\begin{proof} $T\cap \overline R\subseteq T$ is quasi-Pr\"ufer by Corollary ~\ref{3.3}.  Being integrally closed, it is Pr\"ufer by Corollary ~\ref{3.5}. Moreover, $T\subseteq T \overline R$ is an integral extension. Then, $T\cap \overline R\subseteq T\overline R$ is almost-Pr\"ufer and $T=\widetilde{\overline R\cap T}^{T\overline R}$.
\end{proof}

We note that  integral extensions and Pr\"ufer extensions are almost-Pr\"ufer and hence minimal extensions are almost-Pr\"ufer. There are quasi-Pr\"ufer extensions that are not almost-Pr\"ufer. It is enough to consider  \cite[Example 3.5(1)]{P2}. Let $R\subseteq T \subseteq S$ be two minimal extensions, where $R$ is local, $R\subseteq T$ integral  and $T\subseteq S$  is Pr\"ufer. Then $R\subseteq S$ is quasi-Pr\"ufer but not almost-Pr\"ufer, because $S = \widehat R$ and $R= \widetilde R$.  The same example shows that a composite of almost-Pr\"ufer extensions may not be almost-Pr\"ufer.

But the reverse implication holds.

\begin{theorem}\label{4.3} Let $R\subseteq S$ be an almost-Pr\"ufer extension. Then $R\subseteq S$ is quasi-Pr\"ufer. Moreover,   $\widetilde R = \widehat R$, $(\widetilde R)_P = \widetilde{R_P}$ for each $P \in \mathrm{Spec}(R)$. In this case, any flat epimorphic subextension $R\subseteq T$ is Pr\"ufer.
\end{theorem}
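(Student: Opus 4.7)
My plan is to unwind the defining factorization of an almost-Pr\"ufer extension and invoke Proposition~\ref{4.2} and Corollary~\ref{3.3} at each stage. I would fix a factorization $R \subseteq T \subseteq S$ with $R \subseteq T$ Pr\"ufer and $T \subseteq S$ integral. Proposition~\ref{4.2} already identifies $T = \widetilde R = \widehat R$, which dispatches the equality $\widetilde R = \widehat R$. For the quasi-Pr\"ufer assertion, both pieces of the factorization are quasi-Pr\"ufer (a Pr\"ufer extension is quasi-Pr\"ufer by Definition~\ref{2.1}, and an integral extension is trivially quasi-Pr\"ufer), so Corollary~\ref{3.3} gives that $R \subseteq S$ is quasi-Pr\"ufer.

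For $(\widetilde R)_P = \widetilde{R_P}$, I would first check that almost-Pr\"ufer extensions are stable under localization: localizing $R \subseteq \widetilde R \subseteq S$ at $P$ gives a tower $R_P \subseteq (\widetilde R)_P \subseteq S_P$, where $R_P \subseteq (\widetilde R)_P$ is Pr\"ufer by Proposition~\ref{0.2}(1) and $(\widetilde R)_P \subseteq S_P$ is integral because integrality survives localization. Hence $R_P \subseteq S_P$ is almost-Pr\"ufer, and applying Proposition~\ref{4.2} locally forces the intermediate ring $(\widetilde R)_P$ to coincide with $\widetilde{R_P}^{S_P}$.

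Finally, for a flat epimorphic subextension $R \subseteq T'$ with $T' \in [R, S]$, the definition of the Morita hull (Definition~\ref{0.6}) puts $T' \subseteq \widehat R$, and combining with $\widehat R = \widetilde R$ places $T'$ in $[R, \widetilde R]$. Since $R \subseteq \widetilde R$ is Pr\"ufer, Scholium~B(1) says every $U \in [R, \widetilde R]$ yields a flat epimorphism $R \subseteq U$; applying this to each $U \in [R, T']$ (which again lies in $[R, \widetilde R]$) shows $R \subseteq T'$ is itself Pr\"ufer. The main obstacle is the third assertion, since in general $\widetilde R$ does not commute with localization; what rescues the situation here is the uniqueness of the intermediate ring supplied by Proposition~\ref{4.2}, which only holds under the almost-Pr\"ufer hypothesis.
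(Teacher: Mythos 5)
Your proposal is correct and follows essentially the same route as the paper: Proposition~\ref{4.2} for $\widetilde R=\widehat R$, Corollary~\ref{3.3} applied to the Pr\"ufer--integral factorization for the quasi-Pr\"ufer claim, and localization of that factorization (with the uniqueness of the intermediate ring, i.e.\ the ``integral $+$ flat epimorphism $\Rightarrow$ isomorphism'' argument behind Proposition~\ref{4.2}) for $(\widetilde R)_P=\widetilde{R_P}$. You merely spell out details the paper leaves implicit, including the final assertion about flat epimorphic subextensions, which the paper's proof does not address explicitly.
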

\begin{proof}  Let $R\subseteq \widetilde  R \subseteq S$, be an almost-Pr\"ufer extension, that is $\widetilde R \subseteq S$ is integral. 
The result follows because   $R\subseteq \widetilde R$ is Pr\"ufer.  Now the Morita hull and the Pr\"ufer hull coincide by Proposition~\ref{4.2}. 
In the same way,  $(\widetilde R)_P \to \widetilde{R_P}$ is a flat epimorphism and $(\widetilde R)_P \to S_P$ is  integral.
\end{proof}

We could define almost-Pr\"ufer rings as the rings $R$ such that $R\subseteq \mathrm{Tot}(R)$ is almost-Pr\"ufer. But in that case $\widetilde R = \mathrm{Tot}(R)$ (by Theorem~\ref{4.3}), so that $R$ is a Pr\"ufer ring. The converse evidently holds. Therefore, this concept does not define something new.

We observed in \cite[Remark 2.9(c)]{DPP2} that there is an almost-Pr\"ufer  FMC extension $R \subseteq S \subseteq T$, where $R\subseteq S$ is a Pr\"ufer minimal extension and  $S\subseteq T$ is minimal and integral.  
But $R \subseteq T$ is not an FCP extension.

\begin{proposition}\label{4.4}  Let $R \subseteq S$ be an extension verifying the hypotheses:

\begin{enumerate}

\item[(i)] $R \subseteq S$ is  quasi-Pr\"ufer.

\item[(ii)] $R\subseteq S$  can be factored $R\subseteq T \subseteq S$, where $R \subseteq T$ is a flat epimorphism.  
\end{enumerate}

\begin{enumerate}
 \item  Then the following commutative diagram (D) is a pushout,  
\car R,{\overline R},T,{TÊ\overline R}

\noindent  $T\overline R \subseteq S$ is Pr\"ufer and $R \subseteq T\overline R $ is quasi-Pr\"ufer. Moreover, $\mathrm{F}_{R,\overline R}(P) \cong \mathrm{F}_{T,T\overline R}(Q)$ for each $Q \in \mathrm{Spec}(T)$ and $P:= Q \cap R$.

\item  If in addition $R\subseteq T$ is integrally closed,  $(D)$ is a pullback, $T\cap \overline R = R$,  $(R:\overline R )= (T: T\overline R)\cap R$ and $(T:T\overline R) = (R:\overline R)T$.
\end{enumerate}
\end{proposition}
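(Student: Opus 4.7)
For part (1), the natural strategy is to use Proposition~\ref{3.10} as the main engine. Since $R\subseteq S$ is quasi-Pr\"ufer, $\overline R\subseteq S$ is Pr\"ufer, and since $R\to T$ is a flat epimorphism, the ``in particular'' half of Proposition~\ref{3.10} applies: the surjective map $T\otimes_R\overline R\to T\overline R$ is an isomorphism, which is exactly the pushout claim for $(D)$. From the pushout $\overline R\to T\overline R$ is a flat epimorphism (base change of $R\to T$). To see that $T\overline R\subseteq S$ is Pr\"ufer, I would argue that $\overline R\subseteq T\overline R\subseteq S$ sits inside the Pr\"ufer (i.e.\ normal pair) extension $\overline R\subseteq S$, so every intermediate ring $\overline R\subseteq V\subseteq S$ with $T\overline R\subseteq V$ yields $T\overline R\to V$ a flat epimorphism (Scholium~A(4) or Scholium~B).

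Still in part (1), the quasi-Pr\"uferness of $R\subseteq T\overline R$ follows from Corollary~\ref{3.3} applied to the tower $R\subseteq\overline R\subseteq T\overline R$: the first extension is integral and the second is Pr\"ufer by the argument just given. For the fiber identification, using the pushout $T\overline R\cong T\otimes_R\overline R$, the fiber of $T\subseteq T\overline R$ at $Q$ is
\[
\mathrm{F}_{T,T\overline R}(Q)=\kappa(Q)\otimes_T(T\otimes_R\overline R)\cong\kappa(Q)\otimes_R\overline R,
\]
and since $R\to T$ is a flat epimorphism, Scholium~A(7) gives $\kappa(Q)\cong\kappa(P)$, so this agrees with $\mathrm{F}_{R,\overline R}(P)=\kappa(P)\otimes_R\overline R$.

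For part (2), assume in addition $R\subseteq T$ is integrally closed. The key observation is that $\overline R\cap T$ is integral over $R$ and contained in $T$, hence equals $R$; but $\overline R\cap T$ is also (since all rings live inside $T\overline R$) exactly the fibered product $\overline R\times_{T\overline R}T$, so $(D)$ is a pullback with $R=T\cap\overline R$. For the conductor identities, set $\mathfrak{c}:=(T:T\overline R)$ and $f\colon R\to T$. I would first show the direct formula $(R:\overline R)=\mathfrak{c}\cap R$ by a two-way inclusion: if $r\in R$ satisfies $r\overline R\subseteq R$, then $rT\overline R=Tr\overline R\subseteq TR=T$; conversely $rT\overline R\subseteq T$ forces $r\overline R\subseteq T\cap\overline R=R$, using $T\cap\overline R=R$ just established. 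This is exactly $f^{-1}(\mathfrak c)=(R:\overline R)$.

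The final identity $(T:T\overline R)=(R:\overline R)T$ is the main technical point and follows by invoking Scholium~A(6): since $R\to T$ is a flat epimorphism and $\mathfrak{c}$ is an ideal of $T$, one has $\mathfrak{c}=f^{-1}(\mathfrak{c})\cdot T=(R:\overline R)\cdot T$. This is the step that would have been awkward without the flat-epimorphism hypothesis; it is the reason the conductor behaves well under the base change $R\to T$. The main obstacle throughout is really keeping careful track of where the flat-epimorphism hypothesis is used versus the integrally-closed hypothesis: the pushout and quasi-Pr\"ufer statements need only (ii), while the pullback and conductor identities need the additional integral closedness to guarantee $\overline R\cap T=R$.
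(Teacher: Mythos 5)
Your proposal is correct and follows essentially the same route as the paper: the pushout comes from the injectivity of $\overline R\to\overline R\otimes_RT\to T\overline R$ plus Scholium~A(3) (which is exactly the computation inside the proof of Proposition~\ref{3.10}), the fiber identification rests on $\kappa(P)\cong\kappa(Q)$ from Scholium~A(7), and part (2) uses $T\cap\overline R=R$ (from integral closedness) together with Scholium~A(6) for the conductor identity, just as the paper does via the argument of Ayache--Dobbs. The only difference is that you write out explicitly the details the paper delegates to \cite[Lemma 3.5]{AD}, and these details are correct.
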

\begin{proof} 
(1) Consider the injective composite map $ \overline R \to \overline R \otimes_R T \to T\overline R$. As $ \overline R \to \overline R \otimes_R T$ is a flat epimorphism, because deduced by a base change of $R \to T$, we get that the surjective map $ \overline R \otimes_R T \to T\overline R$ is an isomorphism  by Scholium A (3).
By fibers transitivity, we have $\mathrm{F}_{T,\overline RT}(Q) \cong\kappa(Q) \otimes_{\mathbf{k}(P)}\mathrm{F}_{R,\overline R}(P)$ \cite[Corollaire 3.4.9]{EGA}. As $\kappa(P) \to\kappa(Q)$ is an isomorphism by Scholium A, we get that $\mathrm{F}_{R,\overline R}(P) \cong \mathrm{F}_{T,\overline RT}(Q)$.

(2) As in \cite[Lemma 3.5]{AD},  $R = T\cap \overline R$. The first statement on the conductors has the same proof as in \cite[Lemma 3.5]{AD}. The second holds because $R\subseteq T$  is a flat epimorphism (see Scholium A (6)).  
\end{proof}

\begin{theorem}\label{4.5} Let $R\subset S$ be a quasi-Pr\"ufer  
 extension and the diagram (D'):
\car R,{\overline R},\widetilde R,{\widetilde RÊ\overline R}

\begin{enumerate}

\item (D') is a pushout and a pullback, such that $\overline R\cap \widetilde R = R$ and $(R:\overline R )= (\widetilde R: \widetilde RÊ\overline R)\cap R$ so that $(\widetilde R:\widetilde RÊ\overline R) = (R:\overline R)\widetilde R$. 

\item $R\subset S$ can be factored  $R\subseteq \widetilde R \overline R= \overline{\widetilde{R}}= \vec R \subseteq \overset{\Longrightarrow}R= \widetilde{\overline{R}} = S$, where the first extension is  almost-Pr\"ufer and the second  is Pr\"ufer.  

\item $R\subset S$ is almost-Pr\"ufer if and only if $S= \overline R \widetilde R\Leftrightarrow \widetilde{\overline{R}}=\overline{\widetilde{R}}$.

\item $R\subseteq  \widetilde R \overline R= \overline{\widetilde{R}}= \vec R$ is the greatest almost-Pr\"ufer subextension of $R\subseteq S$ and $\widetilde R  =\widetilde R ^{\vec R}$.

\item $\mathrm{Supp}(S/R)=\mathrm{Supp}(\widetilde R/R)\cup\mathrm{Supp}(\overline{R}/R)$ if $R\subseteq S$ is almost-Pr\"ufer.  ($\mathrm{Supp}$ can be replaced with $\mathrm{MSupp}$).
\end{enumerate} 
\end{theorem}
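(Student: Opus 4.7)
The strategy is to apply Proposition~\ref{4.4} to the factorization $R\subseteq \widetilde R\subseteq S$, which satisfies both hypotheses of that proposition: hypothesis (i) is our assumption that $R\subseteq S$ is quasi-Pr\"ufer, and hypothesis (ii) holds because $R\subseteq \widetilde R$ is Pr\"ufer and in particular a flat epimorphism. Moreover, $R\subseteq \widetilde R$ is integrally closed (being Pr\"ufer), so both parts of Proposition~\ref{4.4} apply. This will yield (1) directly: Proposition~\ref{4.4}(1) gives that (D') is a pushout and $\widetilde R\overline R\subseteq S$ is Pr\"ufer, while Proposition~\ref{4.4}(2) gives that (D') is a pullback together with the conductor equalities $\overline R\cap \widetilde R=R$ and $(R:\overline R)=(\widetilde R:\widetilde R\overline R)\cap R$, $(\widetilde R:\widetilde R\overline R)=(R:\overline R)\widetilde R$.

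For (2), I first note that since $R\subseteq S$ is quasi-Pr\"ufer, $S$ is itself a (the largest) quasi-Pr\"ufer subextension of $R\subseteq S$, so $\overset{\Longrightarrow}R=S=\widetilde{\overline R}$ by Theorem~\ref{3.18}. Next, $\widetilde R\subseteq \widetilde R\overline R$ is integral (since $\overline R$ is integral over $R$), and by the pushout of part (1), $\widetilde R\overline R\subseteq S$ is Pr\"ufer, hence integrally closed; this forces $\widetilde R\overline R=\overline{\widetilde R}^S$. Combined with $\vec R=\overline R\widetilde R$ from Theorem~\ref{3.18}, the factorization $R\subseteq \widetilde R\subseteq \widetilde R\overline R=\vec R\subseteq S$ exhibits $R\subseteq \vec R$ as a composite Pr\"ufer-then-integral, i.e.\ almost-Pr\"ufer, and $\vec R\subseteq S$ as Pr\"ufer.

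For (3), use Proposition~\ref{4.2}: $R\subseteq S$ is almost-Pr\"ufer if and only if $\widetilde R\subseteq S$ is integral, i.e.\ $S=\overline{\widetilde R}^S=\widetilde R\overline R$ by the identification from (2). The second equivalence $\widetilde R\overline R=S\Leftrightarrow \widetilde{\overline R}=\overline{\widetilde R}$ is immediate from $S=\widetilde{\overline R}$ and $\overline{\widetilde R}=\widetilde R\overline R$. For (4), given any almost-Pr\"ufer subextension $R\subseteq U\subseteq S$, Proposition~\ref{4.2} applied inside $U$ gives that $\widetilde R^U\subseteq U$ is integral; since $\widetilde R^U=\widetilde R\cap U\subseteq \widetilde R$ by Proposition~\ref{0.8}, every element of $U$ is integral over $\widetilde R$, so $U\subseteq \overline{\widetilde R}^S=\vec R$. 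The identity $\widetilde R^{\vec R}=\widetilde R\cap \vec R=\widetilde R$ is then another application of Proposition~\ref{0.8}.

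For (5), in the almost-Pr\"ufer case we have $S=\vec R=\widetilde R\overline R$, and the pushout of part (1) identifies the canonical surjection $\widetilde R\otimes_R\overline R\to S$ as an isomorphism. Localizing at $P\in\mathrm{Spec}(R)$ preserves this pushout, giving $S_P\cong \widetilde R_P\otimes_{R_P}\overline R_P$. Hence if both $\widetilde R_P=R_P$ and $\overline R_P=R_P$ then $S_P=R_P$, which gives $\mathrm{Supp}(S/R)\subseteq \mathrm{Supp}(\widetilde R/R)\cup\mathrm{Supp}(\overline R/R)$; the reverse inclusion is immediate from $\widetilde R,\overline R\subseteq S$. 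The main delicate point in the whole argument is keeping track of the identification $\overline{\widetilde R}=\widetilde R\overline R$, which is what makes everything go through; but this follows cleanly from Proposition~\ref{4.4}(1) and the Pr\"ufer property implying integral closedness.
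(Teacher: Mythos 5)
Your proposal is correct and follows essentially the same route as the paper: both hinge on applying Proposition~\ref{4.4} with $T=\widetilde R$ (using that $R\subseteq\widetilde R$ is a flat epimorphism and integrally closed), then identifying $\overline{\widetilde R}=\widetilde R\overline R$ because that compositum is integral over $\widetilde R$ and integrally closed in $S$, and finishing (4) via Proposition~\ref{0.8} and (5) by localization. The only differences are cosmetic (you invoke Proposition~\ref{4.2} directly where the paper applies its own statement (3) to the subextension, and you localize the tensor product rather than the compositum), so there is nothing to flag.
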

\begin{proof} To show (1), (2), in view of Theorem~\ref{3.18}, it is enough to apply Proposition~\ref{4.4} with $T= \widetilde R$ and $S=\overset{\Longrightarrow} R$, because $R \subseteq \widetilde R\overline R$ is almost-Pr\"ufer whence quasi-Pr\"ufer, keeping in mind that a Pr\"ufer extension is integrally closed, whereas an integral Pr\"ufer extension is trivial. Moreover,  $\overline{\widetilde{R}}=\overline{R}\widetilde{R}$ because $\overline{R}\widetilde{R}\subseteq\overline{\widetilde{R}}$ is both integral and integrally closed.

(3) is obvious.

(4) Now consider an almost-Pr\"ufer subextension $R \subseteq T \subseteq U$, where $R\subseteq T$ is Pr\"ufer and $T\subseteq U$ is integral. Applying (3), we see that $U= {\overline R}^U  {\widetilde R}^U\subseteq \overline R \widetilde R $ in view of Proposition~\ref{0.8}. 

(5)   Obviously, $\mathrm{Supp}(\widetilde R/R)\cup\mathrm{Supp}(\overline{R}/R)\subseteq \mathrm{Supp}(S/R)$. Conversely, let $M\in \mathrm{Spec}(R)$ be such that $R_M\neq S_M$, and  $R_M = (\widetilde R)_M= \overline{R}_M$.  Then  (3) entails that $S_M = (\overline R )_M(\widetilde R)_M = R_M$, which is absurd.
\end{proof}

\begin{corollary}\label{4.5bis} Let $R \subseteq S$ be an almost-Pr\"ufer extension. The following conditions are equivalent:
\begin{enumerate}

\item $\mathrm{Supp}(S/\overline{R})\cap\mathrm{Supp}(\overline{R}/R)=\emptyset$.

\item $\mathrm{Supp}(S/\widetilde R)\cap\mathrm{Supp}(\widetilde{R}/R)=\emptyset$.

\item $\mathrm{Supp}(\widetilde R/R)\cap\mathrm{Supp}(\overline{R}/R)=\emptyset$.
\end{enumerate} 
\end{corollary}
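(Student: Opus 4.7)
My approach will be to exploit the pullback/pushout structure furnished by Theorem~\ref{4.5}, which localizes cleanly. Since $R\subseteq S$ is almost-Pr\"ufer, it is in particular quasi-Pr\"ufer (Theorem~\ref{4.3}), and Theorem~\ref{4.5}(3) gives $S=\overline R\,\widetilde R$. Theorem~\ref{4.5}(1) asserts that the square (D') is a pullback, so $\overline R\cap \widetilde R=R$ inside $S$. Both of these identities are preserved by localization at any $P\in\mathrm{Spec}(R)$: the compositum commutes with localization, giving $S_P=\overline R_P\,\widetilde R_P$, and exactness of localization together with commutation with intersection inside a common overring gives $\overline R_P\cap \widetilde R_P=R_P$.

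The crucial observation is then the following pair of local equivalences. For each $P\in \mathrm{Spec}(R)$,
\[
 (S/\overline R)_P=0 \iff S_P=\overline R_P \iff \widetilde R_P\subseteq \overline R_P,
\]
the last step using $S_P=\overline R_P\,\widetilde R_P$. But $\widetilde R_P\subseteq \overline R_P$ forces $\widetilde R_P=\widetilde R_P\cap \overline R_P=R_P$, and conversely $\widetilde R_P=R_P$ trivially implies $\widetilde R_P\subseteq\overline R_P$. Thus $(S/\overline R)_P=0$ if and only if $P\notin\mathrm{Supp}(\widetilde R/R)$. Symmetrically, $(S/\widetilde R)_P=0$ if and only if $P\notin\mathrm{Supp}(\overline R/R)$.

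From these two equivalences the corollary follows immediately. Indeed, $P\in\mathrm{Supp}(S/\overline R)\cap\mathrm{Supp}(\overline R/R)$ if and only if $\widetilde R_P\neq R_P$ and $\overline R_P\neq R_P$, which is the same as $P\in\mathrm{Supp}(\widetilde R/R)\cap\mathrm{Supp}(\overline R/R)$; this yields (1)$\iff$(3). By the analogous argument applied to the other local equivalence, $P\in\mathrm{Supp}(S/\widetilde R)\cap\mathrm{Supp}(\widetilde R/R)$ iff $P\in\mathrm{Supp}(\overline R/R)\cap\mathrm{Supp}(\widetilde R/R)$, giving (2)$\iff$(3).

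The only subtlety is checking that $\overline R_P\cap\widetilde R_P=R_P$ inside $S_P$, but this is just exactness of localization applied to the pullback inclusion, so no real obstacle arises; the argument is short and essentially local.
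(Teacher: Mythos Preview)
Your proof is correct, and it takes a genuinely different and sharper route than the paper's.

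The paper argues by a cyclic chain of implications $(1)\Rightarrow(2)\Rightarrow(3)\Rightarrow(1)$, each step by contradiction: assuming a prime $P$ lies in the relevant intersection, it uses the identity $(\widetilde R)_P=\widetilde{R_P}$ (from Theorem~\ref{4.3}) to deduce that $R_P\subset S_P$ is neither integral nor Pr\"ufer, and then plays this off against the decomposition of $\mathrm{Supp}(S/R)$ supplied by the hypothesis to reach a contradiction. Your argument instead exploits the pushout--pullback identities $S=\overline R\,\widetilde R$ and $\overline R\cap\widetilde R=R$ from Theorem~\ref{4.5} directly after localization, and from them you extract the \emph{equalities} $\mathrm{Supp}(S/\overline R)=\mathrm{Supp}(\widetilde R/R)$ and $\mathrm{Supp}(S/\widetilde R)=\mathrm{Supp}(\overline R/R)$; the three conditions then become literally the same statement. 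This is both shorter and strictly stronger: you have in fact proved, for any almost-Pr\"ufer extension, the support equalities that the paper only records later (Corollary~\ref{7.3}) under an additional FCP hypothesis. The paper's approach, by contrast, foregrounds the local dichotomy ``integral or Pr\"ufer'' implicit in the commutation $(\widetilde R)_P=\widetilde{R_P}$, which is a useful viewpoint elsewhere but is not needed here.
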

\begin{proof}
Since $R \subseteq S$ is almost-Pr\"ufer, we get $ (\widetilde R)_P = \widetilde{R_P}$ for each $P\in\mathrm{Spec}(R)$. Moreover, $\mathrm{Supp}(S/R)=\mathrm{Supp}(\widetilde R/R)\cup\mathrm{Supp}(\overline{R}/R)=\mathrm{Supp}(S/\overline{R})\cup\mathrm{Supp}(\overline{R}/R)=\mathrm{Supp}(S/\widetilde R)\cup\mathrm{Supp}(\widetilde R/R)$.

(1) $\Rightarrow$ (2): Assume that there exists  $P\in\mathrm{Supp}(S/\widetilde R)\cap\mathrm{Supp}(\widetilde{R}/R)$. Then, $(\widetilde R)_P\neq S_P,R_P$, so that $R_P \subset S_P$ is neither Pr\"ufer, nor integral. But, $P\in\mathrm{Supp}(S/R)=\mathrm{Supp}(S/\overline{R})\cup\mathrm{Supp}(\overline{R}/R)$. If $P\in \mathrm{Supp}(S/\overline{R})$, then $P\not\in\mathrm{Supp}(\overline{R}/R)$, so that $(\overline{R})_P=R_P$ and $R_P \subset S_P$ is  Pr\"ufer, a contradiction.  If $P\in \mathrm{Supp}(\overline{R}/R)$, then $P\not\in\mathrm{Supp}(S/\overline{R})$, so that $(\overline{R})_P=S_P$ and $R_P \subset S_P$ is  integral, a contradiction.

(2) $\Rightarrow$ (3): Assume that there exists  $P\in\mathrm{Supp}(\widetilde R/R)\cap\mathrm{Supp}(\overline{R}/R)$. Then, $R_P\neq(\widetilde R)_P,(\overline{R})_P$, so that $R_P \subset S_P$ is neither Pr\"ufer, nor integral. But, $P\in\mathrm{Supp}(S/R)=\mathrm{Supp}(S/\widetilde{R})\cup\mathrm{Supp}(\widetilde{R}/R)$. If $P\in \mathrm{Supp}(S/\widetilde{R})$, then $P\not\in\mathrm{Supp}(\widetilde{R}/R)$, so that $(\widetilde{R})_P=R_P$ and $R_P \subset S_P$ is  integral, a contradiction.  If $P\in \mathrm{Supp}(\widetilde{R}/R)$, then $P\not\in\mathrm{Supp}(S/\widetilde{R})$, so that $(\widetilde{R})_P=S_P$ and $R_P \subset S_P$ is  Pr\"ufer, a contradiction.

(3) $\Rightarrow$ (1): Assume that there exists  $P\in\mathrm{Supp}(S/\overline R)\cap\mathrm{Supp}(\overline{R}/R)$. Then, $(\overline R)_P\neq R_P,S_P$, so that $R_P \subset S_P$ is neither Pr\"ufer, nor integral. But, $P\in\mathrm{Supp}(S/R)=\mathrm{Supp}(\overline{R}/R)\cup\mathrm{Supp}(\widetilde{R}/R)$. If $P\in \mathrm{Supp}(\widetilde{R}/R)$, then $P\not\in\mathrm{Supp}(\overline{R}/R)$, so that $(\overline{R})_P=R_P$ and $R_P \subset S_P$ is  Pr\"ufer, a contradiction.  If $P\in \mathrm{Supp}(\overline{R}/R)$, then $P\not\in\mathrm{Supp}(\widetilde{R}/R)$, so that $(\widetilde{R})_P=R_P$ and $R_P \subset S_P$ is  integral, a contradiction.
\end{proof}

Proposition~\ref{4.4} has the following similar statement proved by Ayache and Dobbs. It reduces to Theorem~\ref{4.5} in case $R\subseteq S$ has FCP because of Proposition~\ref{0.5}.

\begin{proposition}\label{4.6} Let $R\subseteq T \subseteq S$ be a quasi-Pr\"ufer extension,  where $T\subseteq S$ is an integral minimal  extension and $R \subseteq T$ is  integrally closed . Then the diagram (D) is  a pullback, $S= T\overline R$ and  $(T:S)= (R:\overline R)T$.
\end{proposition}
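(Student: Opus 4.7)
The plan is to reduce Proposition 4.6 to Proposition 4.4, after first verifying that the hypotheses of 4.4 are satisfied, and then to use the minimality of $T \subseteq S$ to force $S = T\overline{R}$.

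First, I would observe that $R\subseteq T$ is itself quasi-Pr\"ufer by Corollary~\ref{3.3} (since $R \subseteq S$ is quasi-Pr\"ufer and $T\in[R,S]$). Combined with the hypothesis that $R\subseteq T$ is integrally closed, Corollary~\ref{3.5} yields that $R\subseteq T$ is Pr\"ufer, and in particular a flat epimorphism. Thus the hypotheses (i) and (ii) of Proposition~\ref{4.4} hold, and we are moreover in the ``integrally closed'' case of part (2). So the diagram (D)
\car R,{\overline R},T,{T\overline R}
\noindent is a pullback, $T\cap \overline R = R$, and $(T:T\overline R) = (R:\overline R)T$. Everything in the statement except the equality $S = T\overline R$ is now in hand (with $(T:S) = (T:T\overline R)$ to follow immediately once $S = T\overline R$ is established).

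Next I would show $S = T\overline R$ using that $T\subset S$ is minimal. Clearly $T\subseteq T\overline R\subseteq S$, so minimality forces $T\overline R = T$ or $T\overline R = S$. Suppose for contradiction that $T\overline R = T$, so $\overline R\subseteq T$. Then every element of $\overline R$ is integral over $R$ and lies in $T$; but $R\subseteq T$ is integrally closed, so $\overline R = R$. This means $R\subseteq S$ is Pr\"ufer (being quasi-Pr\"ufer with trivial integral closure), hence a normal pair in the sense of Scholium B(2), so every subextension $T\subseteq S$ is integrally closed. However $T\subseteq S$ is a nontrivial integral (minimal) extension, contradiction. Hence $T\overline R = S$.

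The conductor identity then follows immediately: substituting $S = T\overline R$ into Proposition~\ref{4.4}(2) gives $(T:S) = (R:\overline R)T$. The only place where care is needed is the minimality argument, but this is straightforward because Pr\"ufer extensions are integrally closed at every intermediate ring; there are no delicate calculations, only an appeal to the normal-pair property. Thus the entire proof is essentially the observation that Corollary~\ref{3.5} supplies the ``$R\subseteq T$ Pr\"ufer'' hypothesis needed to invoke Proposition~\ref{4.4}, followed by a one-line minimality argument.
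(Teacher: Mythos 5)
Your proof is correct. Note, however, that the paper does not actually write out a proof of this statement: its entire ``proof'' is a citation of \cite[Lemma 3.5]{AD}, preceded by the remark that the proposition is a statement ``similar'' to Proposition~\ref{4.4}. What you have done is to make that similarity precise and derive the result internally: Corollary~\ref{3.3} gives that $R\subseteq T$ is quasi-Pr\"ufer, Corollary~\ref{3.5} upgrades this to Pr\"ufer (hence a flat epimorphism), so both hypotheses of Proposition~\ref{4.4} hold and part (2) delivers the pullback and the conductor formula $(T:T\overline R)=(R:\overline R)T$; the only genuinely new point is $S=T\overline R$, which your minimality dichotomy handles cleanly --- if $T\overline R=T$ then $\overline R\subseteq\overline R^{\,T}=R$, forcing $R\subseteq S$ to be Pr\"ufer and hence $T\subseteq S$ integrally closed, contradicting that $T\subset S$ is a proper integral extension. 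All the individual steps check out (in particular, minimality in this paper means $T\neq S$, so the contradiction is genuine). Your route buys self-containedness: the proposition becomes an honest corollary of the paper's own Proposition~\ref{4.4} rather than an import from Ayache--Dobbs, and it works in the general ring-extension setting without re-deriving anything from the source cited.
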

\begin{proof} \cite[Lemma 3.5]{AD}.
\end{proof}

 \begin{proposition}~\label{4.7} Let $R \subseteq U\subseteq S$ and $R\subseteq V \subseteq S$ be two towers of extensions, such that $R\subseteq U$ and $R\subseteq V$ are almost-Pr\"ufer. Then $R \subseteq UV$ is almost-Pr\"ufer and  $\widetilde{UV} =\widetilde{U}\widetilde{V}$.
\end{proposition}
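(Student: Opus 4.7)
The plan is to produce a canonical Prüfer-then-integral factorization of $R\subseteq UV$ and then appeal to the uniqueness of such a factorization provided by Proposition~\ref{4.2} to identify the Prüfer hull.

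By Proposition~\ref{4.2}, the almost-Prüfer hypothesis yields factorizations $R\subseteq\widetilde R^U\subseteq U$ and $R\subseteq\widetilde R^V\subseteq V$ in which $R\subseteq\widetilde R^U$ and $R\subseteq\widetilde R^V$ are Prüfer while $\widetilde R^U\subseteq U$ and $\widetilde R^V\subseteq V$ are integral. I would then apply the composition result \cite[Corollary~5.11, p.53]{KZ}, recalled just before Proposition~\ref{3.11}, to the Prüfer subextensions $R\subseteq\widetilde R^U$ and $R\subseteq\widetilde R^V$ of $R\subseteq S$ to conclude that $R\subseteq\widetilde R^U\widetilde R^V$ is Prüfer.

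Next, I would verify that $\widetilde R^U\widetilde R^V\subseteq UV$ is integral by transitivity. Every element of $U$ is integral over $\widetilde R^U$, and hence over the larger ring $\widetilde R^U\widetilde R^V$, so $\widetilde R^U\widetilde R^V\subseteq U\widetilde R^V$ is integral. Symmetrically, every element of $V$ is integral over $\widetilde R^V\subseteq U\widetilde R^V$, so $U\widetilde R^V\subseteq UV$ is integral. Composing yields that $\widetilde R^U\widetilde R^V\subseteq UV$ is integral, and hence the tower $R\subseteq\widetilde R^U\widetilde R^V\subseteq UV$ realises $R\subseteq UV$ as almost-Prüfer via Definition~\ref{4.1}.

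Finally, Proposition~\ref{4.2} forces the middle term of any Prüfer-then-integral factorization of an almost-Prüfer extension to be the Prüfer hull of the base in the top ring. Applied to the factorization just constructed, this gives $\widetilde R^{UV}=\widetilde R^U\widetilde R^V$. I do not expect a genuine obstacle: the only point deserving care is invoking this uniqueness of the Prüfer factor to promote the evident inclusion $\widetilde R^U\widetilde R^V\subseteq\widetilde R^{UV}$ to an equality.
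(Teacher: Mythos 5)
Your proposal is correct and follows essentially the same route as the paper: both take the product of the two Prüfer hulls $\widetilde R^U\widetilde R^V$, invoke \cite[Corollary 5.11, p.53]{KZ} to see that $R\subseteq\widetilde R^U\widetilde R^V$ is Prüfer, observe that $\widetilde R^U\widetilde R^V\subseteq UV$ is integral, and identify the Prüfer hull of $R$ in $UV$. You merely spell out two details the paper leaves implicit (the transitivity argument for integrality and the appeal to Proposition~\ref{4.2} for the uniqueness of the Prüfer factor), so no comparison beyond this is needed.
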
 
\begin{proof} Denote by $U'$, $V'$  and $W'$ the Pr\"ufer hulls of $R$ in $U$, $V$ and  $W=UV$. We deduce from \cite[Corollary 5.11, p.53]{KZ}, that $R\subseteq U'V' $ is Pr\"ufer. Moreover, $U'V'\subseteq UV$ is clearly  integral  and $U'V' \subseteq W'$ because the Pr\"ufer hull is the greatest Pr\"ufer subextension. We deduce that $R\subseteq UV$ is almost-Pr\"ufer and that $\widetilde{UV} =\widetilde{U}\widetilde{V}$.
\end{proof}

 \begin{proposition}~\label{4.8} Let $R \subseteq U\subseteq S$ and $R\subseteq V \subseteq S$ be two towers of extensions, such that $R\subseteq U$ is almost-Pr\"ufer and $R\subseteq V$  is a flat epimorphism. Then $U\subseteq UV$ is almost-Pr\"ufer.
\end{proposition}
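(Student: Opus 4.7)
The plan is to show that $U \subseteq UV$ is in fact Prüfer (from which almost-Prüfer follows at once, with the integral part trivial). First, since $R \subseteq V$ is a flat epimorphism, Scholium A (8) guarantees that the base change $U \to U \otimes_R V$ is again a flat epimorphism. The natural surjection $U \otimes_R V \to UV$ is forced to be injective by Scholium A (3), because the composite $U \to U \otimes_R V \to UV \hookrightarrow S$ coincides with the injection $U \hookrightarrow S$; hence it is an isomorphism. Consequently $U \subseteq UV$ is itself a flat epimorphism, and in particular is integrally closed.

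By the remark immediately following Definition~\ref{2.1}, an extension is Prüfer precisely when it is quasi-Prüfer and integrally closed. It therefore suffices to show $U \subseteq UV$ is quasi-Prüfer. To this end I would apply the second part of Proposition~\ref{3.10} to the quasi-Prüfer extension $R \subseteq U$ (which is quasi-Prüfer by Theorem~\ref{4.3}, since it is almost-Prüfer) and the flat epimorphic extension $R \subseteq V$, with $U,V$ both subrings of $S$, obtaining that $V \subseteq UV$ is quasi-Prüfer. Then Corollary~\ref{3.3} applied to the tower $R \subseteq U \subseteq UV$ transfers quasi-Prüferness to $U \subseteq UV$ once $R \subseteq UV$ is known to be quasi-Prüfer; the latter follows from Corollary~\ref{3.3} applied to $R \subseteq V \subseteq UV$, combined with the quasi-Prüferness of $V \subseteq UV$ already obtained and of $R \subseteq V$.

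The main technical point will be verifying that $R \subseteq V$ is quasi-Prüfer from the hypothesis that it is merely a flat epimorphism. A flat epimorphism is spectrally injective by Scholium A (7), giving INC at the ambient level, but the INC-pair characterization of Theorem~\ref{2.3} demands INC for every intermediate extension $R \subseteq T \subseteq V$. I expect this to be dispatched by the local criterion of Proposition~\ref{3.1}: at each $P \in \mathrm{Spec}(R)$, Scholium A (1) gives either $R_P \to V_P$ an isomorphism (a trivial case) or $V = PV$; in the latter case Proposition~\ref{0.3}(3), applied to the appropriate prime of $R_P$, exhibits the Prüfer local structure of every intermediate subextension, which then globalizes by Proposition~\ref{3.1}. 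Once $R \subseteq V$ is quasi-Prüfer, the composition argument of the previous paragraph closes the proof.
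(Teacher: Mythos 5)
Your opening step is correct and matches the technique of Proposition~\ref{3.10}: the surjection $U\otimes_RV\to UV$ is injective by Scholium A(3), so $U\subseteq UV$ is a flat epimorphism. The proof breaks immediately afterwards, at ``and in particular is integrally closed.'' A flat epimorphism need not be integrally closed: Scholium A(4) only makes the \emph{upper} part of a flat epimorphism a flat epimorphism again, so for $x\in UV$ integral over $U$ you cannot apply property (L) to $U\subseteq U[x]$. Concretely, take $R=\mathbb{Z}$, $U=\mathbb{Z}[\sqrt 5]$ (integral, hence almost-Pr\"ufer over $R$), $V=\mathbb{Z}[1/2]$, $S=\mathbb{Q}(\sqrt 5)$. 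Then $U\subseteq UV=\mathbb{Z}[\sqrt 5,1/2]$ is a localization, hence a flat epimorphism, yet $(1+\sqrt 5)/2\in UV$ is integral over $U$ and not in $U$; so $U\subseteq UV$ is not integrally closed and cannot be Pr\"ufer. Your target is therefore too strong. A second, independent, gap is the claim that a flat epimorphism $R\subseteq V$ is quasi-Pr\"ufer. In the branch $V=PV$ of Scholium A(1), Proposition~\ref{0.3}(3) gives you nothing about intermediate rings: for $R=k[x,y]\subseteq V=k[x,y]_x$ the intermediate ring $T=k[x,y/x]$ has the comparable primes $(x)\subset (x,y/x)$, both lying over $(x,y)$, so $(R,V)$ is not an INC-pair and by Theorem~\ref{2.3} this flat epimorphism is not quasi-Pr\"ufer.

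For comparison, the paper's proof is to mimic Proposition~\ref{4.7} using \cite[Theorem 5.10, p.53]{KZ}: with $U':=\widetilde R^U$, so that $R\subseteq U'$ is Pr\"ufer and $U'\subseteq U$ is integral, that theorem yields that $V\subseteq U'V$ is Pr\"ufer, while $U'V\subseteq UV$ is integral; this is an almost-Pr\"ufer factorization of the extension $V\subseteq UV$, not of $U\subseteq UV$. In the example above one checks that $\widetilde U^{UV}=U$ (the localization of $U$ at the prime over $2$ is a non-integrally-closed one-dimensional local domain, so it admits no proper Pr\"ufer subextension) while $UV$ is not integral over $U$; hence $U\subseteq UV$ is not even almost-Pr\"ufer there, and the provable conclusion is the one about $V\subseteq UV$. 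So beyond the two incorrect lemmas, the statement you are aiming at (and a fortiori the stronger ``$U\subseteq UV$ is Pr\"ufer'') cannot be reached by any argument.
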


\begin{proof} Mimic the proof of Proposition~\ref{4.7} and use \cite[Theorem 5.10, p.53]{KZ}.
\end{proof}
 
 \begin{proposition}~\label{4.9} Let  $R \subseteq S$ be an almost-Pr\"ufer extension and $R\to T $ a flat epimorphism. Then $T\subseteq T\otimes_R S$ is almost-Pr\"ufer.
\end{proposition}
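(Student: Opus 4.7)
The plan is to use the factorization characterization of almost-Pr\"ufer extensions given in Definition~\ref{4.1} and Proposition~\ref{4.2} and show that the desired factorization is preserved under the base change $R \to T$. Since $R \subseteq S$ is almost-Pr\"ufer, we can write $R \subseteq \widetilde{R} \subseteq S$ where $R \subseteq \widetilde{R}$ is Pr\"ufer and $\widetilde{R} \subseteq S$ is integral. The strategy is then to tensor this tower with $T$ over $R$ and verify that we obtain a tower $T \subseteq T \otimes_R \widetilde{R} \subseteq T \otimes_R S$ exhibiting $T \subseteq T \otimes_R S$ as almost-Pr\"ufer.

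First I would check that all three terms embed into each other as claimed. Since $R \to T$ is flat (being a flat epimorphism), the inclusions $R \hookrightarrow \widetilde{R} \hookrightarrow S$ remain injective after tensoring with $T$; so $T \subseteq T \otimes_R \widetilde{R} \subseteq T \otimes_R S$ is an honest tower of extensions. Next, I would apply Proposition~\ref{3.10} directly to the Pr\"ufer extension $R \subseteq \widetilde{R}$ and the flat epimorphism $R \to T$ to conclude that $T \subseteq T \otimes_R \widetilde{R}$ is Pr\"ufer. Finally, since $\widetilde{R} \subseteq S$ is integral and integrality is preserved under arbitrary base change, the extension $T \otimes_R \widetilde{R} \subseteq T \otimes_R S$ is integral. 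Combining the two, $T \subseteq T \otimes_R S$ factors as a Pr\"ufer extension followed by an integral extension, so it is almost-Pr\"ufer by Definition~\ref{4.1}.

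There is essentially no obstacle here beyond the bookkeeping of injectivity; the substantive work has already been done in Proposition~\ref{3.10}. One minor subtlety worth mentioning is that although $T \otimes_R \widetilde{R}$ need not coincide with the Pr\"ufer hull $\widetilde{T}^{T \otimes_R S}$ (as noted in the remark after Proposition~\ref{3.15}), this is irrelevant for our purposes: we only need some intermediate ring witnessing the almost-Pr\"ufer factorization, and $T \otimes_R \widetilde{R}$ does the job.
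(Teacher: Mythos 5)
Your proof is correct and follows essentially the same route as the paper, whose entire proof reads ``It is enough to use Proposition~\ref{3.10} and Definition~\ref{4.1}''; you have simply spelled out the intended details (injectivity via flatness, Pr\"ufer part via Proposition~\ref{3.10}, integrality preserved by base change). The closing remark that $T\otimes_R\widetilde R$ need not be the Pr\"ufer hull of $T\subseteq T\otimes_R S$ but still witnesses the factorization is a correct and worthwhile observation.
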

  
\begin{proof} It is enough to use Proposition~\ref{3.10} and Definition~\ref{4.1}.
\end{proof}
   
\begin{proposition}\label{4.10} An extension $R\subseteq S$  is  almost-Pr\"ufer if and only if   $R_P \subseteq S_P$ is almost-Pr\"ufer and $\widetilde{R_P} = (\widetilde R)_P$ for each $P \in \mathrm{Spec}(R)$.    
  \end{proposition}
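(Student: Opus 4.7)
The plan is to derive both directions from Proposition~\ref{4.2}, which characterizes almost-Pr\"ufer extensions by the integrality of $\widetilde R \subseteq S$, combined with Theorem~\ref{4.3}, which records the commutation $(\widetilde R)_P = \widetilde{R_P}$ whenever $R \subseteq S$ is almost-Pr\"ufer, and with the basic fact that integrality of a ring extension can be checked locally on the base.

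For the forward direction, suppose $R \subseteq S$ is almost-Pr\"ufer. Theorem~\ref{4.3} immediately gives $(\widetilde R)_P = \widetilde{R_P}$ for every $P \in \mathrm{Spec}(R)$, and Proposition~\ref{4.2} tells us $\widetilde R \subseteq S$ is integral. Localizing at $P$ and using the commutation just recorded, $\widetilde{R_P} = (\widetilde R)_P \subseteq S_P$ is integral, so another application of Proposition~\ref{4.2} gives that $R_P \subseteq S_P$ is almost-Pr\"ufer. Equivalently, one may invoke Proposition~\ref{4.9} directly, since $R \to R_P$ is a flat epimorphism.

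For the converse, assume both local hypotheses. By Proposition~\ref{4.2} applied to $R_P \subseteq S_P$, the inclusion $\widetilde{R_P} \subseteq S_P$ is integral, and then $(\widetilde R)_P \subseteq S_P$ is integral by the assumed equality $(\widetilde R)_P = \widetilde{R_P}$. To promote this to integrality of $\widetilde R \subseteq S$ itself, I observe that for each prime $Q \in \mathrm{Spec}(\widetilde R)$ with contraction $P = Q \cap R$ in $R$, the multiplicative set $R \setminus P$ is contained in $\widetilde R \setminus Q$, so $(\widetilde R)_Q \subseteq S_Q$ is a further localization of $(\widetilde R)_P \subseteq S_P$ and is thus integral. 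Since integrality may be checked at all primes of the base $\widetilde R$, we conclude that $\widetilde R \subseteq S$ is integral, and a final appeal to Proposition~\ref{4.2} delivers that $R \subseteq S$ is almost-Pr\"ufer. The only genuine subtlety is this local-to-global passage for integrality; everything else is bookkeeping with Proposition~\ref{4.2} and Theorem~\ref{4.3}.
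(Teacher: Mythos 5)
Your proof is correct, and the forward direction coincides with the paper's (Theorem~\ref{4.3} for the commutation, then localize). The converse, however, takes a genuinely different route. The paper first globalizes quasi-Pr\"uferness (locally almost-Pr\"ufer implies locally quasi-Pr\"ufer, hence quasi-Pr\"ufer by Proposition~\ref{3.1}), and then uses the criterion of Theorem~\ref{4.5}(3): from $S_P=\overline{R_P}\,\widetilde{R_P}=(\overline R)_P(\widetilde R)_P=(\overline R\widetilde R)_P$ for all $P$ it deduces $S=\overline R\widetilde R$, which characterizes almost-Pr\"ufer among quasi-Pr\"ufer extensions. You instead work entirely with the criterion of Proposition~\ref{4.2}: you show $(\widetilde R)_P\subseteq S_P$ is integral for every $P$ and then globalize integrality itself. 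Your local-to-global step is sound --- since integral closure commutes with localization at any multiplicative subset, the integral closure $C$ of $\widetilde R$ in $S$ satisfies $C_P=S_P$ for all $P\in\mathrm{Spec}(R)$, whence $C=S$; your detour through the primes of $\widetilde R$ lying over $P$ accomplishes the same thing. What your route buys is that it bypasses the quasi-Pr\"ufer globalization and the machinery of Theorem~\ref{4.5} entirely, resting only on Proposition~\ref{4.2} and a standard commutative-algebra fact; what the paper's route buys is that it stays within the $S=\overline R\widetilde R$ formalism that is reused throughout Section~4. Both arguments ultimately hinge on the same two ingredients, namely commutation of integral closure with localization and the local detection of equality of $R$-modules.
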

 \begin{proof}
   For an  arbitrary extension $R\subseteq S$ we have  $(\widetilde R)_P \subseteq \widetilde{R_P}$. Suppose that $R \subseteq S$ is almost-Pr\"ufer, then so is $R_P \subseteq S_P$  and $(\widetilde R)_P = \widetilde{R_P}$ by Theorem~\ref{4.3}. Conversely, if $R\subseteq S$ is locally almost-Pr\"ufer, whence locally quasi-Pr\"ufer, then $R\subseteq S$ is quasi-Pr\"ufer. If  $\widetilde{R_P} = (\widetilde R)_P$ holds for each $P \in \mathrm{Spec}(R)$, we have $S_P = (\overline R \widetilde R)_P$  so that $S = \overline R \widetilde R$ and $R \subseteq S$ is almost-Pr\"ufer  by  Theorem~\ref{4.5}.  
   \end{proof}
   
   \begin{corollary}\label{4.11} An FCP extension $R\subseteq S$ is almost-Pr\"ufer if and only if $R_P\subseteq S_P$ is almost-Pr\"ufer for each $P\in \mathrm{Spec}(R)$.
 \end{corollary}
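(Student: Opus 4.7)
The plan is to reduce the converse to Proposition~\ref{4.10} and verify its commutation condition using the FCP hypothesis. The forward direction is a special case of Proposition~\ref{4.10}, so I concentrate on the converse.

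Assume $R\subseteq S$ is FCP and $R_P\subseteq S_P$ is almost-Pr\"ufer for each $P\in\mathrm{Spec}(R)$. Since almost-Pr\"ufer extensions are quasi-Pr\"ufer by Theorem~\ref{4.3}, each $R_P\subseteq S_P$ is quasi-Pr\"ufer, and Proposition~\ref{3.1} upgrades this to global quasi-Pr\"ufer for $R\subseteq S$. By Proposition~\ref{4.10}, it then suffices to establish $(\widetilde R)_P=\widetilde{R_P}$ for every $P$. The inclusion $(\widetilde R)_P\subseteq\widetilde{R_P}$ is automatic: since $R\subseteq\widetilde R$ is Pr\"ufer, $R_P\subseteq(\widetilde R)_P$ is Pr\"ufer by Proposition~\ref{0.2}(1), so $(\widetilde R)_P$ sits inside the maximal Pr\"ufer subextension $\widetilde{R_P}$.

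For the reverse inclusion, I would proceed by contradiction. If $(\widetilde R)_P\subsetneq\widetilde{R_P}$ at some $P$, then since FCP localizes, $R_P\subseteq\widetilde{R_P}$ is an FCP Pr\"ufer extension, and Proposition~\ref{0.5}(1) decomposes it as a composite of Pr\"ufer minimal extensions, producing a Pr\"ufer minimal subextension $(\widetilde R)_P\subset A\subseteq\widetilde{R_P}$. Using the FCP correspondence between the intermediate lattice $[\widetilde R,S]$ and its localization at $P$, I would lift this to a Pr\"ufer minimal extension $\widetilde R\subset T$ in $[\widetilde R,S]$ with $T_P=A$. The composite $R\subseteq\widetilde R\subset T$ is then Pr\"ufer (composites of Pr\"ufer extensions are Pr\"ufer), contradicting the maximality of $\widetilde R$ as the Pr\"ufer hull of $R$ in $S$. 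Combined with Proposition~\ref{4.10}, this yields that $R\subseteq S$ is almost-Pr\"ufer.

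The main obstacle is the lifting step: showing that a Pr\"ufer minimal extension in the localized lattice $[(\widetilde R)_P,S_P]$ arises as the localization of a globally Pr\"ufer minimal extension in $[\widetilde R,S]$. The FCP hypothesis provides the requisite finiteness and a correspondence via crucial maximal ideals of minimal extensions, but verifying that the lifted extension retains the flat-epimorphic (Pr\"ufer) and minimal character globally, and not merely at $P$, is the delicate part of the argument.
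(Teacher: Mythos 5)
Your skeleton is the paper's proof: reduce to the commutation $(\widetilde R)_P=\widetilde{R_P}$ via Proposition~\ref{4.10}, note that every minimal extension immediately above $\widetilde R$ inside $S$ must be integral (a Pr\"ufer minimal one would compose with $R\subseteq\widetilde R$ to give a larger Pr\"ufer subextension), and derive a contradiction from a local Pr\"ufer minimal extension sitting above $(\widetilde R)_P$. The one step you leave open --- producing from the local datum a \emph{global} Pr\"ufer minimal extension with bottom ring $\widetilde R$ --- is precisely where the paper's proof does its only real work, and it closes the gap by quoting Lemma~1.10 of \cite{P2}: starting from $R'_2\in[\widetilde R,S]$ with $(\widetilde R)_P\subset(R'_2)_P$ Pr\"ufer minimal with crucial ideal $Q(\widetilde R)_P$, one first locates $R'_1\subset R'_2$ Pr\"ufer minimal with $P\notin\mathrm{Supp}(R'_1/\widetilde R)$ and then invokes that lemma to push the Pr\"ufer minimal step down to the bottom of the tower, obtaining $\widetilde R\subset R_2$ Pr\"ufer minimal with crucial maximal ideal $Q$. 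Two remarks on your version. First, you ask for more than you need: there is no reason to insist on $T_P=A$; the contradiction only requires the existence of \emph{some} Pr\"ufer minimal extension $\widetilde R\subset T$, which is what the crucial-ideal bookkeeping delivers, and demanding $T_P=A$ on the nose would make the lifting harder than it has to be. Second, as written your argument is not complete --- the lifting is not a routine consequence of ``FCP localizes'' and must be justified (here, by the cited lemma from \cite{P2}); since you correctly isolated it as the delicate point but did not supply it, the proposal is a faithful outline of the paper's route with its key lemma left unproved rather than an independent complete proof.
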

 \begin{proof} It is enough to show that  $R\subseteq S$ is almost-Pr\"ufer if   $R_P\subseteq S_P$ is almost-Pr\"ufer for each $P\in \mathrm{Spec}(R)$ using Proposition~\ref{4.10}. Any minimal extension $\widetilde R\subset R_1$ is integral by definition of $\widetilde R$. Assume that $(\widetilde R)_P\subset \widetilde {(R_P)}$, so that there exists $R'_2\in[\widetilde R,S]$ such that $(\widetilde R)_P\subset(R'_2)_P$ is  a Pr\"ufer minimal extension with crucial maximal ideal $Q(\widetilde R)_P$, for some $Q\in\mathrm{Max}(\widetilde R)$ with $Q\cap R\subseteq P$. In particular, $\widetilde R\subset R'_2$ is not  integral. We may assume that there exists $R'_1\in[\widetilde R,R'_2]$ such that $R'_1\subset R'_2$ is a Pr\"ufer minimal extension with $P\not\in\mathrm{Supp}(R'_1/\widetilde R)$. Using \cite[Lemma 1.10]{P2}, there exists $R_2\in[\widetilde R,R'_2]$ such that $\widetilde R\subset R_2$ is a Pr\"ufer minimal extension with crucial maximal ideal $Q$, a contradiction. Then, $(\widetilde R)_P\subset S_P$ is integral for each $P$, whence $(\widetilde R)_P= \widetilde {(R_P)}$.
\end{proof}

  We now intend to demonstrate that our methods allow us to prove easily some results. For instance, next statement generalizes \cite[Corollary 4.5]{AD} and can be fruitful in algebraic number theory. 
  
  \begin{proposition}\label{4.12.1} Let $(R,M)$ be a one-dimensional local ring and $R\subseteq S$ a quasi-Pr\"ufer extension. Suppose that there is a tower $R\subset T \subseteq S$, where $R\subset T$ is integrally closed. Then $R\subseteq S$ is almost-Pr\"ufer, $T = \widetilde R$ and $S$ is zero-dimensional.
  \end{proposition}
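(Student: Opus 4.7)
The plan is to identify the intermediate ring $T$ explicitly via Proposition~\ref{0.3}(3), deduce that $T$ coincides with $\widetilde R$ and is zero-dimensional local with singleton spectrum, and then leverage this through the INC-pair description to force both $\dim S = 0$ and the almost-Pr\"ufer factorization. First, since $R\subseteq S$ is quasi-Pr\"ufer, Corollary~\ref{3.3} makes $R\subseteq T$ quasi-Pr\"ufer; being integrally closed by hypothesis, it is then Pr\"ufer by Corollary~\ref{3.5}. As $R$ is local and the extension is nontrivial, Proposition~\ref{0.3}(3) supplies some $P\in\mathrm{Spec}(R)$ with $T=R_P$, $PT=P$, and $R/P$ a valuation domain. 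Nontriviality forces $P\neq M$, and the one-dimensionality of $R$ then makes $P$ a minimal prime; consequently $\mathrm{Spec}(T)=\{P\}$ and $T$ is a zero-dimensional local ring having $P$ as its only (and simultaneously minimal and maximal) prime.

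Next I would show $T=\widetilde R$. The inclusion $T\subseteq\widetilde R$ is immediate from the greatness of the Pr\"ufer hull, and $T\subseteq\widetilde R$ is itself Pr\"ufer because a subextension of a Pr\"ufer extension is again Pr\"ufer (use Scholium A(4) together with the definition of Pr\"ufer). Applying Proposition~\ref{0.3}(3) to the local ring $T$ yields $\widetilde R=T_{P''}$ for some $P''\in\mathrm{Spec}(T)=\{P\}$; since $T$ is already local at $P$, one concludes $\widetilde R=T_P=T$.

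To get $\dim S=0$, note that $T\subseteq S$ is quasi-Pr\"ufer by Corollary~\ref{3.3} and hence an INC-pair by Theorem~\ref{2.3}. Every prime of $S$ contracts to the unique prime $P$ of $T$, so any two primes of $S$ share the same contraction and must be incomparable, giving $\dim S=0$. Finally, to obtain that $R\subseteq S$ is almost-Pr\"ufer it suffices to show $\widetilde R\subseteq S$ is integral. Factor the quasi-Pr\"ufer extension $\widetilde R\subseteq S$ as $\widetilde R\subseteq\overline{\widetilde R}^S\subseteq S$; integrality preserves Krull dimension, so $\overline{\widetilde R}^S$ is zero-dimensional, and the Pr\"ufer extension $\overline{\widetilde R}^S\subseteq S$ is a flat epimorphism from a zero-dimensional ring, hence surjective by Scholium A(S). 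Thus $\overline{\widetilde R}^S=S$, so $\widetilde R\subseteq S$ is integral and $R\subseteq\widetilde R\subseteq S$ is the desired almost-Pr\"ufer factorization. The main obstacle is the structural identification $T=R_P$ with $P$ minimal, which is what collapses $\mathrm{Spec}(T)$ to a single point; after that step, everything reduces to INC bookkeeping and one application of the zero-dimensional surjectivity criterion from Scholium A.
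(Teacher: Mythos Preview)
Your proof is correct. The overall skeleton matches the paper's---show $R\subset T$ is Pr\"ufer, establish that $T$ is zero-dimensional, and then use Scholium~A(S) on a zero-dimensional intermediate ring to collapse the remaining Pr\"ufer part to an equality---but the organization and the specific tools differ. You invoke Proposition~\ref{0.3}(3) twice to get the explicit identification $T=R_P$ with $P$ minimal (hence $|\mathrm{Spec}(T)|=1$) and then again to pin down $\widetilde R=T$; the paper instead argues that no prime of $T$ can lie over $M$ (else $R\subset T$ would be a faithfully flat epimorphism, hence an isomorphism) and uses the flat-epimorphism localization Scholium~A(1) to see $T_N\cong R_{N\cap R}$ is zero-dimensional for every $N\in\mathrm{Spec}(T)$. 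You then obtain $\dim S=0$ via the INC-pair characterization and finish by applying Scholium~A(S) to $\overline{\widetilde R}^S\subseteq S$, whereas the paper applies it to $T\overline R\subseteq S$ (so $S=T\overline R$ is zero-dimensional) and recovers $T=\widetilde R$ a posteriori from Proposition~\ref{4.2}. Your route makes the identification $T=\widetilde R$ transparent early on and reads slightly more constructively; the paper's route is a bit shorter and avoids the second appeal to Proposition~\ref{0.3}(3).
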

  \begin{proof} Because $R\subset T$ is quasi-Pr\"ufer and integrally closed, it is Pr\"ufer. If some prime ideal of $T$ is lying over $M$, $R\subset T$ is a faithfully flat epimorphism, whence an isomorphism by Scholium A, which  is absurd. Now let $N$ be a prime ideal of $T$ and $P:= N\cap R$. Then $R_P$ is zero-dimensional and isomorphic to $T_N$. Therefore, $T$ is zero-dimensional. It follows that $T\overline R$ is zero-dimensional. Since $R\overline T \subseteq S$ is Pr\"ufer, we deduce from Scholium A, that  $\overline RT = S$. The proof is now complete.  
   \end{proof}
   We also generalize \cite[Proposition 5.2]{AD} as follows.
   
   \begin{proposition}\label{4.12.2} Let $R\subset S$ be a quasi-Pr\"ufer extension, such that $\overline R$ is local with maximal ideal $N:=\sqrt {(R:\overline R)}$. Then $R$ is local and $[R,S]= [R,\overline R]\cup [\overline R ,S]$. If in addition $R$ is one-dimensional, then either $R\subset S$ is integral or there is some minimal prime ideal $P$ of $\overline R$, such that $S= (\overline R)_P$, $P=SP$ and  $\overline R/P$ is a one-dimensional valuation domain    with  quotient field $S/P$.
  \end{proposition}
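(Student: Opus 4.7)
The plan is to bootstrap from the integral extension $R \subseteq \overline R$ and the Pr\"ufer extension $\overline R \subseteq S$, using the conductor hypothesis $N = \sqrt{(R:\overline R)}$ to control primes of $R$ and of intermediate integral subrings. For part (1), because $R \subseteq \overline R$ is integral and $\overline R$ is local with unique maximal ideal $N$, lying-over and incomparability force every maximal ideal of $R$ to be a contraction from $N$; hence $R$ has the unique maximal ideal $M := N \cap R$. Along the way I record a lemma that will be reused: primes of $R$ containing $(R:\overline R)$ are contractions of primes of $\overline R$ containing $(R:\overline R)$, and since $\sqrt{(R:\overline R)} = N$ in $\overline R$ the only such prime of $\overline R$ is $N$, so $M$ is the only prime of $R$ above $(R:\overline R)$.

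For part (2), let $T \in [R,S]$ and set $U := T \cap \overline R$, the integral closure of $R$ in $T$. By Corollary~\ref{3.5}, $U \subseteq T$ is Pr\"ufer, and $U \subseteq \overline R$ is integral with $\overline R$ local, so the argument of (1) applied to the pair $U \subseteq \overline R$ shows that $U$ is local with maximal ideal $M_U := N \cap U$. Apply Proposition~\ref{0.3}(3) to the Pr\"ufer extension $U \subseteq T$ to obtain a prime $Q \in \mathrm{Spec}(U)$ with $T = U_Q$. If $Q = M_U$, then $T = U \subseteq \overline R$ and $T \in [R,\overline R]$. Otherwise $Q \subsetneq M_U$, and I claim $(R:\overline R) \not\subseteq Q$: if it were, then $Q \cap R$ would be a prime of $R$ containing $(R:\overline R)$, so $Q \cap R = M = M_U \cap R$ by the lemma from part (1); incomparability for the integral extension $R \subseteq U$ would then force $Q = M_U$, a contradiction. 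Pick $c \in (R:\overline R) \setminus Q$; then $c$ is a unit in $T = U_Q$, while $c\cdot \overline R \subseteq R \subseteq T$, so multiplying by $c^{-1} \in T$ yields $\overline R \subseteq T$, i.e.\ $T \in [\overline R,S]$.

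For part (3), the extra hypothesis $\dim R = 1$ forces $\dim \overline R = 1$ since $R \subseteq \overline R$ is integral. Apply Proposition~\ref{0.3}(3) to the Pr\"ufer extension $\overline R \subseteq S$ of the local ring $\overline R$: there is $P \in \mathrm{Spec}(\overline R)$ with $S = \overline R_P$, $P = SP$, and $\overline R/P$ a valuation domain with quotient field $S/P$. Either $P = N$, in which case $S = \overline R_N = \overline R$ and $R \subseteq S$ is integral; or $P \subsetneq N$, and then one-dimensionality of $\overline R$ forces $P$ to be a minimal prime and $\overline R/P$ to be one-dimensional, giving the stated structure. The main obstacle is the key step of part (2), the verification that $(R:\overline R) \not\subseteq Q$ when $Q \subsetneq M_U$; this is precisely where the full strength of the hypothesis $N = \sqrt{(R:\overline R)}$ is required, combined with incomparability for the integral extension $R \subseteq U$.
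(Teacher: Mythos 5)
Your proof is correct, and parts (1) and (3) coincide with the paper's treatment; but the central step of (2) --- showing that any $T\in[R,S]$ not contained in $\overline R$ must contain $\overline R$ --- is argued by a genuinely different route. The paper works directly with an element $s\in T\setminus\overline R$: since $\overline R\subseteq S$ is Pr\"ufer and $\overline R$ is local, Proposition~\ref{0.3}(1) gives $s^{-1}\in\overline R$, and $s^{-1}$ cannot be a unit of $\overline R$ (else $s\in\overline R$), so $s^{-1}\in N=\sqrt{(R:\overline R)}$; then $s^{-n}\in(R:\overline R)$ for some $n$, whence $\overline R\subseteq Rs^n\subseteq T$ in one line. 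You instead pass to $U:=T\cap\overline R$, verify that $U$ is local and that $U\subseteq T$ is Pr\"ufer (via Corollaries~\ref{3.3} and~\ref{3.5}), invoke the structure theorem Proposition~\ref{0.3}(3) to write $T=U_Q$, and then combine the radical-conductor hypothesis with incomparability of the integral extension $R\subseteq U$ to rule out $(R:\overline R)\subseteq Q$ when $Q\subsetneq M_U$; a conductor element $c\notin Q$ is then a unit of $T$ and $\overline R=c^{-1}(c\overline R)\subseteq T$. Both arguments hinge on $N=\sqrt{(R:\overline R)}$ in the same essential way (a conductor element becomes invertible in $T$), but the paper's version is shorter and purely element-theoretic, using only the Manis description of a Pr\"ufer extension over a local ring, while yours is heavier machinery-wise yet makes explicit how every prime of the integral closure of $R$ in $T$ sits relative to the conductor --- an observation that is reusable elsewhere. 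All the individual steps you take (that $U$ is the integral closure of $R$ in $T$, that $U$ is local with maximal ideal $N\cap U$, the lying-over argument for primes containing the conductor, and the incomparability step) check out, so there is no gap.
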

 
 \begin{proof} $R$ is obviously local. Let $T\in[R,S]\setminus[R,\overline R]$ and $s\in T\setminus \overline R$. Then $s\in$ U$(S)$ and $s^{-1}\in  \overline R$ by Proposition ~\ref{0.3} (1). But $s^{-1}\not\in$ U$(  \overline R)$, so that $s^{-1}\in N$. It follows that there exists some integer $n$ such that $s^{-n}\in (R:\overline R)$, giving $s^{-n}\overline R\subseteq R$, or, equivalently, $\overline R\subseteq Rs^n\subseteq T$. Then, $T\in[\overline R, S]$ and we obtain $[R,S]= [R,\overline R]\cup [\overline R ,S]$.
 
 Assume that $R$ is one-dimensional. 
 If $R\subset S$ is not integral then $\overline R \subset S$ is Pr\"ufer and $\overline R$ is one-dimensional. To complete the proof, use Proposition~\ref{0.3} (3).
  \end{proof}  
 
\subsection{FCP extensions}

In case we consider only FCP extensions, we obtain more results.

\begin{proposition}\label{4.12} Let $R\subseteq S$ be an FCP extension. The following statements are equivalent:
\begin{enumerate}
\item  $R\subseteq S$ is almost-Pr\"ufer.

\item   $R_P \subseteq S_P$ is either integral or Pr\"ufer   for each $P \in \mathrm{Spec}(R)$.

\item $R_P \subseteq S_P$ is almost-Pr\"ufer and $ \mathrm{Supp}(S/\widetilde R)\cap\mathrm{Supp}(\widetilde R/R)= \emptyset$.

\item $\mathrm{Supp}(\overline R/R)\cap \mathrm{Supp}(S/\overline R) = \emptyset  $. 

\end{enumerate}
\end{proposition}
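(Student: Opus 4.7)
My plan is to interleave the four conditions by establishing $(2) \Leftrightarrow (4)$, $(2) \Rightarrow (1)$, $(1) \Rightarrow (4)$, and $(1) \Leftrightarrow (3)$, with the third step carrying the main content.

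First, $(2) \Leftrightarrow (4)$. Integral closure commutes with localization, so $(\overline R)_P = \overline{R_P}^{S_P}$. Under (2), if $R_P \subseteq S_P$ is integral then $(\overline R)_P = S_P$, while if it is Pr\"ufer (hence integrally closed) then $(\overline R)_P = R_P$; either way $P$ is excluded from the intersection in (4). Conversely, under (4), for each $P$ either $(\overline R)_P = R_P$---so $R_P \subseteq S_P$ is integrally closed and hence Pr\"ufer by Proposition~\ref{0.5} (using FCP)---or $(\overline R)_P = S_P$, in which case $R_P \subseteq S_P$ is integral. The implication $(2) \Rightarrow (1)$ follows easily: integral and Pr\"ufer extensions are both almost-Pr\"ufer (Definition~\ref{4.1}), so (2) says $R \subseteq S$ is locally almost-Pr\"ufer, and Corollary~\ref{4.11} lifts this to $R \subseteq S$ being almost-Pr\"ufer.

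The heart of the proof is $(1) \Rightarrow (4)$. By Corollary~\ref{4.5bis}, it suffices to show the equivalent disjointness $\mathrm{Supp}(\widetilde R/R) \cap \mathrm{Supp}(\overline R/R) = \emptyset$. Suppose toward contradiction that $P$ lies in this intersection. Proposition~\ref{4.10} gives that $R_P \subseteq S_P$ is almost-Pr\"ufer with $(\widetilde R)_P = \widetilde{R_P}$, and we have $R_P \subsetneq \widetilde{R_P}$ and $R_P \subsetneq (\overline R)_P = \overline{R_P}^{S_P}$. Since FCP localizes, I pick a Pr\"ufer minimal extension $R_P \subset R_1 \subseteq \widetilde{R_P}$ and an integral minimal extension $R_P \subset R_2 \subseteq \overline{R_P}^{S_P}$. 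By Proposition~\ref{0.3}(3), $R_1 = (R_P)_Q$ for some prime $Q$ strictly contained in the maximal ideal of $R_P$. Since the crucial maximal ideal of the integral minimal extension $R_P \subset R_2$ is the maximal ideal of $R_P$, the crucial-ideal property yields $(R_2)_Q = (R_P)_Q = R_1$, so in particular $R_2 \subseteq R_1 \subseteq \widetilde{R_P}$. Combining with $R_2 \subseteq \overline{R_P}^{S_P}$ gives $R_2 \subseteq \widetilde{R_P} \cap \overline{R_P}^{S_P} = (\widetilde R \cap \overline R)_P = R_P$ by Theorem~\ref{4.5}(1), contradicting $R_P \subsetneq R_2$.

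Finally, $(1) \Leftrightarrow (3)$. Under (1), Proposition~\ref{4.10} supplies the local almost-Pr\"ufer condition of (3), while the support disjointness in (3) is exactly condition~(2) of Corollary~\ref{4.5bis}, equivalent via that corollary to the disjointness just established. For $(3) \Rightarrow (1)$, the local almost-Pr\"ufer hypothesis combined with the FCP assumption yields (1) by Corollary~\ref{4.11}. The main obstacle is the contradiction step in $(1) \Rightarrow (4)$: it hinges on coupling the explicit local description of Pr\"ufer minimal extensions from Proposition~\ref{0.3}(3) with the intersection identity $\widetilde R \cap \overline R = R$ from Theorem~\ref{4.5}(1), and is exactly where the FCP hypothesis is essential (contrast with the non-FCP almost-Pr\"ufer example noted after Theorem~\ref{4.3}).
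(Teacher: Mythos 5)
Your proof is correct, but it is organized quite differently from the paper's. The paper gets $(1)\Leftrightarrow(2)$ by quoting the dichotomy principle of \cite{P2} (over a local ring an FCP almost-Pr\"ufer extension is either integral or Pr\"ufer, \cite[Proposition 2.4]{P2}), proves $(3)$ equivalent to condition (2) of Proposition~\ref{4.10} via \cite[Proposition 3.3]{P2}, and settles $(1)\Leftrightarrow(4)$ by combining Theorem~\ref{4.5} with \cite[Proposition 3.6]{P2}. You instead make the local dichotomy an \emph{output} rather than an input: your cycle $(1)\Rightarrow(4)\Rightarrow(2)\Rightarrow(1)$ derives $(2)$ from $(4)$ by the elementary observation that $(\overline R)_P\in\{R_P,S_P\}$ together with Proposition~\ref{0.5}, and the only substantive work is your $(1)\Rightarrow(4)$, a self-contained contradiction argument: a Pr\"ufer minimal $R_P\subset R_1=(R_P)_Q$ (Proposition~\ref{0.3}(3)) and an integral minimal $R_P\subset R_2$ with crucial ideal the maximal ideal of $R_P$ force $R_2\subseteq(R_2)_Q=R_1$, hence $R_2\subseteq\widetilde{R_P}\cap\overline{R_P}=R_P$ by Theorem~\ref{4.5}(1). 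This buys independence from the three cited results of \cite{P2} at the cost of a somewhat longer argument; the paper's version is shorter but outsources the key dichotomy. Two small points you should make explicit: the inference ``$(R_2)_Q=R_1$, so $R_2\subseteq R_1$'' silently identifies $(R_2)_Q=R_2\otimes_{R_P}R_1$ with the compositum $R_1R_2$ inside $S_P$ (justified by Scholium A(3), exactly as in the proofs of Propositions~\ref{3.10} and~\ref{4.4}); and your appeal to Corollary~\ref{4.11} for $(2)\Rightarrow(1)$ and $(3)\Rightarrow(1)$ is legitimate since that corollary precedes the proposition and its proof does not depend on it.
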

\begin{proof}The equivalence of Proposition~\ref{4.10} shows that (2)  $\Leftrightarrow$ (1) holds because  $\widehat T =\widetilde T$  and over  a local ring $T$, an almost-Pr\"ufer  FCP extension $T\subseteq U$ is either integral or Pr\"ufer \cite[Proposition 2.4]{P2} . Moreover when $R_P\subseteq S_P$ is either integral or Pr\"ufer, it is easy to show that $(\widetilde R)_P = \widetilde{R_P}$

Next we show that (3) is equivalent to (2) of Proposition~\ref{4.10}.

Let $P\in\mathrm{Supp}(S/\widetilde R)\cap\mathrm{Supp}(\widetilde R/R)$ be such that $R_P\subseteq S_P$ is almost-Pr\"ufer. Then, $(\widetilde{R})_P\neq R_P,S_P$, so that  $R_P\subset (\widetilde{R})_P\subset S_P$. Since $R\subset\widetilde R$ is  Pr\"ufer, so is $R_P\subset (\widetilde{R})_P$, giving $(\widetilde R)_P\subseteq\widetilde{R_P}$ and $R_P\neq \widetilde{R_P}$. It follows that $\widetilde{R_P}=S_P$ in view of the dichotomy principle \cite[Proposition 3.3]{P2} since $R_P$ is a local ring, and then $\widetilde{R_P}\neq(\widetilde R)_P$. 

Conversely, assume that $\widetilde{R_P}\neq(\widetilde R)_P$, {\it i.e.} $P\in \mathrm{Supp}(S/R)$. Then, $R_P\neq\widetilde{R_P}$, so that $\widetilde{R_P}=S_P$, as we have just seen. Hence $R_P\subset S_P$ is integrally closed. It follows that $\overline{R_P}=\overline R_P=R_P$, so that $P\not\in\mathrm{Supp}(\overline R/R)$ and $P\in\mathrm{Supp}(\widetilde{R}/R)$ by Theorem~\ref{4.5}(5). Moreover, $\widetilde R_P\neq  S_P$ implies that $P\in\mathrm{Supp}(S/\widetilde R)$. To conclude, $P\in\mathrm{Supp}(S/\widetilde R)\cap\mathrm{Supp}(\widetilde R/R)$.

(1) $\Leftrightarrow$ (4)  An FCP extension is quasi-Pr\"ufer by Corollary~\ref{3.4}. Suppose that $R\subseteq S$ is almost-Pr\"ufer. By Theorem~\ref{4.5}, letting $U:= \widetilde R$, we get that $U\cap \overline R = R$ and $S=\overline R U$. We deduce from \cite[Proposition 3.6]{P2} that $\mathrm{Supp}(\overline R/R)\cap \mathrm{Supp}(S/\overline R) = \emptyset$. Suppose that this last condition holds. Then by \cite[Proposition 3.6]{P2} $R\subseteq S$ can be factored $R\subseteq U \subseteq S$, where $R\subseteq U$ is integrally closed, whence Pr\"ufer by Proposition~\ref{0.5}, and $U\subseteq S$ is integral. Therefore, $R\subseteq S$ is almost-Pr\"ufer.
\end{proof}

\begin{lemma}\label{4.12bis} Let $B\subset D$ and $C\subset D$ be two integral minimal  extensions and $A:=B\cap C$. If $A\subset D$ has FCP, then, $A\subset D$ is  integral. 
\end{lemma}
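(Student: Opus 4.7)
My approach is proof by contradiction. If $B = C$ then $A = B$ and integrality of $A \subset D$ follows from that of $B \subset D$; so assume $B \neq C$, whence minimality of $B, C \subset D$ forces $BC = D$. Suppose $A \subset D$ is not integral. By Corollary~\ref{3.4} (FCP implies quasi-Pr\"ufer), $E := \overline{R}^D$ satisfies $A \subseteq E \subsetneq D$ with $E \subseteq D$ Pr\"ufer, hence a normal pair.

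The key structural observation is that $E$ is incomparable with each of $B$ and $C$. If $E \subseteq B$, then $B$ is intermediate in the normal pair $E \subseteq D$, so $B \subseteq D$ is integrally closed; combined with $B \subset D$ integral, Scholium~A(L) forces $B = D$, contradicting $B \subsetneq D$. If instead $B \subseteq E$, minimality of $B \subset D$ together with $B \subseteq E \subsetneq D$ forces $B = E$, making $E \subset D$ both integral and Pr\"ufer, so $E = D$ by Scholium~A(L), another contradiction. The same applies to $C$. From incomparability, $BE \supsetneq B$, and minimality of $B \subset D$ yields $BE = D$; likewise $CE = D$.

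Localization then pins down the global contradiction. For any $P \in \mathrm{Supp}(D/E) \setminus \mathrm{Supp}(E/A)$, one has $A_P = E_P \subsetneq D_P$ Pr\"ufer, so the normal-pair property makes $B_P, C_P \subseteq D_P$ integrally closed; being also integral (by localization of $B, C \subset D$), Scholium~A(L) gives $B_P = C_P = D_P$, forcing $A_P = (B \cap C)_P = D_P$, which contradicts $A_P \neq D_P$. Hence $\mathrm{Supp}(D/E) \subseteq \mathrm{Supp}(E/A)$, and since $E \subsetneq D$ gives $\mathrm{Supp}(D/E) \neq \emptyset$, we can choose $P \in \mathrm{Supp}(E/A) \cap \mathrm{Supp}(D/E)$ and localize to a local base with $A_P \subsetneq E_P \subsetneq D_P$; moreover $B_P, C_P \subsetneq D_P$ remain integral minimal with intersection $A_P$ (else $B_P = D_P$ would give $A_P = C_P$, making $A_P \subset D_P$ integral and forcing $E_P = D_P$, contradiction).

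The main obstacle is closing off this mixed-support, fully local case. I would handle it by induction on the length $\ell[A, D]$: the base case $\ell = 1$ is immediate since $[A, D] = \{A, D\}$ forces $B = C = A$ and then integrality is trivial. In the inductive step, the localization at $P$ either strictly reduces the length (so induction applies directly to yield $A_P \subset D_P$ integral, contradicting $E_P \subsetneq D_P$) or leaves a genuinely local subproblem, which I would then close using Proposition~\ref{4.6} applied to the pullback towers $B \cap E \subseteq B \subset D$ and $C \cap E \subseteq C \subset D$ (yielding $D = BE = CE$ and the conductor formulas $(B:D) = (B\cap E : E)B$, $(C:D) = (C\cap E : E)C$), combined with the local Pr\"ufer structure theorem Proposition~\ref{0.3}(3), to show that the coexistence of the two integral minimal ``coatoms'' $B, C$ with intersection $A$ is incompatible with a nontrivial Pr\"ufer layer $E \subsetneq D$.
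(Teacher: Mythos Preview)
Your reduction is sound up to a point: the incomparability of $E$ with $B$ and $C$, the support inclusion $\mathrm{Supp}(D/E)\subseteq\mathrm{Supp}(E/A)$, and the passage to a local base (together with the persistence of minimality of $B_P,C_P\subset D_P$, which does hold once you know $B_P\neq D_P$) are all correct. But the proof stops exactly where the real work begins. After localizing you are left with a local ring $A$, a proper integral closure $A\subsetneq E\subsetneq D$ with $E\subset D$ Pr\"ufer, and two integral minimal coatoms $B,C\subset D$ with $A=B\cap C$. Your induction on $\ell[A,D]$ does not close this case: if $A$ is already local with maximal ideal $P$, localization at $P$ changes nothing and the length does not drop. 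You then invoke Proposition~\ref{4.6} and Proposition~\ref{0.3}(3) to ``show that the coexistence \ldots\ is incompatible,'' but you never carry this out; Proposition~\ref{4.6} applied to $B\cap E\subseteq B\subset D$ only recovers $D=BE$ and the conductor identity $(B:D)=(B\cap E:E)B$, neither of which by itself produces a contradiction. (Also, $E:=\overline{R}^D$ should read $E:=\overline{A}^D$.)

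The paper's proof avoids this dead end by working directly with the conductors $M:=(B:D)$ and $N:=(C:D)$ and splitting into two cases. If $M\neq N$, a structure result on intersections of integral minimal extensions with distinct conductors gives integrality of $A\subset D$ immediately. If $M=N$, one first shows $M\in\mathrm{Max}(A)$; then, with $B':=\overline{A}^{B}$, the ideal $M$ is maximal in $B'$ as well. Since $B'\subseteq B$ is an FCP integrally closed extension it is Pr\"ufer, and passing modulo the shared ideal $M$ yields a flat epimorphism $B'/M\subseteq B/M$ out of a field, which by Scholium~A(S) is surjective. Hence $B'=B$, so $A\subseteq B$ is integral and therefore so is $A\subset D$. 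This conductor argument is precisely the missing ingredient in your local case; the detour through supports and localization does not circumvent it.
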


\begin{proof} Set $M:=(B:D)$ and  $N:=(C:D)$. 

If $M\neq N$, then, $A\subset D$ is  integral by \cite[Proposition 6.6]{DPPS}.

Assume that $M=N$. Then, $M\in\mathrm{Max}(A)$ by \cite[Proposition 5.7]{DPPS}. Let $B'$ be the integral closure of $A$ in $B$.   Then $M$ is also an ideal of $B'$, which is prime in $B'$, and then maximal in $B'$. If $A\subset D$ is an FCP extension, so is  $B'\subseteq B$, which is a flat epimorphism, and so is $B'/M\subseteq B/M$. Then, $B'=B$ since $B'/M$ is a field. It follows that $A\subseteq B$ is an  integral extension, and so is $A\subset D$.
\end{proof}

\begin{proposition}\label{4.12ter} Let $R\subset S$ be an FCP almost-Pr\"ufer extension. Then, $\widetilde R = \widehat R$ is the least $T\in[R,S]$ such that $T\subseteq S$  is  integral. 
\end{proposition}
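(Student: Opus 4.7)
The equality $\widetilde R = \widehat R$ and the fact that $\widetilde R\subseteq S$ is integral are immediate: the first is part of Theorem~\ref{4.3}, and the second is built into Definition~\ref{4.1} (or equivalently Proposition~\ref{4.2}). What really needs proof is the minimality statement: for every $T\in[R,S]$ with $T\subseteq S$ integral, one has $\widetilde R\subseteq T$.

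My plan is to argue locally, viewing $\widetilde R$ and $T$ as $R$-submodules of $S$; it will then suffice to verify $(\widetilde R)_P\subseteq T_P$ for every $P\in\mathrm{Spec}(R)$. Fix such a $P$. Since integrality is preserved by localization, $T_P\subseteq S_P$ is integral. The FCP almost-Pr\"ufer hypothesis, through Proposition~\ref{4.12}, gives the dichotomy that $R_P\subseteq S_P$ is either Pr\"ufer or integral, and I would handle these two cases separately.

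In the Pr\"ufer case, $R_P\subseteq S_P$ is a normal pair (Scholium~B(3)), so $T_P$ is integrally closed in $S_P$; combined with $T_P\subseteq S_P$ being integral, this forces $T_P=S_P$, and therefore $(\widetilde R)_P\subseteq S_P=T_P$. In the integral case, any Pr\"ufer subextension of $R_P\subseteq S_P$ is an integral flat epimorphism, hence trivial by the (L)-statement of Scholium~A(5); thus $\widetilde{R_P}=R_P$. Applying the localization identity $(\widetilde R)_P=\widetilde{R_P}$ for almost-Pr\"ufer extensions (Proposition~\ref{4.10}) yields $(\widetilde R)_P=R_P\subseteq T_P$.

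Having established $(\widetilde R)_P\subseteq T_P$ for every $P\in\mathrm{Spec}(R)$, the desired global inclusion $\widetilde R\subseteq T$ follows from the routine fact that containment of $R$-submodules of a common module is a local property (equivalently, the ideal $\{r\in R\mid r\widetilde R\subseteq T\}$ meets every prime avoidance, hence equals $R$). The only delicate step is the integral case, where the chain $(\widetilde R)_P\subseteq\widetilde{R_P}$ that is valid for arbitrary extensions is insufficient; it is precisely Proposition~\ref{4.10}, specific to almost-Pr\"ufer extensions, that upgrades this inclusion to the equality needed to conclude.
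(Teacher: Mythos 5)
Your proof is correct, but it follows a genuinely different route from the paper's. The paper proves minimality by induction on $n=\ell[\widetilde R,S]$: it considers the set $X$ of all $T\in[R,S]$ with $T\subseteq S$ integral, notes that $\widetilde R$ is a minimal element of $X$, and then uses Lemma~\ref{4.12bis} (the intersection of two integral minimal subextensions $B\subset D$, $C\subset D$ is again integral in $D$ under FCP) to force any other minimal element of $X$ back into $[\widetilde R,S]$. Your argument instead localizes: the dichotomy of Proposition~\ref{4.12} makes each $R_P\subseteq S_P$ either Pr\"ufer (where the normal-pair property plus integrality of $T_P\subseteq S_P$ forces $T_P=S_P$) or integral (where $\widetilde{R_P}=R_P$ by Scholium~A(5), upgraded to $(\widetilde R)_P=R_P$ via Proposition~\ref{4.10}/Theorem~\ref{4.3}), and the inclusion $\widetilde R\subseteq T$ is checked prime by prime. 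Both Propositions~\ref{4.10} and~\ref{4.12} precede the statement and do not depend on it, so there is no circularity. What your approach buys is brevity and the elimination of the length induction and of Lemma~\ref{4.12bis} altogether, at the cost of leaning on the local dichotomy principle (which ultimately rests on \cite[Proposition 3.3]{P2}); the paper's induction is more self-contained at the level of minimal extensions and crucial ideals. One small stylistic point: your closing justification that local containment of $R$-submodules implies global containment is phrased awkwardly (``meets every prime avoidance''), but the underlying fact --- that $(\widetilde R+T)/T$ vanishes because all its localizations do --- is standard and correct.
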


\begin{proof} We may assume that $R\subset S$ is not integral. If there is some $U\in[R,\widetilde R]$ such that $U\subseteq \widetilde R$ is integral, then $U=\widetilde R$. Set $X:=\{T\in[R,S]\mid T\subseteq S$   integral$\}$. It follows that $\widetilde R$ is a minimal element of $X$. We are going to show that $\widetilde R$ is the least element of $X$.

Set $n:=\ell[\widetilde R,S]\geq 1$ and let  $ \widetilde R=R_0\subset R_1 \subset\cdots\subset R_{n-1}\subset R_n=S$ be a maximal chain of $ [\widetilde R,S]$, with length $n$. 
There does not exist a maximal chain of $ \widetilde R$-subalgebras of $S$ with length $>n$. Let $T\in X$. We intend to show that $T\in[\widetilde R,S]$. It is enough to choose $T$ such that $T$ is a minimal element of $X$. Consider the induction hypothesis: (H$_n$): $X\subseteq [\widetilde R,S]$ when $n:=\ell[\widetilde R,S]$.

We first show (H$_1$). If $n=1$, $\widetilde R\subset S$ is  minimal. Let $T\in X$ and $T_1\in[T,S]$ be such that $T_1\subset S$ is minimal. Assume that $T_1\neq \widetilde R$.  Lemma ~\ref{4.12bis} shows that $T_1\cap \widetilde R\subset \widetilde R$ is  integral, which contradicts  the beginning of the proof. Then, $T_1=\widetilde R$, so that $T=\widetilde R$ for the same contradiction and (H$_1$) is proved.

Assume that $n>1$ and that  (H$_k$) holds for any $k<n$.  Let $T\in X$ and $T_1\in[T,S]$ be such that $T_1\subset S$ is minimal. If $T_1\in[ \widetilde R,S]$, then $k:=\ell[\widetilde R,T_1]\leq n-1$. But we get that $T\in[R,T_1]$, with $T\subseteq T_1$ integral. Moreover, $\widetilde R$ is also the Pr\"ufer hull of $R\subseteq T_1$, with $k:=\ell[\widetilde R,T_1]\leq n-1$. Since (H$_k$) holds, we get that $T\in[\widetilde R,T_1]\subset [\widetilde R,S]$. 

 If $T_1\not\in[ \widetilde R,S]$, set $U:=T_1\cap R_{n-1}$. We get that $T_1\subset S$ and $R_{n-1}\subset S$ are minimal and  integral. Using again Lemma ~\ref{4.12bis}, we get that $U\subset S$ is  integral, with $\ell[\widetilde R,R_{n-1}]= n-1$ and $U\in[R,R_{n-1}]$. As before, $\widetilde R$ is also the Pr\"ufer hull of $R\subseteq R_{n-1}$. Since (H$_{n-1}$) holds, $U\in[\widetilde R,R_{n-1}]$, so that $T_1\in[ \widetilde R,S]$, a contradiction. Therefore,  (H$_n$) is proved.
\end{proof}

We will need a relative version of the support. Let  $f: R\to T$ be a ring morphism and $E$ a $T$-module. The relative support of $E$ over $R$ is $\mathscr S_R(E):= {}^af(\mathrm{Supp}_T(E))$ and $\mathrm{M}\mathscr{S}_R(E):=\mathscr S_R(E)\cap\mathrm{Max}(R)$. In particular, for a ring extension $R\subset S$, we have $\mathscr S_R(S/R):= \mathrm{Supp}_R(S/R))$.

\begin{proposition}\label{4.13} Let $R\subseteq S$ be an FCP extension. The following statements hold:

\begin{enumerate}

 \item $\mathrm {Supp}(\overline{\widetilde{R}}/\overline{R})\cap\mathrm{Supp}(\overline{R}/R)=\emptyset$.

 \item $\mathrm{Supp}(\widetilde{R}/R)\cap\mathrm{Supp}(\overline{R}/R)=\mathrm{Supp}(\overline{\widetilde{R}}/\widetilde{R})\cap\mathrm{Supp}(\widetilde{R}/R)= \emptyset$. 
 
\item $\mathrm{MSupp}(S/R)=\mathrm{MSupp}(\widetilde R/R)\cup\mathrm{MSupp}(\overline{R}/R)$.
\end{enumerate}
\end{proposition}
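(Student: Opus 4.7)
My plan rests on the observation that the intermediate ring $\vec R=\overline R\widetilde R=\overline{\widetilde R}$ gives rise to an FCP almost-Pr\"ufer subextension $R\subseteq \vec R$: almost-Pr\"ufer by Theorem~\ref{4.5}(2), and FCP because subextensions inherit FCP from $R\subseteq S$. Moreover, the integral closure and the Pr\"ufer hull of $R$ computed \emph{inside} $\vec R$ are exactly $\overline R$ and $\widetilde R$, since both are already contained in $\vec R$ (using Proposition~\ref{0.8} for the Pr\"ufer hull).

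For Part (1), I will apply condition (4) of Proposition~\ref{4.12} to the FCP almost-Pr\"ufer extension $R\subseteq \vec R$; it reads $\mathrm{Supp}(\overline R/R)\cap\mathrm{Supp}(\vec R/\overline R)=\emptyset$, which is exactly Part (1) once we substitute $\vec R=\overline{\widetilde R}$. For Part (2), with Part (1) in hand, I will apply Corollary~\ref{4.5bis} to the same almost-Pr\"ufer extension $R\subseteq \vec R$: Part (1) is precisely its condition (1), so its equivalent conditions (2) and (3) hold as well, and rewriting them with $\vec R=\overline{\widetilde R}$ gives exactly the two disjointness statements asserted in Part (2).

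For Part (3), the inclusion $\supseteq$ is immediate. For $\subseteq$, suppose $M\in\mathrm{MSupp}(S/R)$ with $M\notin\mathrm{MSupp}(\overline R/R)$. Then $R_M=(\overline R)_M=\overline{R_M}$, so $R_M\subsetneq S_M$ is integrally closed and FCP, hence Pr\"ufer by Proposition~\ref{0.5}, which forces $\widetilde{R_M}=S_M$. The main obstacle, and the heart of the argument, is to upgrade the trivial inclusion $(\widetilde R)_M\subseteq\widetilde{R_M}$ to an equality. I will reuse the sliding argument from the proof of Corollary~\ref{4.11}: if $(\widetilde R)_M\subsetneq\widetilde{R_M}$, the FCP Pr\"ufer extension $(\widetilde R)_M\subsetneq\widetilde{R_M}$ admits a Pr\"ufer minimal first step $(\widetilde R)_M\subset U$ by Proposition~\ref{0.5}; by surjectivity of the localization map $[\widetilde R,S]\to[(\widetilde R)_M,S_M]$ (valid in the FCP setting), there is $R'_2\in[\widetilde R,S]$ with $(R'_2)_M=U$, and then \cite[Lemma 1.10]{P2} produces $R_2\in[\widetilde R,R'_2]$ such that $\widetilde R\subset R_2$ is a Pr\"ufer minimal extension, contradicting the maximality of $\widetilde R$ as the Pr\"ufer hull. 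Hence $(\widetilde R)_M=\widetilde{R_M}=S_M\neq R_M$, so $M\in\mathrm{MSupp}(\widetilde R/R)$, which completes Part (3).
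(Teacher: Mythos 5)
Your proof is correct. Part (1) coincides with the paper's argument: both apply Proposition~\ref{4.12}(4) to the FCP almost-Pr\"ufer extension $R\subseteq\vec R$, after noting that the integral closure and Pr\"ufer hull of $R$ inside $\vec R$ are still $\overline R$ and $\widetilde R$. Parts (2) and (3), however, take genuinely different routes. For (2), the paper proves the first disjointness directly: a maximal ideal $M$ lying in both supports would make $R_M\subset(\widetilde R)_M$ a nontrivial FCP Pr\"ufer extension and $R_M\subset(\overline R)_M$ a nontrivial FCP integral extension of the same local ring, contradicting the dichotomy principle \cite[Proposition 3.3]{P2}; the second disjointness is then deduced by a short localization computation. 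You instead feed Part (1) into the three equivalent conditions of Corollary~\ref{4.5bis} applied to $R\subseteq\vec R$ --- a clean shortcut that avoids redoing the local analysis at the cost of leaning on that corollary. For (3) the divergence is larger: the paper decomposes $\mathrm{MSupp}(S/R)$ into relative supports along the chain through $\widetilde R$ and $\vec R$ and identifies each piece via \cite[Propositions 2.3 and 3.2]{P2}, whereas you argue pointwise at each $M\in\mathrm{MSupp}(S/R)\setminus\mathrm{MSupp}(\overline R/R)$, observing that $R_M\subset S_M$ is integrally closed FCP, hence Pr\"ufer by Proposition~\ref{0.5}, and then forcing $(\widetilde R)_M=\widetilde{R_M}=S_M$ by the sliding argument of Corollary~\ref{4.11}. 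This is more self-contained and transparent, and it is essentially the mechanism the paper itself deploys later in Proposition~\ref{4.14} --- which cites Proposition~\ref{4.13}(3), so your direct derivation also sidesteps any appearance of circularity. The only steps to spell out are the surjectivity of the localization map $[\widetilde R,S]\to[(\widetilde R)_M,S_M]$ and the intermediate choice of $R'_1$ required before invoking \cite[Lemma 1.10]{P2}; both are handled exactly as in the proof of Corollary~\ref{4.11}, as you indicate.
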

\begin{proof}

(1) is a consequence of Proposition~\ref{4.12}(4) because $R\subseteq \overline{\widetilde{R}}$ is almost-Pr\"ufer. 

We  prove the first part of (2). If some $M \in \mathrm{Supp}(\widetilde R/R) \cap \mathrm{Supp}(\overline R/R)$, it can be supposed in $\mathrm{Max}(R)$. Set $R':=R_M,U:=(\widetilde R)_M, T:=(\overline R)_M$ and $M':=MR_M$. Then, $R'\neq U,T$, with $R'\subset U$ FCP Pr\"ufer and $R'\subset T$ FCP integral, an absurdity \cite[Proposition 3.3]{P2}. 

To show the second part, assume that some $P \in \mathrm{Supp}(\overline{\widetilde{R}}/\widetilde{R})\cap\mathrm{Supp}(\widetilde{R}/R)$. Then, $P\not\in\mathrm{Supp}(\overline{R}/R)$ by the first part of (2), so that $\overline{R}_P=R_P$, giving $(\overline{\widetilde{R}})_P=\overline{R}_P\widetilde{R}_P=\widetilde{R}_P$, a contradiction. 
 
(3) Obviously, 
$\mathrm{MSupp}(S/R)=\mathrm{M}\mathscr{S}(S/R)=\mathrm{M}\mathscr{S}(S/\overline T^S)\cup\mathrm{M}\mathscr{S}(\overline T^S/T)$

\noindent $\cup\mathrm{M}\mathscr{S}(T/\overline U ^T)\cup\mathrm{M}\mathscr{S}(\overline{U}^T/U)\cup\mathrm{M}\mathscr{S}(U/R)$.
By \cite[Propositions 2.3 and 3.2]{P2}, we have $\mathrm{M}\mathscr{S}(S/\overline T^S)\subseteq\mathscr{S}(\overline T^S/T)=\mathscr{S}(\overline R/\overline R^T)=\mathrm{M}\mathscr{S}(\overline R/R)=\mathrm{MSupp}(\overline R/R),\ \mathrm{M}\mathscr{S}(T/\overline U^T)=\mathscr{S}(\overline R^T/R)\subseteq\mathscr{S}(\overline R/R)=\mathrm{Supp}(\overline R/R)$ and $\mathrm{M}\mathscr{S}(\overline U^T/U)=\mathscr{S}(\overline R^T/R)=\mathrm{Supp}(\overline R/R)$. 
To conclude, $\mathrm{MSupp}(S/R)=\mathrm{MSupp}(\widetilde R/R)\cup\mathrm{MSupp}(\overline R/R)$.
\end{proof}

\begin{proposition}\label{4.14} Let $R\subset S$ be an FCP  extension and  $M\in\mathrm{MSupp}(S/R)$,  then $\widetilde{R_M}=(\widetilde R)_M$ if and only if $M\not\in\mathrm{MSupp}(S/\widetilde R)\cap\mathrm{MSupp}(\widetilde R/R)$.
\end{proposition}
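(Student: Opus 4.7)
The plan is to localize at $M$ and combine three ingredients: the always-valid inclusion chain $R_M \subseteq (\widetilde R)_M \subseteq \widetilde{R_M} \subseteq S_M$ (where the middle step holds because $R \subseteq \widetilde R$ being Pr\"ufer localizes by Proposition~\ref{3.1}, exhibiting $(\widetilde R)_M$ as a Pr\"ufer subextension of $R_M \subset S_M$ and hence contained in $\widetilde{R_M}$); the disjointness and MSupp decomposition of Proposition~\ref{4.13}(2)(3); and the dichotomy principle \cite[Proposition 3.3]{P2} used in the proof of Proposition~\ref{4.12}, which over a local base $R_M$ forces $\widetilde{R_M} \in \{R_M, S_M\}$ in the FCP setting. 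Once these are in place the argument is a clean case analysis and Proposition~\ref{0.5} (FCP + integrally closed $=$ Pr\"ufer) does the arithmetic.

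For the forward implication I would argue by contradiction. Assume $\widetilde{R_M} = (\widetilde R)_M$ and that $M$ lies in both $\mathrm{MSupp}(S/\widetilde R)$ and $\mathrm{MSupp}(\widetilde R/R)$. Then $(\widetilde R)_M$, and hence $\widetilde{R_M}$, is strictly between $R_M$ and $S_M$. But Proposition~\ref{4.13}(2) forces $M \notin \mathrm{MSupp}(\overline R/R)$, so $(\overline R)_M = R_M$ and $R_M \subset S_M$ is integrally closed; as it is FCP, Proposition~\ref{0.5} makes it Pr\"ufer, so $\widetilde{R_M} = S_M$, contradicting the strict inclusion.

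For the converse, split according to which factor $M$ avoids. If $M \notin \mathrm{MSupp}(S/\widetilde R)$, then $(\widetilde R)_M = S_M$ and the inclusion chain instantly collapses to $\widetilde{R_M} = S_M = (\widetilde R)_M$. Otherwise $M \notin \mathrm{MSupp}(\widetilde R/R)$, so $(\widetilde R)_M = R_M$; Proposition~\ref{4.13}(3) combined with the standing hypothesis $M \in \mathrm{MSupp}(S/R)$ places $M$ in $\mathrm{MSupp}(\overline R/R)$, so $R_M \subset S_M$ is not integrally closed and hence (again by Proposition~\ref{0.5}) not Pr\"ufer; in particular $\widetilde{R_M} \neq S_M$, and the dichotomy principle forces $\widetilde{R_M} = R_M = (\widetilde R)_M$.

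The main obstacle is precisely this final subcase: ruling out an intermediate Pr\"ufer hull $R_M \subsetneq \widetilde{R_M} \subsetneq S_M$ when the global hull is locally trivial. That step is not purely formal—it requires the FCP local dichotomy $\widetilde{R_M}\in\{R_M,S_M\}$. Everything else is bookkeeping once Propositions~\ref{3.1}, \ref{0.5}, and \ref{4.13} are marshalled.
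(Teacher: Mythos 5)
Your proof is correct and follows essentially the same route as the paper's: both rest on the inclusion $(\widetilde R)_M\subseteq\widetilde{R_M}$ coming from localization of the Pr\"ufer extension $R\subseteq\widetilde R$, the support decomposition and disjointness of Proposition~\ref{4.13}, and the local dichotomy $\widetilde{R_M}\in\{R_M,S_M\}$ from \cite[Proposition 3.3]{P2}. The only differences are cosmetic — you argue the two implications directly where the paper works with the negated equivalence, and in the forward direction you substitute Proposition~\ref{4.13}(2) plus Proposition~\ref{0.5} for one invocation of the dichotomy principle.
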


\begin{proof} In fact, we are going to show that $\widetilde{R_M}\neq(\widetilde R)_M$ if and only if $M\in\mathrm{MSupp}(S/\widetilde R)\cap\mathrm{MSupp}(\widetilde R/R)$.

Let $M\in\mathrm{MSupp}(S/\widetilde R)\cap\mathrm{MSupp}(\widetilde R/R)$. Then, $\widetilde{R_M}\neq R_M,S_M$ and then  $R_M\subset\widetilde{R_M}\subset S_M$. Since $R\subset\widetilde R$ is  Pr\"ufer, so is $R_M\subset\widetilde{R_M}$ by Proposition~\ref{0.3}, giving $(\widetilde R)_M\subseteq\widetilde{R_M}$ and $R_M\neq \widetilde{R_M}$. Therefore, $\widetilde{R_M}=S_M$ \cite[Proposition 3.3]{P2} since $R_M$ is  local, and then $\widetilde{R_M}\neq(\widetilde R)_M$. 

Conversely, if $\widetilde{R_M}\neq(\widetilde R)_M$, then, $R_M\neq\widetilde{R_M}$, so that $\widetilde{R_M}=S_M$, as we have just seen and then $R_M\subset S_M$ is integrally closed. It follows that $\overline{R_M}=\overline R_M=R_M$, so that $M\not\in\mathrm{MSupp}(\overline R/R)$. Hence, $M\in\mathrm{MSupp}(\widetilde{R}/R)$ by Proposition~\ref{4.13}(3). Moreover, $\widetilde R_M\neq  S_M\Rightarrow M\in\mathrm{MSupp}(S/\widetilde R)$. To conclude, $M\in\mathrm{MSupp}(S/\widetilde R)\cap\mathrm{MSupp}(\widetilde R/R)$.
\end{proof}

If $R\subseteq S$ is any ring extension, with $\dim(R) =0$, then $\widetilde{R_M}=(\widetilde R)_M$ for any $M\in\mathrm{Max}(R)$. Indeed by Scholium A (2), the flat epimorphism $R\to \widetilde R$ is bijective as well as $R_M\to (\widetilde R)_M$. This conclusion is still valid in another context.

\begin{corollary}\label{4.15} Let $R\subset S$ be an FCP  extension. Assume that one of the following conditions is satisfied:

\begin{enumerate}
\item  $\mathrm{MSupp}(S/\widetilde R)\cap\mathrm{MSupp}(\widetilde R/R)=\emptyset$.

\item $S=\overline R\widetilde R$, or equivalently, $R \subseteq S$ is almost-Pr\"ufer.
\end{enumerate} 

Then, $\widetilde{R_M}=(\widetilde R)_M$ for any $M\in\mathrm{Max}(R)$.
\end{corollary}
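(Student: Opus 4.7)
The plan is to dispatch the two hypotheses separately, since both reduce quickly to results already established in the paper.

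Under condition (1), the main tool is Proposition~\ref{4.14}. First I would handle the trivial case $M \in \mathrm{Max}(R) \setminus \mathrm{MSupp}(S/R)$: here $R_M = S_M$, which forces $\widetilde{R_M} = R_M$, while the chain $R_M \subseteq (\widetilde R)_M \subseteq S_M = R_M$ collapses to $(\widetilde R)_M = R_M$, giving the desired equality. For $M \in \mathrm{MSupp}(S/R)$, the hypothesis $\mathrm{MSupp}(S/\widetilde R)\cap\mathrm{MSupp}(\widetilde R/R)=\emptyset$ immediately implies $M \notin \mathrm{MSupp}(S/\widetilde R) \cap \mathrm{MSupp}(\widetilde R/R)$, so Proposition~\ref{4.14} yields $\widetilde{R_M} = (\widetilde R)_M$.

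Under condition (2), the equivalence between $S = \overline R \widetilde R$ and ``$R\subseteq S$ is almost-Pr\"ufer'' is Theorem~\ref{4.5}(3). Once we know the extension is almost-Pr\"ufer, Theorem~\ref{4.3} directly supplies $(\widetilde R)_P = \widetilde{R_P}$ for every $P \in \mathrm{Spec}(R)$, which in particular covers every maximal ideal.

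There is no real obstacle: both parts are essentially bookkeeping applications of Proposition~\ref{4.14} and Theorem~\ref{4.3}. The only point requiring a moment of care is noting that Proposition~\ref{4.14} is stated for $M \in \mathrm{MSupp}(S/R)$, so one must separately observe that $M \notin \mathrm{MSupp}(S/R)$ is the trivial case. No further computations or lemmas are needed.
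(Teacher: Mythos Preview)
Your proof is correct and follows essentially the same approach as the paper: the paper simply cites Proposition~\ref{4.14} for (1) and Proposition~\ref{4.10} for (2), while you cite Proposition~\ref{4.14} (with the extra care of separating out the trivial case $M\notin\mathrm{MSupp}(S/R)$, which the paper glosses over) and Theorem~\ref{4.3} instead of Proposition~\ref{4.10} for (2). Both choices of reference for (2) give the conclusion directly, so the arguments are interchangeable.
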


\begin{proof}
(1) is Proposition ~\ref{4.14}.
(2) is Proposition ~\ref{4.10}.
\end{proof}

\begin{proposition}\label{4.15bis} Let $R\subset S$ be an almost-Pr\"ufer FCP  extension.  Then, any $T\in[R,S]$ is the integral closure of $T\cap \widetilde R$ in $T\widetilde R$. 
\end{proposition}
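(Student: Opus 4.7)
My plan is to prove the equality $T=\overline{T\cap\widetilde R}^{T\widetilde R}$ by verifying it locally at every $M\in\mathrm{Max}(R)$ and concluding globally. Two $R$-subalgebras of $S$ that agree after localization at every maximal ideal of $R$ must coincide, and since localization along $R\to R_M$ is flat, it commutes with finite intersections and, as a standard fact, with the operation of forming integral closure. So it suffices to check the equality after passing to $R_M$, $S_M$, $T_M$, and $(\widetilde R)_M$ for each such $M$.

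The key inputs are Proposition~\ref{4.12}(2), which asserts that because $R\subseteq S$ is almost-Pr\"ufer with FCP each $R_M\subseteq S_M$ is either integral or Pr\"ufer, together with Corollary~\ref{4.15}(2), which gives $(\widetilde R)_M=\widetilde{R_M}$. This enables a clean case analysis. If $R_M\subseteq S_M$ is integral then $\widetilde{R_M}=R_M$, hence $(T\cap\widetilde R)_M=T_M\cap R_M=R_M$ and $(T\widetilde R)_M=T_M(\widetilde R)_M=T_M$; the integral closure of $R_M$ in $T_M$ is $T_M$ itself, since $R_M\subseteq T_M$ is integral as a subextension of $R_M\subseteq S_M$. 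If instead $R_M\subseteq S_M$ is Pr\"ufer then $\widetilde{R_M}=S_M$, so $(T\cap\widetilde R)_M=T_M\cap S_M=T_M$ and $(T\widetilde R)_M=T_M\cdot S_M=S_M$; by the normal pair characterization of Pr\"ufer extensions (Scholium~B(3)), $T_M$ is integrally closed in $S_M$, so the integral closure of $T_M$ in $S_M$ is $T_M$. In both cases $\bigl(\overline{T\cap\widetilde R}^{T\widetilde R}\bigr)_M=T_M$, which gives the required global identity.

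No substantial obstacle is anticipated. The only bookkeeping to carry out is the verification of the commutation identities $(T\cap\widetilde R)_M=T_M\cap(\widetilde R)_M$ and $(T\widetilde R)_M=T_M(\widetilde R)_M$, which are routine consequences of the flatness of $R\to R_M$, together with the compatibility of integral closure with localization, a standard fact. All the genuine content is already packaged in the dichotomy provided by the almost-Pr\"ufer FCP hypothesis, which forces the local behaviour to be either purely integral or purely Pr\"ufer and hence makes the two extreme terms $T\cap\widetilde R$ and $T\widetilde R$ collapse onto the correct endpoints at each maximal ideal.
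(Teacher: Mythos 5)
Your proof is correct and follows essentially the same route as the paper's: localize at each maximal ideal, invoke the integral-or-Pr\"ufer dichotomy of Proposition~\ref{4.12} together with $(\widetilde R)_M=\widetilde{R_M}$ from Corollary~\ref{4.15}, check the two cases, and use the compatibility of integral closure with localization to globalize. The only cosmetic difference is that you case-split on $R_M\subseteq S_M$ and compute $(T\cap\widetilde R)_M$ and $(T\widetilde R)_M$ directly, whereas the paper phrases the same dichotomy for $(T\cap\widetilde R)_M\subseteq (T\widetilde R)_M$; the substance is identical.
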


\begin{proof} Set $U:=T\cap \widetilde R$ and $V:=T \widetilde R$. Since $R\subset S$ is almost-Pr\"ufer,  $U\subseteq \widetilde R$ is  Pr\"ufer and $\widetilde R\subseteq V$ is integral and  $\widetilde{R}$  is also the Pr\"ufer hull of $U\subseteq V$. Because $R\subset S$ is almost-Pr\"ufer,  for each $M\in\mathrm{MSupp}_R(S/R)$,  $R_M\subseteq S_M$ is either integral, or Pr\"ufer by Proposition ~\ref{4.12}, and so is $U_M\subseteq V_M$. But $\widetilde{R_M}=(\widetilde R)_M$ by Corollary ~\ref{4.15}  is also the Pr\"ufer hull of $U_M\subseteq V_M$. Let $T'$ be the integral closure of $U$ in $V$. Then, $T'_M$ is the integral closure of $U_M$ in $V_M$. 

Assume that  $U_M\subseteq V_M$ is integral. Then $V_M=T'_M$ and $U_M=(\widetilde R)_M$, so that $V_M=T_M (\widetilde R)_M=T_M$, giving $T_M=T'_M$.

Assume that  $U_M\subseteq V_M$ is Pr\"ufer. Then $U_M=T'_M$ and $V_M=(\widetilde R)_M$, so that $U_M=T_M \cap(\widetilde R)_M=T_M$, giving $T_M=T'_M$.

To conclude, we get that $T_M=T'_M$  for each $M\in\mathrm{MSupp}_R(S/R)$. Since $R_M=S_M$, with $T_M=T'_M$ for each $M\in\mathrm{Max}(R)\setminus\mathrm{MSupp}_R(S/R)$, we get $T=T'$,  whence  $ T$ is the  integral closure of  $U\subseteq V$.
\end{proof}

We build an example of an FCP extension $R\subset S$ where we have $\widetilde{R_M}\neq(\widetilde R)_M$ for some $M\in\mathrm{Max}(R)$. In particular, $R\subset S$ is not almost-Pr\"ufer.

\begin{example}\label{4.16} Let $R$ be an integral domain with quotient field $S$ and $\mathrm{Spec}(R):=\{M_1,M_2,P,0\}$, where $M_1\neq  M_2$ are two  maximal ideals and $P$ a prime ideal satisfying $P\subset M_1\cap M_2$. Assume that there are $R_1,\ R_2$ and $R_3$ such that $R\subset R_1$ is  Pr\"ufer minimal, with $\mathscr{C}(R,R_1)=M_1,\ R\subset R_2$ is  integral minimal, with $\mathscr{C}(R,R_2)=M_2$ and $R_2\subset R_3$ is Pr\"ufer minimal, with $\mathscr{C}(R_2,R_3)=M_3\in\mathrm{Max}(R_2)$ such that $M_3\cap R=M_2$ and $M_2R_3=R_3$. This last condition is satisfied when $R\subset R_2$ is either ramified or inert. Indeed, in both cases, $M_3R_3=R_3$; moreover, in the ramified case, we have $M_3^2\subseteq M_2$ and in the inert case, $M_3=M_2$ \cite[Theorem 3.3]{Pic}. We apply \cite[Proposition 7.10]{DPPS} and \cite[Lemma 2.4]{DPP2} several times. Set $R'_2:=R_1R_2$. Then, $R_1\subset R'_2$ is  integral minimal,  with $\mathscr{C}(R_1,R'_2) =:M'_2=M_2R_1$ and $R_2\subset R'_2$ is  Pr\"ufer minimal, with $\mathscr{C}(R_2,R'_2)=:M'_1=M_1R_2\in\mathrm{Max}(R_2)$. Moreover, $M'_1\neq M_3,\ \mathrm {Spec}(R_1)=\{M'_2,P_1,0\}$, where $P_1$ is the only prime ideal of $R_1$ lying over $P$. But, $P=(R:R _1)$ by \cite[Proposition 3.3]{FO}, so that $P=P_1$. Set $R'_3:=R_3R'_2$. Then, $R'_2\subset R'_3$ is  Pr\"ufer minimal, with $\mathscr{C}(R'_2,R'_3)=:M'_3=M_3R'_2\in\mathrm {Max}(R'_2)$ and $R_3\subset R'_3$ is  Pr\"ufer minimal, with $\mathscr {C}(R_3,R'_3)=M''_1=M_1R_3\in\mathrm{Max}(R_3)$. It follows that we have $\mathrm{Spec}(R'_3)=\{P',0\}$ where $P'$ is the only prime ideal of $R'_3$ lying over $P$. To end, assume that $R'_3\subset S$ is  Pr\"ufer minimal, with $\mathscr{C}(R'_3,S)=P'$. Hence, $R_2$ is the integral closure of $R$ in $S$.  In particular, $R\subset S$ has FCP \cite[Theorems 6.3 and  3.13]{DPP2} and is quasi-Pr\"ufer. Since $R\subset R_1$ is integrally closed, we have $R_1\subseteq\widetilde R$. Assume that $R_1\neq\widetilde R$. Then, there exists $T\in[R_1,S]$ such that $R_1\subset T$ is  Pr\"ufer minimal and $\mathscr{C}(R_1,T)=M'_2$, a contradiction by Proposition~\ref{4.12} since $M'_2=\mathscr{C}(R_1,R'_2)$, with $R_1\subset R'_2$ integral  minimal. Then, $R_1=\widetilde R$. It follows that $M_1\in \mathrm{MSupp}(\widetilde R/R)$. But, $P=\mathscr{C}(R'_3,S)\cap R\in\mathrm{Supp}(S/\widetilde R)$ and $P\subset M_1$ give $M_1\in\mathrm{MSupp}(S/\widetilde R)$, so that $\widetilde{R_{M_1}}\neq(\widetilde R)_{M_1}$ by Proposition~\ref{4.14} giving that $R\subset S$ is not almost-Pr\"ufer.

\end{example}

We now intend to refine Theorem~\ref{4.5}, following the scheme used in \cite[Proposition 4]{A} for extensions of integral domains.

\begin{proposition}\label{4.17} Let $R\subseteq S$ and $U, T\in [R,S]$ be such that $R\subseteq U$ is integral and $R\subseteq T$ is Pr\"ufer.  Then $U\subseteq UT$ is Pr\"ufer in the following cases and $R\subseteq UT$ is almost-Pr\"ufer.
\begin{enumerate}
\item  $\mathrm{Supp}(\overline R/R) \cap \mathrm{Supp}(\widetilde R/R) = \emptyset$ (for example, if $R\subseteq S$ has FCP).

\item  $R\subseteq  U$ preserves integral closure.
\end{enumerate}
\end{proposition}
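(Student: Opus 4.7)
\medskip
\noindent\textbf{Proof plan.} I would prove the two assertions separately, since in fact $R\subseteq UT$ is almost-Pr\"ufer without either hypothesis. Because $R\subseteq U$ (integral) and $R\subseteq T$ (Pr\"ufer) are both quasi-Pr\"ufer, Proposition~\ref{3.11} yields that $R\subseteq UT$ is quasi-Pr\"ufer. Since $T\subseteq \widetilde R^{UT}$ and $U\subseteq \overline R^{UT}$, the chain $UT\subseteq \overline R^{UT}\widetilde R^{UT}\subseteq UT$ collapses to an equality, so Theorem~\ref{4.5}(3) gives that $R\subseteq UT$ is almost-Pr\"ufer.

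For the Pr\"ufer property of $U\subseteq UT$, I would first note that $U\subseteq UT$ is quasi-Pr\"ufer by Corollary~\ref{3.3}, so by Corollary~\ref{3.5} I only need to prove that it is integrally closed. A useful preliminary is the identification $U\otimes_R T\cong UT$ via the multiplication map $\phi$: indeed, $U\to U\otimes_R T$ is a flat epimorphism by base change of the flat epimorphism $R\to T$, so by Scholium~A(6) the kernel $K$ of $\phi$ equals $(K\cap U)(U\otimes_R T)$; since the inclusion $U\hookrightarrow UT$ factors through $\phi$, one has $K\cap U=0$ and hence $K=0$.

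Under hypothesis~(2), applying preservation of integral closure for $R\to U$ to the $R$-algebra $T$ yields $\overline U^{U\otimes_R T}=U\otimes_R \overline R^T=U\otimes_R R=U$, using that $R\subseteq T$ is Pr\"ufer and hence integrally closed ($\overline R^T=R$). Transporting along the isomorphism $U\otimes_R T\cong UT$ gives $\overline U^{UT}=U$, proving that $U\subseteq UT$ is integrally closed, whence Pr\"ufer.

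Under hypothesis~(1), I would verify the Pr\"ufer property locally via Proposition~\ref{3.1}. For each $P\in\mathrm{Spec}(U)$ with $N:=P\cap R$, the disjointness hypothesis splits into two cases. If $N\notin\mathrm{Supp}(\widetilde R/R)$, then $\widetilde R_N=R_N$, and $T\subseteq\widetilde R$ forces $T_N=R_N$; hence $(UT)_P\cong T\otimes_R U_P=T_N\otimes_{R_N}U_P=U_P$, so the local extension is trivial. If instead $N\in\mathrm{Supp}(\widetilde R/R)$, then $N\notin\mathrm{Supp}(\overline R/R)$, so $\overline R_N=R_N$ and the inclusions $R_N\subseteq U_N\subseteq \overline R_N$ collapse to $U_N=R_N$; since $U_N=R_N$ is local at $NR_N$, there is a unique prime of $U$ above $N$, so $U_P=(U_N)_{PU_N}=R_N$, and $(UT)_P=T_N\otimes_{R_N}U_P=T_N$. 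Thus $U_P\subseteq (UT)_P$ identifies with the Pr\"ufer localization $R_N\subseteq T_N$. The FCP case is subsumed in (1) via Proposition~\ref{4.13}(2). The main delicate step is the second subcase of (1), where one must carefully track the collapses $U_N=R_N$ and $U_P=R_N$ to identify the local extension with a localization of $R\subseteq T$.
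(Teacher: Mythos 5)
Your proposal is correct and takes essentially the same route as the paper: for (1) a localization over the disjoint supports $\mathrm{Supp}(\overline R/R)$ and $\mathrm{Supp}(\widetilde R/R)$, identifying each local extension $U_P\subseteq (UT)_P$ with either a trivial extension or a localization of the Pr\"ufer extension $R\subseteq T$, and for (2) the identification $U\otimes_R T\cong UT$ together with preservation of integral closure, followed by the fact that an integrally closed quasi-Pr\"ufer extension is Pr\"ufer. Your remark that $R\subseteq UT$ is almost-Pr\"ufer unconditionally is also right (it already follows from the factorization $R\subseteq T\subseteq UT$ and Definition~\ref{4.1}).
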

\begin{proof}  (1) We have $\emptyset =\mathrm{MSupp}(U/R) \cap \mathrm{MSupp}(T/R)$, since $U\subseteq \overline R$ and $T\subseteq \widetilde R$. 
Let $M \in \mathrm{MSupp}((UT)/R)$. For $M\in\mathrm{MSupp}(U/R)$, we have $R_M=T_M$ and $(UT)_M = U_M$.
If $M\notin \mathrm{MSupp}(U/R)$, then $U_M= R_M$ and $(UT)_M= T_M$, so that $U_M \subseteq (UT)_M$ identifies to $R_M \subseteq T_M$. 

Let $N\in\mathrm{Max}(U)$ and set $M:=N\cap R\in\mathrm{Max}(R)$ since $R\subseteq U$ is integral. If $M\not\in\mathrm{Supp}(\overline R/R)$, then $R_M=\overline R_M=U_M$ and $N$ is the only maximal ideal of $U$ lying over $M$. It follows that $U_M=U_N$ and $(UT)_M=(UT)_N$ by \cite[Lemma 2.4]{DPP2}. Then,  
$U_N \subseteq (UT)_N$ identifies to $R_M \subseteq T_M$ which is Pr\"ufer. If $M\not\in\mathrm{Supp}(\widetilde R/R)$, then $R_M=T_M$ gives $U_M=(UT)_M$, so that $U_N=(UT)_N$ by localizing the precedent equality and 
$U_N \subseteq (UT)_N$ is still Pr\"ufer. Therefore,  $U\subseteq UT$ is locally Pr\"ufer, whence Pr\"ufer by 
Proposition~\ref{0.2}. 

(2) The usual reasoning shows that $U\otimes_R T\cong UT$, so that  $U\subseteq UT$ is integrally closed. Since $U $ is contained in $\overline R^{UT}$, we get that $U=\overline R^{UT}$. Now observe that $R\subseteq UT$ is almost-Pr\"ufer, whence quasi-Pr\"ufer. It follows that $U\subseteq UT$ is Pr\"ufer. 
\end{proof}

Next propositions generalize  Ayache's results of \cite[Proposition 11]{A}.

\begin{proposition}\label{4.18} Let $R\subseteq S$ be a quasi-Pr\"ufer extension, $T, T'\in [R,S]$ and  $U:=T\cap T'$.  The following statements hold:

\begin{enumerate} 
\item $\widetilde T = \widetilde{(T\cap \overline R)}$ for each $T\in [R,S]$.

\item $\widetilde T\cap \widetilde{T'} \subseteq \widetilde{T\cap T'}$.

\item Let $\mathrm{Supp}(\overline T/T) \cap \mathrm{Supp}(\widetilde T/T)=  \emptyset$ (this assumption holds if $R\subseteq S$ has FCP). Then,  $T\subseteq T' \Rightarrow \widetilde T \subseteq \widetilde{T'}$.

\item  If $\mathrm{Supp}(\overline U/U) \cap \mathrm{Supp}(\widetilde U/U)=  \emptyset$, then $\widetilde T\cap \widetilde{T'} = \widetilde{T\cap T'}$.
\end{enumerate}

\end{proposition}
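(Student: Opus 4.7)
The plan is to handle the four parts in order, with (4) a formal consequence of (2) and (3); throughout I keep the paper's convention that $\widetilde{T}$, $\widetilde{T'}$, etc.\ denote Pr\"ufer hulls in $S$. For (1), I invoke Corollary~\ref{4.2bis}, which gives that $T\cap\overline{R}\subseteq T$ is Pr\"ufer. Composing with $T\subseteq\widetilde{T}$ via \cite[Theorem~5.6]{KZ} produces a Pr\"ufer extension $T\cap\overline{R}\subseteq\widetilde{T}$, forcing $\widetilde{T}\subseteq\widetilde{T\cap\overline{R}}$ by maximality of the Pr\"ufer hull. Conversely, $T\cap\overline{R}\subseteq T$ being Pr\"ufer forces $T\subseteq\widetilde{T\cap\overline{R}}$, and then $T\subseteq\widetilde{T\cap\overline{R}}$ is itself Pr\"ufer as a subextension of the normal pair $T\cap\overline{R}\subseteq\widetilde{T\cap\overline{R}}$, yielding the reverse inclusion $\widetilde{T\cap\overline{R}}\subseteq\widetilde{T}$.

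For (2), set $W:=\widetilde{T}\cap\widetilde{T'}$. Corollary~\ref{3.3} immediately gives that $T\cap T'\subseteq W$ is quasi-Pr\"ufer, and the crux is integral closedness: any $v\in W$ integral over $T\cap T'$ is a fortiori integral over $T$; but $v\in\widetilde{T}$ and $T\subseteq\widetilde{T}$ is integrally closed, so $v\in T$, and symmetrically $v\in T'$, whence $v\in T\cap T'$. A quasi-Pr\"ufer integrally closed extension is Pr\"ufer, so $W\subseteq\widetilde{T\cap T'}$.

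The main technical step is (3). I decompose $T\subseteq T'$ via Corollary~\ref{4.2bis} as $T\subseteq\overline{T}^{T'}\subseteq T'$, with the first extension integral and the second Pr\"ufer. Proposition~\ref{4.17}(1), applied with base $T$, integral piece $\overline{T}^{T'}$, and Pr\"ufer piece $\widetilde{T}$ (the stated support hypothesis is precisely what is required), yields that $\overline{T}^{T'}\subseteq\overline{T}^{T'}\widetilde{T}$ is Pr\"ufer. Since $\overline{T}^{T'}\subseteq T'$ is a flat epimorphism (being Pr\"ufer), the base-change part of Proposition~\ref{3.10} then upgrades this to $T'\subseteq T'\widetilde{T}$ being Pr\"ufer, using $T'(\overline{T}^{T'}\widetilde{T})=T'\widetilde{T}$ since $\overline{T}^{T'}\subseteq T'$. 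Therefore $\widetilde{T}\subseteq T'\widetilde{T}\subseteq\widetilde{T'}$. The obstacle I anticipate is correctly aligning these two invocations, so that the integral piece $T\subseteq\overline{T}^{T'}$ and the flat-epimorphism piece $\overline{T}^{T'}\subseteq T'$ of the quasi-Pr\"ufer factorization of $T\subseteq T'$ are handled by the right tools.

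Finally, (4) follows by applying (3) with base $U:=T\cap T'$ to the two inclusions $U\subseteq T$ and $U\subseteq T'$ (the hypothesis in (4) is exactly the one required to invoke (3) from the base $U$), producing $\widetilde{U}\subseteq\widetilde{T}\cap\widetilde{T'}$; combined with (2) this yields the equality $\widetilde{T}\cap\widetilde{T'}=\widetilde{T\cap T'}$.
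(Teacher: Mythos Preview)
Your proof is correct. The approach largely parallels the paper's, with two genuine simplifications worth noting.

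For (1), the paper proves the reverse inclusion $\widetilde{T\cap\overline R}\subseteq\widetilde T$ by introducing the auxiliary ring $W:=\widetilde{(T\cap\overline R)}\cap\overline T$ and showing $T=W$ through a short chain of integral/Pr\"ufer arguments. Your route is more direct: once $T\cap\overline R\subseteq T$ is Pr\"ufer you immediately have $T\in[T\cap\overline R,\widetilde{T\cap\overline R}]$, and since a Pr\"ufer extension is a normal pair, the upper subextension $T\subseteq\widetilde{T\cap\overline R}$ is itself Pr\"ufer, forcing $\widetilde{T\cap\overline R}\subseteq\widetilde T$. This avoids the auxiliary construction entirely.

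For (3), the paper first invokes (1) to reduce to the case $T,T'\in[R,\overline R]$, so that $T\subseteq T'$ is integral and a single application of Proposition~\ref{4.17}(1) suffices. You instead work with the original $T\subseteq T'$, factor it as integral followed by Pr\"ufer, apply Proposition~\ref{4.17}(1) to the integral piece, and then push forward along the Pr\"ufer piece via Proposition~\ref{3.10}. Both arguments are valid; the paper's reduction is slightly more economical (one tool instead of two), while yours makes (3) logically independent of (1). Parts (2) and (4) are essentially identical to the paper's.
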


\begin{proof}(1) We observe that $R\subseteq T$ is quasi-Pr\"ufer  by Corollary~\ref{3.3}. Since $T\cap \overline R$ is the integral closure of $R$ in $T$, we get that $T\cap \overline R \subseteq T$ is Pr\"ufer.
It follows that $T\cap \overline R \subseteq \widetilde T$ is Pr\"ufer.  We thus have $\widetilde T \subseteq \widetilde{T\cap \overline R}$. To prove the reverse inclusion, we set $V:= T\cap \overline R$ and $W:= \widetilde V\cap \overline T$. We have $W\cap \overline R= \widetilde V \cap \overline R =  V$, because $V \subseteq \widetilde V \cap \overline R   $ is integral and Pr\"ufer since we have a tower $V \subseteq \widetilde V\cap \overline R \subseteq  \widetilde V$. Therefore, $V\subseteq W$ is  Pr\"ufer because $W\in[V,\widetilde V]$. Moreover, $T\subseteq  \widetilde T\subseteq  \widetilde V$, since $V\subseteq  \widetilde T$ is Pr\"ufer. Then,  $T\subseteq W$  is integral because $W\in[T,\overline T]$, and we have  $V \subseteq T \subseteq W$. This entails that $T= W= \widetilde V\cap  \overline T$, so that  $T\subseteq \widetilde V$ is  Pr\"ufer. It follows that $\widetilde V \subseteq \widetilde T$ since $T\in[V,\widetilde V]$.

(2) A quasi-Pr\"ufer extension is Pr\"ufer if and only if it is integrally closed.  We observe that $T\cap T' \subseteq \widetilde T \cap \widetilde{T'}$ is integrally closed, whence Pr\"ufer. It follows that $\widetilde T\cap \widetilde{T'}\subseteq \widetilde{T\cap T'}$. 

(3) Set $U = T\cap \overline R$ and $U' = T'\cap \overline R$, so that $U, U' \in [R,\overline R]$ with $U\subseteq U'$. In view of (1), we thus can suppose that $T, T'\in [R,\overline R]$. It follows that $T\subseteq T'$ is integral and $T\subseteq \widetilde  T$ is Pr\"ufer. We deduce from Proposition~\ref{4.17}(1) that $T'\subseteq T'\widetilde T$ is Pr\"ufer, so that $\widetilde{T} T'\subseteq \widetilde{T'}$, because $\mathrm{Supp}(\overline T/T) \cap \mathrm{Supp}(\widetilde T/T)=  \emptyset$ and $\overline T=\overline R$. Therefore, we have $\widetilde T \subseteq \widetilde{T'}$.

(4) Assume that $\mathrm{Supp}(\overline U/U) \cap \mathrm{Supp}(\widetilde U/U)=  \emptyset$. Then, $T\cap T'\subset T,T'$ gives $\widetilde{T\cap T'}\subseteq \widetilde{T} \cap \widetilde{T'}$  in view of (3), so that $\widetilde{T\cap T'}= \widetilde{T} \cap \widetilde{T'}$ by (2).
\end{proof}

\begin{proposition}\label{4.19} Let $R\subseteq S$ be a quasi-Pr\"ufer extension and $T\subseteq T'$ a subextension of $R\subseteq S$.
 Set $U:=T\cap \overline R,\ U':= T'\cap \overline R,\  V:=T \overline R$ and $V':=T' \overline R$. The following statements hold:

\begin{enumerate} 
\item $ T \subseteq T'$ is integral if and only if $V=V'$.

\item $ T \subseteq T'$ is Pr\"ufer if and only if $U=U'$.

\item Assume that $U\subset U'$ is integral  minimal and $V=V'$. Then, $T\subset T'$ is integral  minimal, of the same type as $U\subset U'$. 

\item Assume that $V\subset V'$ is  Pr\"ufer minimal  and $U=U'$. Then, $T\subset T'$ is  Pr\"ufer minimal.

\item Assume that $T\subset T'$ is  minimal    and set $P:=\mathcal{ C}(T,T')$.

\noindent (a) If $T\subset T'$ is  integral, then $U\subset U'$ is  integral minimal  if and only if $P\cap U\in\mathrm{Max}(U)$.

\noindent(b) If $T\subset T'$ is Pr\"ufer, then $V\subset V'$ is Pr\"ufer minimal  if and only if there is exactly one prime ideal in $V$ lying over $P$.\end{enumerate}

\end{proposition}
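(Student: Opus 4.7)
The plan is to leverage three general facts throughout: the Pr\"ufer extension $\overline R\subseteq S$ (from Definition~\ref{2.1}) is a normal pair, so every member of $[\overline R,S]$ is integrally closed in $S$ (Scholium~B(3)); a subextension of a quasi-Pr\"ufer extension is Pr\"ufer precisely when it is integrally closed (Corollary~\ref{3.5}); and an integral flat epimorphism is trivial (Scholium~A(5)). For (1), if $T\subseteq T'$ is integral then $V\subseteq V'$ is integral, but $V,V'\in[\overline R,S]$ makes $V\subseteq V'$ also integrally closed, forcing $V=V'$; the converse is immediate since $V=V'=T\overline R$ is integral over $T$. For (2), $T\subseteq T'$ is quasi-Pr\"ufer by Corollary~\ref{3.3} and hence Pr\"ufer iff integrally closed; one direction yields $U'\subseteq U$ since elements of $U'$ are integral over $T$ and hence lie in $T$, and conversely for $W\in[T,T']$ integral over $T$ one gets $W\cap\overline R=U$, so $U\subseteq W$ is quasi-Pr\"ufer with trivial integral closure, hence Pr\"ufer, making $T\subseteq W$ both integral and integrally closed, and therefore trivial.

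For (3) and (4), I test minimality of $T\subset T'$ on a generic $W\in[T,T']$ via the invariants $W\cap\overline R\in[U,U']$ (in (3)) and $W\overline R\in[V,V']$ (in (4)); the assumed minimality forces each invariant to match one of the two endpoints, and in every subcase (2) (resp.~(1)) combined with the integral/Pr\"ufer clash collapses $W$ to $T$ or $T'$. The type-preservation in (3) is the main technical step. I first establish $T'=TU'$ by picking $u'\in U'\setminus U$, noting $u'\in T'\setminus T$ since $T\cap\overline R=U$, and invoking minimality of $T\subset T'$. Next I identify $P:=(T:T')$ with $MT$, where $M:=(U:U')$: the inclusion $MT\subseteq P$ uses $MU'\subseteq U$, and the reverse follows once $MT$ is known to be a maximal ideal of $T$. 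For this I prove $T/MT\cong U/M$: by Scholium~A(1), either $U_M\to T_M$ is an isomorphism or $MT=T$, and the latter is excluded because some prime of $T$ lies over $M$; combining with the vanishing $(T/MT)_{P'}=0$ for $P'\ne M$ in $\mathrm{Spec}(U)$, the $U$-module $T/MT$ has support only at $M$ and is therefore equal to $(T/MT)_M=U_M/MU_M=U/M$. Finally, applying Proposition~\ref{4.4}(1) to the almost-Pr\"ufer extension $U\subseteq V$ with flat epimorphism $U\subseteq T$ yields the pushout $V\cong T\otimes_U\overline R$; flat base change then identifies $T'=TU'\cong U'\otimes_U T$, whence $T'/PT'\cong U'/MU'$ as $U/M$-algebras and the ramified/decomposed/inert type is preserved.

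For (5)(a), the forward direction is immediate from (3), since the crucial ideal of $T\subset T'$ is $MT$, meeting $U$ in $M\in\mathrm{Max}(U)$. Conversely, assuming $M:=P\cap U\in\mathrm{Max}(U)$, I first rule out $U=U'$ (which by (2) would make $T\subseteq T'$ Pr\"ufer and contradict integrality), verify $(U:U')=P\cap U$ directly using $T'=TU'$, reproduce $T/MT=U/M$ using only maximality of $M$ and $MT\ne T$, deduce $P=MT$ and $T'/PT'\cong U'/MU'$, and apply the Ferrand-Olivier trichotomy to conclude $U\subset U'$ is integral minimal of the same type. For (5)(b), $T\subset T'$ Pr\"ufer minimal gives $U=U'$ by (2), and Proposition~\ref{4.4}(1) presents $V\subseteq V'$ as the base change $V\otimes_T T'$, a flat epimorphism and integrally closed in $S$, hence Pr\"ufer. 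The key observation is that $(V'/V)_Q=V_Q\otimes_T(T'/T)$ vanishes unless $Q\cap T=P$ (because $T_{Q\cap T}=T'_{Q\cap T}$ whenever $Q\cap T\ne P$) and is nonzero when $Q\cap T=P$, so $\mathrm{Supp}(V'/V)$ is exactly the fiber of $\mathrm{Spec}(V)\to\mathrm{Spec}(T)$ over $P$. The main obstacle is the converse direction: when this fiber is a singleton $\{Q\}$, I must verify that $V_Q\subseteq V'_Q$, as the base change of the local Pr\"ufer minimal extension $T_P\subseteq T'_P$ by the local integral morphism $T_P\to V_Q$, remains Pr\"ufer minimal, for which I would invoke the structural description in Proposition~\ref{0.3}(3) and check that no proper intermediate subring can arise from this integral base change.
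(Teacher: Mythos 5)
Most of your argument is sound and in places takes a genuinely different, arguably cleaner, route than the paper. For (1) and (2) your use of the normal-pair property of $\overline R\subseteq S$ and of ``integral $+$ flat epimorphism $\Rightarrow$ trivial'' matches the paper's idea. For the minimality assertions in (3) and (4) you test an arbitrary $W\in[T,T']$ against the invariants $W\cap\overline R$ and $W\overline R$ and collapse $W$ to an endpoint via the integral/Pr\"ufer clash; the paper instead localizes at the crucial ideal and invokes \cite[Proposition 6.12]{DPP2} resp.\ \cite[Proposition 4.6]{DPPS}. Your treatment of the type in (3) and of (5)(a) via the pushout $T'\cong U'\otimes_UT$ (Proposition~\ref{4.4}(1)), the identification $T/MT\cong U/M$, and the resulting isomorphism $T'/PT'\cong U'/MU'$ is a legitimate global substitute for the paper's localization at $M$; your direct computation $(U:U')=P\cap U$ replaces the paper's appeal to Proposition~\ref{4.4}(2). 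All of this checks out.

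There is, however, a genuine gap in the converse direction of (5)(b), and you flag it yourself: after reducing to the statement that $V_Q\subseteq V'_Q$ is Pr\"ufer minimal when $Q$ is the unique prime of $V$ over $P$, you write that you ``would invoke'' Proposition~\ref{0.3}(3) and ``check that no proper intermediate subring can arise from this integral base change.'' That check is precisely the content of the claim and is not carried out; it is not automatic that a Pr\"ufer minimal extension stays minimal under an integral base change even when the fiber is a singleton (one must rule out proper intermediate rings of $V_Q\subseteq (V_Q)_{T_P\setminus\mathfrak p}$, e.g.\ by showing the relevant quotient of $V_Q$ is a one-dimensional valuation domain). The paper closes this direction in one line by citing \cite[Proposition 6.12]{DPP2}: an integrally closed (here Pr\"ufer, since $V,V'\in[\overline R,S]$) extension is minimal if and only if $|\mathrm{Supp}_V(V'/V)|=1$, which combined with your correct computation $\mathrm{Supp}_V(V'/V)=\{Q\in\mathrm{Spec}(V)\mid Q\cap T=P\}$ finishes the proof. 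You need either that criterion or a completed version of your base-change verification; as written, (5)(b) is only proved in the forward direction.
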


\begin{proof} In $[R,S]$ we have the integral extensions $U\subseteq U',\ T\subseteq V,\ T'\subseteq V'$ and the Pr\"ufer extensions $V\subseteq V',\ U\subseteq T,\ U'\subseteq T'$. Moreover, $ \overline R$ is also the integral closure of $U\subseteq V'$. 

(1) is gotten by considering the extension $T\subseteq V'$, which is both $T\subseteq V\subseteq V'$ and $T\subseteq T'\subseteq V'$.

(2) is gotten by considering the extension $U\subseteq T'$, which is both $U\subseteq T\subseteq T'$ and $U\subseteq U'\subseteq T'$.

(3) Assume that $U\subset U'$ is integral  minimal  and $V=V'$. Then, $T\subset T'$ is  integral  by (1) and $T\neq T'$ because of (2). Set $M:=(U:U')\in\mathrm{Supp}_U(U'/U)$. For any $M'\in\mathrm{Max}(U)$ such that $M'\neq M$, we have $U_{M'}=U'_{M'}$, so that $T_{M'}={T'}_{M'}$ because $U_{M'}\subseteq T'_{M'}$ is Pr\"ufer. But, $U\subseteq T'$ is almost-Pr\"ufer, giving $T'=TU'$. By Theorem ~\ref{4.5}, $(T:T')=(U:U')T=MT\neq T$ because $T\neq T'$. We get that $U\subseteq T$ Pr\"ufer implies that $M\not\in \mathrm{Supp}_U(T/U)$ and $U_M=T_M$.   It follows that ${T'}_M=T_M{U'}_M={U'}_M$. Therefore, $T_M\subseteq {T'}_M$ identifies to $U_M\subseteq {U'}_M$, which is minimal  of the same type as $U\subset U'$ by \cite[Proposition 4.6]{DPPS}. Then, $T\subset T'$ is integral minimal, of the same type as $U\subset U'$. 

(4) Assume that $V\subset V'$ is  Pr\"ufer minimal and $U=U'$. Then, $T\subset T'$ is Pr\"ufer  by (2) and $T\neq T'$ because of (1). Set $Q:=\mathcal{ C}(V,V')$ and $P:=Q\cap T\in\mathrm{Max}(T)$ since $Q\in\mathrm{Max}(V)$. For any $P'\in\mathrm{Max}(T)$ such that $P'\neq P$, and $Q'\in\mathrm{Max}(V)$ lying above $P'$, we have $V_{Q'}=V'_{Q'}$, so that $V_{P'}={V'}_{P'}$. It follows that ${T'}_{P'}\subseteq {V'}_{P'}$ is  integral, so that $T_{P'}={T'}_{P'}$ and $P'\not\in \mathrm{Supp}_T(T'/T)$. We get that $T\subset T'$ is Pr\"ufer minimal in view of \cite[Proposition 6.12]{DPP2}. 

(5) Assume that $T\subset T'$ is a minimal  extension and set $P:=\mathcal{ C}(T,T')$.

(a) Assume that  $T\subset T'$ is  integral. Then, $V=V'$ and $U\neq U'$ by (1) and (2). We can use Proposition ~\ref{4.4} getting that $P=(U:U')T\in\mathrm{Max}(T)$ and $Q:=(U:U')=P\cap U\in\mathrm{Spec}(U)$. It follows that $Q\not\in\mathrm{Supp}_U(T/U)$, so that $U_Q=T_Q$ and $U'_Q=T'_Q$. Then,  $U_Q\subset U'_Q$ is  integral minimal, with $Q\in\mathrm{Supp}_U(U'/U)$. 

If $Q\not\in\mathrm{Max}(U)$, then $U\subset U'$ is not  minimal   by the properties of the crucial maximal ideal. 

Assume that $Q\in\mathrm{Max}(U)$ and let $M\in\mathrm{Max}(U)$, with $M\neq Q$. Then, $U_M=U'_M$ because  $M+Q=U$, so that $U\subset U'$ is a minimal extension and  (a) is gotten. 

(b) Assume that  $T\subset T'$ is Pr\"ufer. Then, $V\neq V'$ and $U= U'$ by (1) and (2). Moreover, $PT'=T'$ gives $PV'=V'$.  Let $Q\in\mathrm{Max}(V)$ lying over $P$. Then, $QV'=V'$ gives that $Q\in
\mathrm{Supp}_V(V'/V)$. Moreover, we have $V'=VT'$. Let $P'\in \mathrm{Max}(T),\ P'\neq P$.  Then, $T_{P'}=T'_{P'}$ gives $V_{P'}=V'_{P'}$. It follows that $\mathrm{Supp}_T(V'/V)=\{P\}$ and $\mathrm{Supp}_V(V'/V)=\{Q\in\mathrm{Max}(V)\mid Q\cap T=P\}$. But, by \cite[Proposition 6.12]{DPP2},  $V\subset V'$ is Pr\"ufer minimal   if and only if $|\mathrm{Supp}_V(V'/V)|=1$, and then  if and only if there is exactly one prime ideal in $V$ lying over $P$. 
\end{proof}

\begin{lemma}\label{4.20} Let $R\subseteq S$ be an FCP almost-Pr\"ufer extension and $U\in[R,\overline R]$, $V\in[\overline R,S]$. Then  $U\subseteq V$ has FCP and is  almost-Pr\"ufer.  
\end{lemma}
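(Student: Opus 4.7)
The FCP half is immediate: any chain in $[U,V]$ is simultaneously a chain in $[R,S]$, so $U\subseteq V$ inherits FCP from $R\subseteq S$.

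For the almost-Pr\"ufer conclusion, my plan is to apply the local dichotomy of Proposition~\ref{4.12}: since the extension has FCP, $U\subseteq V$ is almost-Pr\"ufer if and only if $U_P\subseteq V_P$ is either integral or Pr\"ufer for every $P\in\mathrm{Spec}(U)$. I will fix such a $P$, set $M:=P\cap R$, and feed the same proposition back into the hypothesized almost-Pr\"ufer FCP extension $R\subseteq S$ to obtain that $R_M\subseteq S_M$ is integral or Pr\"ufer. Then I will split into two cases. If $R_M\subseteq S_M$ is integral, then $S_M=\overline R_M$, hence $V_M=\overline R_M$ as $\overline R\subseteq V\subseteq S$, and integrality of $U\subseteq\overline R$ localizes to give $U_M\subseteq V_M$ integral. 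If instead $R_M\subseteq S_M$ is Pr\"ufer, then it is integrally closed, forcing $\overline R_M=R_M$, and therefore $U_M=R_M$ (since $R\subseteq U\subseteq\overline R$); the extension $U_M=R_M\subseteq V_M$ is then Pr\"ufer as a subextension of the Pr\"ufer extension $R_M\subseteq S_M$.

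In both cases the dichotomy passes from $U_M\subseteq V_M$ to the further localization $U_P\subseteq V_P$ because integrality and the Pr\"ufer property are both stable under localization (the latter by Proposition~\ref{3.1}). Plugging this back into Proposition~\ref{4.12} applied to $U\subseteq V$ finishes the proof. I anticipate no serious obstacle: the whole argument is a bookkeeping exercise in which the integral closure $\overline R$ sits cleanly between $U$ and $V$ and the local almost-Pr\"ufer dichotomy does all the work.
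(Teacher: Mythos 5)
Your proof is correct and follows essentially the same route as the paper's: both reduce the statement to Proposition~\ref{4.12} and transfer the hypothesis from $R\subseteq S$ to $U\subseteq V$ by pulling a prime of $U$ back to a prime of $R$. The only difference is which equivalent clause of that proposition is invoked --- the paper checks the support condition $\mathrm{Supp}_U(\overline R/U)\cap\mathrm{Supp}_U(V/\overline R)=\emptyset$ by a one-line contradiction, while you check the local ``integral or Pr\"ufer'' dichotomy via a two-case analysis; both arguments are sound.
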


\begin{proof} Obviously, $U\subseteq V$ has FCP and $\overline R$ is  the integral closure of $U$ in $V$.  Proposition ~\ref{4.12} entails that  $\mathrm{Supp}_R(\overline R/R)\cap \mathrm{Supp}_R(S/\overline R) = \emptyset  $. We claim that  $\mathrm{Supp}_U(\overline R/U)\cap \mathrm{Supp}_U(V/\overline R) =\emptyset  $. Deny  and let $Q\in\mathrm{Supp}_U(\overline R/U)\cap \mathrm{Supp}_U(V/\overline R)$. Then, $\overline R_Q\neq U_Q,V_Q$. If $P:=Q\cap R$. we get that $\overline R_P\neq U_P,V_P$, giving $\overline R_P\neq R_P,S_P$, a contradiction. Another use of Proposition ~\ref{4.12} shows that $U\subseteq V$ is   almost-Pr\"ufer.  
\end{proof}

\begin{proposition}\label{4.21} Let $R\subseteq S$ be an FCP almost-Pr\"ufer extension and $T\subseteq T'$ a subextension of $R\subseteq S$.
 Set $U:=T\cap \overline R$ and $V':=T' \overline R$. Let $W$ be the Pr\"ufer hull of $U\subseteq V'$. Then, $W$ is also the Pr\"ufer hull of $T\subseteq T'$ and $T\subseteq T'$ is an FCP almost-Pr\"ufer extension.
 \end{proposition}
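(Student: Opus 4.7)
The plan is to pass from the subextension $T\subseteq T'$ to the simpler almost-Pr\"ufer extension $U\subseteq V'$. Since $U\in[R,\overline R]$ and $V'\in[\overline R,S]$, Lemma~\ref{4.20} guarantees that $U\subseteq V'$ is itself FCP almost-Pr\"ufer, with integral closure $\overline R$ and Pr\"ufer hull $W$. Applying Corollary~\ref{4.2bis} to $T\in[R,S]$ shows that $U=T\cap\overline R\subseteq T$ is Pr\"ufer; since $T\in[U,V']$, the maximality of $W$ forces $T\subseteq W$, and then the tower $U\subseteq T\subseteq W$ inside the Pr\"ufer extension $U\subseteq W$ makes $T\subseteq W$ Pr\"ufer by Scholium~B(1) together with Scholium~A(4).

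The crucial step will be to establish $W\subseteq T'$, and I plan to do this locally at each $M\in\mathrm{Max}(U)$. By Proposition~\ref{4.12} applied to the FCP almost-Pr\"ufer $U\subseteq V'$, the localization $U_M\subseteq V'_M$ is either integral or Pr\"ufer, and Corollary~\ref{4.15} yields $W_M=\widetilde{U_M}^{V'_M}$. In the integral case, any Pr\"ufer subextension of $U_M\subseteq V'_M$ is simultaneously integral and a flat epimorphism, hence trivial by Scholium~A(5), so $W_M=U_M\subseteq T'_M$. In the Pr\"ufer case, integral closedness of $U_M\subseteq V'_M$ forces $(\overline R)_M=U_M$, and the identity $V'=T'\overline R$ then localizes to $V'_M=T'_M(\overline R)_M=T'_M$, giving $W_M=V'_M=T'_M$. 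In both cases $W_M\subseteq T'_M$, and by the standard criterion for inclusions of $U$-submodules of $V'$ tested at all $M\in\mathrm{Max}(U)$, this yields $W\subseteq T'$.

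With $W\subseteq T'$ in hand, maximality comes for free: given any Pr\"ufer subextension $T\subseteq Z\subseteq T'$, the composite $U\subseteq T\subseteq Z$ is Pr\"ufer by \cite[Theorem 5.6, p.51]{KZ}, so $Z\subseteq W$ by the maximality of $W=\widetilde{U}^{V'}$. This shows $W$ is the Pr\"ufer hull of $T\subseteq T'$. To reach the almost-Pr\"ufer conclusion, I will invoke Proposition~\ref{4.15bis} on the FCP almost-Pr\"ufer extension $U\subseteq V'$ at $T'\in[U,V']$: $T'$ equals the integral closure of $T'\cap W$ in $T'W$. Using the inclusion $W\subseteq T'$ just proved, this collapses to $T'$ being the integral closure of $W$ in itself, i.e.\ $W\subseteq T'$ is integral. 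The factorization $T\subseteq W\subseteq T'$ therefore exhibits $T\subseteq T'$ as almost-Pr\"ufer, and FCP is inherited because every chain in $[T,T']$ is a chain in $[R,S]$. The main obstacle is precisely the local inclusion $W\subseteq T'$; the decisive inputs will be the dichotomy of Proposition~\ref{4.12} together with the commutation of the Pr\"ufer hull with localization supplied by Corollary~\ref{4.15} in the FCP almost-Pr\"ufer setting.
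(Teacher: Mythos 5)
Your proof is correct, and its overall skeleton matches the paper's: reduce to the extension $U\subseteq V'$ via Lemma~\ref{4.20}, show $T\subseteq W$ is Pr\"ufer, and then establish the crucial containment $W\subseteq T'$, from which integrality of $W\subseteq T'$ and the hull property follow. The one place where you genuinely diverge is that containment. The paper gets $W\subseteq T'$ in one line from Proposition~\ref{4.12ter}: for an FCP almost-Pr\"ufer extension, the Pr\"ufer hull $W$ is the \emph{least} $U$-subalgebra of $V'$ over which $V'$ is integral, and $T'\subseteq V'=T'\overline R$ is integral, so $W\subseteq T'$. You instead re-derive the containment locally at each $M\in\mathrm{Max}(U)$, using the integral/Pr\"ufer dichotomy of Proposition~\ref{4.12} together with $W_M=\widetilde{U_M}$ from Corollary~\ref{4.15}; your case analysis (integral case gives $W_M=U_M$, Pr\"ufer case gives $(\overline R)_M=U_M$ hence $V'_M=T'_M=W_M$) is sound. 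What the paper's route buys is brevity and a conceptual explanation (minimality of the hull among subalgebras with integral complement); what yours buys is independence from Proposition~\ref{4.12ter} and its induction, at the cost of a longer localization argument. Two minor efficiency remarks: once $W\subseteq T'$ is known, the integrality of $W\subseteq T'$ is immediate from the tower $W\subseteq T'\subseteq V'$ with $W\subseteq V'$ integral (Proposition~\ref{4.2}), so the detour through Proposition~\ref{4.15bis} is unnecessary, though not wrong; and your maximality argument (any Pr\"ufer $T\subseteq Z\subseteq T'$ yields $U\subseteq Z$ Pr\"ufer, hence $Z\subseteq W$) is the same as the paper's, just phrased in the other direction.
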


\begin{proof} By Lemma ~\ref{4.20}, we get that $U\subseteq V'$ is an FCP almost-Pr\"ufer extension.  Let  $\widetilde T$ be the Pr\"ufer hull of $T\subseteq T'$. Since $U\subseteq T $ and $T\subseteq \widetilde T$  are Pr\"ufer, so is $U\subseteq \widetilde T$ and $\widetilde T\subseteq V'$  gives that $\widetilde T\subseteq W$. Then,  $T\subseteq W$ is Pr\"ufer as a subextension of $U\subseteq W$. 

Moreover, in view of Proposition ~\ref{4.12ter}, $W$ is the least $U$-subalgebra of $V'$ over which $V'$ is integral. Since $T'\subseteq V'$ is integral, we get that $W\subseteq T'$, so that $W\in[T,T']$, with $W\subseteq T'$ integral as a subextension of $W\subseteq V'$. It follows that $W$ is also the Pr\"ufer hull of $T\subseteq T'$ and $T\subseteq T'$ is an FCP almost-Pr\"ufer extension.
\end{proof}

\section{The case of Nagata extensions}

In this section we transfer the quasi-Pr\"ufer (and almost-Pr\"ufer) properties to Nagata extensions.
\begin{proposition}\label{0.4} Let $R\subseteq S$ be a Pr\"ufer (and FCP) extension, then $R(X) \subseteq S(X)$ is a Pr\"ufer (and FCP) extension.
   \end{proposition}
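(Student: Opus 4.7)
My plan is to establish Pr\"ufer-ness by reducing to the case where $R$ is local (via Proposition~\ref{0.2}), then transferring the structural characterization of Proposition~\ref{0.3}(3) from $R\subseteq S$ to $R(X)\subseteq S(X)$. The FCP clause will then follow from Proposition~\ref{0.5}.

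By Proposition~\ref{0.2}(1), verifying that $R(X)\subseteq S(X)$ is Pr\"ufer reduces to checking Pr\"ufer-ness locally at each $\mathfrak{p}\in\mathrm{Spec}(R(X))$. Setting $P:=\mathfrak{p}\cap R$, the identification $(R\setminus P)^{-1}R(X)\cong R_P(X)$ (and its $S$-analogue) reduces the claim to Pr\"ufer-ness of $R_P(X)\subseteq S_P(X)$, where $R_P\subseteq S_P$ is Pr\"ufer by Proposition~\ref{0.2}(1). So I may assume $(R,\mathfrak{m})$ is local; then $R(X)$ is local with maximal ideal $\mathfrak{m}R(X)$. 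Proposition~\ref{0.3}(3) applied to $R\subseteq S$ produces $\mathfrak{q}\in\mathrm{Spec}(R)$ with $S=R_\mathfrak{q}$, $\mathfrak{q}S=\mathfrak{q}$, and $R/\mathfrak{q}$ a valuation domain. I would then verify the three conditions of Proposition~\ref{0.3}(3) for $R(X)\subseteq S(X)$ with candidate prime $\mathfrak{p}:=\mathfrak{q}R(X)$.

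The verifications are: (a) $R(X)_\mathfrak{p}\cong S(X)$, which follows from $S(X)=R_\mathfrak{q}(X)\cong R(X)_{\mathfrak{q}R(X)}$; (c) $R(X)/\mathfrak{p}\cong(R/\mathfrak{q})(X)$ is a valuation domain since the Nagata ring of a valuation domain is itself a valuation domain (via the Gauss extension of the valuation); and (b) $\mathfrak{p}S(X)=\mathfrak{p}$, the key condition. For (b), given $q\in\mathfrak{q}$ and $g=F/G\in R_\mathfrak{q}(X)$ with $F,G\in R_\mathfrak{q}[X]$ and $G$ of unit content in $R_\mathfrak{q}$, the condition $\mathfrak{q}R_\mathfrak{q}=\mathfrak{q}$ gives $qF\in\mathfrak{q}[X]$, and the valuation-domain structure of $R/\mathfrak{q}$ (with totally ordered ideals) permits a factorization $G=uG'$ with $u\in R_\mathfrak{q}^{\times}$ and $G'\in R[X]$ of unit content in $R$; hence $qg=(qF\cdot u^{-1})/G'\in\mathfrak{q}R(X)$.

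For FCP, Proposition~\ref{0.5}(1) writes the FCP Pr\"ufer extension $R\subseteq S$ as a composite of Pr\"ufer minimal extensions. The Pr\"ufer part applied stepwise yields a corresponding Nagata chain of Pr\"ufer extensions, and minimality is preserved under $T\mapsto T(X)$ (via the bijection $[R,S]\leftrightarrow[R(X),S(X)]$ for Pr\"ufer extensions, $T\mapsto T(X)$, $T'\mapsto T'\cap S$), so each step is itself minimal, in particular FCP, and the composite is FCP. The main obstacle is condition (b), which requires a delicate factorization in $R_\mathfrak{q}[X]$ balancing $\mathfrak{q}R_\mathfrak{q}=\mathfrak{q}$ against the unit-content structure defining $R(X)$.
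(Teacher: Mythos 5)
Your treatment of the Pr\"ufer clause follows the same route as the paper (reduce to $R$ local, then transport the three conditions of Proposition~\ref{0.3}(3) using the facts that the Nagata ring of a valuation domain is a valuation domain and that localization and quotient commute suitably with $(-)(X)$), and your verification of condition (b) is a reasonable filling-in of details the paper leaves implicit. However, your reduction step rests on the identification $(R\setminus P)^{-1}R(X)\cong R_P(X)$, and this is false in general: for $R=k[a,b]$ and $P=(a,b-1)$, the polynomial $f=aX+b$ has content $(a,b)\not\subseteq P$, so $f$ is invertible in $R_P(X)=R[X]_{PR[X]}$, but $f$ lies in the upper to zero $(aX+b)K[X]\cap R[X]$, which meets neither $R\setminus P$ nor the set of unit-content polynomials (one checks $aX+b$ divides no unit-content polynomial of $R[X]$), so $f$ is not invertible in $(R\setminus P)^{-1}R(X)$. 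Relatedly, if $\mathfrak{p}\in\mathrm{Spec}(R(X))$ is an upper to $P$, then $R(X)_{\mathfrak{p}}$ is \emph{not} a localization of $R_P(X)$ (the containment $\mathfrak{p}\supseteq PR(X)$ goes the wrong way), so Pr\"ufer-ness of $R_P(X)\subseteq S_P(X)$ does not localize to give Pr\"ufer-ness at $\mathfrak{p}$. The repair is to check only the maximal ideals of $R(X)$ (Proposition~\ref{0.2}(2) together with Proposition~\ref{0.3}(2)); these are exactly the ideals $MR(X)$ with $M\in\mathrm{Max}(R)$, and the correct identification, the one the paper actually states, is $R(X)_{MR(X)}=R[X]_{MR[X]}\cong R_M(X)$, a localization at the \emph{prime} $MR(X)$, not at the multiplicative set $R\setminus M$.

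The FCP clause is where the genuine gap lies. The paper simply quotes \cite[Theorem 3.9]{DPP3} ($R\subset S$ has FCP if and only if $R(X)\subset S(X)$ has FCP), whereas you decompose $R\subseteq S$ into Pr\"ufer minimal steps and argue that each step passes to a minimal step of Nagata rings. Two inferences there are unjustified. First, the bijection $[R,S]\leftrightarrow[R(X),S(X)]$, $T\mapsto T(X)$, for Pr\"ufer extensions is not available off the shelf; it is essentially equivalent to the transfer result you are trying to prove, so invoking it is circular. Second, even granting that $R(X)\subseteq S(X)$ is a finite tower of minimal extensions, that only gives FMC, and FMC does not imply FCP: the paper itself records (after Theorem~\ref{4.3}) a composite of two minimal extensions that is not FCP. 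Your argument can be salvaged without the bijection by using Proposition~\ref{0.5}(2): once $R(X)\subseteq S(X)$ is known to be Pr\"ufer (hence integrally closed), it suffices to check that $\mathrm{Supp}(S(X)/R(X))$ is finite, which follows from the finiteness of $\mathrm{Supp}_R(S/R)$ and the description of $\mathrm{Max}(R(X))$ as $\{MR(X)\mid M\in\mathrm{Max}(R)\}$. As written, though, the FCP conclusion does not follow.
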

   \begin{proof} We can suppose that $(R,M)$ is local, in order to use Proposition~\ref{0.3}(3). Then it is  enough to know the following facts: $V(X)$ is a valuation domain if so is  $V$; $R[X]_{P[X]} \cong R(X)_{P(X)} \cong R_P(X)$ where $P(X) =PR(X)$  and $ R(X)/P(X)\cong (R/P)(X) $ for $P\in \mathrm{Spec}(R)$. If in addition $R\subseteq S$ is FCP, it is enough to use  \cite[Theorem 3.9]{DPP3}: $R\subset S$ has FCP  if and only if $R(X)\subset S(X)$ has FCP.
    \end{proof}

 \begin{proposition}\label{nagataqp} If $R\subseteq S$ is quasi-Pr\"ufer, then so is $R(X)\subseteq S(X)$, $\overline{R(X)}=\overline R(X)\cong \overline R \otimes_R R(X)$ and $S(X)\cong S\otimes_R R(X)$.
 \end{proposition}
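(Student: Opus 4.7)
The plan is to factor $R \subseteq S$ as $R \subseteq \overline R \subseteq S$ (integral then Pr\"ufer, by Definition~\ref{2.1}) and to propagate each factor through the Nagata construction separately. Concretely, I would establish two base-change isomorphisms: first, $\overline R \otimes_R R(X) \cong \overline R(X)$, exploiting that $R \subseteq \overline R$ is integral; second, $S \otimes_{\overline R} \overline R(X) \cong S(X)$, exploiting that $\overline R \subseteq S$ is Pr\"ufer (in particular a flat epimorphism). Once both are in hand, the tower $R(X) \subseteq \overline R(X) \subseteq S(X)$ has first factor integral (as base change of $R \subseteq \overline R$ along $R \to R(X)$) and second factor Pr\"ufer (Proposition~\ref{0.4}), making $R(X) \subseteq S(X)$ quasi-Pr\"ufer with $\overline{R(X)} = \overline R(X)$ by Definition~\ref{2.1}(2); and the chain $S \otimes_R R(X) = S \otimes_{\overline R}(\overline R \otimes_R R(X)) = S \otimes_{\overline R} \overline R(X) = S(X)$ delivers the last isomorphism.

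For the first isomorphism, I would reduce to the module-finite case by writing $\overline R$ as a filtered union of module-finite integral $R$-subalgebras. Both rings are localizations of $\overline R[X]$, so it suffices to show every $f \in \overline R[X]$ with $c_{\overline R}(f) = \overline R$ is invertible in $\overline R[X]_{\Sigma_R}$. Applying Cayley--Hamilton to the $R[X]$-linear endomorphism ``multiplication by $f$'' of the finitely generated $R[X]$-module $\overline R[X]$ produces $h \in \overline R[X]$ with $fh = a_0(X) \in R[X]$. The main technical obstacle is to verify $a_0(X) \in \Sigma_R$: reducing modulo an arbitrary $M \in \mathrm{Max}(R)$, the image $\bar f$ still has unit content in $(\overline R/M\overline R)[X]$ and hence is a regular element there (McCoy), which I would use to force the corresponding determinant $\bar a_0(X)$ to be nonzero in $(R/M)[X]$.

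For the second isomorphism, since $\overline R \to S$ is a flat epimorphism (Scholium B), Scholium A(8) yields that the base change $\overline R(X) \to S \otimes_{\overline R} \overline R(X)$ is also a flat epimorphism; and Proposition~\ref{0.4} gives that $\overline R(X) \to S(X)$ is Pr\"ufer, hence a flat epimorphism. These two flat epimorphic extensions of $\overline R(X)$ fit into a natural commutative triangle through the canonical map $S \otimes_{\overline R} \overline R(X) \to S(X)$. A flat epimorphism is determined up to unique isomorphism by the image of its spectrum (Scholium A(1)), so it suffices to match the two spectral images inside $\mathrm{Spec}(\overline R(X))$; both should equal the set of primes of $\overline R(X)$ whose contraction to $\overline R$ survives in $S$. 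The Pr\"ufer hypothesis enters here through the bijection $\mathrm{Max}(S) \leftrightarrow \{M \in \mathrm{Max}(\overline R) : MS \neq S\}$, which I would use to argue that a prime $Q$ of $S[X]$ satisfies $Q \subseteq N[X]$ for some $N \in \mathrm{Max}(S)$ if and only if $Q \cap \overline R[X] \subseteq M[X]$ for some surviving $M \in \mathrm{Max}(\overline R)$. This spectral matching, together with the content computation in the previous paragraph, constitutes the heart of the proof.
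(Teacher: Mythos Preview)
Your structural plan matches the paper's exactly: factor $R\subseteq S$ as integral followed by Pr\"ufer, then invoke Proposition~\ref{0.4} for the second factor and integrality of base change for the first, so that $R(X)\subseteq\overline R(X)\subseteq S(X)$ exhibits $R(X)\subseteq S(X)$ as quasi-Pr\"ufer with $\overline{R(X)}=\overline R(X)$.

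Where you diverge is in the tensor-product identifications. The paper simply quotes the known fact $\overline{R(X)}=\overline R(X)$ and dispatches both $\overline R(X)\cong\overline R\otimes_R R(X)$ and $S(X)\cong S\otimes_R R(X)$ by citing \cite[Proposition~4 and Proposition~7]{PicC}, a reference devoted precisely to when $T\otimes_R R(X)\cong T(X)$. You instead sketch direct arguments: a Cayley--Hamilton/content computation for the integral step, and a spectral-image comparison of two flat epimorphisms for the Pr\"ufer step. Both sketches are on the right track; the second is essentially sound once you note (via Scholium~A(4),(5)) that the comparison map $S\otimes_{\overline R}\overline R(X)\to S(X)$ is a flat epimorphism, hence an isomorphism as soon as it is spectrally surjective. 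In the first sketch, be a bit careful: your determinant reduction modulo $M$ tacitly assumes the chosen module-finite subalgebra is free over $R$ (so that the characteristic-polynomial construction commutes with $-\otimes_R R/M$); localizing at $M$ first, or replacing the determinant by a norm-type argument, removes this wrinkle. The net effect is that the paper's proof is a two-line citation while yours is a self-contained derivation of the same facts; nothing is missing, but you are reproving content results that \cite{PicC} already supplies.
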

 \begin{proof} It is enough to use proposition~\ref{0.4}, because $\overline{R(X)}
 = \overline{R}(X)$. The third assertion results from \cite[Proposition 4 and Proposition 7]{PicC}.
 \end{proof}

\begin{proposition}\label{4.6} If $R \subseteq S$ is almost-Pr\"ufer, then so is $R(X)\subseteq S(X)$. It follows that   $\widetilde{R(X)} = \widetilde{R}(X)$ for an  almost-Pr\"ufer extension $R\subseteq S$.
\end{proposition}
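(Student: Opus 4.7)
The plan is to prove the result by combining the factorization characterization of almost-Pr\"ufer extensions from Definition~\ref{4.1} with the two transfer results already established for Nagata rings. Starting from $R\subseteq S$ almost-Pr\"ufer, Proposition~\ref{4.2} yields a canonical factorization $R\subseteq \widetilde R\subseteq S$, where $R\subseteq\widetilde R$ is Pr\"ufer and $\widetilde R\subseteq S$ is integral. The task is to show that the induced tower $R(X)\subseteq\widetilde R(X)\subseteq S(X)$ exhibits $R(X)\subseteq S(X)$ as almost-Pr\"ufer, and that the Pr\"ufer hull of the resulting extension is exactly $\widetilde R(X)$.

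First I would apply Proposition~\ref{0.4} to the Pr\"ufer extension $R\subseteq\widetilde R$ to conclude that $R(X)\subseteq\widetilde R(X)$ is Pr\"ufer. Then I would verify that $\widetilde R(X)\subseteq S(X)$ is integral. For this, I would invoke the isomorphisms $S(X)\cong S\otimes_R R(X)$ and $\widetilde R(X)\cong \widetilde R\otimes_R R(X)$ recorded in Proposition~\ref{nagataqp} (from \cite[Propositions 4 and 7]{PicC}). They give $S(X)\cong S\otimes_{\widetilde R}\widetilde R(X)$, so the extension $\widetilde R(X)\subseteq S(X)$ arises from $\widetilde R\subseteq S$ by the base change $\widetilde R\to\widetilde R(X)$. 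Since integrality is preserved by base change, this extension is integral.

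Combining the two steps, $R(X)\subseteq\widetilde R(X)\subseteq S(X)$ is a Pr\"ufer-then-integral factorization, so by Definition~\ref{4.1}, $R(X)\subseteq S(X)$ is almost-Pr\"ufer. For the last assertion, I would apply Proposition~\ref{4.2} to the almost-Pr\"ufer extension $R(X)\subseteq S(X)$: the uniqueness clause of that proposition identifies the intermediate ring in any such factorization with the Pr\"ufer hull, so the ring $\widetilde R(X)$ exhibited above must equal $\widetilde{R(X)}$.

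The only potentially delicate point is the integrality of $\widetilde R(X)\subseteq S(X)$; this is handled cleanly by the tensor description of Nagata rings rather than by a direct polynomial content argument, which would be more awkward since the multiplicative sets defining $\widetilde R(X)$ and $S(X)$ differ. Once that identification is in place, the rest of the proof is a direct application of Definition~\ref{4.1} and Proposition~\ref{4.2}.
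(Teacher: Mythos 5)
Your proposal is correct and follows essentially the same route as the paper: factor through $\widetilde R$, apply Proposition~\ref{0.4} to get that $R(X)\subseteq\widetilde R(X)$ is Pr\"ufer, observe that $\widetilde R(X)\subseteq S(X)$ is integral, and conclude via the uniqueness part of Proposition~\ref{4.2}. The paper states the integrality of $\widetilde R(X)\subseteq S(X)$ without comment, whereas you justify it through the base-change description $S(X)\cong S\otimes_{\widetilde R}\widetilde R(X)$; this is a harmless (and welcome) extra detail, not a different method.
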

\begin{proof} 
If $R \subseteq S$ is almost-Pr\"ufer, then $R\subseteq \widetilde{R}$ is Pr\"ufer and $\widetilde{R}\subseteq S$ is integral and then $R(X)\subseteq \widetilde{R}(X)$ is Pr\"ufer and $\widetilde{R}(X)\subseteq S(X)$ is integral, whence $R(X)\subseteq S(X)$ is almost-Pr\"ufer with $\widetilde{R(X)} = \widetilde{R}(X)$.
\end{proof}

\begin{lemma}\label{5.4} Let $R\subset S$ be an FCP ring extension such that $\widetilde{R}=R$. Then, $\widetilde{R (X)}=R(X)$.
\end{lemma}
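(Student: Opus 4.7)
The plan is to argue by contradiction. Suppose $\widetilde{R(X)}\supsetneq R(X)$. Since $R\subset S$ has FCP, so does $R(X)\subset S(X)$ by \cite[Theorem 3.9]{DPP3} (as already used in the proof of Proposition~\ref{0.4}). Consequently, $R(X)\subseteq \widetilde{R(X)}$ is an FCP Pr\"ufer extension, so by Proposition~\ref{0.5}(1) it decomposes as a finite composite of Pr\"ufer minimal extensions; in particular, there is some $V'\in[R(X),\widetilde{R(X)}]\subseteq[R(X),S(X)]$ such that $R(X)\subset V'$ is a (nontrivial) Pr\"ufer minimal extension.

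Next I would descend $V'$ to a subextension of $R\subseteq S$. Under the FCP hypothesis, \cite[Theorem 3.9]{DPP3} furnishes a bijection $[R,S]\to[R(X),S(X)]$ sending $T$ to $T(X)=T\otimes_RR(X)$. Accordingly, write $V'=T(X)$ for some $T\in[R,S]$; the properness $V'\supsetneq R(X)$ forces $T\supsetneq R$.

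Now $R\to R(X)$ is faithfully flat and $R(X)\subseteq R(X)\otimes_R T=T(X)$ is Pr\"ufer, so Proposition~\ref{3.16} (alternatively Scholium A(9), descent of flat epimorphisms by faithfully flat morphisms) implies that $R\subseteq T$ is Pr\"ufer. This exhibits $R\subsetneq T\subseteq\widetilde{R}$, contradicting $\widetilde{R}=R$. The main obstacle is the invocation of the correspondence $[R,S]\leftrightarrow[R(X),S(X)]$ in the FCP setting; once that tool from \cite{DPP3} is in hand, both extraction of a Pr\"ufer minimal step (via Proposition~\ref{0.5}) and descent along the faithfully flat morphism $R\to R(X)$ (via Proposition~\ref{3.16}) are routine.
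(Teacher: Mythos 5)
There is a genuine gap at the central step of your argument: the claim that, under the FCP hypothesis, \cite[Theorem 3.9]{DPP3} furnishes a \emph{bijection} $[R,S]\to[R(X),S(X)]$, $T\mapsto T(X)$, so that your Pr\"ufer minimal step $V'$ can be written as $T(X)$. That theorem only asserts that FCP transfers between $R\subseteq S$ and $R(X)\subseteq S(X)$; the bijectivity of $T\mapsto T(X)$ is \cite[Theorem 32]{DPP4} and is established there under the hypothesis that $R(X)\subseteq S(X)$ has \emph{FIP} (this is exactly how the present paper uses it in Proposition~\ref{5.8}, where FIP is assumed). Under mere FCP the map $T\mapsto T(X)$ is injective but not surjective in general: for instance, if $k\subset K=k(a,b)$ is purely inseparable of degree $p^2$ with $a^p,b^p\in k$ and $k$ infinite, then $k\subset K$ has FCP (every chain has length at most $2$) and $\widetilde k=k$, yet $k(X)(a+Xb)$ lies in $[k(X),K(X)]$ and is not of the form $L(X)$ for any $L\in[k,K]$. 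So the element $V'$ you extract from Proposition~\ref{0.5} need not descend to an intermediate ring of $R\subseteq S$, and the contradiction you aim for does not materialize. (Your remaining steps are fine: \emph{if} $V'=T(X)=T\otimes_RR(X)$, then faithfully flat descent via Proposition~\ref{3.16} does give that $R\subseteq T$ is Pr\"ufer.)

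The paper circumvents precisely this obstruction by descending not the ring $V'$ but its crucial maximal ideal: writing $M'=\mathcal{C}(R(X),T')=MR(X)$ for some $M\in\mathrm{MSupp}(S/R)$ (\cite[Lemma 3.3]{DPP3}), showing via Proposition~\ref{4.13} that $M\notin\mathrm{MSupp}(\overline R/R)$, hence $M\in\mathscr S(S/\overline R)$, locating a Pr\"ufer minimal extension $T_1\subset T_2$ in $[\overline R,S]$ whose crucial ideal contracts to $M$, and then invoking \cite[Lemma 1.10]{P2} to manufacture a Pr\"ufer minimal extension $R\subset T$ inside $[R,S]$ itself, contradicting $\widetilde R=R$. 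To repair your proof you would need some such mechanism for transporting the Pr\"ufer minimal step from the Nagata level back down to $[R,S]$; the lattice bijection you rely on is not available in the FCP setting.
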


\begin{proof} If $R(X)\neq\widetilde{R (X)}$, there is some $T'\in[R(X),\widetilde{R (X)}]$ such that $R(X)\subset T'$ is  Pr\"ufer minimal. Set $ \mathscr{C}(R(X),T')\in\mathrm{MSupp}(S(X)/R(X))$ $=: M'$. There is $M\in\mathrm{MSupp}(S/R)$ such that $M'=MR(X)$  \cite[Lemma 3.3]{DPP3}. But, $M'\not\in\mathrm{MSupp}(\overline{R(X)}/R(X))=\mathrm{MSupp}(\overline{R}(X)/R(X))$ by Proposition~\ref{4.13}(2), giving that $M\not\in\mathrm {MSupp}(\overline{R}/R)=\mathscr{S}(\overline{R}/R)$. Then \cite[Proposition 1.7(3)]{P2} entails that $M\in\mathscr{S}(S/\overline{R})$. By \cite[Proposition 1.7(4)]{P2}, there are some $T_1,T_2\in[\overline{R},S]$  with $T_1\subset T_2$   Pr\"ufer minimal (an FCP extension is quasi-Pr\"ufer), with $M=\mathscr{C}(T_1,T_2)\cap R$. We can  choose  for $T_1\subset T_2$  the first minimal extension verifying the preceding  property. Therefore, $M\not\in\mathscr{S}(T_1/\overline{R})$, so that $M\not\in\mathscr{S}(T_1/R)=\mathrm{Supp}(T_ 1/R)$. By \cite[Lemma 1.10]{P2}, we get that there exists $T\in[R,T_2]$ such that $R\subset T$ is   Pr\"ufer minimal, a contradiction. 
\end{proof}

\begin{proposition}\label{5.5} If $R\subset S$ is an FCP  extension, then, $\widetilde{R}(X)=\widetilde{R (X)}$.
\end{proposition}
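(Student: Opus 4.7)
The plan is to establish the two inclusions $\widetilde{R}(X) \subseteq \widetilde{R(X)}$ and $\widetilde{R(X)} \subseteq \widetilde{R}(X)$.

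For the first inclusion, I would invoke Proposition~\ref{0.4} applied to the Pr\"ufer extension $R \subseteq \widetilde{R}$: this yields that $R(X) \subseteq \widetilde{R}(X)$ is Pr\"ufer, so $\widetilde{R}(X)$ lies inside the Pr\"ufer hull of $R(X) \subseteq S(X)$, i.e., $\widetilde{R}(X) \subseteq \widetilde{R(X)}$.

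For the reverse inclusion, the strategy is to apply Lemma~\ref{5.4} to the extension $\widetilde{R} \subseteq S$. To justify this application I need two preliminary facts: first, that $\widetilde{R} \subseteq S$ is an FCP extension, which is immediate since it is a subextension of the FCP extension $R \subseteq S$; and second, that the Pr\"ufer hull of $\widetilde{R}$ in $S$ is $\widetilde{R}$ itself. For the latter, if $\widetilde{R} \subseteq T$ is a Pr\"ufer subextension of $\widetilde{R} \subseteq S$, then composing with the Pr\"ufer extension $R \subseteq \widetilde{R}$ and using Knebusch--Zanger's stability of Pr\"ufer extensions under composition \cite[Theorem 5.6]{KZ}, $R \subseteq T$ is Pr\"ufer, which forces $T \subseteq \widetilde{R}$; hence $\widetilde{\widetilde{R}}^S = \widetilde{R}$.

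Lemma~\ref{5.4} applied to $\widetilde{R} \subseteq S$ then yields $\widetilde{\widetilde{R}(X)}^{S(X)} = \widetilde{R}(X)$. Since $R(X) \subseteq \widetilde{R(X)}$ is Pr\"ufer (by definition of the Pr\"ufer hull) and $\widetilde{R}(X) \in [R(X), \widetilde{R(X)}]$ by the first inclusion, the subextension $\widetilde{R}(X) \subseteq \widetilde{R(X)}$ is itself Pr\"ufer; being a subextension of $\widetilde{R}(X) \subseteq S(X)$, the maximality property of the Pr\"ufer hull gives $\widetilde{R(X)} \subseteq \widetilde{\widetilde{R}(X)}^{S(X)} = \widetilde{R}(X)$. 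Combining the two inclusions completes the proof.

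The only non-routine step is verifying that Lemma~\ref{5.4} may be applied to $\widetilde{R} \subseteq S$, and the main conceptual ingredient is the stability of the Pr\"ufer property under composition, which is what makes the Pr\"ufer hull idempotent in the sense $\widetilde{\widetilde{R}}^S = \widetilde{R}$.
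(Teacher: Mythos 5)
Your proposal is correct and follows essentially the same route as the paper: both establish $\widetilde{R}(X)\subseteq\widetilde{R(X)}$ via Proposition~\ref{0.4}, then apply Lemma~\ref{5.4} to the Pr\"ufer-closed extension $\widetilde{R}\subseteq S$ (the paper writes $T:=\widetilde R$ with $T=\widetilde T$ and derives a contradiction, whereas you argue the reverse inclusion directly from the maximality of the hull). Your added justification that $\widetilde{\widetilde{R}}^{S}=\widetilde{R}$ via composition of Pr\"ufer extensions is a correct filling-in of a detail the paper leaves implicit.
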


\begin{proof}  
Because $R\subseteq\widetilde{R}$ is Pr\"ufer, $R(X)\subseteq\widetilde{R}(X)$ is Pr\"ufer by Corollary~\ref{0.4}. Then, $\widetilde{R}(X)\subseteq\widetilde{R (X)}$. Assume that $\widetilde{R}(X)\neq\widetilde{R(X)}$ and set $T:=\widetilde{R}$, so that $T=\widetilde{T}$, giving $\widetilde{T(X)}=T(X)=\widetilde{R}(X)$ by Lemma~\ref{5.4}. Hence $\widetilde{T(X)}\subset\widetilde{R(X)}$ is a Pr\"ufer extension,  contradicting  the definition of $\widetilde{T (X)}$. So, $\widetilde{R}(X)=\widetilde{R (X)}$.
\end{proof}

\begin{proposition}\label{5.7} Let $R \subseteq S$ be an almost-Pr\"ufer FCP extension, then $\widehat{R}(X) = \widehat{R(X)} =\widetilde{R(X)}$.
\end{proposition}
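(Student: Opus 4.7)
The plan is to chain together three facts that have already been established in the paper, so no new work on Nagata rings should be needed. The three equalities form a short loop, and the only subtlety is making sure each invocation is legitimate in the given setting.

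First I would verify that $R(X)\subseteq S(X)$ is itself almost-Pr\"ufer. This is exactly the content of Proposition~\ref{4.6}: since $R\subseteq\widetilde R$ is Pr\"ufer and $\widetilde R\subseteq S$ is integral, the same factorization persists after the Nagata construction, yielding $R(X)\subseteq\widetilde R(X)\subseteq S(X)$ with the first extension Pr\"ufer and the second integral. This is the crucial structural input; everything else is formal.

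Next I would read off the three equalities. Applying Theorem~\ref{4.3} to the almost-Pr\"ufer extension $R\subseteq S$ gives $\widehat R=\widetilde R$, hence $\widehat R(X)=\widetilde R(X)$. The FCP hypothesis (through Proposition~\ref{5.5}, or alternatively the final statement of Proposition~\ref{4.6} combined with the almost-Pr\"ufer hypothesis) then gives $\widetilde R(X)=\widetilde{R(X)}$. Finally, applying Theorem~\ref{4.3} a second time, now to the almost-Pr\"ufer extension $R(X)\subseteq S(X)$ supplied by the first step, yields $\widetilde{R(X)}=\widehat{R(X)}$. Concatenating,
\[
\widehat R(X)=\widetilde R(X)=\widetilde{R(X)}=\widehat{R(X)},
\]
which is the desired chain of equalities.

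I do not anticipate a genuine obstacle: the hypotheses are calibrated so that each of the three ingredients applies directly. The only point requiring a moment of care is that Theorem~\ref{4.3}'s identification of Morita and Pr\"ufer hulls is predicated on the relevant extension being almost-Pr\"ufer, which is why the verification in the first paragraph (via Proposition~\ref{4.6}) must precede the second application of Theorem~\ref{4.3}. The FCP hypothesis itself is used only to invoke Proposition~\ref{5.5} for the middle equality; alternatively, one may note that Proposition~\ref{4.6} records the equality $\widetilde{R(X)}=\widetilde R(X)$ directly under the almost-Pr\"ufer hypothesis, so the FCP assumption is genuinely needed only insofar as it is already built into the statement.
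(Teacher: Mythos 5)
Your proof is correct and follows essentially the same route as the paper: both chain $\widehat{R}(X)=\widetilde{R}(X)$ (Theorem~\ref{4.3} applied to $R\subseteq S$), $\widetilde{R}(X)=\widetilde{R(X)}$ (Proposition~\ref{5.5}), and $\widetilde{R(X)}=\widehat{R(X)}$ (Theorem~\ref{4.3} applied to $R(X)\subseteq S(X)$, which is almost-Pr\"ufer by the Nagata transfer result). Your explicit verification that $R(X)\subseteq S(X)$ is almost-Pr\"ufer before the second use of Theorem~\ref{4.3}, and your remark that the middle equality already holds for almost-Pr\"ufer extensions without FCP, are welcome clarifications but do not change the argument.
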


\begin{proof} We have a tower $R(X) \subseteq \widehat{R(X)} = \widetilde{R(X)}=\widetilde{R}(X)  = \widehat R(X)$, where the first and the third equalities come from Theorem ~\ref{4.3} and the second from Proposition ~\ref{5.5}.
\end{proof}

 We end this section  with a special result.

\begin{proposition} \label{5.8} Let $R \subseteq S$ be an extension such that $R(X) \subseteq S(X)$ has FIP, then $\widehat{R}(X) = \widehat{R(X)}$.

\end{proposition}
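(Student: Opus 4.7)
The plan is to prove the equality $\widehat R(X) = \widehat{R(X)}$ by establishing two inclusions, the forward one via base change of flat epimorphisms and the reverse one via faithfully flat descent combined with a Nagata-style correspondence for intermediate algebras.

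First I would handle $\widehat R(X)\subseteq \widehat{R(X)}$. The FIP hypothesis on $R(X)\subseteq S(X)$ forces FCP, and hence by Proposition~\ref{0.4} the original extension $R\subseteq S$ has FCP as well, so it is quasi-Pr\"ufer by Corollary~\ref{3.4}; the subextension $R\subseteq \widehat R$ is then also quasi-Pr\"ufer by Corollary~\ref{3.3}. Proposition~\ref{nagataqp} therefore identifies $\widehat R(X) \cong \widehat R\otimes_R R(X)$ as a subring of $S(X) \cong S\otimes_R R(X)$. Since $R\to \widehat R$ is a flat epimorphism, base-changing along the faithfully flat $R\to R(X)$ yields a flat epimorphism $R(X)\to \widehat R(X)$ by Scholium~A(8), so $\widehat R(X)\subseteq \widehat{R(X)}$ by maximality of the Morita hull.

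For the reverse inclusion $\widehat{R(X)}\subseteq \widehat R(X)$, I would invoke the Nagata correspondence under FIP: the map $T\mapsto T(X)$ gives a bijection between $[R,S]$ and $[R(X),S(X)]$, with inverse $V\mapsto V\cap S$ (the one-sided identity $T(X)\cap S=T$ is easy to verify, since any $s\in S$ equal to $f/g$ with $f,g\in T[X]$ and $g$ unit-content in $T$ forces $sT\subseteq T$; surjectivity of $T\mapsto T(X)$ under FIP is the companion to the FCP-preservation of \cite{DPP3}). Writing $\widehat{R(X)}=T(X)$ for the unique such $T\in[R,S]$, the map $R(X)\to T(X)=T\otimes_R R(X)$ (Proposition~\ref{nagataqp}) is a flat epimorphism by the defining property of $\widehat{R(X)}$. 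Faithfully flat descent of flat epimorphisms (Scholium~A(9)) then yields that $R\to T$ is itself a flat epimorphism, so $T\subseteq \widehat R$ by maximality of the Morita hull, and therefore $\widehat{R(X)}=T(X)\subseteq \widehat R(X)$.

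The main obstacle is the surjectivity of $T\mapsto T(X)$ under FIP. If this correspondence is not invoked directly, an alternative route is an idempotency trick: Scholium~A(4) applied to $R(X)\subseteq \widehat R(X)\subseteq \widehat{R(X)}$ gives that $\widehat R(X)\subseteq \widehat{R(X)}$ is itself a flat epimorphism, so $\widehat{R(X)}=\widehat{\widehat R(X)}^{S(X)}$. This reduces the problem to a Morita-hull analogue of Lemma~\ref{5.4}, namely that if $R\subseteq S$ has FIP and $\widehat R=R$, then $\widehat{R(X)}=R(X)$; that lemma would be proved by picking a minimal subextension $R(X)\subset T'\subseteq \widehat{R(X)}$ and, in the Pr\"ufer-minimal subcase, descending along the faithfully flat $R\to R(X)$ to contradict $\widehat R=R$. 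The delicate point is to rule out an integral minimal subextension $R(X)\subset T'$ inside the flat epimorphism $R(X)\to \widehat{R(X)}$, which requires the full strength of the minimal-extension correspondence between $[R,S]$ and $[R(X),S(X)]$ together with Scholium~A(5).
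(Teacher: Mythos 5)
Your main argument is correct and is essentially the paper's own proof: both rest on the bijection $T\mapsto T(X)$ of \cite[Theorem 32]{DPP4} to write $\widehat{R(X)}=T(X)$ for some $T\in[R,S]$, and on faithfully flat descent of flat epimorphisms along $R\to R(X)$ (Scholium A(9)) to conclude $T=\widehat R$, with the forward inclusion $\widehat R(X)\subseteq\widehat{R(X)}$ coming from base change and maximality of the Morita hull exactly as in the paper. The alternative ``idempotency'' route you sketch at the end is not needed and is not what the paper does.
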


\begin{proof} The map $[R,S] \to [R(X),S(X)]$ defined by $T \mapsto T(X) = R(X)\otimes_R T$ is bijective  \cite[Theorem 32]{DPP4},  whence $ \widehat{R(X)} = T(X)$ for some $T \in [R,S]$. Moreover, $\widehat{R}(X) \to \widehat{R(X)}$ is a flat epimorphism. Since $R \to R(X)$ is faithfully flat,  $\widehat R = T$
and the result follows.
\end{proof}

\section{Fibers of quasi-Pr\"ufer extensions}

We intend to complete some results of Ayache-Dobbs \cite{AD}.
We begin by recalling some features about quasi-finite ring morphisms. A ring morphism $R \to S$ is called quasi-finite  by \cite{R} if it is of finite type and $\kappa(P) \to\kappa(P)\otimes_RS$ is finite (as a $\kappa(P)$-vector space), for each $P\in\mathrm{Spec}(R)$ \cite[Proposition 3, p.40]{R}. 

\begin{proposition}\label{0} A ring morphism of finite type  is incomparable if and only if  it is quasi-finite and, if and only if its  fibers are finite. 
\end{proposition}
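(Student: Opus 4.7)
The plan is to reduce each of the three conditions to an equivalent statement about the fiber rings $F_P := \kappa(P)\otimes_R S$ for $P\in \mathrm{Spec}(R)$, and then to invoke a classical trichotomy for finitely generated algebras over a field. Since $R\to S$ is of finite type, each $F_P$ is a finitely generated $\kappa(P)$-algebra, and this is the only hypothesis the argument will use in any essential way.

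First I would translate the three conditions into fibrewise form. Via the homeomorphism $\mathrm{Fib}_{R,S}(P)\cong \mathrm{Spec}(F_P)$ recalled before Definition~\ref{2.2}, the incomparability of $R\to S$ is equivalent to $\dim F_P = 0$ for every $P$ with $F_P\neq 0$ (a chain $Q\subsetneq Q'$ in $\mathrm{Spec}(S)$ with $Q\cap R=Q'\cap R=P$ corresponds exactly to a chain in $\mathrm{Spec}(F_P)$). Quasi-finiteness in the sense of Raynaud is by definition the condition $\dim_{\kappa(P)} F_P<\infty$ for all $P$. And ``finite fibers'' means $|\mathrm{Spec}(F_P)|<\infty$ for every $P$.

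The heart of the proof is then the classical fact that for a finitely generated algebra $A$ over a field $k$, the following are equivalent: (i) $\dim A=0$; (ii) $\dim_k A<\infty$; (iii) $|\mathrm{Spec}(A)|<\infty$. The equivalence (i)$\Leftrightarrow$(ii) follows from Noether normalization, which provides a finite extension $k[y_1,\dots,y_d]\subseteq A$ with $d=\dim A$, so that $\dim A=0$ iff $d=0$ iff $A$ is module-finite over $k$. The implication (ii)$\Rightarrow$(iii) is immediate, since a finite-dimensional $k$-algebra is Artinian. For (iii)$\Rightarrow$(i), apply Noether normalization again: if $\dim A=d\geq 1$ then the integral surjection $\mathrm{Spec}(A)\twoheadrightarrow \mathrm{Spec}(k[y_1,\dots,y_d])$ would force $\mathrm{Spec}(A)$ to be infinite, because $k[y_1]$ already has infinitely many prime ideals.

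Applying this trichotomy to each fiber ring $F_P$ yields the three asserted equivalences simultaneously. I do not anticipate any real obstacle: the statement is folkloric and matches the classical characterization of quasi-finiteness at a point as isolatedness in the fiber (see \cite[Chapitre IV]{R}, and EGA~IV, (13.1.1)). The only mild care needed is to observe that if $F_P=0$ the three conditions are trivially satisfied at $P$, so one restricts attention to the nonempty fibers throughout.
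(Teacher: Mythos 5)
Your proof is correct, but it takes a genuinely different (and much more self-contained) route than the paper. The paper's entire proof is a citation: it invokes Uda's result \cite[Corollary 1.8]{U} together with Raynaud's definition of quasi-finiteness, and leaves all the work to those references. You instead reduce all three conditions to statements about the fiber rings $F_P=\kappa(P)\otimes_RS$ --- which is exactly the right normal form, and consistent with how the paper itself sets up fibers before Definition~\ref{2.2} --- and then prove from scratch the trichotomy for a finitely generated algebra $A$ over a field: $\dim A=0$ iff $\dim_kA<\infty$ iff $\operatorname{Spec}(A)$ is finite. Your two uses of Noether normalization (once for the equivalence of zero-dimensionality with module-finiteness, once to see that positive dimension forces infinitely many primes via lying-over onto $\operatorname{Spec}(k[y_1,\dots,y_d])$, which is infinite as soon as $d\geq 1$ since any field admits infinitely many monic irreducible polynomials) are both sound, and your observation that empty fibers satisfy all three conditions vacuously closes the one loose end. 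What your approach buys is a proof readable without chasing down Uda's paper; what the paper's approach buys is brevity. The only cosmetic caveat is that the paper's Definition~\ref{2.2}(1) states the fiber-ring reformulation of incomparability with a quantifier over all $T\in[R,S]$ (which really describes an INC-pair); your cleaner translation, with $T=S$ only, is the one actually needed here and is the standard one.
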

\begin{proof}
Use \cite[Corollary 1.8]{U}  and the above definition.
\end{proof}

\begin{theorem} \label{6.0} An extension  $R\subseteq S$  is quasi-Pr\"ufer if and only if  $R\subseteq T$ is quasi-finite (respectively, has finite fibers) for each $T\in [R,S]$ such that $T$ is of finite type over $R$, if and only if $R\subseteq T$ has integral fiber morphisms for each $T\in [R,S]$.
\end{theorem}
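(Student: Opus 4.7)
The plan is to exploit two ingredients already established in the excerpt. First, the equivalence between being quasi-Pr\"ufer and the integral-fiber-morphism condition is essentially Proposition~\ref{3.8}\,(1)$\Leftrightarrow$(4), so no new argument is required there. The real content of the theorem is therefore the equivalence between being quasi-Pr\"ufer and having quasi-finite (equivalently, finite-fibered) subextensions $R\subseteq T$ for every $T\in[R,S]$ of finite type over $R$, which I would pivot around the INC-pair characterization (Theorem~\ref{2.3}) and the finite-type dictionary of Proposition~\ref{0}.

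For the forward implication, suppose $R\subseteq S$ is quasi-Pr\"ufer. By Theorem~\ref{2.3} it is an INC-pair, so $R\subseteq T$ is incomparable for every $T\in[R,S]$. When $T$ is moreover of finite type over $R$, Proposition~\ref{0} immediately upgrades incomparability to quasi-finiteness, equivalently to finiteness of the fibers.

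For the converse, assume that $R\subseteq T$ has finite fibers (equivalently, is quasi-finite) for every $T\in[R,S]$ of finite type over $R$. By Theorem~\ref{2.3} it suffices to check that $R\subseteq S$ is an INC-pair. Fix $T\in[R,S]$ and primes $Q\subseteq Q'$ of $T$ with $Q\cap R=Q'\cap R$. For each $s\in Q'$, set $T_s:=R[s]\in[R,T]$; it is of finite type over $R$, so by hypothesis $R\subseteq T_s$ has finite fibers and hence is incomparable by Proposition~\ref{0}. Since $Q\cap T_s\subseteq Q'\cap T_s$ are comparable primes of $T_s$ lying over the same prime of $R$, they must coincide, whence $s\in Q'\cap T_s=Q\cap T_s\subseteq Q$. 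Thus $Q'\subseteq Q$, so $Q=Q'$, establishing INC and hence quasi-Pr\"ufer. The only subtlety, and the main point of the argument, is recognizing that the INC property can be verified one element at a time through the finitely generated subalgebras $R[s]$, which is exactly what makes the finite-type hypothesis usable.
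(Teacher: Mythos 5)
Your proposal is correct and follows essentially the same route as the paper: the forward direction combines Theorem~\ref{2.3} with Proposition~\ref{0}, the converse reduces INC for an arbitrary $T\in[R,S]$ to its finite-type subalgebras (the paper writes $T$ as a union of finite-type subalgebras $T_\alpha$, while you use the singly generated $R[s]$ for $s\in Q'$ --- the same idea), and the fiber-morphism equivalence is delegated to Proposition~\ref{3.8} in both cases. No gaps.
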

\begin{proof} It is clear that $R\subseteq  S$ is an INC-pair  implies  the condition because of Proposition~\ref{0}. To prove the converse, let $T\in [R,S]$ and write $T$ as the union of its finite type $R$-subalgebras $T_\alpha$. Now let $Q \subseteq Q'$ be prime ideals of $T$, lying over a prime ideal $P$ of $R$ and set $Q_\alpha : = Q \cap T_\alpha$ and $Q'_\alpha:= Q'\cap T_\alpha$. If $R \subseteq T_\alpha$ is quasi-finite, then $Q_\alpha = Q'_\alpha$, so  that $Q= Q'$ and then $R\subseteq T$ is incomparable. The last statement is Proposition~\ref{3.8}.
\end{proof}
\begin{corollary} An integrally closed extension is Pr\"ufer if and only if each of its subextensions $R\subseteq T$ of finite type has finite fibers.
\end{corollary}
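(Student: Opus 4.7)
The plan is to deduce this corollary as a direct specialization of Theorem~\ref{6.0}, using the basic observation (recorded just after Definition~\ref{2.1}) that an extension is Pr\"ufer if and only if it is quasi-Pr\"ufer and integrally closed.

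First I would handle the forward direction: assume $R\subseteq S$ is integrally closed and Pr\"ufer. Since a Pr\"ufer extension is in particular quasi-Pr\"ufer, Theorem~\ref{6.0} immediately yields that $R\subseteq T$ has finite fibers for every $T\in[R,S]$ of finite type over $R$.

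For the converse, assume $R\subseteq S$ is integrally closed and that every finite-type subextension $R\subseteq T$ has finite fibers. Theorem~\ref{6.0} then gives that $R\subseteq S$ is quasi-Pr\"ufer. Combining this with the assumed integral closedness (so $\overline R = S$), we obtain that $R=\overline R\subseteq S$ is Pr\"ufer by Definition~\ref{2.1}(1).

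Since this is essentially a two-line corollary pulling together Theorem~\ref{6.0} and the Pr\"ufer = quasi-Pr\"ufer + integrally closed characterization, there is no real obstacle; the only thing worth being careful about is simply citing Theorem~\ref{6.0} in the correct direction in each implication.
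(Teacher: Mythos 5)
Your proof is correct and follows essentially the same route as the paper: both directions reduce to Theorem~\ref{6.0} together with the fact that Pr\"ufer $=$ quasi-Pr\"ufer $+$ integrally closed. The only cosmetic difference is that for the forward direction the paper short-circuits Theorem~\ref{6.0} by noting that each subextension of a Pr\"ufer extension is a flat epimorphism, hence spectrally injective, so its fibers have cardinality at most $1$; your appeal to Theorem~\ref{6.0} gives the same conclusion.
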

\begin{proof} It is enough to observe that the fibers of a (flat) epimorphism  have a cardinal $\leq 1$, because an epimorphism is spectrally injective.
\end{proof}

A ring extension $R\subseteq S$ is called {\it strongly affine} if each of its subextensions $ R\subseteq T$ is of finite type. The above considerations show that in this case $R\subseteq S$ is quasi-Pr\"ufer if and only if each of its subextensions  $R\subseteq T$ has finite fibers. For example, an FCP extension is strongly affine and quasi-Pr\"ufer. We also are interested in extensions $R\subseteq S$ that are not necessarily strongly affine and such that each of its subextensions $R \subseteq T$ have finite fibers.

Next lemma will be useful, its proof is obvious.

\begin{lemma}\label{6.5.1}  Let $R \subseteq S$ be an extension and $T\in [R,S]$
\begin{enumerate}
\item If $T\subseteq S$ is spectrally injective and $R\subseteq T$ has finite fibers, then $R\subseteq S$ has finite fibers.
\item If $R\subseteq T$ is spectrally injective, then $T\subseteq S$ has finite fibers if and only if  $R\subseteq S$ has finite fibers.
\end{enumerate}
\end{lemma}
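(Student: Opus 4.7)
The plan is to unwind the definitions directly, exploiting the transitivity of contraction along the tower $R \subseteq T \subseteq S$, i.e., the factorization of the spectral map $\mathrm{Spec}(S) \to \mathrm{Spec}(T) \to \mathrm{Spec}(R)$. In both parts, the fiber-restriction map $Q \mapsto Q \cap T$ sends $\mathrm{Fib}_{R,S}(P)$ into $\mathrm{Fib}_{R,T}(P)$ because $(Q \cap T) \cap R = Q \cap R$, and this is just the restriction of the spectral map $\mathrm{Spec}(S) \to \mathrm{Spec}(T)$ to the fiber over $P$.

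For part (1), I would fix $P \in \mathrm{Spec}(R)$ and consider the restriction $\rho : \mathrm{Fib}_{R,S}(P) \to \mathrm{Fib}_{R,T}(P)$ described above. Spectral injectivity of $T \subseteq S$ makes $\rho$ injective, and since $\mathrm{Fib}_{R,T}(P)$ is finite by hypothesis, so is $\mathrm{Fib}_{R,S}(P)$.

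For the forward direction of part (2), fix $P \in \mathrm{Spec}(R)$; spectral injectivity of $R \subseteq T$ gives $|\mathrm{Fib}_{R,T}(P)| \leq 1$. If $\mathrm{Fib}_{R,T}(P) = \emptyset$ then $\mathrm{Fib}_{R,S}(P) = \emptyset$ as well (any $Q$ over $P$ would contract to an element of $\mathrm{Fib}_{R,T}(P)$), so finiteness is trivial. Otherwise, writing $\mathrm{Fib}_{R,T}(P) = \{Q'\}$, every $Q \in \mathrm{Fib}_{R,S}(P)$ necessarily satisfies $Q \cap T = Q'$, so $\mathrm{Fib}_{R,S}(P) = \mathrm{Fib}_{T,S}(Q')$, which is finite by assumption. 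For the converse, given $Q' \in \mathrm{Spec}(T)$ set $P := Q' \cap R$; then $\mathrm{Fib}_{T,S}(Q') \subseteq \mathrm{Fib}_{R,S}(P)$ since any prime above $Q'$ contracts to $P$ in $R$, and finiteness is inherited.

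There is no real obstacle here: the content is purely set-theoretic once spectral injectivity is reinterpreted as the cardinality bound $|\mathrm{Fib}| \leq 1$ on all fibers. This is why the authors simply declare the proof obvious.
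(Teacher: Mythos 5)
Your proof is correct and is exactly the elementary fiber-chasing argument the paper has in mind when it declares the lemma's proof ``obvious'': injectivity of $\mathrm{Spec}(S)\to\mathrm{Spec}(T)$ restricted to fibers in part (1), and the identification $\mathrm{Fib}_{R,S}(P)=\mathrm{Fib}_{T,S}(Q')$ when the fiber of $T$ over $P$ is the singleton $\{Q'\}$ in part (2). Nothing to add.
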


\begin{remark}\label{6.2} Let  $R\subseteq S$ be an almost-Pr\"ufer  extension, such that the integral extension $T:= \widetilde R \subseteq S$ has finite fibers and  let $P\in \mathrm{Spec}(R)$. The study of the finiteness of $\mathrm{Fib}_{R,S} (P)$ can be reduced as follows. 
As $\overline R \subseteq S$ is an epimorphism, because it is Pr\"ufer, it is spectrally injective (see Scholium A). The hypotheses of Proposition~\ref{4.4}  hold.  We examine three cases.  In case $(R:\overline R)\not\subseteq P$, it is well known that $R_P =(\overline R)_P$ so that $|\mathrm{Fib}_{R,S}(P)|= 1$, because  $\overline R \to S$ is  spectrally injective. Suppose now that $(R:\overline R) = P$.  From $(R:\overline R )= (T: S)\cap R$, we deduce that $P$ is lain over by some  $Q\in \mathrm{Spec}(T)$  and then $\mathrm{Fib}_{R,\overline R}(P) \cong \mathrm{Fib}_{T,S}(Q)$. The conclusion follows as above.
Thus the remaining case is $(R:\overline R ) \subset P$ and we can assume that $PT=T$ for if not $\mathrm{Fib}_{R,\overline R}(P) \cong \mathrm{Fib}_{T,S}(Q)$ for some  $Q \in \mathrm{Spec}(T)$ by Scholium A (1).  
\end{remark}

  \begin{proposition}\label{6.3}  Let $R\subseteq S$ be an almost-P\"rufer extension.  If  $\widetilde R \subseteq S$ has finite fiber morphisms and $(\widetilde R_P:S_P)$ is a maximal ideal of $\widetilde R_P$ for each $P \in \mathrm{Supp}_R(S/\widetilde R)$, then $R\subseteq \overline R$ and $R\subseteq S$ have finite fibers.
\end{proposition}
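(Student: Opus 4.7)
Given $P\in\mathrm{Spec}(R)$, my plan is to bound $\mathrm{Fib}_{R,S}(P)$ and $\mathrm{Fib}_{R,\overline R}(P)$ separately, using the factorization $R\subseteq\widetilde R\subseteq S$ in which $R\to\widetilde R$ is a flat epimorphism (by the almost-Pr\"ufer hypothesis and Theorem~\ref{4.3}) and $\widetilde R\to S$ is integral. For the fibers of $R\subseteq S$, the flat epimorphism is spectrally injective by Scholium~A(7), so at most one $Q\in\mathrm{Spec}(\widetilde R)$ lies over $P$; by integrality of $\widetilde R\subseteq S$ every prime of $S$ above $P$ must sit above such a $Q$. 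Thus $\mathrm{Fib}_{R,S}(P)$ is either empty (in the case $\widetilde R=P\widetilde R$) or equal to $\mathrm{Fib}_{\widetilde R,S}(Q)$, which is finite by the finite-fiber-morphism hypothesis; the conductor assumption is not needed for this half.

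For $\mathrm{Fib}_{R,\overline R}(P)$ I would follow the three subcases of Remark~\ref{6.2}, applied with $T=\widetilde R$ via the pushout-pullback square $(\mathrm D')$ of Theorem~\ref{4.5}. If $(R:\overline R)\not\subseteq P$ then $R_P=\overline R_P$ and the fiber is a singleton. If $(R:\overline R)\subseteq P$ and some $Q\in\mathrm{Spec}(\widetilde R)$ lies over $P$, the pushout fiber isomorphism of Proposition~\ref{4.4}(1) gives $\mathrm F_{R,\overline R}(P)\cong\mathrm F_{\widetilde R,S}(Q)$, and finiteness transfers. Otherwise, by the Scholium~A(1) dichotomy applied to $R\to\widetilde R$ one has $\widetilde R=P\widetilde R$, and the conductor hypothesis enters. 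Here I would localize the pushout $\widetilde R\otimes_R\overline R\cong S$ at $P$, obtaining $\widetilde R_P\otimes_{R_P}\overline R_P\cong S_P$. When $P\notin\mathrm{Supp}_R(S/\widetilde R)$ one has $S_P=\widetilde R_P$, so $\overline R_P$ embeds as an integral $R_P$-subalgebra of the flat epimorphism $R_P\to\widetilde R_P$, and Scholium~A combined with integrality should collapse $\overline R_P$ onto $R_P$ and force a singleton fiber. When $P\in\mathrm{Supp}_R(S/\widetilde R)$, the maximality of $\mathfrak c:=(\widetilde R_P:S_P)$ makes $\widetilde R_P/\mathfrak c$ a field, so that $S_P/\mathfrak cS_P$ is integral over it with spectrum bounded by the fiber of $\widetilde R\subseteq S$ at the prime $\mathfrak c\cap\widetilde R$; transferring through the pushout yields the required finite bound on $\mathrm{Fib}_{R,\overline R}(P)$.

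The main obstacle is precisely this last transfer. When $\widetilde R=P\widetilde R$, the naive identity $|\mathrm{Fib}_{R,S}(P)|=|\mathrm{Fib}_{R,\widetilde R}(P)|\cdot|\mathrm{Fib}_{R,\overline R}(P)|$ degenerates because its left-hand side vanishes while its right-hand side need not, so one cannot read off finiteness of $\mathrm{Fib}_{R,\overline R}(P)$ from finiteness of $\mathrm{Fib}_{R,S}(P)$ through the pushout alone. The maximal-conductor assumption is designed to patch exactly this failure: by concentrating the support of $S_P/\widetilde R_P$ at a single maximal ideal $\mathfrak c$ of $\widetilde R_P$, it forces the potentially exotic primes of $\overline R$ above $P$ to be witnessed by the one finite fiber $\mathrm{Fib}_{\widetilde R,S}(\mathfrak c\cap\widetilde R)$, which is the ingredient that ultimately makes the argument go through.
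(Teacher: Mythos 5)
Your reduction is sound up to the last step, and the easy parts are handled correctly: the finiteness of $\mathrm{Fib}_{R,S}(P)$ via spectral injectivity of $R\to\widetilde R$ plus integrality of $\widetilde R\subseteq S$, the case $(R:\overline R)\not\subseteq P$, the case where some prime of $\widetilde R$ lies over $P$ (via the fiber isomorphism of Proposition~\ref{4.4}(1)), and the collapse $R_P=\overline R_P$ when $S_P=\widetilde R_P$. But the step you yourself flag as ``the main obstacle'' is a genuine gap, and the mechanism you propose for closing it does not work. Write $T:=\widetilde R_P$, and note that by Proposition~\ref{0.3}(3) (applied to the Pr\"ufer extension $R_P\subseteq T$ over the local ring $R_P$) one has $T=(R_P)_{\mathfrak P}$ with $\mathfrak P=\mathfrak PT$ the maximal ideal of $T$ and $R_P/\mathfrak P$ a \emph{valuation domain} with quotient field $T/\mathfrak P$; the conductor hypothesis forces $(T:S_P)=\mathfrak P=(R_P:\overline R_P)$. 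Now compute what your ``witnessing'' fiber actually sees: since $S_P\cong T\otimes_{R_P}\overline R_P$, the fiber $\mathrm F_{T,S_P}(\mathfrak P)$ is $\kappa(\mathfrak P)\otimes_{R_P/\mathfrak P}(\overline R_P/\mathfrak P)$, i.e.\ the localization of $\overline R_P/\mathfrak P$ at the nonzero elements of the valuation domain $R_P/\mathfrak P$. Its primes are exactly the primes of $\overline R_P/\mathfrak P$ lying over $(0)$ --- the \emph{minimal} primes --- whereas $\mathrm{Fib}_{R,\overline R}(P)$ consists of the \emph{maximal} ideals of $\overline R_P/\mathfrak P$. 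These are disjoint sets unless the dimension is zero, so finiteness of the former does not transfer to the latter ``through the pushout''; a ring integral over a local domain can have finitely many minimal primes and infinitely many maximal ideals.

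The paper bridges exactly this gap with two ingredients absent from your proposal. First, Lemma~\ref{3.7} shows that for each of the finitely many minimal primes $N_i$ of $S':=S_P/\mathfrak P$ lying over $M_i\in\mathrm{Min}(\overline R_P/\mathfrak P)$, the quotient $(\overline R_P/\mathfrak P)/M_i$ is the integral closure of the valuation domain $R_P/\mathfrak P$ in the field $S'/N_i$, which is a \emph{finite} extension of the quotient field $T/\mathfrak P$ by the finite-fiber-morphism hypothesis. Second, Gilmer's theorem \cite[Corollary 20.3]{MIT} bounds the number of maximal ideals of the integral closure of a valuation domain in a finite field extension by the separable degree. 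Since every maximal ideal of $\overline R_P/\mathfrak P$ contains some $M_i$, this yields the finite bound. Without the identification of $R_P/\mathfrak P$ as a valuation domain (from Proposition~\ref{0.3}(3)) and the appeal to Gilmer's theorem, the argument does not close.
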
 
\begin{proof} The Pr\"ufer closure commutes with the localization at prime ideals by  Proposition~\ref{4.10}. We set $T:= \widetilde R$. Let $P$ be a prime ideal of $R$ and $\varphi : R \to R_P$ the canonical morphism. We clearly have $\mathrm{Fib}_{R,.}(P) = {}^a\varphi(\mathrm{Fib}_{R_P,._P}(PR_P))$. Therefore, we can localize the data at $P$ and we can assume that $R$ is local.  
  
 In case $(T:S) = T$, we get a factorization  $R \to \overline R \to T$. Since $R \to T$ is Pr\"ufer so is $R \to \overline R$ and it follows that $R = \overline R$ because a Pr\"ufer extension is integrally closed. 

From Proposition ~\ref{0.3} applied to $R \subseteq T$,  we get that there is some $\mathfrak P \in \mathrm{Spec}(R)$ such that  $T= R_{\mathfrak{P}}$, $R/\mathfrak P$ is  a valuation ring with quotient field $T/\mathfrak{P}$ and $\mathfrak{P} =\mathfrak{P}T$. It follows that $(T:S)  = \mathfrak{P}T = \mathfrak{P} \subseteq R$, and hence $(T:S) = (T:S)\cap R =  (R:\overline R)$. We have therefore  a pushout diagram by Theorem~\ref{4.5}:
 
 \car {R':=R/\mathfrak{P}}, { \overline R/\mathfrak{P}:= \overline{R'}}, {T':= T/\mathfrak{P}}, {S/\mathfrak{P}:= S'}
 
\noindent where $R/\mathfrak{P}$ is a valuation domain, $T/\mathfrak{P}$ is  its quotient field and $\overline R/\mathfrak{P} \to S/\mathfrak{P}$ is Pr\"ufer by \cite[Proposition 5.8, p. 52]{KZ}.
 
 Because $\overline{R'} \to S'$ is injective and a flat epimorphism, there is a bijective map $\mathrm{Min}(S') \to \mathrm{Min}(\overline{R'})$. But $T' \to S'$ is the fiber at $\mathfrak{P}$ of $T \to S$ and is therefore finite. Therefore,   $\mathrm{Min}(S')$ is a finite set $\{N_1,\ldots, N_n\}$ of maximal ideals lying over the minimal prime ideals $\{M_1,\ldots, M_n\}$ of $\overline{R'}$ lying over $0$ in $R'$. We infer from Lemma~\ref{3.7} that $\overline{R'}/{M_i} \to S'/{N_i}$ is Pr\"ufer, whence integrally closed. Therefore, $\overline{R'}/{M_i}$ is  an integral domain and the integral closure of $R'$ in $S'/{N_i}$. Any maximal ideal $M$ of $\overline{R'}$ contains some $M_i$. To conclude it is enough to use a result of Gilmer \cite[Corollary 20.3]{MIT} because the number of maximal ideals in $\overline{R'}/M_i$ is less than the separable degree of the extension of fields $T' \subseteq S'/N_i$.
\end{proof}

\begin{remark}\label{6.4} (1) Suppose that $(\widetilde R:S)$ is a maximal ideal of $\widetilde R$. We clearly have $(\widetilde R:S)_P \subseteq (\widetilde R_P:S_P)$ and  the hypotheses on $(\widetilde R:S)$ of the above proposition hold. 

(2)  In case $\widetilde R\subseteq S$ is a tower of finitely many integral minimal extensions $R_{i-1} \subseteq R_i$  with $M_i= (R_{i-1}:R_i)$, then $\mathrm{Supp}_{\widetilde R}(S/\widetilde R) =\{N_1,\ldots,N_n\}\subseteq \mathrm{Max}(\widetilde R)$  where $N_i= M_i\cap R$. If the  ideals $N_i$ are different, each localization at $N_i$  of $\widetilde R\subseteq S$ is  integral  minimal and the above result may apply. 
This generalizes the Ayache-Dobbs result \cite[Lemma 3.6]{AD}, where $\widetilde R \subseteq S$ is supposed to be integral   minimal. 
\end{remark}

\begin{proposition}\label{6.5} Let $R \subseteq S$ be a quasi-Pr\"ufer ring extension. 

\begin{enumerate}

\item  $R \subseteq S$ has finite fibers if and only if $R\subseteq \overline R$ has finite fibers.

\item   $R\subseteq \overline R$ has finite fibers if and only if each extension $R \subseteq T$, where $T \in [R,S]$ has finite fibers.
\end{enumerate}

\end{proposition}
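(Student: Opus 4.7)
I will prove (2) first and then deduce (1) by specialization. The structural backbone is Corollary~\ref{3.3}, which tells us that each subextension $R\subseteq T$ of the quasi-Pr\"ufer extension $R\subseteq S$ is itself quasi-Pr\"ufer with integral closure $\overline R^T = T\cap\overline R$, so that $R\subseteq T$ factors as $R\subseteq T\cap\overline R\subseteq T$ with the first step integral and the second Pr\"ufer.

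For the nontrivial direction of (2), fix $P\in\mathrm{Spec}(R)$ and $T\in[R,S]$, and bound $|\mathrm{Fib}_{R,T}(P)|$ in two stages. Lying-over for the integral extension $T\cap\overline R\subseteq\overline R$ yields a surjective contraction $\mathrm{Fib}_{R,\overline R}(P)\twoheadrightarrow\mathrm{Fib}_{R,T\cap\overline R}(P)$, whence $|\mathrm{Fib}_{R,T\cap\overline R}(P)|\leq|\mathrm{Fib}_{R,\overline R}(P)|<\infty$. Since $T\cap\overline R\subseteq T$ is Pr\"ufer, it is a flat epimorphism by Scholium~B(1), hence spectrally injective by Scholium~A(7); the contraction $\mathrm{Fib}_{R,T}(P)\hookrightarrow\mathrm{Fib}_{R,T\cap\overline R}(P)$ is then injective, producing the desired finiteness of $\mathrm{Fib}_{R,T}(P)$. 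The reverse implication of (2) is immediate on taking $T=\overline R$.

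For (1), the forward direction is the case $T=S$ of (2); equivalently, it is Lemma~\ref{6.5.1}(1) with $T=\overline R$, since $\overline R\subseteq S$ is a flat epimorphism and hence spectrally injective. The converse direction---from finite fibers of $R\subseteq S$ to finite fibers of $R\subseteq \overline R$---is the delicate point, because the injection $\mathrm{Fib}_{R,S}(P)\hookrightarrow\mathrm{Fib}_{R,\overline R}(P)$ afforded by spectral injectivity of $\overline R\subseteq S$ points the wrong way for a direct cardinality bound. The main obstacle will be to control the ``phantom'' primes $Q\in\mathrm{Fib}_{R,\overline R}(P)$ with $QS=S$, i.e.\ those killed by the flat epimorphism $\overline R\to S$ (detected via Scholium~A(1)). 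I would attack this by passing to the zero-dimensional fiber ring $\kappa(P)\otimes_R\overline R$ (Proposition~\ref{3.8}(4)) and exploiting the surjective flat epimorphism $\kappa(P)\otimes_R\overline R\twoheadrightarrow\kappa(P)\otimes_R S$ supplied by Scholium~A(2), so as to translate the fiber count of $S$ back to one for $\overline R$.
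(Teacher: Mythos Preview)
Your argument for (2) is correct and is a clean variant of the paper's. You factor $R\subseteq T$ as $R\subseteq T\cap\overline R\subseteq T$ and bound $|\mathrm{Fib}_{R,T}(P)|$ via the surjection $\mathrm{Fib}_{R,\overline R}(P)\to\mathrm{Fib}_{R,T\cap\overline R}(P)$ (lying-over for the integral extension $T\cap\overline R\subseteq\overline R$) followed by the injection $\mathrm{Fib}_{R,T}(P)\hookrightarrow\mathrm{Fib}_{R,T\cap\overline R}(P)$ (spectral injectivity of the Pr\"ufer extension $T\cap\overline R\subseteq T$). The paper runs the mirror-image route through $T\overline R$ instead: it uses that $\overline R\subseteq T\overline R$ is a flat epimorphism (Proposition~\ref{4.4}) to get $|\mathrm{Fib}_{R,T\overline R}(P)|\leq|\mathrm{Fib}_{R,\overline R}(P)|$, and then spectral surjectivity of the integral map $T\subseteq T\overline R$ to bound $|\mathrm{Fib}_{R,T}(P)|$. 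Both arguments are short and use the same two ingredients in reversed order.

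For the converse direction of (1), your proposal has a genuine gap. You correctly arrive at the surjective flat epimorphism $A:=\kappa(P)\otimes_R\overline R\twoheadrightarrow\kappa(P)\otimes_R S=:B$ with $A$ zero-dimensional (Scholium~A(2)), and you state the intention to ``translate the fiber count of $S$ back to one for $\overline R$''---but you never say how. A surjection $A\to B$ only gives an injection $\mathrm{Spec}(B)\hookrightarrow\mathrm{Spec}(A)$, which points the wrong way; nothing you have written rules out $\mathrm{Spec}(A)$ being infinite while $\mathrm{Spec}(B)$ is finite. This is precisely the step the paper supplies and you omit: since $A\to B$ is surjective it is module-finite, hence (being flat) the localization $B_Q$ is free over $A_Q$ for each $Q\in\mathrm{Spec}(A)$ by \cite{E}; the paper then argues $B_Q\neq 0$, forcing rank one, so $A_Q\cong B_Q$ for all $Q$ and therefore $A\cong B$. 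Without this local-freeness argument (or an equivalent device), your plan for the converse of (1) is only a setup, not a proof.
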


\begin{proof}  (1) Let $P\in \mathrm{Spec}(R)$ and the  morphisms $\kappa(P) \to\kappa(P)\otimes_R\overline R \to\kappa(P)\otimes_R S$. The first (second) morphism is integral (a flat epimorphism) because  deduced by base change from the  integral morphism $R \to \overline R$ (the flat epimorphism $\overline R \to S$). Therefore,  the ring $\kappa(P)\otimes_R \overline R$ is zero dimensional, so that the second morphism is surjective by Scholium A (2).  
Set $A: =\kappa(P)\otimes_R \overline R$ and $B:= \kappa(P)\otimes_R S$, we thus have a  module finite flat ring morphism $A \to B$. Hence, $A_Q \to B_Q$ is free for each $Q \in \mathrm{Spec}(A)$ \cite[Proposition 9]{E} and $B_Q \neq 0$ because it contains $\kappa(P) \neq 0$. Therefore, $A_Q \to B_Q$ is injective and it follows that $A\cong B$.

(2) Suppose that $R \subseteq \overline R$ has finite fibers and let $T \in [R,S]$, then $\overline R \subseteq \overline R T$ is a flat epimorphism by Proposition~\ref{4.4}(1) and so is $\kappa(P)\otimes_R \overline R \to\kappa(P)\otimes_R \overline RT$. Since $\mathrm{Spec}(\kappa(P)\otimes_R \overline RT) \to \mathrm{Spec}(\kappa(P)\otimes_R \overline R )$ is injective, $R \subseteq \overline RT$ has finite fibers. Now $R \subseteq T$ has finite fibers because $T \subseteq \overline RT$ is integral and is therefore spectrally surjective.
\end{proof}

\begin{remark}\label{6.6} Actually, the statement (1) is valid if we only suppose that  $\overline R \subseteq S$ is a flat epimorphism.
\end{remark}

Next result contains  \cite[Lemma 3.6]{AD}, gotten after a long proof.

\begin{corollary} Let $R\subseteq S$ be an almost-Pr\"ufer extension. Then $R\subseteq S$ has finite fibers if and only if $R\subseteq \overline R$ has finite fibers, and if and only if $\widetilde R \subseteq S$ has finite fibers.
\end{corollary}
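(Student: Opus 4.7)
The plan is to chain two equivalences through the intermediate condition that $R\subseteq S$ itself has finite fibers. First, since every almost-Pr\"ufer extension is quasi-Pr\"ufer (Theorem~\ref{4.3}), Proposition~\ref{6.5}(1) applies directly and gives
\[
R\subseteq S \text{ has finite fibers} \iff R\subseteq \overline R \text{ has finite fibers}.
\]
This disposes of the first equivalence in the statement.

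For the second equivalence, I would exploit the canonical factorization $R\subseteq \widetilde R\subseteq S$. By Theorem~\ref{4.3}, because $R\subseteq S$ is almost-Pr\"ufer we have $\widetilde R=\widehat R$ and $R\subseteq \widetilde R$ is a Pr\"ufer extension. Every Pr\"ufer extension is a flat epimorphism (Scholium~B(1)), and every epimorphism is spectrally injective by Scholium~A(7). Hence $R\subseteq \widetilde R$ is spectrally injective. Lemma~\ref{6.5.1}(2) then applies to the tower $R\subseteq \widetilde R\subseteq S$ and yields
\[
\widetilde R\subseteq S \text{ has finite fibers} \iff R\subseteq S \text{ has finite fibers}.
\]

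Concatenating the two displayed equivalences gives the triple equivalence claimed. There is no real obstacle here: the content of the proof is the observation that Lemma~\ref{6.5.1}(2) is immediately applicable to the Pr\"ufer subextension $R\subseteq \widetilde R$, so the result drops out of two earlier statements once we unwind the definition of almost-Pr\"ufer through Theorem~\ref{4.3}.
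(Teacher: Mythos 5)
Your proof is correct and follows exactly the paper's own route: the first equivalence via Proposition~\ref{6.5}(1) (using that almost-Pr\"ufer implies quasi-Pr\"ufer), and the second via Lemma~\ref{6.5.1}(2) applied to the spectrally injective Pr\"ufer subextension $R\subseteq \widetilde R$. You have merely made explicit the justification (Scholium~A(7) and Scholium~B(1)) that the paper leaves implicit.
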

\begin{proof} By Proposition~\ref{6.5}(1) the first equivalence is clear. The second is a consequence of Lemma~\ref{6.5.1}(2).
\end{proof}

The following result is then clear.

\begin{theorem} \label{6.7}Let $R\subseteq S$ be a quasi-Pr\"ufer extension with finite fibers, then $R\subseteq T$ has finite fibers for each $T \in [R,S]$. 
\end{theorem}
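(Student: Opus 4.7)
The plan is to note that this theorem is essentially a concatenation of the two parts of Proposition~\ref{6.5}, which was the real work done just above. So the proof should be a short direct chain of implications.

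First I would start with the hypothesis that $R\subseteq S$ is quasi-Pr\"ufer and has finite fibers. Applying the forward direction of Proposition~\ref{6.5}(1) (which requires exactly the quasi-Pr\"ufer hypothesis), I conclude that $R\subseteq \overline R$ has finite fibers. Then I invoke the forward direction of Proposition~\ref{6.5}(2): once $R\subseteq \overline R$ has finite fibers, every intermediate extension $R\subseteq T$ with $T\in[R,S]$ has finite fibers. Chaining these two gives the statement.

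There is no real obstacle here, which is why the excerpt says ``the following result is then clear.'' The only thing worth spelling out is that the hypotheses of both parts of Proposition~\ref{6.5} are available: the quasi-Pr\"ufer assumption transfers to all subextensions by Corollary~\ref{3.3} if needed, and the proof of Proposition~\ref{6.5}(2) already handles the passage from $R\subseteq \overline R$ to an arbitrary $R\subseteq T$ via the flat epimorphism $\overline R\subseteq \overline R T$ and the integral, spectrally surjective extension $T\subseteq \overline R T$. So the full proof can be written in one or two sentences citing Proposition~\ref{6.5}(1) followed by Proposition~\ref{6.5}(2).
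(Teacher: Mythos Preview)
Your proposal is correct and matches the paper's own approach: the paper simply declares the result ``clear'' immediately after Proposition~\ref{6.5}, and your two-step chain through parts (1) and (2) of that proposition is exactly the intended argument.
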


\begin{corollary}\label{6.8}  If $R\subseteq S$ is quasi-finite and quasi-Pr\"ufer, then $R\subseteq T$ has finite fibers for each $T \in [R,S]$ and $\widetilde R \subseteq S$ is module finite.
\end{corollary}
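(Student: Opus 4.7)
The corollary has two parts and I would handle them in sequence. For the first (finite fibers for every $T\in[R,S]$), the plan is to note that quasi-finiteness of $R\subseteq S$ means each fiber $\kappa(P)\otimes_R S$ is a finite-dimensional $\kappa(P)$-vector space; in particular it has finite spectrum, so $R\subseteq S$ already has finite fibers in the cardinality sense, and Theorem~\ref{6.7} will then apply to yield finite fibers on every intermediate $T$.

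For the module-finiteness of $\widetilde R\subseteq S$, I would first reduce the problem to showing that $R\subseteq\overline R$ is module-finite and that $\widetilde R\overline R=S$. To establish the former, I plan to invoke Zariski's Main Theorem on the quasi-finite extension $R\subseteq S$: this yields a factorization $R\to T\to S$ with $T$ finite over $R$ and $T\to S$ an open immersion. Because $T$ is integral over $R$ we have $T\subseteq\overline R$, and since $T\to S$ is a flat epimorphism the subextension $T\to\overline R$ is also a flat epimorphism (Scholium~A(3)); being simultaneously integral, it must be an isomorphism by Scholium~A(5), so $T=\overline R$ is module-finite over $R$. Base-changing along the flat epimorphism $R\to\widetilde R$ will then give $\widetilde R\overline R\cong\widetilde R\otimes_R\overline R$ module-finite over $\widetilde R$.

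The hard part will be the reduction $\widetilde R\overline R=S$, which by Theorem~\ref{4.5}(3) amounts to asserting that $R\subseteq S$ is almost-Pr\"ufer. By the almost-Pr\"ufer factorization of Theorem~\ref{4.5}(2), the extension $\widetilde R\overline R\subseteq S$ is Pr\"ufer, and my plan is to argue that it must collapse to an isomorphism under the quasi-finite hypothesis: the open-immersion conclusion of ZMT combined with the maximality of $\widetilde R$ as the Pr\"ufer hull of $R$ in $S$ should rule out any nontrivial flat-epimorphic tail beyond $\widetilde R\overline R$ without contradicting the definition of $\widetilde R$. Once $\widetilde R\overline R=S$ is established, $\widetilde R\subseteq S$ is simultaneously integral and of finite type, whence module-finite as claimed.
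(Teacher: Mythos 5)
The first half of your argument is fine: quasi-finiteness gives finite fibers for $R\subseteq S$ (Proposition~\ref{0}), and Theorem~\ref{6.7} then yields finite fibers for every $T\in[R,S]$; this is exactly what the paper relies on. The trouble is in the second half, at two places. First, your identification $F=\overline R$ (where $F$ is the middle term of the Zariski Main Theorem factorization) rests on the claim that $F\subseteq\overline R$ is a flat epimorphism because $F\subseteq S$ is one. Scholium~A(3) says nothing of the sort --- it is a statement about injectivity --- and Scholium~A(4) explicitly warns that in a tower $F\subseteq\overline R\subseteq S$ with $F\subseteq S$ a flat epimorphism, it is the \emph{upper} leg $\overline R\subseteq S$ that inherits the property, while the lower leg ``does not need'' to. So the module-finiteness of $R\subseteq\overline R$ is not established; the paper never claims it, works with the map $\widetilde R\otimes_RF\to S$ directly without ever identifying $F$ with $\overline R$, and in fact remarks after Corollary~\ref{6.9} that $R\subseteq\overline R$ can fail FCP even for FMC extensions.

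Second, and more seriously, the step you yourself label ``the hard part'' --- proving $\widetilde R\,\overline R=S$, i.e.\ that $R\subseteq S$ is almost-Pr\"ufer --- is not an auxiliary reduction but is equivalent to the assertion being proved: if $\widetilde R\subseteq S$ is module-finite it is integral, whence $S=\overline{\widetilde R}=\widetilde R\,\overline R$ by Theorem~\ref{4.5}(2). You offer no argument for it beyond the hope that ZMT and the maximality of $\widetilde R$ ``should rule out any nontrivial flat-epimorphic tail''; that is not a proof. Worse, no such argument can exist in this generality: the extension $R\subset T\subset S$ from \cite[Example 3.5(1)]{P2}, recalled after Corollary~\ref{4.2bis} (a composite of an integral minimal and a Pr\"ufer minimal extension over a local ring $R$), is of finite type with finite-dimensional fiber rings, hence quasi-finite, and is quasi-Pr\"ufer with $\widetilde R=R$, yet it is not almost-Pr\"ufer. (For what it is worth, the paper's own proof buries the same difficulty in the unexplained assertion that $\widetilde R\otimes_RF\to S$ ``is integral'', which is essentially the statement to be proved.) So the proposal has a genuine gap precisely where the substance of the corollary lies.
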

\begin{proof} By the Zariski Main Theorem, there is a factorization $R \subseteq F \subseteq  S$ where $R \subseteq F$ is module finite and $F \subseteq S$ is a flat epimorphism \cite[Corollaire 2, p.42]{R}.  To conclude, we use Scholium A in the rest of the proof. The map  $ \widetilde R\otimes_R F \to S $ is injective because $F \to \widetilde R\otimes_R F$ is a flat  epimorphism and is surjective, since it is integral and a flat epimorphism because $\widetilde R\otimes_R F \to S$ is a flat epimorphism .
\end{proof}

\begin{corollary}\label{6.9} An FMC extension $R \subseteq S$ is such that $R\subseteq T$ has finite fibers for each $T\in [R,S]$. 
\end{corollary}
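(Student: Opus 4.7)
The plan is to reduce this corollary to a combination of two earlier results: Corollary~\ref{3.4}, which says that an FMC extension is quasi-Pr\"ufer, and Theorem~\ref{6.7}, which says that a quasi-Pr\"ufer extension with finite fibers automatically has finite fibers on every subextension $R\subseteq T$. Thus the entire burden of the proof is to verify that an FMC extension $R\subseteq S$ itself has finite fibers at every prime of $R$.

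To see this, I would use the definition of FMC to fix a finite maximal chain $R=R_0\subset R_1\subset\cdots\subset R_n=S$ in which each step $R_i\subset R_{i+1}$ is minimal, and argue by induction on $n$. The base case reduces to showing that a single minimal extension $A\subset B$ has finite fibers at every prime $P\in\mathrm{Spec}(A)$. Let $M:=\mathcal C(A,B)$ be the crucial maximal ideal. For $P\neq M$ we have $A_P=B_P$, so $\mathrm{Fib}_{A,B}(P)$ is either empty or a singleton. At $P=M$, either $A\subset B$ is Pr\"ufer minimal, hence a flat epimorphism and spectrally injective by Scholium A, giving $|\mathrm{Fib}_{A,B}(M)|\le 1$, or $A\subset B$ is integral minimal (ramified, decomposed, or inert), in which case the classification in \cite[Theorem 3.3]{Pic} bounds the fiber by two points.

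For the inductive step, assume $R\subseteq R_{n-1}$ has finite fibers. Given $P\in\mathrm{Spec}(R)$, fiber transitivity gives
\[
\mathrm{Fib}_{R,S}(P)\;=\;\bigcup_{Q\in\mathrm{Fib}_{R,R_{n-1}}(P)}\mathrm{Fib}_{R_{n-1},S}(Q),
\]
which is a finite union of finite sets by the base case applied to the minimal extension $R_{n-1}\subset R_n=S$. Hence $R\subseteq S$ has finite fibers.

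Having shown this, Corollary~\ref{3.4} supplies the quasi-Pr\"ufer hypothesis and Theorem~\ref{6.7} propagates the finite-fiber property to every $T\in[R,S]$, completing the argument. There is no real obstacle here; the corollary is essentially a packaging result, the only genuine content being the fiber analysis of minimal extensions, which is already encoded in the minimal-extension classification recalled in the introduction.
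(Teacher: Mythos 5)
Your proof is correct, but it takes a genuinely different route from the paper. The paper disposes of this corollary in one line: an FMC extension is quasi-finite and quasi-Pr\"ufer, so Corollary~\ref{6.8} applies --- and Corollary~\ref{6.8} in turn rests on the Zariski Main Theorem to factor the quasi-finite extension as a module-finite extension followed by a flat epimorphism. You instead bypass quasi-finiteness and ZMT entirely: you verify the finite-fiber hypothesis of Theorem~\ref{6.7} by hand, inducting along the finite maximal chain and using the crucial-ideal localization $A_P=B_P$ for $P\neq\mathcal C(A,B)$ together with the ramified/decomposed/inert classification to bound each minimal step's fibers by two points, with fiber transitivity gluing the steps. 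Each approach buys something: the paper's route is shorter on the page and yields the extra conclusion of Corollary~\ref{6.8} that $\widetilde R\subseteq S$ is module-finite, whereas yours is more elementary and self-contained (no appeal to ZMT or to the notion of quasi-finiteness), and it even gives an explicit quantitative bound $|\mathrm{Fib}_{R,S}(P)|\le 2^n$ in terms of the chain length $n$. Both arguments converge on the same final step, namely Theorem~\ref{6.7} (equivalently Proposition~\ref{6.5}) propagating finite fibers to every $T\in[R,S]$.
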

\begin{proof}
Such an extension is quasi-finite and quasi-Pr\"ufer. Then use Corollary~\ref{6.8}.
\end{proof}

\cite[Example 4.7]{AD} exhibits  some FMC extension $R\subseteq S$, such that $R\subseteq \overline R$ has not FCP. Actually, $[R,\overline R]$ is an infinite (maximal) chain.

\begin{proposition}\label{6.10} Let $R \subseteq S$ be a quasi-Pr\"ufer extension such that $R \subseteq \overline R$ has finite fibers and $R$ is semi-local.
Then  $T$ is semi-local for each $T\in [R,S]$.
\end{proposition}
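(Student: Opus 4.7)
The plan is to reduce the problem to two pieces: first pass from $R$ to the integral closure $U := \overline R^T = T \cap \overline R$, then pass from $U$ up to $T$, and show that both $U$ and $T$ are semi-local.

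First I would invoke Corollary~\ref{3.3} to note that $R \subseteq T$ is itself quasi-Pr\"ufer. Next, Proposition~\ref{6.5}(2) tells us that once $R \subseteq \overline R$ has finite fibers, so does $R \subseteq T'$ for every $T' \in [R,S]$; in particular, for every $P \in \mathrm{Spec}(R)$ the fiber of $R \subseteq U$ over $P$ is finite (a quotient of the finite set $\mathrm{Fib}_{R,\overline R}(P)$ via the integral surjection $\mathrm{Spec}(\overline R) \to \mathrm{Spec}(U)$). Because $R \subseteq U$ is integral, maximal ideals of $U$ contract to maximal ideals of $R$; combined with $R$ semi-local and the finite fiber condition above each maximal ideal, this shows $U$ is semi-local.

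Then I would use the quasi-Pr\"ufer hypothesis to get that $U \subseteq T$ is Pr\"ufer (by Definition~\ref{2.1}, since $U = \overline R^T$), hence in particular a flat epimorphism $U \to T$. By Scholium~A(7), $\mathrm{Spec}(T) \to \mathrm{Spec}(U)$ is injective and the residual field extensions are isomorphisms. So for any $N \in \mathrm{Max}(T)$, the contraction $N \cap U$ is a prime of $U$ with $U/(N\cap U) \cong T/N$ a field, forcing $N \cap U \in \mathrm{Max}(U)$. Thus $\mathrm{Max}(T)$ injects into the finite set $\mathrm{Max}(U)$, and $T$ is semi-local.

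The argument has no real obstacle: the only subtle point is ensuring that maximal ideals of $T$ actually contract to maximal ideals of $U$ (not merely to primes), but this is exactly what the flat-epimorphism property of $U \subseteq T$ delivers via Scholium~A(7). The remainder is bookkeeping about cardinalities of fibers.
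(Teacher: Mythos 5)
Your reduction to the tower $R\subseteq U\subseteq T$ with $U:=T\cap\overline R=\overline R^T$ is sound, and the first half of the argument is correct: $R\subseteq U$ is integral with finite fibers (the fiber of $U$ over $P$ is the image of the finite set $\mathrm{Fib}_{R,\overline R}(P)$ under the surjection $\mathrm{Spec}(\overline R)\to\mathrm{Spec}(U)$), maximal ideals of $U$ contract to maximal ideals of $R$, so $U$ is semi-local. The gap is in the second half. It is simply false that a maximal ideal $N$ of $T$ contracts to a maximal ideal of $U$ when $U\subseteq T$ is Pr\"ufer: take $U$ a valuation domain (or $\mathbb Z_{(p)}$) and $T$ its quotient field; then $U\subseteq T$ is Pr\"ufer, $\mathrm{Max}(T)=\{(0)\}$, and $(0)\cap U=(0)$ is not maximal in $U$. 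The error comes from misreading Scholium A(7): ``the residual extensions are isomorphisms'' means that $\kappa(N\cap U)\to\kappa(N)$ is an isomorphism, where $\kappa(P)=U_P/PU_P$ is the residue field of the \emph{localization}; it does not say $U/(N\cap U)\cong T/N$. In the example above $\kappa((0))=\mathrm{Frac}(U)\cong T$, consistent with Scholium A(7), while $U/(0)=U\neq T$. Spectral injectivity alone only embeds $\mathrm{Max}(T)$ into $\mathrm{Spec}(U)$, which is of course not finite for a semi-local $U$, so your cardinality count does not close.

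The fact you need --- a Pr\"ufer extension of a semi-local ring is semi-local --- is true, but requires a different argument; this is exactly the point where the paper's proof leans on \cite[Lemma 2.5(f)]{AD} (the paper uses the other decomposition, $\overline R\subseteq T\overline R$ Pr\"ufer with $\overline R$ semi-local, and then descends to $T$ along the integral extension $T\subseteq T\overline R$). Your route can be repaired either by citing the same lemma for $U\subseteq T$, or by a localization argument: if $N_1\neq N_2\in\mathrm{Max}(T)$ contract to primes of $U$ contained in a common $M\in\mathrm{Max}(U)$, then $U_M\subseteq T_M$ is a Pr\"ufer extension of a local ring, hence $T_M$ is local by Proposition~\ref{0.3}(3), and both $N_i$ survive in $T_M$ and lie in its maximal ideal, forcing $N_1=N_2$ by maximality; thus $|\mathrm{Max}(T)|\leq|\mathrm{Max}(U)|$. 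With that substitution your proof goes through.
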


\begin{proof} Obviously $\overline R$ is semi-local. From the tower $\overline R \subseteq T\overline R \subseteq S$ we deduce that $\overline R \subseteq T \overline R$ is Pr\"ufer. It follows that $T\overline R$ is semi-local   \cite[Lemma 2.5 (f)]{AD}. As $T \subseteq T\overline R $ is integral, we get that $T$ is semi-local.
\end{proof}

The following proposition gives a kind of converse.

\begin{proposition}\label{6.15} Let $R \subseteq S$ be an extension with $ \overline R$  semi-local. 
Then $R \subseteq S$ is quasi-Pr\"ufer if and only if  $T$ is semi-local for each $T\in [R,S]$.
\end{proposition}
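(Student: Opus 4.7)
The plan is to handle the two implications separately, with the INC-pair characterization of quasi-Pr\"ufer extensions (Theorem~\ref{2.3}) as the central bridge.

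For the direction $(\Rightarrow)$, I would essentially reuse the argument of Proposition~\ref{6.10}. Given $T\in[R,S]$, form $T\overline{R}\in[\overline{R},S]$. Since $R\subseteq S$ is quasi-Pr\"ufer, $\overline{R}\subseteq S$ is Pr\"ufer, and hence so is the subextension $\overline{R}\subseteq T\overline{R}$ (a subextension sharing the bottom of a Pr\"ufer extension is Pr\"ufer by the normal-pair characterization). Because $\overline{R}$ is semi-local, \cite[Lemma 2.5(f)]{AD}, already invoked in Proposition~\ref{6.10}, yields that $T\overline{R}$ is semi-local; and since $T\subseteq T\overline{R}$ is integral, lying-over for maximal ideals transfers the semi-local property down to $T$.

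For the converse $(\Leftarrow)$, I would argue by contradiction using Theorem~\ref{2.3}. If $R\subseteq S$ were not quasi-Pr\"ufer, there would exist $T\in[R,S]$ and primes $Q\subsetneq Q'$ of $T$ with common contraction $P:=Q\cap R=Q'\cap R$. Picking $s\in Q'\setminus Q$, the primes $Q\cap R[s]$ and $Q'\cap R[s]$ of $R[s]$ are again distinct with common contraction $P$, so I may assume the offending intermediate ring is the monogenic $R[s]=R[X]/I$. The fiber ring $\kappa(P)\otimes_R R[s]=\kappa(P)[X]/\overline{I}$ then carries a chain of primes of length $\geq 1$, and since $\kappa(P)[X]$ is a one-dimensional PID, this forces $\overline{I}=0$, i.e.\ $I\subseteq PR[X]$. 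Consequently $R[s]/PR[s]\cong(R/P)[X]$ is a polynomial ring over the domain $R/P$.

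The main obstacle is now to convert this into a failure of the semi-local hypothesis. The key observation is that $(R/P)[X]$ always carries infinitely many maximal ideals: for any maximal ideal $\overline{M}$ of $R/P$, the quotient $(R/P)[X]/\overline{M}(R/P)[X]\cong((R/P)/\overline{M})[X]$ is a polynomial ring over a field and therefore possesses infinitely many distinct maximal ideals (one per monic irreducible polynomial). Lifting these through the surjection $R[s]\twoheadrightarrow R[s]/PR[s]\cong(R/P)[X]$ produces infinitely many distinct maximal ideals of $R[s]$, contradicting the hypothesis that $R[s]\in[R,S]$ is semi-local. In passing, the assumption ``$\overline{R}$ semi-local'' is in fact redundant for this direction, since $\overline{R}\in[R,S]$ is already among the intermediate rings assumed to be semi-local.
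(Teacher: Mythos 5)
Your proof is correct, and while the forward direction coincides with the paper's (both reuse the argument of Proposition~\ref{6.10}: $\overline R\subseteq T\overline R$ is Pr\"ufer, \cite[Lemma 2.5(f)]{AD} gives that $T\overline R$ is semi-local, and integrality of $T\subseteq T\overline R$ pushes this down to $T$), your converse takes a genuinely different route. The paper deduces from the semi-locality of every $T\in[\overline R,S]$ that $(\overline R,S)$ is a residually algebraic pair by citing \cite[Theorem 3.10]{AJ}, then gets Pr\"ufer locally from \cite[Theorem 2.5]{AJ} together with Propositions~\ref{0.3} and~\ref{0.2} --- with the caveat, acknowledged in the paper, that both \cite{AJ} results must be ``generalized to arbitrary extensions'' since they are stated for domains. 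You instead give a self-contained elementary argument through the INC-pair characterization of Theorem~\ref{2.3}: a failure of incomparability is pushed into a monogenic subring $R[s]\cong R[X]/I$, the one-dimensionality of the fiber forces $I\subseteq PR[X]$ because $\kappa(P)[X]$ is a PID, and then $R[s]/PR[s]\cong(R/P)[X]$ has infinitely many maximal ideals, contradicting semi-locality of $R[s]$. In effect you re-prove, for arbitrary ring extensions, exactly the fragment of \cite[Theorem 3.10]{AJ} that the paper invokes on faith; this buys independence from the domain-theoretic citations and makes explicit your correct observation that the hypothesis ``$\overline R$ semi-local'' is redundant for this implication (it is automatic, since $\overline R\in[R,S]$). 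Both arguments are valid; the paper's is shorter, yours is more transparent and more honestly general.
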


\begin{proof} If $R \subseteq S$ is quasi-Pr\"ufer,  $\overline R \subseteq S$ is Pr\"ufer. Let $T\in [R,S]$ and set $T':=T\overline R$, so that $T \subseteq T'$ is integral, and $\overline R \subseteq T'$ is Pr\"ufer (and then a normal pair). It follows from \cite[Lemma 2.5 (f)]{AD} that $T'$ is semi-local, and so is $T$.

If  $T$ is semi-local for each $T\in [R,S]$,  so is any $T\in [\overline R,S]$. Then, $(\overline R,S)$ is a residually algebraic pair \cite[Theorem 3.10]{AJ} (generalized to arbitrary extensions) and so is $\overline R_M\subseteq S_M$ for each $M\in\mathrm{Max}(\overline R)$, whence is Pr\"ufer  \cite[Theorem 2.5]{AJ} (same remark) and Proposition ~\ref{0.3}. Then, $\overline R \subseteq S$ is Pr\"ufer by Proposition ~\ref{0.2} and $R \subseteq S$ is quasi-Pr\"ufer. 
\end{proof}

\section{Numerical properties of FCP extensions}

\begin{lemma}\label{7.1} Let $R\subset S$ be an FCP extension. The map $\varphi:[R,S]\to\{(T',T'')\in[R,\overline R]\times[\overline R,S]\mid\mathrm{Supp}_{T'}(\overline R/T')\cap\mathrm{Supp}_{T '}(T''/\overline R)=\emptyset\}$, defined by $\varphi(T):=(T\cap\overline R,\overline RT)$ for each $T\in[R,S]$, is bijective. In particular, if $R\subset S$ has FIP, then $|[R,S]|\leq|[R,\overline R]||[\overline R,S]|$. 
\end{lemma}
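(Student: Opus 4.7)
The plan is to exhibit a two-sided inverse to $\varphi$: send an admissible pair $(T',T'')$ to the Pr\"ufer hull $\widetilde{T'}^{T''}$ of $T'$ inside $T''$. The decisive (and repeatedly used) observation will be that, because $R\subseteq S$ is FCP, hence quasi-Pr\"ufer by Corollary~\ref{3.4}, and $\overline R\subseteq S$ is therefore Pr\"ufer, every pair $(T',T'')\in[R,\overline R]\times[\overline R,S]$ automatically has $\overline R$ as the integral closure of $T'$ in $T''$: indeed $T'\subseteq\overline R$ is integral, while $\overline R\subseteq T''$ is Pr\"ufer and hence integrally closed.

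For well-definedness of $\varphi$, I would take $T\in[R,S]$ and set $T':=T\cap\overline R$, $T'':=T\overline R$. Corollary~\ref{4.2bis} says that $T'\subseteq T''$ is almost-Pr\"ufer with $T=\widetilde{T'}^{T''}$; since this extension also has FCP, Proposition~\ref{4.12}(4), read with integral closure $\overline R$, supplies exactly the required condition $\mathrm{Supp}_{T'}(\overline R/T')\cap\mathrm{Supp}_{T'}(T''/\overline R)=\emptyset$. Injectivity is then free: two preimages of $(T',T'')$ must both equal $\widetilde{T'}^{T''}$ by the same Corollary~\ref{4.2bis}.

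For surjectivity, I would start from an admissible pair $(T',T'')$. The support condition is exactly the hypothesis of the converse direction of Proposition~\ref{4.12}(4), so $T'\subseteq T''$ (FCP with integral closure $\overline R$) is almost-Pr\"ufer. Setting $T:=\widetilde{T'}^{T''}$, Theorem~\ref{4.5}(1) applied to $T'\subseteq T''$ (which is quasi-Pr\"ufer by Theorem~\ref{4.3}) gives the pullback identity $T\cap\overline R=T'$, while Theorem~\ref{4.5}(3) gives $T\overline R=T''$. Hence $\varphi(T)=(T',T'')$. Under FIP, $[R,S]$ is finite, so the injectivity of $\varphi$ into the full product yields the bound $|[R,S]|\le|[R,\overline R]|\,|[\overline R,S]|$.

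The only real obstacle I anticipate is the bookkeeping: the Section~4 results are stated for an extension $R\subseteq S$ with integral closure $\overline R$, and I need to invoke them with $(R,S)$ replaced by $(T',T'')$ while keeping the \emph{same} $\overline R$ playing the role of integral closure. The compatibility noted in the first paragraph is precisely what allows this substitution to go through unchanged in both directions of Proposition~\ref{4.12}(4) and in the pullback/product statements of Theorem~\ref{4.5}; once that is recognized, the proof reduces to a straightforward invocation of these results.
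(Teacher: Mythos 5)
Your proof is correct, but it is organized differently from the paper's. The paper's own argument is a threefold application of the external result \cite[Proposition 3.6]{P2}: that proposition directly supplies the support condition for well-definedness, the uniqueness needed for injectivity, and the existence needed for surjectivity, so the printed proof never names the inverse map. You instead build an explicit two-sided inverse $(T',T'')\mapsto\widetilde{T'}^{T''}$ out of the paper's own Section~4 machinery: Corollary~\ref{4.2bis} identifies every $T\in[R,S]$ as the Pr\"ufer hull of $T\cap\overline R$ in $T\overline R$ (giving well-definedness of the inverse and injectivity at once), Proposition~\ref{4.12} translates the support condition into the almost-Pr\"ufer property of $T'\subseteq T''$ in both directions, and Theorem~\ref{4.5}(1),(3) supplies the identities $T\cap\overline R=T'$ and $T\overline R=T''$ for surjectivity. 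Your preliminary observation that $\overline R$ is the integral closure of $T'$ in $T''$ for every admissible pair is exactly the first line of the paper's proof, and it is indeed the point that licenses all the substitutions. The two routes are not independent --- Proposition~\ref{4.12}(1)$\Leftrightarrow$(4) is itself proved in the paper by citing \cite[Proposition 3.6]{P2} --- but yours has the advantage of staying inside results already established in the paper and of exhibiting the inverse bijection explicitly as a Pr\"ufer hull, which makes the order-theoretic content of the later results (Proposition~\ref{7.2}, Theorem~\ref{7.6}) more transparent. The only caveats are cosmetic: Theorem~\ref{4.5} is stated for a proper quasi-Pr\"ufer extension, so the degenerate pair $T'=T''=\overline R$ should be dispatched separately (as the paper does), and the final FIP inequality follows, as you say, from injectivity into the full product.
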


\begin{proof} Let $(T',T'')\in[R,\overline R]\times[\overline R,S]$. Then, $\overline R$ is also the integral closure  of $T'$ in $T ''$ (and in $S$). 

Let $T\in[R,S]$. Set $T':=T\cap\overline R$ and $T'':=\overline RT$. Then $(T',T'')\in[R,\overline R]\times [\overline R,S]$. Assume that $T'=T''$, so that $T'=T''=\overline R$, giving $T=\overline R$ and $\mathrm {Supp}_{T'}(\overline R/T')=\mathrm{Supp}_{T'}(T''/\overline R)=\emptyset$. Assume that $T'\neq T''$. In view of \cite[Proposition 3.6]{P2}, we get $\mathrm{Supp}_{T'}(\overline R/T')\cap\mathrm{Supp}_{T'}(T''/\overline R)=\emptyset$. Hence  $\varphi$ is well defined.

Now, let $T_1,T_2\in[R,S]$ be such that $\varphi(T_1)=\varphi(T_2)=(T',T'')$. Assume $T'\neq T''$. Another use of \cite[Proposition 3.6]{P2} gives that $T_1=T_2$. If $T'=T''$, then, $T'=T''=\overline R$, so that $T_1=T_2=\overline R$. It follows that $\varphi$ is injective. The same reference gives that $\varphi$ is bijective. 
\end{proof}
 
\begin{proposition}\label{7.2} Let $R\subset S$ be a FCP extension. 
We define two order-isomorphisms   $\varphi'$ and $\psi$ as follows:

\item $\varphi':[R,\vec R]\to[R,\overline{R}]\times[\overline{R},\vec R]$ defined by $\varphi'(T):=(T\cap\overline{R},T\overline{R})$ 

\item $\psi:[R,\vec R]\to [R,\widetilde{R}]\times[\widetilde{R},\vec R]$ defined by $\psi(T):=(T\cap\widetilde{R},T\widetilde {R})$. 
\end{proposition}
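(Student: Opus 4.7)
The plan exploits the structure of $R\subseteq\vec R$ as an FCP almost-Pr\"ufer extension (Theorem~\ref{4.5}(4)). Combining Proposition~\ref{4.12}(2) with Proposition~\ref{4.13}, each $P\in\mathrm{Spec}(R)$ lies in at most one of two disjoint strata: on the \emph{integral} stratum $P\in\mathrm{Supp}(\overline R/R)$ one has $(\widetilde R)_P=R_P$ and $(\vec R)_P=(\overline R)_P$, and on the \emph{Pr\"ufer} stratum $P\in\mathrm{Supp}(\widetilde R/R)$ one has $(\overline R)_P=R_P$ and $(\vec R)_P=(\widetilde R)_P$; outside the union, $R_P=(\vec R)_P$.

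For $\varphi'$, I would apply Lemma~\ref{7.1} to the FCP extension $R\subseteq\vec R$; this gives an injection of $[R,\vec R]$ into $[R,\overline R]\times[\overline R,\vec R]$ whose image is the set of pairs $(T',T'')$ with $\mathrm{Supp}_{T'}(\overline R/T')\cap\mathrm{Supp}_{T'}(T''/\overline R)=\emptyset$. I show this disjointness is automatic: if $M$ lay in both supports and $P:=M\cap R$, then contracting forces $P$ into both $\mathrm{Supp}_R(\overline R/R)$ and $\mathrm{Supp}_R(\vec R/\overline R)$, contradicting Proposition~\ref{4.13}(1). Hence $\varphi'$ is bijective. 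Order-preservation in both directions is read off the local reconstruction $T_P=(T\cap\overline R)_P$ on the integral stratum and $T_P=(T\overline R)_P$ on the Pr\"ufer stratum.

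For $\psi$ I would proceed analogously, exhibiting the explicit inverse $(T',T'')\mapsto T''\cap(T'\overline R)$. The same local dichotomy gives $(T\cap\widetilde R)_P=R_P$ and $(T\widetilde R)_P=T_P$ on the integral stratum, while $(T\cap\widetilde R)_P=T_P$ and $(T\widetilde R)_P=(\widetilde R)_P$ on the Pr\"ufer stratum, so $T$ is determined prime-by-prime by $\psi(T)$; this yields both injectivity and order-preservation in both directions. For surjectivity, a prime-by-prime check confirms that $T:=T''\cap(T'\overline R)$ lies in $[R,\vec R]$ and satisfies $T\cap\widetilde R=T'$ and $T\widetilde R=T''$: at integral primes both collapse using $T'_P=R_P$ and $T''_P\subseteq(\overline R)_P$, while at Pr\"ufer primes they collapse using $(\overline R)_P=R_P$ and $T''_P=(\widetilde R)_P$.

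The main obstacle is guessing the correct inverse $(T',T'')\mapsto T''\cap(T'\overline R)$ for $\psi$: once the local dichotomy of the almost-Pr\"ufer FCP hypothesis is in hand, the prime-by-prime verification is routine, but the particular form of this inverse (and the fact that it lands in $[R,\vec R]$ with the right image under $\psi$) is not obvious a priori. An alternative route would factor $\psi$ through $\varphi'$ using the order-isomorphisms $[R,\overline R]\cong[\widetilde R,\vec R]$ via $V\mapsto V\widetilde R$ and $[R,\widetilde R]\cong[\overline R,\vec R]$ via $U\mapsto U\overline R$ coming from the pushout/pullback diagram (D') of Theorem~\ref{4.5}(1).
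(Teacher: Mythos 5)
Your argument is correct. The paper disposes of both isomorphisms in one line by invoking the external result \cite[Lemma 3.7]{P2} together with Proposition~\ref{4.13}, which supplies exactly the two disjointness conditions $\mathrm{Supp}(\overline R/R)\cap\mathrm{Supp}(\vec R/\overline R)=\emptyset$ and $\mathrm{Supp}(\widetilde R/R)\cap\mathrm{Supp}(\vec R/\widetilde R)=\emptyset$ needed to apply that lemma to the intermediate rings $\overline R$ and $\widetilde R$ of $R\subseteq\vec R$. What you do instead is reconstruct the content of that citation: for $\varphi'$ you reuse Lemma~\ref{7.1} of the present paper (legitimate, since $\overline R$ is the integral closure of $R$ in $\vec R$) and show the disjointness constraint on the image is vacuous by contracting a bad maximal ideal down to $R$ and contradicting Proposition~\ref{4.13}(1); for $\psi$, where Lemma~\ref{7.1} does not apply because $\widetilde R$ is not an integral closure, you give a direct prime-by-prime argument with the explicit inverse $(T',T'')\mapsto T''\cap(T'\overline R)$. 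The local dichotomy you use (integral stratum versus Pr\"ufer stratum, with $R_P=(\vec R)_P$ off their union) is sound, the localization identities $(T\cap B)_P=T_P\cap B_P$ and $(TB)_P=T_PB_P$ hold by exactness of localization, and your verification that the proposed inverse lands in $[R,\vec R]$ and recovers $(T',T'')$ checks out on each stratum. The net effect is that your proof is self-contained where the paper's is a citation, at the cost of more bookkeeping; your closing remark that $\psi$ could alternatively be obtained by composing $\varphi'$ with the isomorphisms $U\mapsto U\overline R$ and $V\mapsto V\widetilde R$ is in fact close to how the paper itself organizes the subsequent Corollaries~\ref{7.4} and~\ref{7.5}, though there those maps are deduced from Proposition~\ref{7.2} rather than the other way around, so you would need to avoid circularity if you took that route.
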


\begin{proof} This follows from \cite[Lemma 3.7]{P2} and Proposition~\ref{4.13}. (We recall that $\vec R = \overline{\widetilde{R}}$.)
\end{proof}

\begin{corollary}\label{7.3} If   $R\subseteq S$ has FCP, then
 $\mathrm{Supp}(\vec R/\widetilde{R})=\mathrm{Supp}(\overline{R}/R)$, $\mathrm {Supp}(\vec R/\overline{R})=\mathrm{Supp}(\widetilde{R}/R)$ and  $\mathrm{Supp}(\vec R/R)=\mathrm{Supp}(\widetilde{R}/R)\cup \mathrm{Supp}(\overline{R}/R)$.
\end{corollary}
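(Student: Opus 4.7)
The plan is to leverage two inputs already established in the paper: the pushout square $(D')$ from Theorem~\ref{4.5}(1), which gives $\vec R\cong\overline{R}\otimes_R\widetilde R$, and the disjointness $\mathrm{Supp}(\widetilde R/R)\cap\mathrm{Supp}(\overline R/R)=\emptyset$ provided by Proposition~\ref{4.13}(2) under FCP. Every claim is then established by localizing at $P\in\mathrm{Spec}(R)$ and doing a two-case analysis driven by whether $P$ lies in $\mathrm{Supp}(\widetilde R/R)$ or in $\mathrm{Supp}(\overline R/R)$ (never both).

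For the first equality, I would exploit that $R\to\widetilde R$ is a flat epimorphism: tensoring the exact sequence $0\to R\to\overline R\to\overline R/R\to 0$ with the flat $R$-module $\widetilde R$ yields $\vec R/\widetilde R\cong\widetilde R\otimes_R(\overline R/R)$, hence $(\vec R/\widetilde R)_P\cong\widetilde R_P\otimes_{R_P}(\overline R/R)_P$. If $P\notin\mathrm{Supp}(\overline R/R)$ the right side vanishes. If $P\in\mathrm{Supp}(\overline R/R)$, disjointness gives $\widetilde R_P=R_P$, so the tensor product reduces to $(\overline R/R)_P\neq 0$. This proves $\mathrm{Supp}(\vec R/\widetilde R)=\mathrm{Supp}(\overline R/R)$.

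For the second equality I use the pushout directly, without needing flatness on the other side. Localizing gives $\vec R_P\cong\overline R_P\otimes_{R_P}\widetilde R_P$. If $P\notin\mathrm{Supp}(\widetilde R/R)$ then $\widetilde R_P=R_P$, so $\vec R_P=\overline R_P$ and $(\vec R/\overline R)_P=0$. If $P\in\mathrm{Supp}(\widetilde R/R)$ then by Proposition~\ref{4.13}(2) $\overline R_P=R_P$, so $\vec R_P=\widetilde R_P$ and $(\vec R/\overline R)_P\cong(\widetilde R/R)_P\neq 0$. For the third equality I would simply splice the two short exact sequences $0\to\overline R/R\to\vec R/R\to\vec R/\overline R\to 0$, yielding $\mathrm{Supp}(\vec R/R)=\mathrm{Supp}(\overline R/R)\cup\mathrm{Supp}(\vec R/\overline R)=\mathrm{Supp}(\overline R/R)\cup\mathrm{Supp}(\widetilde R/R)$ by the second equality. (Alternatively, one can appeal to Theorem~\ref{4.5}(5) applied to the almost-Pr\"ufer extension $R\subseteq\vec R$, using $\widetilde R^{\vec R}=\widetilde R$ from Theorem~\ref{4.5}(4) and $\overline R^{\vec R}=\overline R$.)

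The main obstacle is essentially bookkeeping: one must confirm that the pushout identification $\vec R\cong\overline R\otimes_R\widetilde R$ is faithful in the sense that both $\widetilde R$ and $\overline R$ embed as subrings of $\vec R$ (so that the quotient modules $\vec R/\widetilde R$ and $\vec R/\overline R$ are legitimate and the short exact sequences above are exact). This is built into the statement of Theorem~\ref{4.5}(1), so no substantive work is needed beyond invoking it. The rest is the standard interaction between tensor products, flatness, and support, combined with the disjointness of supports which carries the content of the argument.
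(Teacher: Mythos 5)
Your proof is correct and rests on the same two ingredients as the paper's own argument: the disjointness of supports from Proposition~\ref{4.13}(2) and the local identity $\vec R_P=\overline R_P\widetilde R_P$ coming from $\vec R=\overline R\widetilde R\cong\overline R\otimes_R\widetilde R$ (Theorem~\ref{4.5}). The paper merely packages it differently, proving the single set identity $\mathrm{Supp}(\vec R/\widetilde R)\cup\mathrm{Supp}(\widetilde R/R)=\mathrm{Supp}(\widetilde R/R)\cup\mathrm{Supp}(\overline R/R)$ by one localization and then extracting all three equalities by intersecting with the various supports, whereas you check each equality prime by prime (with an optional flatness/tensor detour for the first one); this is a difference of bookkeeping, not of substance.
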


\begin{proof} Set $A:=\mathrm{Supp}(\overline{\widetilde{R}}/\widetilde{R}),\ B:=\mathrm{Supp}(\widetilde {R}/R),\ C:=\mathrm{Supp}(\overline{\widetilde{R}}/\overline{R})$ and $D:=\mathrm{Supp}(\overline{R}/R)$. Then, $A\cup B=C\cup D=\mathrm{Supp}(\overline{\widetilde{R}}/R)$, with $A\cap B=C\cap D=B\cap D=\emptyset$ by Proposition~\ref{4.13}. 

Assume that $A\cup B\neq B\cup D$ and let $P\in(A\cup B)\setminus(B\cup D)$. Then, $R_P\neq (\overline{\widetilde{R}})_P=(\overline{R})_P(\widetilde{R})_P=R_P$, a contradiction. It follows that $A\cup B=B\cup D$. Intersecting the two members of this equality with $A$ and $D$, we get $A=A\cap D=D$. In the same way, intersecting the equality $A\cup B=C\cup D=C\cup A$ by $B$ and $C$, we get $B=C$. 
\end{proof}

\begin{corollary}\label{7.4} Let $R\subset S$ be an FCP extension. We define two order-isomorphisms

\item $\varphi_1:[R,\widetilde{R}]\to[\overline{R},\vec R]$ by $\varphi_1(T):=T\overline {R}$

\item $\psi_1:[R,\overline{R}]\to[\widetilde{R},\vec R]$ by $\psi_1(T):=T\widetilde {R}$.
\end{corollary}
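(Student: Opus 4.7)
My plan is to deduce both isomorphisms directly from Proposition~\ref{7.2} by restricting the order-isomorphisms $\varphi'$ and $\psi$ to appropriate slices. The key observation is that for $T \in [R,\widetilde{R}]$ we automatically have $T\cap \overline{R} = R$ (and symmetrically $T\cap \widetilde{R}=R$ for $T\in[R,\overline{R}]$), by Theorem~\ref{4.5}(1) which gives $\widetilde{R}\cap \overline{R}=R$. Hence $\varphi'$ sends $[R,\widetilde{R}]$ into $\{R\}\times[\overline{R},\vec{R}]$, and composing with projection onto the second factor yields the candidate $\varphi_1(T)=T\overline{R}$; analogously $\psi$ provides $\psi_1$. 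Injectivity and order-preservation of both maps are immediate from the corresponding properties of $\varphi'$ and $\psi$.

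The one step requiring a small argument is surjectivity. For $\varphi_1$, given $U\in[\overline{R},\vec{R}]$, bijectivity of $\varphi'$ produces a unique $T\in[R,\vec{R}]$ with $T\cap\overline{R}=R$ and $T\overline{R}=U$, and I must verify $T\subseteq\widetilde{R}$. By Theorem~\ref{4.5}(2) the extension $R\subseteq\vec{R}$ is almost-Pr\"ufer, and it has FCP as a subextension of $R\subseteq S$; hence Proposition~\ref{4.21} makes $R\subseteq T$ itself almost-Pr\"ufer FCP. Applying Theorem~\ref{4.5}(3) to $R\subseteq T$ gives $T = (T\cap\overline{R})(T\cap\widetilde{R}) = R\cdot(T\cap\widetilde{R}) = T\cap\widetilde{R}\subseteq\widetilde{R}$, as required. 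The inverse of $\varphi_1$ is then just $U\mapsto U\cap\widetilde{R}$, which is manifestly order-preserving.

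The argument for $\psi_1$ is word-for-word symmetric, swapping the roles of $\overline{R}$ and $\widetilde{R}$: for $V\in[\widetilde{R},\vec{R}]$, obtain from $\psi$ the unique $T\in[R,\vec{R}]$ with $T\cap\widetilde{R}=R$ and $T\widetilde{R}=V$, and again Proposition~\ref{4.21} together with Theorem~\ref{4.5}(3) gives $T=(T\cap\overline{R})(T\cap\widetilde{R})=T\cap\overline{R}\subseteq\overline{R}$. Thus $\psi_1$ is a bijection with inverse $V\mapsto V\cap\overline{R}$, and again it is an order-isomorphism.

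The only mild obstacle is being sure that the characterization of $[R,\widetilde{R}]$ inside $[R,\vec{R}]$ as $\{T:T\cap\overline{R}=R\}$ really holds; this is exactly where the almost-Pr\"ufer structure of subextensions (Proposition~\ref{4.21}) plus the factorization $T=\overline{R}^{T}\widetilde{R}^{T}$ of Theorem~\ref{4.5}(3) does all the work. Everything else is formal manipulation of the bijections already supplied by Proposition~\ref{7.2}.
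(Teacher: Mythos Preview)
Your proof is correct, but it takes a genuinely different route from the paper for the key surjectivity step (showing that the $T\in[R,\vec R]$ furnished by $\varphi'^{-1}(R,U)$ actually lies in $[R,\widetilde R]$).

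The paper argues locally: using Corollary~\ref{7.3} and the support disjointness of Proposition~\ref{4.13}, it shows $T_M\subseteq\widetilde R_M$ for every $M\in\mathrm{Supp}(\vec R/R)$ by a short case analysis (either $\overline R_M=R_M$ so $T_M\subseteq\vec R_M=\widetilde R_M$, or $\widetilde R_M=R_M$ so $T_M\cap\overline R_M=R_M$ forces $T_M=R_M=\widetilde R_M$). This is elementary and uses only the support machinery developed just before.

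You instead invoke the structural Proposition~\ref{4.21} to deduce that $R\subseteq T$ is itself almost-Pr\"ufer inside $R\subseteq\vec R$, and then Theorem~\ref{4.5}(3) together with the identifications $\overline R^{T}=T\cap\overline R$ and $\widetilde R^{T}=T\cap\widetilde R$ (the latter by Proposition~\ref{0.8}) yield the clean identity $T=(T\cap\overline R)(T\cap\widetilde R)=T\cap\widetilde R$. This is slicker and more conceptual, at the cost of relying on the heavier Proposition~\ref{4.21}. Your explicit inverse formula $U\mapsto U\cap\widetilde R$ is also correct and makes the order-isomorphism claim immediate; the paper does not spell this out.
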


\begin{proof} We use notation of Proposition~\ref{4.13}. We begin to remark that $\overline{R}$ and $\widetilde{R}$ play symmetric roles.

Let $T,T'\in[R,\widetilde{R}]$ be such that $\varphi_1(T)=\varphi_1(T')$. Since $T\cap\overline{R}=T'\cap\overline{R}=R$ by Proposition~\ref{4.13}, we get $\varphi(T)=\varphi(T')$, so that $T=T'$ and $\varphi_1$ is injective. A similar argument shows that $\psi_1$ is injective.

Let $U\in[\overline{R},\overline{\widetilde{R}}]$. There exists $T\in[R,\overline{\widetilde{R}}]$ such that $\varphi(T)=(R,U)$, so that $R=T\cap\overline{R}$ and $U=T\overline{R}$. Let $M\in\mathrm{Supp}(\overline{\widetilde{R}}/R)=\mathrm{Supp}(\widetilde{R}/R)\cup\mathrm{Supp}(\overline{R}/R)$ by Corollary~\ref{7.3}. If $M\in\mathrm{Supp}(\widetilde{R}/R)$, then $M\not\in\mathrm{Supp}(\overline{R}/R)$ by Proposition~\ref{4.13}, giving $T_M\subseteq\overline{\widetilde{R}}_M=\overline{R}_M\widetilde {R}_M=\widetilde{R}_M$. If $M\in\mathrm{Supp}(\overline{R}/R)$, the same reasoning gives $T_M\subseteq\overline{R}_M$, so that $R_M=T_M\cap\overline{R}_M=T_M$, but $R_M=\widetilde{R}_M$. Then, $T_M=\widetilde{R}_M$. It follows that $T\subseteq\widetilde{R}$, giving $T\in[R,\widetilde{R}]$ and $\varphi_1$ is surjective, hence bijective. A similar argument shows that $\psi_1$ is surjective, hence bijective.
\end{proof} 

\begin{corollary}\label{7.5} If $R\subset S$ has FCP, then $\theta:[R,\widetilde{R}]\times[R,\overline{R}]\to[R,\
\vec R]$  defined by $\theta(T,T'):=TT'$,  is an order-isomorphism.
In particular, if $R\subset S$ has FIP, then $|[R,\widetilde{R}]||[R,\overline{R}]= |[R,\vec R] \|\leq |[R,S]|$.
\end{corollary}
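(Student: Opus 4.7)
The plan is to exhibit $\theta$ as the inverse of an order-isomorphism already available by combining Proposition~\ref{7.2} and Corollary~\ref{7.4}. Indeed, $\psi\colon [R,\vec R]\to [R,\widetilde R]\times [\widetilde R,\vec R]$, $U\mapsto (U\cap\widetilde R,\,U\widetilde R)$, is an order-isomorphism by Proposition~\ref{7.2}, and $\psi_1\colon [R,\overline R]\to [\widetilde R,\vec R]$, $T'\mapsto T'\widetilde R$, is one by Corollary~\ref{7.4}. Hence $\Phi:=(\mathrm{id}\times\psi_1^{-1})\circ\psi$ is an order-isomorphism $[R,\vec R]\to [R,\widetilde R]\times[R,\overline R]$, and it will be enough to show that $\theta=\Phi^{-1}$.

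Well-definedness of $\theta$ is immediate: for $(T,T')\in[R,\widetilde R]\times[R,\overline R]$ the compositum $TT'$ lies in $\widetilde R\,\overline R=\vec R$ by Theorem~\ref{4.5}(2), and monotonicity of $\theta$ is clear. To identify $\theta$ with $\Phi^{-1}$ I need to check, for every pair $(T,T')$, the two equalities
\[
TT'\cdot\widetilde R = T'\widetilde R \quad\text{and}\quad TT'\cap\widetilde R = T,
\]
which together say precisely that $\psi(TT')=(T,\psi_1(T'))$. The first is trivial from $T\subseteq\widetilde R$, which gives $T\cdot\widetilde R=\widetilde R$ and hence $TT'\cdot\widetilde R=T'\widetilde R$.

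The intersection identity $TT'\cap\widetilde R=T$ is the only real obstacle. I would verify it by localizing at every $P\in\mathrm{Spec}(R)$, using the disjointness $\mathrm{Supp}(\widetilde R/R)\cap\mathrm{Supp}(\overline R/R)=\emptyset$ from Proposition~\ref{4.13}(2). If $P$ lies outside both supports, $T_P=T'_P=R_P$ and everything collapses. If $P\in\mathrm{Supp}(\widetilde R/R)$ then $\overline R_P=R_P$ forces $T'_P=R_P$, whence $(TT')_P=T_P\subseteq\widetilde R_P$ and the intersection with $\widetilde R_P$ is $T_P$. If $P\in\mathrm{Supp}(\overline R/R)$ then $\widetilde R_P=R_P=T_P$, so $(TT')_P=T'_P\subseteq\overline R_P$ and $(TT'\cap\widetilde R)_P=T'_P\cap\widetilde R_P\subseteq\overline R_P\cap\widetilde R_P=R_P=T_P$ by Theorem~\ref{4.5}(1); the reverse inclusion $R_P\subseteq T'_P\cap\widetilde R_P$ is obvious. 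Since localization commutes with finite intersections of submodules, $TT'\cap\widetilde R=T$ globally.

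The ``in particular'' clause then follows immediately: the bijection $\theta$ yields $|[R,\widetilde R]|\cdot|[R,\overline R]|=|[R,\vec R]|$, and $|[R,\vec R]|\leq|[R,S]|$ is trivial from the inclusion $[R,\vec R]\subseteq[R,S]$.
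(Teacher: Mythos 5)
Your proof is correct and follows essentially the same route as the paper: the paper simply remarks that $\psi\circ\theta=\mathrm{Id}\times\psi_1$ and concludes, while you spell out the verification of that identity (in particular the equality $TT'\cap\widetilde R=T$, checked by localizing and using the support disjointness of Proposition~\ref{4.13}). No gap; you have just supplied the details the paper leaves implicit.
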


\begin{proof} Using notation of Proposition~\ref{7.2} and Corollary~\ref{7.4}, we may remark that $\psi\circ \theta={\rm Id}\times \psi _1$. Since $\psi$ and  ${\rm Id}\times \psi _1$ are order-isomorphisms, so is $\theta$. The FIP case is obvious. 
\end{proof} 

Gathering the previous results, we get the following theorem.

\begin{theorem}\label{7.6} If $R\subset S$ has  FCP,  the next statements are equivalent:
\begin{enumerate}
\item $\mathrm{Supp}(\overline{R}/R)\cap\mathrm{Supp}(S/\overline{R})=\emptyset$.

\item The map $\varphi:[R,S]\to[R,\overline{R}]\times[\overline{R},S]$ defined by $\varphi(T):=(T\cap\overline{R},T\overline{R})$ is an order-isomorphism.

\item  $R\subseteq S$ is almost-Pr\"ufer.

\item $\mathrm{Supp}(S/\overline{R})=\mathrm{Supp}(\widetilde{R}/R)$.

\item The map $\varphi_1:[R,\widetilde{R}]\to[\overline{R},S]$ defined by $\varphi_1(T):=T\overline {R}$ is an order-isomorphism.

\item The map  $\psi_1:[R,\overline{R}]\to[\widetilde{R},S]$ defined by $\psi_1(T):=T\widetilde {R}$ is an order-isomorphism.

\item The map  $\theta:[R,\widetilde{R}]\times[R,\overline{R}]\to[R,S]$ defined by $\theta(T,T'):=TT'$ is an order-isomorphism.
\end{enumerate}

\noindent If one of these conditions holds, then $\mathrm{Supp}(S/\widetilde{R})=\mathrm{Supp}(\overline{R}/R)$.

If $R\subset S$ has FIP, the former conditions are equivalent to each of the following conditions:
\begin{enumerate}
\item[(8)  ] $|[R,S]|=|[R,\widetilde{R}]||[R,\overline{R}]|$.

\item[(9)  ]  $\|[R,S]|=|[R,\overline{R}]||[\overline{R},S]|$.

\item[(10)] $|[R,\widetilde{R}]|=|[\overline{R},S]|$.

\item[(11)] $|[R,\overline{R}]|=|[\widetilde{R},S]|$.
\end{enumerate}

\end{theorem}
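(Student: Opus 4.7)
The plan is to pivot every equivalence through the single identity $S=\vec R$, because by Theorem~\ref{4.5}(3) this is exactly what ``$R\subseteq S$ is almost-Pr\"ufer'' means. The equivalence $(1)\Leftrightarrow(3)$ will be quoted directly from Proposition~\ref{4.12}. Under~(3) one has $S=\vec R$, so statements~(5), (6) and~(7) read off the conclusions of Corollaries~\ref{7.4} and~\ref{7.5} verbatim, since their targets $[\overline R,\vec R]$, $[\widetilde R,\vec R]$ and $[R,\vec R]$ then coincide with $[\overline R,S]$, $[\widetilde R,S]$ and $[R,S]$. For~(2), I will upgrade Lemma~\ref{7.1}: the local support obstruction over $T'\in[R,\overline R]$ vanishes under~(1), because contracting primes of $T'$ to $R$ shows that $\mathrm{Supp}_{T'}(\overline R/T')$ and $\mathrm{Supp}_{T'}(T''/\overline R)$ sit above $\mathrm{Supp}_R(\overline R/R)$ and $\mathrm{Supp}_R(S/\overline R)$, which are disjoint.

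The converses $(2),(5),(6),(7)\Rightarrow(3)$ all exploit the trivial containment $\vec R\subseteq S$. For (5), (6), (7) I will note that the image of each of $\varphi_1$, $\psi_1$, $\theta$ automatically lies in $[\overline R,\vec R]$, $[\widetilde R,\vec R]$, $[R,\vec R]$ respectively (since $T\overline R\subseteq\widetilde R\,\overline R=\vec R$, and likewise for the other compositions), so surjectivity onto $[\overline R,S]$, $[\widetilde R,S]$, $[R,S]$ forces $\vec R=S$. For~(2), surjectivity of $\varphi$ forces the pair $(R,S)$ into the image, which by Lemma~\ref{7.1} requires the support condition with $T'=R$, $T''=S$, i.e.\ exactly~(1). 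The remaining equivalence $(3)\Leftrightarrow(4)$ I will obtain from the pointwise formula $S_P=(\overline R)_P(\widetilde R)_P$ (valid under~(3)), combined with $(\widetilde R)_P=\widetilde{R_P}$ from Corollary~\ref{4.15}(2) and the disjointness $\mathrm{Supp}(\widetilde R/R)\cap\mathrm{Supp}(\overline R/R)=\emptyset$ from Proposition~\ref{4.13}(2); the symmetric reading yields also the bonus identity $\mathrm{Supp}(S/\widetilde R)=\mathrm{Supp}(\overline R/R)$. The direction $(4)\Rightarrow(1)$ is then the same disjointness.

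For the FIP supplement, the forward implications $(7)\Rightarrow(8)$, $(2)\Rightarrow(9)$, $(5)\Rightarrow(10)$, $(6)\Rightarrow(11)$ are one-line cardinal counts on the respective bijections. Each reverse implication follows a single recipe: Lemma~\ref{7.1}, Corollary~\ref{7.4}, and Corollary~\ref{7.5} always supply an injection whose image is $[R,\vec R]$, $[\overline R,\vec R]$, or $[\widetilde R,\vec R]$ inside the ambient poset, and a cardinality equality forces this image to exhaust the ambient poset, hence $S=\vec R$ and we are back in~(3). The main obstacle is purely organizational, making sure all seven conditions really are funnelled through $S=\vec R$; the only delicate technical point is the prime-contraction step that promotes the global disjointness~(1) to the fibrewise support condition of Lemma~\ref{7.1}.
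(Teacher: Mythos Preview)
Your plan is essentially the paper's own: pivot everything through $S=\vec R$ (i.e.\ condition~(3)), invoke Proposition~\ref{4.12} for $(1)\Leftrightarrow(3)$, and then read off (4), (5), (6), (7) from Corollaries~\ref{7.3}, \ref{7.4}, \ref{7.5} once $S=\vec R$; the converses $(5),(6),(7)\Rightarrow(3)$ via ``image lands in $[\,\cdot\,,\vec R]$'' and the FIP implications are likewise handled the same way.

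Two small points deserve attention. First, Lemma~\ref{7.1} only asserts a \emph{bijection}, whereas (2) demands an \emph{order-isomorphism}; your prime-contraction argument shows the target of Lemma~\ref{7.1} becomes the full product under~(1), but you still owe the (easy) check that $\varphi^{-1}$ preserves order --- or, more simply, once $(1)\Rightarrow(3)$ gives $S=\vec R$, just quote Proposition~\ref{7.2}, which already states $\varphi'$ is an order-isomorphism. Second, your ``single recipe'' for the FIP reverses does not actually cover~(9): condition~(9) reads $|[R,S]|=|[R,\overline R]|\,|[\overline R,S]|$, and none of Corollaries~\ref{7.4}, \ref{7.5} produces an injection with image $[\,\cdot\,,\vec R]$ of that cardinality. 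The correct move for~(9) is the one the paper (implicitly) uses: Lemma~\ref{7.1} embeds $[R,S]$ into $[R,\overline R]\times[\overline R,S]$, equality of cardinalities forces the image to be the whole product, hence $(R,S)$ lies in the image, which is exactly~(1).
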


\begin{proof} (1) $\Rightarrow$ (2) by \cite[Lemma 3.7]{P2}. 

(2) $\Rightarrow$ (1). If the statement (2) holds, there exists $T\in [R,S]$ such that $T\cap\overline{R}=R$ and $T\overline{R}=S$. Then, \cite[Proposition 3.6]{P2} gives that $\mathrm{Supp}(\overline{R}/R)\cap\mathrm{Supp}(S/\overline{R})=\emptyset$.

(1) $\Rightarrow$ (3) by \cite[Proposition 3.6]{P2}.

(3) $\Rightarrow$ (4), (5), (6) and (7): Use Corollary~\ref{7.3} to get (4), Corollary~\ref{7.4} to get (5) and (6), and  Corollary~\ref{7.5} to get (7). Moreover, (3) and Corollary~\ref{7.3} give $\mathrm{Supp}(S/\widetilde{R})=\mathrm{Supp}(\overline{R}/R)$. 

(4) $\Rightarrow$ (1) by Proposition~\ref{4.13}(2).

(5), (6) or (7) $\Rightarrow$ (3) because, in each case, we have $S=\overline{R}\widetilde{R}$.

Assume now that $R\subset S$ has FIP. 

Then, obviously, (7) $\Rightarrow$ (8), (2) $\Rightarrow$ (9), (5) $\Rightarrow$ (10) and (6) $\Rightarrow$ (11). 

(9) $\Rightarrow$ (3) by Corollary~\ref{7.5}, which gives $|[R,\widetilde{R}]||[R,\overline{R}]|=|[R,\overline {\widetilde{R}}] |$, so that $|[R,S] |=|[R,\overline{\widetilde{R}}] |$, and then $S=\overline{\widetilde{R}}$. 

(8) $\Rightarrow$ (1): Using the map $\varphi$ of Lemma~\ref{7.1}, we get that $\{(T',T'')\in[R,\overline R]\times[\overline R,S]\mid\mathrm{Supp}_{T'}(\overline R/T')\cap\mathrm{Supp}_{T'}(T''/\overline R)=\emptyset\}=[R,\overline{R}]\times[\overline{R},S]$, so that $\mathrm{Supp}_{R}(\overline R/R)\cap\mathrm{Supp}_{R}(S/\overline R)=\emptyset$. 

(10) $\Rightarrow$ (3) and (11) $\Rightarrow$ (3) by Corollary~\ref{7.4}.
\end{proof}

\begin{example} We give an example where the results of Theorem~\ref{7.6} do not hold if $R\subseteq S$ has not  FCP. Set $R:=\mathbb{Z}_P$ and $S:=\mathbb{Q}[X]/(X^2)$, where $P\in \mathrm{Max}(\mathbb{Z})$. Then, $\widetilde R=\mathbb{Q}$ because $R\subset \widetilde R$ is Pr\"ufer (minimal) and $\widetilde R\subset S$ is  integral minimal. Set $M:=PR_P\in\mathrm {Max}(R)$ with $(R,M)$ a local ring. It follows that $M\in\mathrm{Supp}(\overline{R}/R)\cap\mathrm{Supp}(S/\overline{R})$ because $R\subset S$ is neither integral, nor Pr\"ufer. Similarly, $M\in\mathrm{Supp}(\overline{R}/R)\cap\mathrm{Supp}(\widetilde R/R)$. Indeed, $R\subset \overline R$ has not FCP. 
\end{example}

We end the paper by some length computations in the FCP case.

\begin{proposition}\label{7.8} Let $R\subseteq S$ be an FCP extension. The following statements hold:
\begin{enumerate}
\item $\ell [R,\widetilde R]=  \ell[\overline R, \vec R]$ and  $\ell [R,\overline R] = \ell [\widetilde R,\vec R]$

\item $\ell [R,\vec R]= \ell [R,\widetilde R] + \ell [\widetilde R,\vec R] = \ell[R,\overline R] + \ell [\overline R, \vec R]$

\item  $\ell [\overline R, \vec R]= |\mathrm{Supp}_{\overline R}(\vec R/ \overline R)| = \ell [R, \widetilde R] = |\mathrm{Supp}_R (\widetilde R/R)|$.
\end{enumerate}
\end{proposition}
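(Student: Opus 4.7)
Parts (1) and (2) are formal consequences of the poset isomorphisms established in Corollaries~\ref{7.4} and~\ref{7.5}. An order-isomorphism preserves the length of maximal chains: applying this to $\varphi_1:[R,\widetilde R]\to[\overline R,\vec R]$ and $\psi_1:[R,\overline R]\to[\widetilde R,\vec R]$ of Corollary~\ref{7.4} gives (1). For (2), the order-isomorphism $\theta:[R,\widetilde R]\times[R,\overline R]\to[R,\vec R]$ of Corollary~\ref{7.5}, together with the standard fact that $\ell(X\times Y)=\ell(X)+\ell(Y)$ for finite-length posets (a maximal chain in $X\times Y$ is obtained by interleaving maximal chains of $X$ and $Y$), yields $\ell[R,\vec R]=\ell[R,\widetilde R]+\ell[R,\overline R]$; combining with (1) produces both expressions in~(2).

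For (3), by (1) it suffices to prove the identity $\ell[A,B]=|\mathrm{Supp}_A(B/A)|$ for any FCP Pr\"ufer extension $A\subseteq B$, and then apply it separately to $R\subseteq\widetilde R$ and to $\overline R\subseteq\vec R$ (both are FCP Pr\"ufer: Pr\"ufer by Definition~\ref{0.6} and Theorem~\ref{4.5}(2), FCP as subextensions of the FCP extension $R\subseteq S$). Fix an FCP Pr\"ufer $A\subseteq B$. By Proposition~\ref{0.5}(1), decompose a maximal chain as $A=A_0\subset A_1\subset\cdots\subset A_n=B$ into Pr\"ufer minimal steps, with $n=\ell[A,B]$. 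Set $M_i:=\mathcal{C}(A_i,A_{i+1})$ and $P_i:=M_i\cap A$. The short exact sequences $0\to A_{i+1}/A_i\to B/A_i\to B/A_{i+1}\to 0$ together with base change applied to each minimal step (whose $A_i$-support is the singleton $\{M_i\}$) give $\mathrm{Supp}_A(B/A)=\bigcup_i V(P_i)$, where $V(P_i)$ denotes the closed set of primes of $A$ above $P_i$.

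The main obstacle is to establish $|\bigcup_i V(P_i)|=n$. The argument rests on Proposition~\ref{0.3}(3): for each $Q\in\mathrm{Supp}_A(B/A)$, the local extension $A_Q\subset B_Q$ has the form $B_Q=(A_Q)_{Q'}$ for some prime $Q'\subsetneq QA_Q$ with $A_Q/Q'$ a valuation domain, which forces the primes of $A$ lying between $Q'$ and $Q$ to form a total chain. An inductive construction of the maximal chain, descending one prime at a time through these locally chained intervals and exploiting the finiteness of $\mathrm{Supp}_A(B/A)$ from Proposition~\ref{0.5}(2), yields a chain whose contracted crucial primes $P_i$ give a bijective enumeration of $\mathrm{Supp}_A(B/A)$; in particular the set $\{P_0,\ldots,P_{n-1}\}$ is upward-closed in $\mathrm{Spec}(A)$, so $\bigcup_i V(P_i)=\{P_0,\ldots,P_{n-1}\}$ has cardinality $n$. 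This gives $\ell[A,B]=|\mathrm{Supp}_A(B/A)|$ and completes~(3); the bookkeeping around distinctness and upward-closure of the $P_i$ is the delicate point that forces the appeal to the valuation structure in Proposition~\ref{0.3}(3).
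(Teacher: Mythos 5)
Your parts (1) and (2) are correct. For (1) you use exactly the paper's argument (the order-isomorphisms $\varphi_1$ and $\psi_1$ of Corollary~\ref{7.4}). For (2) the paper instead invokes the additivity of length across the integral closure from \cite[Theorem 4.11]{DPP3}; your route through the product decomposition $\theta$ of Corollary~\ref{7.5} together with $\ell(X\times Y)=\ell(X)+\ell(Y)$ is a legitimate, essentially self-contained alternative (for the upper bound one notes that each step of a chain in the product strictly increases at least one coordinate, so its length is at most the sum of the lengths of the two projections).

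Part (3) is where the gap lies. Your reduction --- it suffices to prove $\ell[A,B]=|\mathrm{Supp}_A(B/A)|$ for an integrally closed FCP extension $A\subseteq B$ and then apply (1) --- is exactly the paper's strategy, but the paper obtains that identity by citing \cite[Proposition 6.12]{DPP2} (the same result it already used in the proof of Proposition~\ref{4.19} in the form ``$V\subset V'$ is Pr\"ufer minimal iff $|\mathrm{Supp}_V(V'/V)|=1$''), whereas you attempt to reprove it and the proof is incomplete. The inequality $\ell[A,B]\le|\mathrm{Supp}_A(B/A)|$ can indeed be extracted cheaply: for a Pr\"ufer minimal step $A_i\subset A_{i+1}$ with crucial ideal $M_i$ one has $M_iA_{i+1}=A_{i+1}$ by Scholium A(1), and since $A\to A_i$ is spectrally injective, $M_i$ is the unique prime of $A_i$ over $P_i:=M_i\cap A$, so no prime of any later $A_j$ lies over $P_i$; hence the $P_i$ are pairwise distinct elements of $\mathrm{Supp}_A(B/A)$. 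But the reverse inequality --- that for a suitable maximal chain the $P_i$ exhaust the support, equivalently that $\bigcup_iV(P_i)$ has exactly $n$ elements --- is the substantive content of \cite[Proposition 6.12]{DPP2}, and your text only asserts that ``an inductive construction \dots yields a chain whose contracted crucial primes give a bijective enumeration of $\mathrm{Supp}_A(B/A)$.'' That sentence states the theorem rather than proving it: as written, nothing rules out a prime $Q\supsetneq P_i$ lying in the support without being any $P_j$, in which case $|\bigcup_iV(P_i)|>n$. Either carry out that induction in detail or simply cite \cite[Proposition 6.12]{DPP2}, as the paper does.
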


\begin{proof} To prove (1), use the maps $\varphi_1$ and $\psi_1$ of Corollary~\ref{7.4}. Then (2) follows from \cite[Theorem 4.11]{DPP3} and (3) from \cite[Proposition 6.12]{DPP2}.
\end{proof}

\end{document}